\newenvironment{pf}
{\medskip\noindent {\it Proof.  }}
{\hfill\nobreak $\Box$ \par\bigbreak}
\newcommand{\isomo}{\overset{\sim}{\rightarrow}}
\newcommand{\GL}{\mathrm{GL}}
\newcommand{\ps}{\par \smallskip}
\newcommand{\Z}{\mathbb{Z}}
\newcommand{\Q}{\mathbb{Q}}
\newcommand{\R}{\mathbb{R}}
\newcommand{\sff}{\sffamily\selectfont}
\newcommand{\resb}{{\rm res}\,}
\newcommand{\resq}{{\rm qres}\,}
\newcounter{introcounter}
\newtheorem{thm}[subsection]{Theorem}
\newtheorem{prop}[subsection]{Proposition}
\newtheorem{definition}[subsection]{Definition}
\newtheorem{lemma}[subsection]{Lemma}
\newtheorem{remark}[subsection]{Remark}
\newtheorem{cor}[subsection]{Corollary}
\newtheorem{example}[subsection]{Example}
\newtheorem{fact}[subsection]{Fact}
\newtheorem{thmintro}{Theorem}
\titleformat{\section}{\bf \normalsize}{\arabic{section}.}{1 em}{}
\bf \arabic{section}.\arabic{subsection}.}
\titleformat{\subsubsection}[runin]
{\small \bf}
{}
{}
{}
\begin{document}

\title{Unimodular lattices of rank $29$ and related even genera of small determinant}

\author{
Ga\"etan Chenevier\thanks{\texttt{gaetan.chenevier@math.cnrs.fr}, CNRS, \'Ecole Normale Sup\'erieure-PSL, D\'epartement de Math\'ematiques et Application, 45 rue d'Ulm, 75230 Paris Cedex, France. During this work, G. Chenevier was supported by the project ANR-19-CE40-0015-02 COLOSS.}\\
\and
Olivier Ta\"ibi\thanks{\texttt{olivier.taibi@math.cnrs.fr}, 
CNRS, \'Ecole Normale Sup\'erieure de Lyon, Unité de Mathématiques Pures et Appliquées, 46, allée d’Italie 69364 Lyon Cedex 07, France. During this work, O. Ta\"ibi was supported by the project ANR-19-CE40-0015-02 COLOSS.
}}


\maketitle

\begin{abstract} 
We classify the unimodular Euclidean integral lattices of rank $29$ by developing an elementary, yet very efficient, inductive method.
As an application, we determine the isometry classes 
of even lattices of rank $\leq 28$ and prime (half-)determinant $\leq 7$. 
We also provide new isometry invariants allowing for
independent verification of the completeness of our lists,
and we give conceptual explanations of 
some {\it unique orbit phenomena} discovered during our computations. 
Some of the genera classified here are orders of magnitude larger 
than any genus previously classified. In a forthcoming companion paper, 
we use these computations to study the cohomology of ${\rm GL}_n(\Z)$.
\end{abstract}	
\ps
\ps

\section{Introduction}
\label{sect:intro}	

\subsection{The classification of unimodular lattices}
\label{sect:introunimod}
${}^{}$ Let us denote by ${\rm X}_n$ the set of isometry classes of 
{\it unimodular integral Euclidean} lattices of rank $n \geq 1$ (see Sect.~\ref{subsect:notation}). 
The simplest example of an element of ${\rm X}_n$ is the (class of the) {\it standard}, or {\it cubic}, lattice ${\rm I}_n:=\Z^n$.
We know from reduction theory or the geometry of numbers (Hermite, Minkowski) 
that ${\rm X}_n$ is a finite set; we even know its {\it mass} in the sense of Smith-Minkowski-Siegel. 

Determining the exact cardinality and finding representatives of ${\rm X}_n$ 
is, however, a difficult problem and a classical topic in number theory. 
Its origin goes back at least to the works of Lagrange and Gauss on 
counting the number of representations of an integer as a sum of squares, 
a question concerning the single lattice ${\rm I}_n$ a priori 
but intimately linked to the whole of ${\rm X}_n$, e.g. by the {\it Siegel-Weil} formula.
The known values of $|{\rm X}_n|$ are gathered in Table~\ref{tab:tabcardXn} below. \ps

\begin{table}[H]
\centering
\tabcolsep=5pt
\small
\renewcommand{\arraystretch}{1.1}
\begin{tabular}{c|*{11}{c}}
$n$ & $\leq 7$ & $8$ & $9$ & $10$ & $11$ & $12$ & $13$ & $14$ & $15$ & $16$ & $17$ \\
$|{\rm X}_n|$ & $1$ & $2$ & $2$ & $2$ & $2$ & $3$ & $3$ & $4$ & $5$ & $8$ & $9$ \\
\hline
$n$ & $18$ & $19$ & $20$ & $21$ & $22$ & $23$ & $24$ & $25$ & $26$ & $27$ & $28$ \\
$|{\rm X}_n|$ & $13$ & $16$ & $28$ & $40$ & $68$ & $117$ & $297$ & $665$ & $2\,566$ & $17\,059$ & $374\,062$ \\
\end{tabular}
\caption{Size of ${\rm X}_n$ for $n \leq 28$.}
\label{tab:tabcardXn}
\end{table}

These classification results for ${\rm X}_n$ with $n\leq 25$ 
were obtained through a long series of works in the last century 
\cite{mordell,ko,witt,kneser,niemeier,conwaysloaneuni,borcherdsthesis}: 
see \cite{conwaysloane} for some historical perspectives. 
Notable tools include Kneser's neighboring method introduced in \cite{kneser} 
(to settle the cases $10 \leq n \leq 16$), 
root lattices and the gluing method \cite{witt,niemeier,conwaysloaneuni}, 
and the use of Lorentzian lattices in \cite{borcherdsthesis,borcherdsduke} in the case $24 \leq n \leq 25$. 
We also mention that if ${\rm X}_n^R \subset {\rm X}_n$ denotes the subset of lattices 
with root system isomorphic to $R$ (and say, with no norm $1$ vectors), King computed the mass of 
${\rm X}_n^R$ for all $R$ and $n\leq 30$ in \cite{king},  substantially improving the mass formula lower bound 
on $|{\rm X}_n|$ for $26 \leq n \leq 30$.\ps

The remaining cases in Table~\ref{tab:tabcardXn} were obtained in the recent series of works 
\cite{chestat,cheuni} ($26 \leq n \leq 27$) and \cite{cheuni2} ($n=28$), 
using a biased enumeration of the Kneser neighbors of ${\rm I}_n$ 
(developing ideas in \cite{bachervenkov}), 
King's aforementioned work, 
refinements of the Plesken-Souvignier algorithm \cite{pleskensouvignier}, 
and substantial computer calculations. A remarkable, 
though not yet understood, byproduct of \cite{cheuni2} 
is the fact that a certain invariant of vectors of norm $\leq 3$, 
inspired by \cite{bachervenkov} and which we denote by ${\rm BV}$, 
is both fast to compute and  
distinguishes all unimodular lattices of rank $\leq 28$. 
Our first main result is:

\begin{thmintro}
\label{thmintro:uni29}
There are exactly $38\,966\,352$ classes in ${\rm X}_{29}$, 
all distinguished by their {\rm BV} invariants, 
and with Gram matrices given in {\rm \cite{chetaiweb}}. 
\end{thmintro}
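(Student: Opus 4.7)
The plan is to push the strategy of \cite{cheuni,cheuni2} one rank further. Since $29$ is odd, the set ${\rm X}_{29}$ is exactly the genus of ${\rm I}_{29}$, so by Kneser's theorem the $2$-neighbor graph on ${\rm X}_{29}$ is connected. I would explore this graph by a breadth-first search starting from ${\rm I}_{29}$: at each step, pick an unprocessed class $L$, enumerate enough $2$-neighbors of $L$ to exhaust the isometry classes among them, and test each candidate $L'$ against the list of classes already found. Each new class is recorded together with its BV invariant and the order of its orthogonal group.

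The critical efficiency issue is that a naive enumeration runs through exponentially many (in the rank) candidate $2$-neighbors, while $|{\rm X}_{29}|\approx 4\times 10^{7}$, so a blind approach is hopeless. I would use a biased enumeration in the spirit of \cite{bachervenkov,cheuni,cheuni2}, exploiting the action of ${\rm O}(L)$ on the set of isotropic lines in $L/2L$ (equipped with the reduction mod $2$ of $q(x)=(x,x)/2$) to visit drastically fewer representatives. When a candidate neighbor $L'$ is produced, its BV invariant is computed first; because BV already separates all unimodular lattices of rank $\leq 28$, one expects collisions to be very rare, and only in that rare case does one invoke the refined Plesken-Souvignier algorithm \cite{pleskensouvignier} to decide isometry and to compute $|{\rm O}(L')|$. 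Consequently, the vast majority of membership tests reduce to a hash lookup on BV.

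To certify completeness, I would compare the resulting list $\mathcal{L}\subset {\rm X}_{29}$ with King's root-system-wise mass formula \cite{king}: for every root system $R$, check the equality
\[ \sum_{L\in \mathcal{L}\cap {\rm X}_{29}^R} |{\rm O}(L)|^{-1} \,=\, \mathrm{mass}\bigl({\rm X}_{29}^R\bigr). \]
Since each class contributes a positive quantity to the left-hand side, equality certifies that $\mathcal{L}$ hits every class in ${\rm X}_{29}^R$; summing over $R$ gives exhaustivity in ${\rm X}_{29}$. The BV-separation claim of the theorem is then established by sorting the $38\,966\,352$ stored BV invariants and checking that they are pairwise distinct.

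The main obstacle is scale rather than concept: roughly $4\times 10^{7}$ isometry classes must be enumerated, stored, and compared, and even a biased neighbor traversal requires care to ensure that each class is revisited only a bounded number of times. Controlling memory, reusing precomputed generators of small subgroups of ${\rm O}(L)$, stratifying the traversal by root system (so that the mass check of \cite{king} can already be monitored stratum by stratum), and parallelising the computation appear to be the decisive engineering steps, rather than any new mathematical ingredient beyond those already assembled in \cite{cheuni2}.
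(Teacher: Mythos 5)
Your plan is the natural extrapolation of \cite{cheuni,cheuni2} to one more rank, and its verification layer (deduplicate by BV, fall back on Plesken--Souvignier for collisions, certify completeness against King's root-system-wise masses \cite{king}) is exactly what the paper also uses a posteriori. But as a discovery strategy it has a genuine gap: the biased $2$-neighbor traversal does not scale to rank $29$, and the paper explicitly rejects it for this reason. The rank $28$ enumeration in \cite{cheuni2} already cost over $70$ years of CPU time for $374\,062$ classes; ${\rm X}_{29}$ is two orders of magnitude larger ($\approx 3.9\times 10^{7}$ classes spread over $11\,085$ nonempty root systems, each in principle requiring its own tuned ``visible root system'' hunt), and the mass check only certifies completeness \emph{after} every last class has been hit --- the tail of the search, where one hunts for a handful of missing small-mass classes in a haystack of neighbors, is precisely what blows up. So ``scale rather than concept'' understates the problem: a new concept is what makes the computation finish.

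That missing idea is Proposition~\ref{prop:uninplus2n}: a groupoid equivalence between pairs $(L,e)$ with $L$ unimodular of rank $n$ and $e\in L/2L$ satisfying $e\cdot e\equiv 2\bmod 4$, and pairs $(U,\{\alpha,\beta\})$ with $U$ unimodular of rank $n+2$ and $\{\alpha,\beta\}$ orthogonal roots with $\frac{\alpha+\beta}{2}\notin U$. This turns the classification of all rank $29$ lattices whose root system contains $2{\bf A}_1$ into a computation of ${\rm O}(L)$-orbits on $L/2L$ for the $17\,059$ known rank $27$ lattices (Corollaries~\ref{cor:algounim} and~\ref{cor:algounim2}, with the notion of relevant pairs to cut redundancy by a factor of about $4$), costing roughly a month plus the BV pass. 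Only the three root systems without a pair of orthogonal roots --- $\emptyset$, ${\bf A}_1$, ${\bf A}_2$, some $73\,000$ classes in total --- are handled by the neighbor hunting you describe (Proposition~\ref{prop:X29emptyA1A2}). So your proposal is not wrong mathematically, but it omits the one new ingredient that makes Theorem~\ref{thmintro:uni29} provable in practice, and as written it could not be carried out.
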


King's lower bound $|{\rm X}_{29}| \geq 37\,938\,009$ was thus remarkably close.
Our method to prove Theorem~\ref{thmintro:uni29} is different from those above.
Indeed, the neighbor enumeration and analysis in \cite{cheuni2} for the case $n=28$ 
already required more than $70$ years of CPU time on a single core, 
and would not be reasonable in the much larger case $n=29$. 
The methods in \cite{cheuni,cheuni2} allow in principle to determine ${\rm X}_{29}^R$ for each $R$
(assuming the choice of invariant is fine enough, a serious assumption), 
but each choice of $R$ requires some specific handlings, 
and there are $11 085$ contributing $R$ by \cite{king}.
The set ${\rm X}_{29}^\emptyset$ was actually already determined in \cite{cheuni2}, 
and we use here the same strategy only in the cases 
$R={\bf A}_1$ and $R={\bf A}_2$ (see Proposition~\ref{prop:X29emptyA1A2}). 
\ps

All the remaining lattices contain a pair of orthogonal roots,
and for those we use an entirely different strategy, 
that we develop in Sect. \ref{sect:genunimodwithpairroots}.
It is based on the elementary fact that for any $n\geq 1$, 
there is a natural groupoid equivalence between:
(i) pairs $(L,e)$ with $L$ a rank $n$ unimodular lattice 
and $e \in L/2L$ satisfying $e \cdot e \equiv 2 \bmod 4$, and
(ii) pairs $(U,\{\alpha,\beta\})$ with $U$ a rank $n+2$ unimodular lattice 
and $\{\alpha,\beta\}$ an unordered pair of orthogonal roots of $U$ with $\frac{\alpha+\beta}{2} \notin U$. 
This paves the way for a recursive exhaustion of ${\rm X}_{n+2}$, 
by listing first the orbits of mod $2$ vectors of each element in ${\rm X}_n$. \ps

The main drawback of this method is that it produces 
each rank $n+2$ unimodular lattice $U$ as many times 
as the number of ${\rm O}(U)$-orbits of pairs of orthogonal roots in $U$: 
see Sect.~\ref{sect:genunimodwithpairroots} 
for a few simple techniques to reduce these redundancies. 
The worst case occurs when the root system of a generic $U$ 
is of the form $a {\bf A}_1\, b {\bf A}_2 $ with large $a+b$, and 
unfortunately these lattices constitute a significant proportion 
of the cases in practice when $n$ grows (presumably, 
a manifestation of the ``law of conservation of complexity''). \ps

Nevertheless, despite this issue, we realized that this method, 
combined with {\it unimodular hunting} for empty, ${\bf A}_1$ or ${\bf A}_2$ root systems, 
is by far the most efficient strategy 
to recursively reconstruct all unimodular lattices from scratch. 
For instance, using our current algorithms, it allowed us to recover 
all unimodular lattices of rank $\leq 28$ in only about a week of computations. 
Of course, such computations are always much easier {\it a posteriori}, 
but the superiority of this method over the neighbor method 
(which requires significantly more redundancies) is clear. \ps

In Sect.~\ref{sect:classrk29} we put the theory into practice 
and use this method to prove Theorem~\ref{thmintro:uni29}. 
As in rank $\leq 28$, the ${\rm BV}$ invariant turned out to miraculously 
distinguish all elements of ${\rm X}_{29}$, and 
this is a fundamental ingredient in the proof. 
We also refer to this section for detailed information about the final list 
of rank $29$ unimodular lattices 
and for further details about the computation process. \ps

Our computations were performed 
using the open-source computer algebra system Pari/GP \cite{parigp}. 
For efficiency, we were led to improve or reimplement several key algorithms. 
These important algorithmic contributions, due to the second author, 
are briefly described in Section~\ref{subsect:algolat} 
and will be the subject of a separate paper. 
They include: 
an exact implementation of the Fincke-Pohst algorithm \cite{FP}, 
an improvement of the Plesken-Souvignier algorithm for lattices with roots \cite{taibiateliergp}, 
a faster implementation of the ${\rm BV}$ invariant, 
a probabilistic algorithm for finding ``good'' Gram matrices (improving the one in \cite{cheuni2}), 
and a specific algorithm computing orbits of mod $2$ vectors. 
The source codes are, however, already available in \cite{chetaiweb}. \ps

The entire computation took about $20$ months of CPU time (single core equivalent). 
It was run on a system with $2 \times 64$-core AMD EPYC 7763 CPU (zen3) 
running at 1.5 GHz, with 1024 GB of RAM.\footnote{For reference, 
all CPU times reported in this paper are relative to this system.
While this system allowed efficient parallel computations, 
the algorithms described are equally performant on standard personal computers. }  
The given Gram matrices in \cite{chetaiweb}, together with the invariant BV and the mass formula, allow for an 
independent check that our list is complete, and which only requires about $80$ days of CPU time: see Sect. \ref{sect:classrk29}.
\ps

We conclude this section with a discussion of the case $n= 30$.
King's lower bound indicates that ${\rm X}_{30}$ has more than $20$ billion of classes!
Using the unimodular hunting techniques \cite{cheuni, cheuni2}, 
and the improvements above of our algorithms,
we were able to show the following (see Sect.~\ref{sect:rank30}).

\begin{thmintro}
\label{thmintro:X30}
The size of ${\rm X}_{30}^R$ for $R=\emptyset$, ${\bf A}_1$ or ${\bf A}_2$ is given by the table below.
Neighbor forms for representatives for all of those lattices are given in {\rm \cite{chetaiweb}}. Moreover,
all those lattices are distinguished by their ${\rm BV}$ invariant.
\end{thmintro}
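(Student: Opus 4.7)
The plan is to follow the same overall strategy that settled ${\rm X}_{29}^R$ for $R \in \{\emptyset, {\bf A}_1, {\bf A}_2\}$ in Proposition~\ref{prop:X29emptyA1A2}, i.e.\ biased Kneser-neighbor enumeration from ${\rm I}_{30}$ in the spirit of \cite{cheuni,cheuni2}, pushed through with the faster algorithms recorded in Section~\ref{subsect:algolat}. The restriction to these three small root systems is essential, since for other $R$ the sets ${\rm X}_{30}^R$ are astronomically larger; on the other hand, $\emptyset$, ${\bf A}_1$ and ${\bf A}_2$ are precisely the root systems not amenable to the pair-of-orthogonal-roots recursion of Section~\ref{sect:genunimodwithpairroots}, so this direct hunting is unavoidable.

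First I would compute, from King's formulas \cite{king}, the exact mass of ${\rm X}_{30}^R$ for each $R \in \{\emptyset, {\bf A}_1, {\bf A}_2\}$; these will be the targets to match. Second, I would run the biased neighbor enumeration of ${\rm I}_{30}$ (tuned, as in \cite{cheuni2}, to favour neighbors of small minimum and of the desired root type), feeding each resulting Gram matrix into the improved probabilistic reduction to obtain a ``good'' representative, then hashing by the ${\rm BV}$ invariant of vectors of norm $\leq 3$ with its new fast implementation. Third, within each ${\rm BV}$-bucket of identical fingerprint, the refined Plesken-Souvignier algorithm \cite{taibiateliergp} would be used to decide isometry, producing the definitive list of classes found so far. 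Fourth, enumeration would be continued until the Smith-Minkowski-Siegel mass accumulated by the found classes (weighted by $1/|{\rm O}(L)|$) equals the theoretical mass from step one; at that point the list is complete and the cardinalities $|{\rm X}_{30}^R|$ are read off, and the Gram matrices are recorded in \cite{chetaiweb}.

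The final claim — that the ${\rm BV}$ invariant separates all the classes so produced — is then a \emph{verification}: once the complete list is in hand, one checks that the map $L \mapsto {\rm BV}(L)$ is injective on it. Unlike the other assertions, this is not forced by the mass formula and is the one genuinely empirical output of the computation, entirely analogous to the ``miracle'' observed in ranks $\leq 29$.

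The main obstacle is of course the computational scale: even restricted to these three root systems, the counts are expected to be orders of magnitude larger than anything previously classified (King's lower bound for the whole ${\rm X}_{30}$ already exceeds $2\cdot 10^{10}$, so the contributions of $\emptyset$, ${\bf A}_1$ and ${\bf A}_2$, while much smaller, will still dwarf the corresponding rank-$29$ sets). Making the enumeration terminate within a realistic CPU budget therefore depends critically on (i) the new fast ${\rm BV}$ implementation, to keep the cost of deduplication near-linear in the number of candidates, (ii) the improved neighbor enumeration and ``good Gram matrix'' heuristics, to avoid enumerating the same class too many times, and (iii) a secondary risk — that ${\rm BV}$ might fail to be injective on ${\rm X}_{30}^R$, which would require a strengthening of the invariant before the verification step could succeed. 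The theorem asserts that none of these obstacles materialize in the three targeted cases.
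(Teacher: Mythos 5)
Your high-level strategy is the one the paper uses: biased neighbor hunting from ${\rm I}_{30}$ restricted to the visible root systems $\emptyset$, ${\bf A}_1$, ${\bf A}_2$, deduplication by the fast ${\rm BV}$ implementation, completeness certified against King's mass for ${\rm X}_{30}^R$, and the sharpness of ${\rm BV}$ treated as an a posteriori empirical verification. That logical skeleton is correct, and you rightly note that these three root systems are exactly the ones not reachable by the pair-of-orthogonal-roots recursion.

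The genuine gap is in your fourth step, ``enumeration would be continued until the mass \dots equals the theoretical mass.'' The paper reports that this is precisely what does \emph{not} happen: for $R=\emptyset$ the $d$-neighbor enumeration of ${\rm I}_{30}$ (for $d$ up to $147$) stalls with about $10^5$ classes still missing, and for $R={\bf A}_1$ it stalls with a remaining mass of $3/4$, at which point further neighbor sampling is ``searching for a needle in a haystack.'' Closing the gap requires additional ideas that your proposal does not supply: for $\emptyset$, targeted searches for neighbors with prescribed visible isometries of prime order dividing the denominator of the remaining mass, followed by exhaustive and partial $2$-neighbor explorations of carefully chosen already-found lattices (including lattices of the form ${\rm I}_1 \perp L$ with $L \in {\rm X}_{29}^\emptyset$); and for ${\bf A}_1$, the \emph{triplication method} of Proposition~\ref{prop:finalgroupoideq} and Lemma~\ref{lem:tripl2nei}, an exceptional degree-$3$ correspondence on ${\rm X}_{30}^{{\bf A}_1} \sqcup {\rm X}_{30}^{{\bf A}_2}$ built from the genus $\mathcal{H}_{29}$, which is what actually produces the last $3$ lattices of mass $1/4$ each. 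Without some such escalation beyond plain neighbor enumeration of ${\rm I}_{30}$, the computation does not terminate, so the theorem would remain unproved for $\emptyset$ and ${\bf A}_1$ under your plan as written.
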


\vspace{-.7cm}
\begin{table}[H]
\tabcolsep=8pt
{\small \renewcommand{\arraystretch}{1.3} \medskip
\begin{center}
\begin{tabular}{c|c|c|c}
$R$ & $\emptyset$ & ${\bf A}_1$ & ${\bf A}_2$ \\
\hline
$|{\rm X}_{30}^R|$ & $82\,323\,107$ & $357\,495\,297$ &  $12\,708\,298$  \\
\end{tabular}
\end{center}
}
\end{table}

Although we have no doubt that we could go further 
and determine ${\rm X}_{30}^R$ for many other root systems $R$,
we do not pursue this direction here. 
Indeed, it seems more promising 
to directly study the more fundamental (and famous) genus
${\rm X}_{32}^{\rm even}$ of rank $32$ even unimodular lattices, 
which should contain ``only'' about $1.2$ billion lattices according to King, 
and from which ${\rm X}_{30}$ can be theoretically deduced. 
\ps

The results established in this section have several interesting consequences 
for the study of ${\rm X}_{32}^{\rm even}$, such as a classification of 
all lattices containing ${\bf A}_3$. To the best of our knowledge, they provide 
the first indication in the literature that a classification of 
${\rm X}_{32}^{\rm even}$ may now be within reach, 
the sizes of ${\rm X}_{29}$ and of the 
${\rm X}_{30}^R$ above being of the same order of magnitude as the expected 
size of ${\rm X}_{32}^{\rm even}$. A key remaining difficulty 
in studying ${\rm X}_{32}^{\rm even}$
is that the natural analogue of the ${\rm BV}$ invariant 
in this case is computationally much more expensive
due to the large number of norm $4$ vectors.\ps

We will study ${\rm X}_{32}^{\rm even}$ 
more thoroughly in a forthcoming work. 
We stress that the study of such large genera is not as futile as it may seem, 
and is not solely motivated by the computational challenge. Indeed,
as shown in a series of recent works by the authors 
(such as \cite{chlannes,chrenard,taibisiegel,mot23,chetai_cuspcohGL}), 
it would also have significant consequences for 
the theory of automorphic forms for ${\rm GL}_n(\Z)$: 
see~Sect.~\ref{subsect:intromot} 
for more information about these aspects.

\subsection{Even lattices of small and prime (half-)determinant $p$}
\label{subsect:hnp}
	For an integer $n\geq 1$ and $p$ an odd prime, we consider the 
genus $\mathcal{G}_{n,p}$ of even Euclidean lattices of rank $n$ 
and determinant $d$, with $d=p$ for $n$ even and $d=2p$ for $n$ odd. 
See Section~\ref{sect:hdiscp} for examples and basic properties of these genera.
The genus $\mathcal{G}_{n,p}$ is nonempty if and only if 
either $n$ is odd or $n+p \equiv 1 \bmod 4$. 
Our main result is the following.

\begin{thmintro}
\label{thmintro:hdiscptable}
Each $(p,n)$-entry given in the Tables~\ref{tab:gnpneven} and~\ref{tab:gnpnodd} 
provides the number of isometry classes in the genus $\mathcal{G}_{n,p}$. 
Moreover, Gram matrices for representatives of these isometry classes are given\footnote{
We did not store the (huge) list in the case $(p,n)=(7,27)$.} 
in~{\rm \cite{chetaiweb}}.
\end{thmintro}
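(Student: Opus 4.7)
The plan is to reduce Theorem~\ref{thmintro:hdiscptable} to the unimodular classifications now at our disposal (Theorem~\ref{thmintro:uni29} together with Table~\ref{tab:tabcardXn}) via a discriminant-group gluing correspondence. Fix $(n,p)$ with $p \in \{3,5,7\}$ and an even lattice $L \in \mathcal{G}_{n,p}$. The discriminant group $L^*/L$ is cyclic of order $p$ (resp.\ $2p$) when $n$ is even (resp.\ odd), and carries a $\mathbb{Q}/2\mathbb{Z}$-valued quadratic form determined by $n$ and $p$. The idea is to choose a small ``reference'' lattice $R$ of rank $r\in\{1,2\}$ whose discriminant form cancels that of $L$ in the appropriate sense (typically the hull of a vector of norm $p$ or $2p$, possibly together with a root) and an isotropic glue subgroup inside $(L\oplus R)^*/(L\oplus R)$ so that the resulting overlattice $U$ is integral and unimodular of rank $n+r$. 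Conversely, $L$ is recovered as $\rho(R)^\perp\cap U$ for the corresponding isometric embedding $\rho\colon R\hookrightarrow U$. Depending on the residues of $n$ and $p$ modulo $8$, the parity and the precise shape of $R$ must be adjusted so that $U$ actually lies in ${\rm X}_{n+r}$ and $L$ comes out even; a short case analysis should provide the menu of reference pairs $(R,\rho)$ covering every entry of Tables~\ref{tab:gnpneven}--\ref{tab:gnpnodd}. In this way $\mathcal{G}_{n,p}$ is put in bijection with the set of ${\rm O}(U)$-orbits of admissible embeddings $R\hookrightarrow U$ as $U$ ranges over ${\rm X}_{n+r}$.

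The enumeration is then mechanical. For each $U\in {\rm X}_{n+r}$ (with $n+r\le 30$, available from Theorems~\ref{thmintro:uni29} and~\ref{thmintro:X30} or from the earlier literature collected in Table~\ref{tab:tabcardXn}), we enumerate the ${\rm O}(U)$-orbits of admissible embeddings $\rho\colon R\hookrightarrow U$ using the Fincke-Pohst routine and the ``orbits of mod $2$ vectors'' algorithm described in Section~\ref{subsect:algolat}; for each orbit representative we compute $L=\rho(R)^\perp\cap U$ and record its adapted BV invariant. Duplicates, arising either from several $\rho$ inside the same $U$ or from distinct $U$'s producing isometric complements, are first pruned by BV and, in the rare case of collisions, by a full Plesken-Souvignier isometry test. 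Completeness is certified at the end by comparing the sum $\sum_{[L]}|{\rm O}(L)|^{-1}$ over our list with the Smith-Minkowski-Siegel mass of $\mathcal{G}_{n,p}$, which can be evaluated exactly via the standard Conway-Sloane local density formulas.

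The hard part is the scale of the computation rather than any conceptual difficulty: several of the genera in Tables~\ref{tab:gnpneven}--\ref{tab:gnpnodd} are orders of magnitude larger than any previously classified genus, and for the biggest entries (such as $(p,n)=(7,27)$) the number of $(U,\rho)$-pairs to process considerably exceeds that of the rank-$29$ case. Handling this requires the algorithmic improvements of Section~\ref{subsect:algolat} and, crucially, the fortunate empirical phenomenon---to be verified genus by genus---that an adapted BV invariant continues to separate all isometry classes in these even genera, without which the final deduplication step would be the dominant bottleneck.
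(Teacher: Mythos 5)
Your overall philosophy (orthogonal complements of short vectors, gluing, orbit enumeration, mass-formula certification) is the right one, but the specific reduction you propose --- a \emph{single-step} gluing of each $L\in\mathcal{G}_{n,p}$ with a rank $1$ or $2$ reference lattice to land in a classified unimodular genus ${\rm X}_{n+r}$ --- does not work for all entries of Tables~\ref{tab:gnpneven} and~\ref{tab:gnpnodd}, and this is precisely why the paper does something different. First, the smallest positive definite lattice $A$ with $\resb A \simeq -\resb L$ is forced by local (genus) constraints to have rank $3$ or even $4$ for several entries: see Table~\ref{tab:oppres357}, where e.g.\ for $p=7$ and $n\equiv 1,3 \bmod 8$ one needs $\langle 2\rangle \perp S_7$ of rank $4$. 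For $(p,n)=(7,27)$ this puts the ambient unimodular lattice in rank $31$, far beyond anything classified (and even rank $30$ is only known for three root systems by Theorem~\ref{thmintro:X30}), so your bijection has no classified source genus to range over. Second, even where a rank-$2$ reference exists (e.g.\ $\langle 2\rangle\perp\langle 5\rangle$ for $\mathcal{G}_{27,5}$, giving $U$ of rank $29$), you would have to enumerate ${\rm O}(U)$-orbits of \emph{pairs} of short vectors with prescribed inner products and glue across all $38.6$ million rank-$29$ unimodular lattices; the paper instead proceeds by a cascade of rank-$1$ steps through intermediate non-unimodular genera (Figure~\ref{fig:imageliensgenres}), e.g.\ ${\rm X}_{29}\to\mathcal{G}_{28,5}\to\cdots$, each step governed by Example~\ref{ex:orthospecial}, Example~\ref{ex:charequivalencecharvectors} or Proposition~\ref{prop:d1d2}, where the relevant type-$t$ vectors number at most a few thousand per lattice (Table~\ref{tab:nbormtvec}) --- except for Niemeier lattices, where the norm-$14$ vectors number about $187$ billion each and a special two-step reduction via orbits of mod~$2$ vectors (Lemma~\ref{lem:variantind2ML}) is needed, a difficulty your proposal does not anticipate.

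A further, more structural difference: with a rank-$1$ reference the glue datum is a single element $w$ of a cyclic residue, and the paper proves (Proposition~\ref{prop:algoLLprime}) that $w$ is determined up to sign, so each isometry class of the target genus is produced \emph{exactly once} and no deduplication is needed --- the mass formula then certifies completeness without any reliance on a sharp invariant. Your scheme produces each class with unknown multiplicity (several embeddings per $U$, several $U$ per class, and for non-cyclic glue groups several inequivalent glue maps), so its correctness hinges entirely on the empirical sharpness of an adapted ${\rm BV}$ invariant for deduplication; the paper deliberately decouples the classification from the invariants, using the latter only for independent verification (Theorem~\ref{thmintro:hdiscptablebvp}), and indeed produces no invariant at all for $(7,27)$.
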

\vspace{-.3cm}
\begin{table}[H]
\tabcolsep=6pt
{\scriptsize \renewcommand{\arraystretch}{1.7} \medskip
\begin{center}
\begin{tabular}{c|c|c|c|c|c|c|c|c|c|c|c|c|c|c}
$p \, \backslash \, n $ & $2$ &  $4$  & $6$ & $ 8$ & $10$ & $12$ & $14$ & $16$ & $18$ & $20$ & $22$ &  $24$ & $26$ & $28$ \\
\hline
$3$ & $1$ & & $1$ & & $1$ & & $2$ & & $6$ & & $31$ & & ${\color{red}678}$ & \\
\hline
$5$ & & $1$ & & $1$ & & $2$ & & $5$ & & $27$ & & ${\color{red}352}$ & & {\color{red} 2\,738\,211} \\
\hline
$7$ & $1$ & & $1$ & & $2$ & & $4$ & & ${\color{red}20}$ & & ${\color{red} 153}$ & &{\color{red} 44\,955}  & \\
\end{tabular}
\end{center}
}
\vspace{-5 mm}
\caption{\small Number of isometry classes of even lattices of even rank $2 \leq n \leq 28 $ and odd prime determinant $p\leq 7$ (zero if blank).}
\label{tab:gnpneven}
\end{table}
\vspace{-.6cm}
\begin{table}[H]
\tabcolsep=5.5pt
{\scriptsize \renewcommand{\arraystretch}{1.7} \medskip
\begin{center}
\begin{tabular}{c|c|c|c|c|c|c|c|c|c|c|c|c|c|c}
$p \, \backslash \, n $ & $1$ &  $3$  & $5$ & $7$ & $9$ & $11$ & $13$ & $15$ & $17$ & $19$ & $21$ &  $23$ & $25$ & $27$ \\
\hline
$3$ & $1$ & $1$ & $1$ & $1$ & $2$ & $2$ & $3$ & $5$ & $10$ & $19$ & $64$ & ${\color{red} 290}$ & ${\color{red} 2\,827}$ & {\color{red} 285\,825}\\
\hline
$5$ & $1$ & $1$ & $1$ & $1$ & $3$ & $3$ & $5$ & $10$ & $21$ & ${\color{red} 55}$ & ${\color{red} 210}$ & ${\color{red} 1\,396}$ & ${\color{red} 38\,749}$ &${\color{red} 24\, 545\,511}$  \\
\hline
$7$ & $1$ & $1$ & $1$ & $2$ & $3$ & $5$ & $8$ & $14$ & ${\color{red} 37}$ & ${\color{red}119}$ & ${\color{red}513}$ &${\color{red} 5\,535}$ & ${\color{red} 341\,798}$ & ${\color{magenta} 659\,641\,434}$ \\
\end{tabular}
\end{center}
}
\vspace{-5 mm}
\caption{\small
Number of isometry classes of even lattices of odd rank $1 \leq n \leq 27$ and determinant $2p$ with $p$ a prime $\leq 7$.}
\label{tab:gnpnodd}
\end{table}

The results for $p=3$ and even $n \leq 14$ in these tables 
go back to Kneser~\cite{kneser}.
Kneser deduced them from his aforementioned classification of 
unimodular lattices of rank $\leq 16$ by studying 
embeddings into unimodular lattices of slightly larger rank.\footnote{
Actually, this idea had already been used by Gauss 
in the last section of his {\it Disquisitiones}, 
in which he relates the representations 
of an integer $n$ as a sum of three squares 
to the isometry classes in a certain genus of 
integral binary forms of determinant $n$.}
Applying similar ideas, along with 
a more systematic use of root lattices and of the gluing method, 
Conway and Sloane determined in \cite{conwaysloanelowdim} 
all the values in the tables above which are $\leq 2$, or with $n\leq 10$.
We also mention the almost complete\footnote{
The few indeterminacies for $(26,3)$ in Borcherds's list 
were settled in \cite{megarbane} (see \cite[\S 9]{cheuni} for a different approach).
}
study in \cite[\S 4.7]{borcherdsduke} of the cases $(n,p)=(25,3)$ and $(26,3)$
by a quite different approach using Lorentzian lattices.\ps

We will prove Theorem~\ref{thmintro:hdiscptable} in Sect.~\ref{subsect:orbitmethod}.
Our method, in the spirit of Kneser's, consists in deducing 
all the genera $\mathcal{G}_{n,p}$ above 
from the classification of unimodular lattices of rank $\leq 29$ and 
successive applications of {\it the orbit method}, that we now discuss.\ps
  
We say that a genus $\mathcal{G}'$ may be deduced 
from another genus $\mathcal{G}$ by {\it the orbit method of type $t$} 
if the map $(L,v) \mapsto L':=v^\perp \cap L$ induces a bijection between 
the isometry classes of pairs $(L,v)$, with $L$ in $\mathcal{G}$ and $v \in L$
of type $t$, and the isometry classes of $L'$ in $\mathcal{G}'$. 
{\it Type $t$ vectors} will typically be the set of 
all primitive vectors of a certain norm satisfying possibly some extra conditions. 
Several natural instances of such triples $(\mathcal{G},\mathcal{G'},t)$
are recalled in Sect.~\ref{subsect:gluing} (see also Proposition \ref{prop:d1d2} for 
a possibly new example). 
If $\mathcal{L}$ is a set of representatives for the isometry classes in $\mathcal{G}$,
finding representatives of the isometry classes in $\mathcal{G}'$ then amounts 
to determining, for each $L \in \mathcal{L}$, 
the ${\rm O}(L)$-orbits of vectors of type $t$ in $L$, hence the terminology.
If the number of type $t$ vectors in these lattices $L$ is not too large,\footnote{
This holds for all cases but two cases: see the end of Sect.~\ref{subsect:orbitmethod}.} and 
if the Plesken-Souvignier algorithm succeeds in finding generators for ${\rm O}(L)$, 
this orbit computation is a simple task for a computer.
\ps

The proof of Theorem~\ref{thmintro:hdiscptable} is fairly long and intricate.
An overview of the techniques we used is 
shown in Figure~\ref{fig:imageliensgenres} below.
In this figure, even genera are pictured in blue, 
and odd ones are in yellow.\footnote{
We focus in this figure on the most important cases $n\geq 23$. 
The cases $n\leq 22$ are only easier: see the information in \cite{chetaiweb}.
}
\begin{figure}[htp]
\centering
\vspace{-3cm}
\includegraphics[scale=1.3]{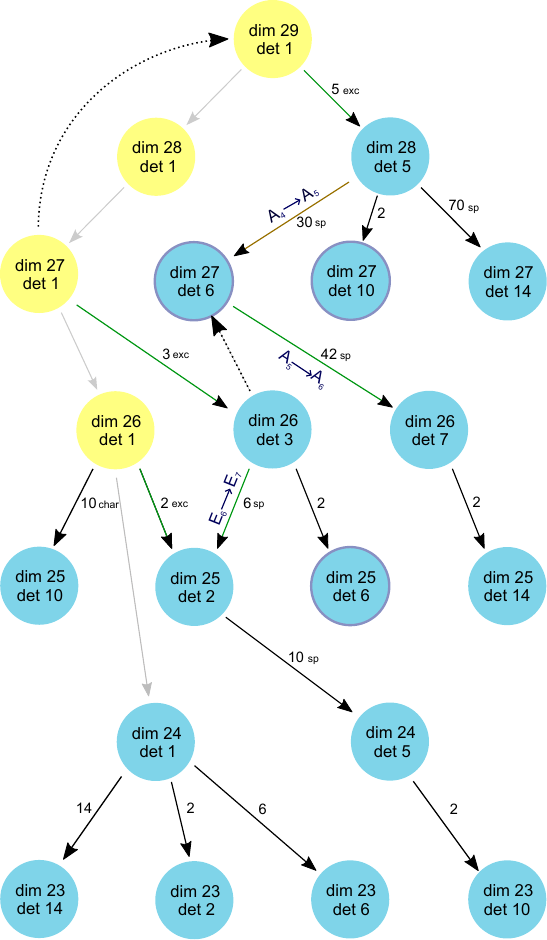}
\caption{Unimodular lattices rule!\\
\small ({\it Leitfaden} for the proof of Theorem~\ref{thmintro:hdiscptable})}
\label{fig:imageliensgenres}
\end{figure}
The presence of an arrow $\mathcal{G} \overset{t}{\rightarrow} \mathcal{G}'$ 
means that $\mathcal{G}'$ can be deduced from $\mathcal{G}$ 
by the orbit method of type $t$. An important advantage 
of the orbit method is that it is unnecessary 
to provide any sharp isometry invariant for the lattices 
in the target genus to produce each isometry class once and only once.
Nevertheless, we found it highly desirable to have such an invariant 
(that is fast to compute), at least for our larger genera. 
For instance, having such invariants allows one to directly check that 
the lattices given in our lists are pairwise non-isomorphic, 
hence form a complete list of representatives,
by applying the known mass formula for 
$\mathcal{G}_{n,p}$ \cite{conwaysloanemass}, 
and the Plesken-Souvignier algorithm. 
It is also useful for potential Hecke operator computations.
This is achieved by the following result.

\begin{thmintro}
\label{thmintro:hdiscptablebvp}
For each $(p,n)$-entry in black in the Tables~\ref{tab:gnpneven} and~\ref{tab:gnpnodd},
the isometry classes in $\mathcal{G}_{n,p}$ are distinguished by their root systems. 
For the entries in red,\footnote{
The case $(p,n)=(7,27)$ could presumably be treated as well, but we did not try.} 
they are distinguished by their ${\rm BV}_{n,p}$-invariant.
\end{thmintro}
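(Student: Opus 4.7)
The plan is to verify both claims by direct computation on the explicit lists of representatives $\mathcal{L}_{n,p}$ produced in the proof of Theorem~C. For the black entries in Tables~\ref{tab:gnpneven} and~\ref{tab:gnpnodd}, the approach is elementary: for each $L \in \mathcal{L}_{n,p}$ I would enumerate the norm~$2$ vectors of $L$ using the short-vector routines mentioned in Section~\ref{subsect:algolat}, decompose the resulting root system as a direct sum of irreducible $\mathrm{ADE}$ components, and verify by sorting that no two lattices in $\mathcal{L}_{n,p}$ share the same root system. Every black entry satisfies $|\mathcal{L}_{n,p}| \leq 678$ and the ambient rank is $\leq 22$, so this step is essentially free.

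For the red entries, the plan is to introduce a genus-adapted analogue ${\rm BV}_{n,p}$ of the Bacher--Venkov-type invariant ${\rm BV}$ used in Theorems~A and~B. Concretely, to each $L \in \mathcal{G}_{n,p}$ one attaches a combinatorial fingerprint built from the configurations of short vectors of $L$ (of norms bounded by some threshold chosen so that the invariant remains cheap to compute), refined by their images in the discriminant group $L^{\sharp}/L$, which is cyclic of order $p$ or $2p$ according to the parity of $n$. By construction, ${\rm BV}_{n,p}(L)$ depends only on the isometry class of $L$, so the proof reduces to computing ${\rm BV}_{n,p}$ on each element of $\mathcal{L}_{n,p}$ and checking pairwise distinctness by sorting. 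As an independent cross-check of both completeness of $\mathcal{L}_{n,p}$ and non-redundancy, the sum $\sum_{L \in \mathcal{L}_{n,p}} |{\rm O}(L)|^{-1}$ is compared with the Conway--Sloane mass of $\mathcal{G}_{n,p}$, and the Plesken--Souvignier algorithm certifies that any two lattices with identical ${\rm BV}_{n,p}$ would be detected as isometric.

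The main obstacle is computational, in two ways. First, one must handle genuinely large sets: $(p,n)=(5,27)$ gives roughly $24.5$ million lattices, so even storing, hashing and sorting the invariants requires care, and the improved implementations of Fincke--Pohst, of Plesken--Souvignier and of ${\rm BV}$ described in Section~\ref{subsect:algolat} are essential. Second, and more delicate, the success of ${\rm BV}_{n,p}$ at separating all isometry classes is an empirical phenomenon: there is no a priori structural reason it must work, and the norm threshold defining ${\rm BV}_{n,p}$ may need to be tuned genus by genus. If distinctness failed in some case, one would be forced to enrich the invariant, for instance by raising the norm threshold or by incorporating further refinements such as multisets of unordered pairs of short vectors together with their inner products reduced modulo~$p$. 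The content of the theorem is precisely that, with the choices made, no such enrichment is needed in any red entry; the magenta entry $(p,n)=(7,27)$ is excluded because the computation was not attempted, and it is conceivable that for this case the basic ${\rm BV}_{n,p}$ would in fact need strengthening.
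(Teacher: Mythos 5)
Your treatment of the black entries matches the paper's (a direct root-system computation), and your overall philosophy for the red entries --- define an invariant, compute it on the lists, check pairwise distinctness, and cross-check against the mass formula --- is the right one. But the specific construction you sketch for ${\rm BV}_{n,p}$ has a genuine gap: you propose a fingerprint built from the short vectors of $L$ itself (with some norm threshold), refined by classes in $\resb L$. Since $L$ is even, the only vectors below norm $4$ are the roots, which are already known to be insufficient for the red entries, so any such invariant must use the norm-$4$ vectors. The paper points out that the lattices in these genera typically have more than $80\,000$ vectors of norm $4$, and the BV-type computation scales like the cube of the number of vertices; this makes a depth-$4$ invariant on $L$ roughly $64\,000$ times slower than a depth-$3$ computation on a rank-$29$ unimodular lattice, i.e.\ hours per lattice --- hopeless for the $24.5$ million lattices of $\mathcal{G}_{27,5}$. "Tuning the threshold" cannot rescue this, because there is no useful threshold strictly between $2$ and $4$ for an even lattice.

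The key idea you are missing is the \emph{gluing} definition of ${\rm BV}_{n,p}$ (Sect.~\ref{subsect:checkhdiscp}): one chooses a small auxiliary lattice $A$ of rank $\leq 3$ with $\resb A \simeq -\resb L$ (Table~\ref{tab:oppres357}), glues $L \perp A$ to an \emph{odd unimodular} lattice $U$ of rank $\leq 29$, and sets ${\rm BV}^1_{n,p}(L) = {\rm BV}_3(U,\iota)$, the \emph{marked} depth-$3$ invariant of Definition~\ref{def:markedhbv}. This is both computable (odd unimodular lattices of these ranks have only on the order of $10^3$ vectors of norm $\leq 3$) and plausibly sharp a priori, since the unmarked ${\rm BV}_3$ is already known to separate all unimodular lattices of rank $\leq 29$ (Corollary~\ref{cor:BVsharpuni}). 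Even then, ${\rm BV}^1$ alone fails for several odd-rank red entries, so the paper introduces two further gluings ${\rm BV}^2,{\rm BV}^3$ (via different choices of $A$), uses tuples of them genus by genus (Table~\ref{tab:bvnpnodd}), and handles $(23,5)$ by an ad hoc supplement involving the rescaled dual lattice and the \emph{absolute} variant of BV. Your proposal anticipates that enrichment might be needed, but without the gluing step the basic invariant is not even computable in the ranges where the theorem has content.
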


We refer to Sect.~\ref{subsect:checkhdiscp} for the proof, and
a (rather case-by-case) definition of ${\rm BV}_{n,p}$. 
These invariants are special cases of new invariants that 
we introduce in Sect.~\ref{subsect:markedBV} and that we call 
the {\it marked} BV {\it invariants}.
The marked ${\rm BV}$ invariants are defined more generally 
for arbitrary isometry classes of pairs $(L,\iota)$, 
where $L$ is a lattice and $\iota$ is an embedding of a fixed lattice $A$ into $L$; 
they refine the ${\rm BV}$ invariant of $L$ by 
taking into account the embedding $\iota$. 
Our strategy is then to identify the groupoid $\mathcal{G}_{n,p}$
with a suitable groupoid of such pairs 
(there are many different ways to do so using gluing constructions), 
and define the invariant of a lattice in $\mathcal{G}_{n,p}$ 
as the marked BV invariant of the corresponding pair. 
The amazing efficiency of these invariants, 
demonstrated by the tens of millions of lattices they distinguish,
is an unexplained miracle to us.\ps

Going back to the statement of Theorem~\ref{thmintro:hdiscptable}, 
we mention that for each lattice in our lists in \cite{chetaiweb}, 
we provide not only a (good) Gram matrix,
but also its root system,
its (reduced) mass, 
generators of its reduced isometry group,
and for red $(p,n)$-entries,
its (hashed) ${\rm BV}_{n,p}$ invariant.
It is then easy to check that our lists are complete, thanks to the mass formula.

\subsection{Unique orbit property for ''exceptional'' vectors}
\label{subsect:uniqueorbintro}
During the numerical applications of 
the orbit method described in \S~\ref{subsect:hnp},
and strikingly often, we observed that the set of vectors of 
a certain type in certain lattices $L$ (or their dual $L^\sharp$) 
forms a single ${\rm O}(L)$-orbit. 
{\it This applies to the set of type $t$ vectors of each $L \in \mathcal{G}$ 
for each green arrow $\mathcal{G} \overset{t}{\rightarrow} \mathcal{G}'$ 
in Figure~\ref{fig:imageliensgenres}}, making the orbit method 
particularly straightforward to implement in these cases. \ps

We naturally tried to find conceptual reasons for this: 
this is the topic of Sect.~\ref{sect:fertile} and Sect.~\ref{sect:excuni}. 
Here is a striking example. Let $L$ be a unimodular lattice. 
Following \cite{bachervenkov}, we denote by ${\rm Exc}\, L$ 
the set of characteristic vectors $v$ of $L$ with norm $v \cdot v <8$ 
(the {\it exceptional} vectors), and call $L$ {\it exceptional} 
if ${\rm Exc}\, L$ is non-empty. The following result is proved 
in Sect.~\ref{sect:excuni}. 

\begin{thmintro} 
\label{thm:excsingleorbituni}
Assume $L$ is an exceptional unimodular lattice of rank $n$ with 
$n \not\equiv 6, 7 \bmod 8$. Then ${\rm O}(L)$ acts transitively on ${\rm Exc}\, L$.
\end{thmintro}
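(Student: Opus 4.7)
The plan is to establish transitivity by constructing, for any two exceptional characteristic vectors $v_1, v_2 \in {\rm Exc}\, L$, an explicit isometry of $L$ carrying $v_1$ to $v_2$. The first step will be to reduce to the case $v_1 \cdot v_1 = v_2 \cdot v_2 =: s$. This follows from two standard facts about a unimodular rank-$n$ lattice $L$: the characteristic vectors form a single class modulo $2L$, and any characteristic $v$ satisfies $v \cdot v \equiv n \pmod 8$ (van der Blij). Combined with the bound $0 \leq v_i \cdot v_i < 8$ for $v_i \in {\rm Exc}\, L$, this uniquely pins down $s \in [0,8)$.

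The core of the argument is a reflection construction. I would set
\[ u := \tfrac{v_1 + v_2}{2}, \qquad w := \tfrac{v_2 - v_1}{2}, \]
both of which lie in $L$ by the mod-$2L$ property. A direct computation yields $v_1 = u - w$, $v_2 = u + w$, $u \cdot w = 0$, and $u \cdot u + w \cdot w = s$. Crucially, the hypothesis $n \not\equiv 6, 7 \pmod 8$ is equivalent to $s \leq 5$, so at least one of the nonnegative integers $u \cdot u$ and $w \cdot w$ necessarily belongs to $\{0, 1, 2\}$.

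It then remains to run a short case analysis, based on the elementary fact that for any $x \in L$ with $x \cdot x \in \{1, 2\}$ the orthogonal reflection $s_x$ preserves $L$ (since $x \cdot x$ divides $2$, making the coefficient $2(y \cdot x)/(x \cdot x)$ integral, together with the unimodularity of $L$ forcing $x$ to be primitive). If $w \cdot w \in \{1, 2\}$, the identity $v_1 \cdot w = (u - w) \cdot w = -\,w \cdot w$ gives $s_w(v_1) = v_1 + 2w = v_2$. If $u \cdot u \in \{1, 2\}$, then symmetrically $s_u(v_1) = -v_2$, and composing with $-\mathrm{id} \in {\rm O}(L)$ does the job. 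The boundary cases $w = 0$ and $u = 0$ give $v_1 = v_2$ and $v_1 = -v_2$ respectively, handled by $\mathrm{id}$ and $-\mathrm{id}$.

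I do not expect any real obstacle: the argument is essentially formal once the identity $u \cdot u + w \cdot w = s$ is in place. What is appealing is that the strategy transparently explains the hypothesis on $n$: for $s = 6$ or $s = 7$, the split $(u \cdot u, w \cdot w)$ can be $(3,3)$ or $(3,4)$, where neither summand supports a short reflection and this approach breaks down, consistent with the exclusion of the residues $n \equiv 6, 7 \pmod 8$.
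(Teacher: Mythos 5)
Your proof is correct, and it takes a genuinely different --- and considerably more elementary --- route than the paper. All the key identities check out: for characteristic $v_1,v_2$ of common norm $s\equiv n \bmod 8$ one has $u=\tfrac{v_1+v_2}{2},\,w=\tfrac{v_2-v_1}{2}\in L$ with $u\cdot w=0$ and $u\cdot u+w\cdot w=s\leq 5$, so one of the two integers lies in $\{0,1,2\}$; the degenerate values $0$ give $v_2=\pm v_1$, and otherwise $s_w(v_1)=v_1+2w=v_2$ (resp. $-s_u(v_1)=v_2$) with $s_w,s_u\in{\rm O}(L)$ since reflections in vectors of norm $1$ or $2$ preserve any integral lattice. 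The paper instead reduces to ${\rm r}_1(L)=0$ and argues residue by residue: the cases $n\equiv 2,3,4\bmod 8$ use known descriptions of ${\rm Exc}\,L$ (including the companion-lattice trick for $n\equiv 4$), while the case $n\equiv 5\bmod 8$ is handled by gluing $L^{\rm even}$ with ${\rm A}_3$ into an even unimodular lattice of rank $n+3$, identifying $\tfrac12{\rm Exc}\,L$ with extensions of the embedding ${\rm A}_3\hookrightarrow U$ to ${\rm A}_4$, and running a Weyl-orbit analysis of root-system embeddings. Your argument is uniform in $n$, avoids all of that machinery, and transparently explains the exclusion of $n\equiv 6,7$; moreover, when ${\rm r}_1(L)=0$ the splits $(1,s-1)$ and $(s-1,1)$ cannot occur, so your dichotomy only ever uses $-1$ and root reflections and thus even recovers the ${\rm W}(L)^{\pm}$-transitivity refinement of Theorem~\ref{thm:uniqueorbex} (and the bound $|{\rm Exc}\,L|\leq 2$ for $n\equiv 2,3\bmod 8$). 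What the paper's heavier approach buys, and yours does not, is the extra structural output of Theorem~\ref{thm:uniqueorbex} and Corollary~\ref{cor:ex2mUmplus3}: the bound $|{\rm Exc}\,L|\leq 2n$ and the identification of $|{\rm Exc}\,L|/2$ with the size of an ${\bf A}_{m+3}$ component of the glued even unimodular lattice, which the authors use for the mass cross-checks against King's tables. One cosmetic remark: the appeal to primitivity of $x$ in your reflection lemma is unnecessary; integrality of $2(y\cdot x)/(x\cdot x)$ for all $y\in L$ already suffices.
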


This explains the green arrows of type \texttt{exc} in Figure~\ref{fig:imageliensgenres}.
Known simple descriptions of ${\rm Exc}\, L$ 
for $n \equiv 0,1,2,3,4 \bmod 8$ actually make the theorem 
straightforward to prove in these cases. 
However, the case $n \equiv 5 \bmod 8$, 
of importance here since $29 \equiv 5 \bmod 8$, 
is much harder. 
In this case we prove a stronger result (Theorem~\ref{thm:uniqueorbex}): 
{\it if furthermore $L$ has no norm $1$ vector, 
then we have $|{\rm Exc}\,L| \leq 2n$ and 
the subgroup of ${\rm O}(L)$ generated by $-1$ 
and the Weyl group of $L$ acts transitively on ${\rm Exc}\,L$}. 
For instance, we have $|{\rm Exc}\,L|=2$ if $L$ has no root, 
a previously unexplained observation in \cite{cheuni2}.
Note that the statement of Theorem~\ref{thm:excsingleorbituni} 
does not hold for $n \equiv 6,7 \bmod 8$ 
(but see Remark~\ref{rem:dim6mod8}). \ps

	A general framework in which the number of orbits 
can be controlled is introduced in Sect.~\ref{sect:fertile}. 
Fix a root system $R$, with associated root lattice ${\rm Q}(R)$.
We are interested in an even lattice $M$ in a genus that is {\it opposite} 
to that of ${\rm Q}(R)$, in the sense that there is an isometry 
of finite quadratic spaces
$$\eta : M^\sharp/M \overset{\sim}{\longrightarrow}\,- \, {\rm Q}(R)^\sharp/{\rm Q}(R).$$ 
We say that a class $c$ in ${\rm Q}(R)^\sharp/{\rm Q}(R)$ is {\it fertile} 
if the minimum $\nu(c)$ of $\xi \cdot \xi$, over all $\xi \in {\rm Q}(R)^\sharp$ 
in the class $c$, is $<2$. To each fertile $c$ is attached a canonical root system 
$R_c \supset R$, whose Dynkin diagram is obtained from that of $R$ by adding 
a single node and some edges; each possible such diagram arises 
for one and only one fertile $c$.
Fix a fertile class $c$, as well as $(M,\eta)$ 
as above. We are interested in the following set of (short) vectors in $M^\sharp$
$${\rm Exc}_{c,\eta}\, M \,\,=\,\,\{ v \in M^\sharp\, \, \mid\, \, \eta(v)=-c\,\,{\rm and}\, \, v\cdot v=2- \nu(c)\},$$
that we call the {\it $(c,\eta)$-exceptional vectors} of $M$.\footnote{This notion refines and generalizes that of exceptional vectors in odd unimodular lattices: see Remark \ref{rem:rel_def_exc}.}
A key fact is that these vectors are in natural bijection with the extensions to ${\rm Q}(R_c)$ 
of the natural embedding ${\rm Q}(R) \rightarrow U$, 
where $U \supset M \perp {\rm Q}(R)$ denotes 
the even unimodular lattice associated to the pair $(M,\eta)$ 
by the gluing construction (Proposition~\ref{prop:orbfibreMc}). 
Studying the ${\rm W}(M)$-orbits of $(c,\eta)$-exceptional vectors of $M$ then becomes 
equivalent to studying Weyl group orbits of 
embeddings of root systems into each other, a combinatorial problem
which in favorable cases leads to unique orbit properties.\ps

The proof of Theorem~\ref{thm:excsingleorbituni} in the case $n\equiv 5 \bmod 8$, 
consists in applying these ideas to the even part $M$ of the unimodular lattice $L$ 
with $R \simeq {\bf A}_3$ and to the fertile classes $c$ leading to 
${\rm R}_c \simeq {\bf A}_4$ or ${\bf D}_4$. 
Note that the classification of exceptional unimodular lattices of rank $29$ easily follows from 
Theorem~\ref{thmintro:uni29}: see Theorem~\ref{thm:excuni209} for a detailed study. 
By Theorem~\ref{thm:excsingleorbituni} the number of such lattices is exactly the same as the number of isometry classes in the genus $\mathcal{G}_{28,5}$, explaining the important $(5,28)$-entry in Table~\ref{tab:gnpneven}: see Remark~\ref{ex:g285}. \ps

A second consequence of the general theory above is a
relationship between the genera opposite to the genus of ${\rm Q}(R)$ and 
those opposite to the genus of ${\rm Q}(R_c)$, 
for each fertile class $c$ of $R$ (Corollary~\ref{cor:MRcMRc}). 
When an arrow $\mathcal{G} \overset{t}{\rightarrow} \mathcal{G}'$ 
in Figure~\ref{fig:imageliensgenres} is a special case of this relationship, 
we add the label $R \rightarrow R_c$ to it.
Here is an example occurring twice in the figure. 
For all $n\geq 1$ the pair $({\bf A}_n,{\bf A}_{n+1})$ occurs as some $(R,R_c)$, 
so we deduce a groupoid equivalence between: \par
(i) pairs $(M,e)$, with $M$ an even lattice of determinant $n+1$ 
and $e \in M^\sharp$ a primitive vector satisfying $e\cdot e = \frac{n+2}{n+1}$,  \par
(ii) pairs $(N,w)$, with $N$ an even lattice of determinant $n+2$
and $w$ a generator of the group $N^\sharp/N$ satisfying $w\cdot w \equiv -\frac{n+1}{n+2} \bmod 2\Z$. \par
\noindent A generalization of this statement is given in Proposition~\ref{prop:d1d2}. 
We have not been able to locate such statements elsewhere in the literature. \ps

The unique orbit property for the pair $(R,R_c)=({\bf A}_n,{\bf A}_{n+1})$ is discussed in Proposition~\ref{prop:keyuniqueorbitAn}. 
It explains the green arrow ${\bf A}_5 \rightarrow {\bf A}_6$ in Figure~\ref{fig:imageliensgenres}, as well as the brown arrow ${\bf A}_4 \rightarrow {\bf A}_5$: 
the unique orbit property holds in most cases, with an understood set of exceptions. 
We have essentially explained so far all the decorations in Figure~\ref{fig:imageliensgenres},
except for the purple circles!
\ps

\subsection{Motivations and perspectives}
\label{subsect:intromot}

${}^{}$ As discussed throughout this introduction,
the practical question of classifying integral Euclidean lattices 
of fixed dimension and determinant is a venerable one, 
whose historical developments have generally reflected theoretical and algorithmic 
advances in the study of Euclidean lattices. It is remarkable that 
the sizes of the genera classified in Sect.~\ref{sect:introunimod} and~\ref{subsect:hnp} 
are several orders of magnitude larger than those previously accessible.
However, the motivations for studying such large genera might seem limited to 
the encyclopedic aspect and the computational challenge.
Indeed, while the very first cases historically considered (say, in dimension at most $4$)
were linked to very simple and natural arithmetic questions, 
this can hardly be said to remain true in high dimensions. \ps

Our main motivation for this work is actually quite different;
it stems from the forthcoming companion paper~\cite{chetai_cuspcohGL}, 
in which we study automorphic forms and the rational cohomology of the group 
${\rm GL}_n(\Z)$, building upon a series of recent works 
by the authors~\cite{chlannes,chrenard,taibisiegel,mot23}.
As will be explained in that paper, 
new phenomena occur around dimension $n=27$, 
but uncovering them requires 
the evaluation of certain local orbital integrals for orthogonal groups.
While a direct computation of these integrals seems out of reach, 
they can be determined via a local-global method 
using the characteristic masses (in the sense of~\cite{cheniemeier})
of sufficiently many genera of even lattices 
of dimension $\leq 27$ and prime (half-)determinant. \ps

The specific genera needed for our computations in~\cite{chetai_cuspcohGL} 
are the $\mathcal{G}_{n,p}$ 
for $(n,p)$ in $\{ (25,3), (27,3), (27,5)\}$; 
they are circled in purple in Figure~\ref{fig:imageliensgenres}. 
This explains why, in the proofs of 
Theorems~\ref{thmintro:hdiscptable} and~\ref{thmintro:hdiscptablebvp} 
in Sect.~\ref{sect:hdiscp}, we examine these genera in greater detail 
and sometimes provide alternative classification methods, 
given their primary importance for our applications.
We finally mention that Theorem~\ref{thmintro:uni29} and the methods above 
also enable the determination of $\mathcal{G}_{n,p}$ for many 
other pairs $(n,p)$ (with $p\geq 11$ and $n\leq 27$), but we omit the discussion of these cases here.

\ps \medskip
{\sc Acknowledgments.} 
Experiments presented in this work were carried out using the cluster cinaps 
of the LMO (Universit\'e Paris-Saclay) and the cluster ``Cascade'' of the PSMN (Pôle Scientifique de Modélisation Numérique, ENS de Lyon).
We warmly thank the LMO and PSMN for sharing their machines.

{\small
\setcounter{tocdepth}{1}
\tableofcontents
}

\section{General preliminaries on lattices}
\label{sect:notation}

\subsection{Basic notations, conventions, and terminology}
\label{subsect:notation}
${}^{}$
	If $V$ is a Euclidean space, we usually denote by $x \cdot y$ its inner product.
A {\it lattice} in $V$ is a subgroup $L$ generated by a basis of $V$.
The {\it dual} of $L$ is the lattice $L^\sharp :=\{ v \in V\, \, |\, \, w \cdot v \in \Z ,\,\, \, \forall w \in L\}$.
We say that $L$ is {\it integral} if we have $L \subset L^\sharp$.
It is sometimes better to have a non embedded notion of an Euclidean integral lattice, 
and to think of them as a an abstract abelian group of finite rank $L$ equipped with
a positive definite symmetric $\Z$-bilinear form $L \times L \rightarrow \Z$, $(x,y) \mapsto x.y$, 
the ambient Euclidean space being then $V:=L \otimes_\Z \R$. 
We freely use both points of view. {\bf From now on, unless explicitly stated, 
the term {\it lattice} will always mean {\it integral Euclidean lattice}.}\ps

Let $L$ be a lattice. The {\it norm} of a vector $v \in V$ is $v \cdot v$.
The lattice $L$ is called {\it even} if $v.v \in 2\Z$ for all $v \in L$, and {\it odd} otherwise.
The {\it determinant} of $L$ is the determinant of the {\it Gram matrix} 
${\rm Gram} (e) = (e_i \cdot e_j)_{1 \leq i,j \leq n}$ of any $\Z$-basis 
$e=(e_1,\dots,e_n)$ of $L$. It is denoted $\det L$. We have 
$\det L \in \Z_{\geq 1}$, and we say that $L$ is unimodular if $\det L = 1$.\ps

For $i\geq 0$, the {\it configuration of vectors of $L$ of norm $\leq i$} (resp. of norm $i$)
is the finite metric set
\begin{equation}\label{eq:defri}
{\rm R}_{\leq i}(L)=\{ v \in L\, |\, v \cdot v \leq i\}\,\,\,{\rm and}\, \, \, {\rm R}_{i}(L)=\{ v \in L\, |\, v \cdot v = i\}
\end{equation}
and we also set ${\rm r}_i(L) = |{\rm R}_{i}(L)|$. We denote by ${\rm I}_n$ the standard lattice $\Z^n$,
with $x \cdot y = \sum_{i=1}^n x_iy_i$.
Any lattice $L$ may be uniquely written as $L = L_0 \perp L_1$ 
with $L_0 \simeq {\rm I}_m$ and ${\rm r}_1(L_1)=0$, and we have ${\rm r}_1(L)=2m$.
A {\it root} in $L$ is a vector of norm $2$.
The {\it root system} of $L$ is ${\rm R}_2(L)$.

\begin{example} 
\label{Ex:rootlattices} 
{\rm (Root systems and lattices) }
In this paper, {\it a root system $R$}, in an Euclidean space $V$, 
will always be assumed to have all its roots of norm $2$, 
hence to be of ${\rm ADE}$ type {\rm (}or {\it simply laced}{\rm )}. 
We denote by ${\rm Q}(R)$ the even lattice it generates in $V$ {\rm (}{\it root lattices}{\rm )}. 
We use bold fonts ${\bf A}_n$, ${\bf D}_n$, ${\bf E}_n$ to denote
isomorphism classes of root systems of those names, 
with the conventions ${\bf A}_0={\bf D}_0={\bf D}_1=\emptyset$ and ${\bf D}_2= 2{\bf A}_1$.
We also denote by ${\rm A}_n$, ${\rm D}_n$ and ${\rm E}_n$ the standard corresponding root lattices.
\end{example}

Contrary to ${\rm R}_{\leq 2}(L)$, there seems to be no known classification of the
configurations of vectors of the form ${\rm R}_{\leq 3}(L)$.
We refer to~\cite[\S 4.3]{cheuni} for a discussion of this problem, 
and to Sect.~\ref{subsect:markedBV} below for some important invariants that we shall use.
\ps

The {\it isometry group} of the lattice $L$ is ${\rm O}(L)=\{ \gamma \in {\rm O}(V), \, \, \gamma(L)=L\}$ (a finite group).
The {\it Weyl group} of $L$ is the subgroup ${\rm W}(L) \subset {\rm O}(L)$ 
generated by the orthogonal reflections ${\rm s}_\alpha$ 
about each $\alpha \in {\rm R}_2(L)$. This is a normal subgroup, 
isomorphic to the classical Weyl group ${\rm W}(R)$ of $R:={\rm R}_2(L)$.
We define the {\it reduced isometry group} of $L$ to be 
${\rm O}(L)^{\rm red}:={\rm O}(L)/{\rm W}(L)$.
Let $\rho$ be the Weyl vector of a positive root system in ${\rm R}_2(L)$.
The stabilizer ${\rm O}(L;\rho)$ of $\rho$ in ${\rm O}(L)$ 
satisfies ${\rm O}(L) = {\rm W}(L) \rtimes {\rm O}(L;\rho)$, 
and in particular, is canonically isomorphic to ${\rm O}(L)^{\rm red}$. 
If $w \in {\rm W}(L)$ denotes the unique element satisfying $w(\rho)=-\rho$, 
then we have $-w \in {\rm O}(L;\rho)$. This element is nontrivial, hence has 
order $2$, unless we have $-1 \in {\rm W}(L)$. We also denote by ${\rm W}(L)^{\pm}$ 
the subgroup of ${\rm O}(L)$ generated by $-{\rm 1}$ and ${\rm W}(L)$.

\begin{lemma}
\label{lem:minus1wl} 
Let $L$ be a lattice such that $-1 \in {\rm W}(L)$. 
Then ${\rm R}_2(L)$ has the same rank as $L$,
and the abelian group $L^\sharp/L$ is killed by $2$.
\end{lemma}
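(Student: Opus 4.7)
The plan is to exploit the fact that ${\rm W}(L)$ is generated by reflections ${\rm s}_\alpha$ along roots $\alpha \in {\rm R}_2(L)$, which have two very restrictive features: they fix pointwise the hyperplane $\alpha^\perp$, and they preserve each coset of $L$ in $L^\sharp$.

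First I would handle the rank statement. Let $V_R \subseteq V = L \otimes_\Z \R$ denote the $\R$-span of ${\rm R}_2(L)$. Each reflection ${\rm s}_\alpha$ with $\alpha \in {\rm R}_2(L)$ acts as the identity on the orthogonal complement $V_R^\perp$, so every element of ${\rm W}(L)$ does as well. Since $-1 \in {\rm W}(L)$ must in particular act as the identity on $V_R^\perp$, we get $V_R^\perp = 0$, i.e.\ $V_R = V$. Thus ${\rm R}_2(L)$ spans $V$ and has the same rank as $L$.

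Next I would prove that $L^\sharp/L$ is $2$-torsion. The key observation is that for any $\alpha \in {\rm R}_2(L)$ and any $v \in L^\sharp$, one has $\alpha \in L$ and $v \cdot \alpha \in \Z$ (since $\alpha \in L$ and $v \in L^\sharp$), so
\[ {\rm s}_\alpha(v) - v \,=\, -(v \cdot \alpha)\,\alpha \,\in\, L.\]
Therefore each generator ${\rm s}_\alpha$, and hence every element of ${\rm W}(L)$, acts trivially on $L^\sharp/L$. Applying this to $-1 \in {\rm W}(L)$ yields $-v \equiv v \bmod L$ for every $v \in L^\sharp$, i.e.\ $2v \in L$, which is precisely the statement that $L^\sharp/L$ is killed by $2$.

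Both steps are essentially immediate from the definition of ${\rm W}(L)$, so I do not anticipate any real obstacle; the only point worth verifying carefully is the integrality $v \cdot \alpha \in \Z$ used in the second step, which is exactly the defining property of $L^\sharp$.
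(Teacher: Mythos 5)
Your proof is correct and follows exactly the paper's argument: the rank statement via the fact that every reflection (hence all of ${\rm W}(L)$, in particular $-1$) fixes the orthogonal complement of the span of the roots, and the $2$-torsion statement via ${\rm s}_\alpha(v)-v=-(v\cdot\alpha)\alpha\in L$ showing that ${\rm W}(L)$ acts trivially on $L^\sharp/L$. The paper merely declares the first assertion "clear" where you spell it out; otherwise the two proofs coincide.
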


\begin{pf} 
The first assertion is clear. 
For the second, note that for $\alpha \in {\rm R}_2(L)$ and $x \in L^\sharp$, 
we have ${\rm s}_\alpha(x) =x- \,(\alpha \cdot x) \alpha \equiv x \bmod L$, 
so ${\rm W}(L)$ acts trivially on $L^\sharp/L$.
\end{pf}

\begin{lemma}
\label{lem:surjWpm}
Let $U$ be a lattice and $V \subset U$ a sublattice.
Let ${\rm O}(U,V)$ be the stabilizer of $V$ in ${\rm O}(U)$, 
and $r : {\rm O}(U,V) \rightarrow {\rm O}(V)$ the restriction morphism.
We have ${\rm W}(V) \subset r({\rm W}(U)\cap {\rm O}(U,V))$ 
and ${\rm W}(V)^{\pm} \subset r({\rm W}(U)^{\pm} \cap {\rm O}(U,V)) $. 
\end{lemma}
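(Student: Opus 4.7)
The plan is straightforward: the group ${\rm W}(V)$ has an explicit set of generators (the reflections in roots of $V$), and each such generator has a canonical lift to ${\rm W}(U) \cap {\rm O}(U,V)$, given by the same reflection viewed as an element of ${\rm W}(U)$.

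More precisely, for the first inclusion, I would argue as follows. Since $V \subset U$, every root of $V$ is a root of $U$, so for $\alpha \in {\rm R}_2(V)$ the reflection $\mathrm{s}_\alpha$, defined on the ambient Euclidean space, lies in ${\rm W}(U)$. The formula $\mathrm{s}_\alpha(x) = x - (\alpha \cdot x)\alpha$ shows that whenever $x \in V$ and $\alpha \in V$, one has $\mathrm{s}_\alpha(x) \in V$; hence $\mathrm{s}_\alpha \in {\rm O}(U,V)$, and clearly $r(\mathrm{s}_\alpha)$ is the reflection $\mathrm{s}_\alpha \in {\rm W}(V)$. Since such elements generate ${\rm W}(V)$, the first inclusion follows.

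For the second inclusion, by definition ${\rm W}(V)^\pm$ is generated by ${\rm W}(V)$ together with $-{\rm id}_V$. The first inclusion already handles the ${\rm W}(V)$ part, so it suffices to lift $-{\rm id}_V$. But $-{\rm id}_U \in {\rm W}(U)^\pm$, it obviously preserves $V$, and its restriction to $V$ is exactly $-{\rm id}_V$.

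There is essentially no obstacle here: the lemma is a matter of unwinding definitions, with the only mildly non-tautological observation being that the reflection $\mathrm{s}_\alpha$ stabilizes the sublattice $V$ precisely because $\alpha$ itself lies in $V$.
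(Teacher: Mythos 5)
Your proof is correct and follows exactly the paper's argument: lift each reflection $\mathrm{s}_\alpha$ for $\alpha$ a root of $V$ to the same reflection in ${\rm W}(U)$, observe it stabilizes $V$ since $\alpha \in V$, and lift $-{\rm id}_V$ to $-{\rm id}_U$. Nothing to add.
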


\begin{pf} For any root $\alpha$ of $V$ (hence of $U$),
we have ${\rm s}_\alpha \in  {\rm W}(U) \cap {\rm O}(U,V)$ and $r({\rm s}_{\alpha})={\rm s}_\alpha$. 
We conclude since $r(-{\rm id}_U)=-{\rm id}_V$.
\end{pf}

\subsection{Groupoids and masses}
\label{subsect:groupoidsmasses}

${}^{}$ 
The language of {\it groupoids} is particularly well-suited to the theory of lattices,
providing both concise statements and useful points of view.
Recall that a groupoid is a category $\mathcal{G}$ whose morphisms are all isomorphisms.
Two groupoids are said to be {\it equivalent} if they are equivalent as categories.
When the isomorphism classes of objects in a groupoid $\mathcal{G}$ form a set, 
we denote 
it by ${\rm Cl}(\mathcal{G})$. An equivalence of groupoids $\mathcal{G} \rightarrow \mathcal{G'}$ 
induces in particular a natural bijection ${\rm Cl}(\mathcal{G}) \isomo {\rm Cl}(\mathcal{G}')$, whenever defined.
\ps
If $\mathcal{C}$ is any collection of lattices, 
for instance any genus, then $\mathcal{C}$ may be viewed as a groupoid 
whose morphisms are the lattice isometries.
As another example, the pairs $(L,v)$
with $L$ in $\mathcal{C}$ and $v \in L$, form a groupoid in a natural way: an isomorphism
$(L,v) \rightarrow (L',v')$ is a lattice isometry $f : L \rightarrow L'$ with $f(v)=v'$. 
We will consider many variants of these examples in what follows, the groupoid structure 
being usually obvious from the context.

\ps
Let $\mathcal{G}$ be a groupoid with finitely many isomorphism classes of objects, 
and such that each object in $\mathcal{G}$ has a finite automorphism group. 
The {\it mass} of such a $\mathcal{G}$ is 
the rational number ${\rm mass}\, \mathcal{G}
\, =\,\sum_{[x] \in {\rm Cl}(\mathcal{G})} \frac{1}{|{\rm Aut}_{\mathcal{G}}(x)|}.$\ps

	For instance, any collection $\mathcal{C}$ of lattices with bounded determinants 
has a mass, denoted by ${\rm mass}\,\mathcal{C}$. 
For such a $\mathcal{C}$ and any root system $R$, 
we also denote by $\mathcal{C}^R$ the full subgroupoid of lattices $L$ in $\mathcal{C}$ 
with ${\rm r}_1(L)=0$ and root system ${\rm R}_2(L) \simeq R$. 
The {\it reduced mass} of $\mathcal{C}^R$ is defined as 
$|{\rm W}(R)| \, {\rm mass}\, \mathcal{C}^R$. 
If $\mathcal{C}$ consists of the single lattice $L$, 
we simply write ${\rm mass}\, L=1/|{\rm O}(L)|$,
and the reduced mass of $L$ is $|{\rm W}(R)|/|{\rm O}(L)|=1/|{\rm O}(L)^{\rm red}|$
with $R={\rm R}_2(L)$.
	
\subsection{Residues and gluing constructions}
\label{subsect:gluing}
Let $L$ be a lattice. \ps

(R1) The finite abelian group $\resb L := L^\sharp/L$, 
sometimes called the {\it discriminant group} \cite{nikulin}, 
the {\it glue group} \cite{conwaysloane} or the {\it residue}\footnote{
We owe this pleasant notation and terminology to Jean Lannes.
Not only does this choice avoid the overused term {\it discriminant}, 
but it also evokes the construction of Milnor's residue maps 
${\rm W}(\Q) \rightarrow {\rm W}(\mathbb{F}_p)$ in the theory of Witt groups,
as described in the appendix in~\cite{bllv}. } \cite{chlannes}, 
is equipped with a non-degenerate $\Q/\Z$-valued symmetric bilinear form, 
defined by $(x,y) \mapsto x \cdot y \bmod \Z$.
We have $|\resb L| = \det L$.
Any isometry $\sigma : L \rightarrow L'$ between lattices induces 
an isometry of finite bilinear spaces $\resb \sigma :  \resb\,L \rightarrow \resb\,L'$.\ps

(R2) A subgroup $I \subset \resb L$ is called {\it isotropic} 
if we have $x \cdot y  \equiv 0$ for all $x,y \in I$, and a {\it Lagrangian} 
if we have furthermore $|I|^2=|\resb L|$. 
The map $\beta_L : M \mapsto M/L$ defines a bijection between 
the set of integral lattices containing $L$ and the set of isotropic subgroups of $\resb\, L$.
In this bijection we have \(|M/L|^2 \det M = \det L\), 
in particular $M/L$ is a Lagrangian of \(\resb\, L\) if and only if $M$ is unimodular. \ps

(R3) Assume that $L$ is even. 
Then the finite symmetric bilinear space $\resb\, L$ may be equipped
with a canonical quadratic form ${\rm q} : \resb L \rightarrow \Q/\Z$ 
such that ${\rm q}(x+y)-{\rm q}(x)-{\rm q}(y) \equiv x \cdot y$, 
defined by ${\rm q}(x) = \frac{x \cdot x}{2} \bmod \Z$. 
When we view $\resb L$ as equipped with such a structure, 
we rather denote it\footnote{This is important since on occasion we will really
want to consider $\resb\, L$ when $L$ is even, and not $\resq\,L$.}
by $\resq \,L$, for {\it quadratic residue}. In the bijection $\beta_L$ of (R2) above, 
the {\it even} lattices $M$ correspond to the {\it quadratic isotropic} 
subgroups $I \subset \resq L$, {\it i.e.} satisfying ${\rm q}(I)=0$.
\ps

We now state a form of the so-called {\it gluing construction}, 
which is essentially \cite[Prop. 1.5.1]{nikulin}.  
Recall that a subgroup $A$ of a lattice $L$ is called {\it saturated} 
if the abelian group $L/A$ is torsion free,
or equivalently, if $A$ is a direct summand of $L$ as $\Z$-module. 
If $X$ is a bilinear or quadratic space, $-X$ denotes 
the same space but with the opposite bilinear or quadratic form.\ps

\begin{prop}
\label{prop:nikulingroupoids}
Let $A$ be a lattice and $H \subset \resb \, A$ a subgroup 
equipped with the induced $\Q/\Z$-valued bilinear form. 
The following groupoids are naturally equivalent:
\begin{itemize}
\item[(i)] pairs $(L,\iota)$ with $L$ a lattice 
and $\iota : A \rightarrow L$ an isometric embedding with saturated image 
such that the natural map $L \rightarrow \resb \,A$ has image $H$.\ps
\item[(ii)] pairs $(B,\eta)$ with $B$ a lattice 
and $\eta : H \,\rightarrow \,-\resb \,B$ an isometric embedding.
\end{itemize}
In this equivalence, we have $B \simeq L \cap \iota(A)^\perp$ 
and $|H|^2 \,\det L =\, \det A \,\det B$. Assuming furthermore 
$H= \resb \,A$, we also have $\resb B \simeq  - \resb\,A \perp \resb\,L$.
\end{prop}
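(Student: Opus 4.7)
The plan is to construct explicit inverse functors between the two groupoids. The core observation is that if $\iota : A \hookrightarrow L$ is isometric with saturated image and $B := L \cap \iota(A)^\perp$, then the ambient Euclidean space decomposes orthogonally as $(\iota(A) \otimes \R) \perp (B \otimes \R)$, and $L$ sits between $\iota(A) \perp B$ and $\iota(A)^\sharp \perp B^\sharp$, entirely determined by its image in the quotient $\resb A \perp \resb B$.

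For the functor (i) $\to$ (ii), starting from $(L, \iota)$ I set $B := L \cap \iota(A)^\perp$ and decompose each $x \in L$ uniquely as $x = x_A + x_B$ with $x_A \in \iota(A) \otimes \R$ and $x_B \in B \otimes \R$. Integrality of $L$ forces $x_A \in \iota(A)^\sharp \cong A^\sharp$ and $x_B \in B^\sharp$, and the natural map $L \to \resb A$ of the statement is $x \mapsto \overline{x_A}$. I then define $\eta : H \to \resb B$ by $\eta(\overline{x_A}) := \overline{x_B}$: this is well-defined since $x, x' \in L$ with the same image in $\resb A$ satisfy $x_A - x'_A \in A$, hence $x_B - x'_B \in L \cap (B \otimes \R) = B$. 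The identity $x \cdot x = x_A \cdot x_A + x_B \cdot x_B \in \Z$ and its bilinear analogue show that $\eta$ is an isometric embedding into $-\resb B$, and injectivity of $\eta$ follows from saturation (if $\overline{x_B} = 0$ then $x_B \in B \subset L$, so $x_A = x - x_B \in L \cap (\iota(A) \otimes \R) = \iota(A)$).

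Conversely, given $(B, \eta)$ I define
\[ L := \bigl\{ (a, b) \in A^\sharp \perp B^\sharp \,:\, \overline a \in H \text{ and } \eta(\overline a) = \overline b \bigr\} \]
with $\iota : A \to L$, $a \mapsto (a, 0)$. The defining condition forces $(a, b) \cdot (a', b') = a \cdot a' + b \cdot b' \in \Z$, since $\eta$ is isometric into $-\resb B$, so $L$ is integral. It contains $A \perp B$, surjects onto $H$ via the first coordinate, and has $\iota(A)$ saturated because second-coordinate projection identifies $L/\iota(A)$ with a subgroup of $B^\sharp$. Checking that the two constructions are mutually inverse and compatible with morphisms is then routine: in both directions $L$ corresponds, modulo $A \perp B$, to the graph $\Gamma := \{(h, \eta(h)) : h \in H\} \subset \resb A \perp \resb B$ of $\eta$.

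The identity $|H|^2 \det L = \det A \det B$ follows immediately from $[L : A \perp B] = |\Gamma| = |H|$ and the standard formula $\det(A \perp B) = [L : A \perp B]^2 \det L$. For the final claim, assume $H = \resb A$ and let $N := \eta(\resb A) \subset \resb B$. Since $\eta$ is an isometric injection into $-\resb B$, the subspace $N$ is non-degenerate and isomorphic to $-\resb A$ as a bilinear space, yielding an orthogonal splitting $\resb B = N \perp N^\perp \cong -\resb A \perp N^\perp$. It remains to identify $N^\perp$ with $\resb L$. Using $L^\sharp \subset (A \perp B)^\sharp = A^\sharp \perp B^\sharp$, one checks that $L^\sharp/(A \perp B) = \Gamma^\perp$ (orthogonal inside $\resb A \perp \resb B$ for the bilinear form), so $\resb L = \Gamma^\perp/\Gamma$. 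The main subtlety is then to produce a canonical isometry $\Gamma^\perp/\Gamma \isomo N^\perp$: non-degeneracy of the form on $\resb A$ implies every element of $\Gamma^\perp$ has the form $(x_0, \eta(x_0) + y)$ for unique $x_0 \in \resb A$ and $y \in N^\perp$, and a direct cross-term computation using $\eta(x_0) \cdot \eta(x_0') = -x_0 \cdot x_0'$ shows that the map sending the class of $(x_0, \eta(x_0) + y)$ to $y$ preserves the bilinear form. This is precisely where the $-\resb A$ summand in the final isomorphism appears.
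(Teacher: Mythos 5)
Your proposal is correct and follows essentially the same route as the paper: both identify $L$ with the graph $\{(h,\eta(h))\}$ of $\eta$ inside $\resb A \perp \resb B$ via the isotropic-subgroup correspondence, derive the determinant identity from $[L:A\perp B]=|H|$, and obtain the final splitting $\resb B \simeq -\resb A \perp \resb L$ by computing $\Gamma^\perp/\Gamma$ (the paper phrases this via the Lagrangian property of $\Gamma$ in $\resb A \perp \eta(H)$, you via an explicit cross-term computation, but these are the same argument). The only point treated more lightly than in the paper is compatibility with morphisms, where the paper notes explicitly that automorphisms of $(L,\iota)$ are of the form $\mathrm{id}_{\iota(A)}\times g$ with $g\in{\rm O}(B)$; this is indeed routine as you claim.
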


Let $\mathcal{A}$ and $\mathcal{B}$ be the two respective groupoids in (i) and (ii). 
It is understood that a morphism $(L,\iota) \rightarrow (L',\iota')$ in $\mathcal{A}$ 
is an isometry $\sigma : L \isomo L'$ such that $\sigma \circ \iota = \iota'$,
and similarly, that a morphism $(B,\eta) \rightarrow (B',\eta')$ in $\mathcal{B}$ 
is an isometry $\sigma : B \isomo B'$ with $\resb \sigma\, \circ \eta = \eta '$. 
We also denote respectively by ${\rm O}(L,\iota)$ and ${\rm O}(B,\eta)$ 
the groups ${\rm Aut}_{\mathcal{A}}(L,\iota)$ and ${\rm Aut}_{\mathcal{B}}(B,\eta)$.

\begin{pf} 
Fix $(L,\iota)$ in $\mathcal{A}$ and denote by $B$ the orthogonal of $\iota(A)$ in $L$.
By (R2), the subgroup $I:=L/(\iota(A) \perp B)$ of $\resb \,\iota(A) \perp \resb \,B$ is totally isotropic.
Both projections ${\rm pr}_A : I \rightarrow \resb\,\iota(A)$ 
and ${\rm pr}_B : I \rightarrow \resb\,B$ 
are injective as $\iota(A)$ and $B$ are saturated in $L$. 
The image of ${\rm pr}_A$ is $(\resb\,\iota)(H)$ by assumption.
So the formula $\eta:= {\rm pr}_B \circ {\rm pr}_A^{-1} \circ \resb\,\iota$ 
defines an isometric embedding $H \rightarrow -\resb B$, 
and we have $I={\rm I}(\iota,\eta)$ with
\begin{equation}
\label{iiotaeta}
{\rm I}(\iota, \eta) := \{ (\resb\,\iota)(h) + \eta(h)\, |\, \, h \in H\}.
\end{equation}
This construction defines a natural functor 
$G : \mathcal{A} \rightarrow \mathcal{B}, (L,\iota) \mapsto (B,\eta)$.
An automorphism of $(L,\iota)$ in $\mathcal{A}$ has the form ${\rm id}_{\iota(A)} \times g$ 
with $g \in {\rm O}(B)$, so the definition of ${\rm I}(\iota, \eta)$, 
and the injectivity of $\resb\,\iota$, show that $G$ is fully faithful.
Conversely, fix $(B,\eta)$ in $\mathcal{B}$. 
Define ${\rm I}({\rm id},\eta)$ as above in $\resb \,A \perp \resb \,B$. 
This is an isotropic subspace, 
hence by (R2) of the form $L/(A \perp B)$ for some unique lattice $L$, 
which defines a natural object $(L,{\rm id})$ in $\mathcal{A}$ 
whose image under $G$ is $(B,\eta)$. 
This shows that $G$ is an equivalence. \par
The assertion $|H|^2 \det L\,=\,\det A\, \det B$ follows from $|I|=|H|$.

Assume finally $H=\resb A$. The subspace $\eta(H) \subset \resb B$
is isometric to $-\resb A$, and in particular nondegenerate, 
so we may write $\resb B = \eta(H) \perp S$. 
The subspace ${\rm I}(\iota,\eta) \subset \resb \iota(A) \perp \eta(H)$ is a Lagrangian, 
so we deduce ${\rm I}(\iota,\eta)^\perp = {\rm I}(\iota,\eta) \perp S$, and then
$\resb L \simeq {\rm I}(\iota,\eta)^\perp/{\rm I}(\iota,\eta) \simeq S$.
\end{pf}

\begin{remark}
\label{rem:afternik}
\begin{itemize}
\item[(i)] (unimodular case) In this construction, we have $\det L=1$ $\iff$ $H=\resb A$ 
and $\resb A \simeq - \resb B$. \par
\item[(ii)] (even variant) Assume $A$ is even.
  Pairs $(L,\iota)$ with $L$ even correspond to pairs $(B,\eta)$ with $B$ even and $\eta$ an isometric embedding $H \rightarrow -\resq\,B$, and if \(H=\resq A\) we have \(\resq B \simeq -\resq A \perp \resq L\).\par
\end{itemize}
\end{remark}

We now discuss the case ${\rm rk}\, A=1$.
Let $L$ be a lattice and let $v \in L$ be nonzero.
Recall that $v$ is called {\it primitive} if $\Z v$ is saturated in $L$.
Define the {\it modulus of $v$ in $L$} to be 
the unique integer ${\rm m}(v)\geq 1$ satisfying
\begin{equation}
\label{eq:defmodu}
\{ v \cdot  x\,|\,x \in L\} = {\rm m}(v) \Z.
\end{equation}
This is also the largest integer $m$ with $v \in mL^\sharp$, 
{\it i.e.} such that $v \bmod mL$ is in the kernel of the natural $\Z/m$-valued bilinear form on $L/mL$.
The integer ${\rm m}(v)$ always divides $v \cdot v$, 
as well as $\det L$ if $v$ is primitive 
(consider the Gram matrix of a basis of $L$ containing $v$).

\begin{example}
\label{ex:orthospecial}
Fix an integer $d\geq 1$ and a divisor $m$ of $d$. 
The following natural groupoids are equivalent:
\begin{itemize}
\item[(i)] pairs $(L,v)$ with $L$ even and $v \in L$ primitive 
with $v \cdot v=d$ and ${\rm m}(v)=m$.\ps
\item[(ii)] pairs $(N,w)$ with $N$ even and $w \in \resq\,N$ of order $d/m$ 
satisfying ${\rm q}(w) \equiv -\frac{m^2}{2d} \bmod \Z$.
\end{itemize}
In this equivalence, 
we have $N \simeq  L \cap v^\perp$ and \,\,$m^2 \det N = d \det L$.
\end{example}

For $d \geq 1$ we denote by $\langle d \rangle$ 
the rank $1$ lattice $\Z e$ with $e \cdot e=d$.
It is equivalent to give an isometric embedding $\iota : \langle d \rangle \rightarrow L$ 
(with saturated image) and a (primitive) norm $d$ element of $L$, namely $\iota(e)$.

\begin{pf}
Set $A = \langle d \rangle$. 
For $L$ a lattice and
$\iota : A \rightarrow L$ an isometric embedding with $v=\iota(e)$ primitive, 
the image of the orthogonal (and surjective) projection $L \rightarrow A^\sharp$ 
is $m A^\sharp$ with $m={\rm m}(v)$. 
The subgroup $H = m \,\resb\, A$ of $\resb A$ is 
cyclic of order $d/m$ and generated by $m e/d$, 
whose norm is $m^2/d$. It is thus equivalent to give 
an isometric embedding $\eta : H \rightarrow -\resq N$ 
and the element $\iota(e/d)$, 
which can be any element $w$ of $\resq N$ of order $d/m$ with ${\rm q}(w)= -\frac{m^2}{2d}$.
The statement follows then from Proposition~\ref{prop:nikulingroupoids} 
in the even case (Remark~\ref{rem:afternik} (ii)). 
\end{pf}

\begin{definition}
\label{def:spevec}
A primitive nonzero vector $v$ in a lattice $L$ is called {\rm special} if it satisfies ${\rm m}(v)=\det L$.
\end{definition}

It is equivalent to require $v \in (\det L) L^\sharp$, 
as this implies $\det L\, |\, {\rm m}(v)$ and the opposite divisibility always holds. 
Special vectors will play a role in Sect.~\ref{sect:hdiscp}.

\subsection{Characteristic vectors and even sublattices}
\label{subsect:charvec}

If $L$ is a lattice, a {\it characteristic vector} of $L$ is an element $\xi \in L^\sharp$ such that
for all $x \in L$ we have $x \cdot \xi \equiv x \cdot x \bmod 2$.
Characteristic vectors always exist and form a unique class
${\rm Char}\,L$ in $L^\sharp/2L^\sharp$. 
For $\xi \in {\rm Char}\,L$, the lattice 
$$L^{\rm even}=\{x \in L\, |\, x \cdot \xi \equiv 0 \bmod 2\}=\{ x \in L\, |\, \, x \cdot x\equiv 0 \bmod 2\}$$
is the largest even sublattice of $L$, 
and called the {\it even part} of $L$; 
it has index $2$ whenever $L$ is odd.

\begin{prop}
\label{prop:ortheven}
Assume $(L,\iota)$ and $(B,\eta)$ correspond to each other 
in the equivalence of Proposition~\ref{prop:nikulingroupoids}, with $H=\resb\,A$.
Then $B$ is even if and only if $A$ contains a characteristic vector of $L$.
\end{prop}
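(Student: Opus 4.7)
My plan is to prove the two implications separately, with the forward direction being essentially immediate and the converse requiring a short computation built on the gluing description of $L$.

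For the direction ``$A$ contains a characteristic vector $\xi$ of $L$'' $\Rightarrow$ ``$B$ is even'', I would simply apply the defining property of $\xi$ to an arbitrary $b \in B \subset L$. Orthogonality of $B$ and $A \ni \xi$ gives $\xi \cdot b = 0$, so the congruence $\xi \cdot b \equiv b \cdot b \pmod 2$ forces $b \cdot b \in 2\mathbb{Z}$, which is the assertion that $B$ is even.

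For the converse, assume $B$ is even. By Proposition~\ref{prop:nikulingroupoids} (using $H = \resb\,A$), every $x \in L$ admits a unique orthogonal decomposition $x = a_x + b_x$ with $a_x \in A^\sharp$ and $b_x \in B^\sharp$, and the projection $x \mapsto a_x$ identifies $L/B \isomo A^\sharp$. Tensoring the short exact sequence $0 \to B \to L \to A^\sharp \to 0$ with $\mathbb{Z}/2$ (and using $\mathrm{Tor}_1^{\mathbb{Z}}(A^\sharp,\mathbb{Z}/2)=0$, since $A^\sharp$ is torsion-free) yields an exact sequence of $\mathbb{F}_2$-vector spaces $0 \to B/2B \to L/2L \to A^\sharp/2A^\sharp \to 0$. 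The map $q_L : L/2L \to \mathbb{F}_2$, $x \mapsto x \cdot x \bmod 2$, is $\mathbb{F}_2$-linear (since $(x+y)\cdot(x+y)\equiv x\cdot x+y\cdot y\pmod 2$), and evenness of $B$ is precisely the statement that $q_L$ vanishes on $B/2B$. Hence $q_L$ descends to a linear form $\bar q : A^\sharp/2A^\sharp \to \mathbb{F}_2$.

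To conclude, I would invoke the fact that the perfect duality $A \times A^\sharp \to \mathbb{Z}$ induces an isomorphism $A/2A \isomo \mathrm{Hom}(A^\sharp/2A^\sharp, \mathbb{F}_2)$: both sides are $\mathbb{F}_2$-vector spaces of dimension $\mathrm{rk}\,A$, and injectivity of the pairing map follows from $(A^\sharp)^\sharp = A$ (so $\xi \cdot A^\sharp \subset 2\mathbb{Z}$ forces $\xi \in 2A$). Picking $\xi \in A$ to represent $\bar q$ under this duality, one gets, for any $x \in L$, that $\xi \cdot x = \xi \cdot a_x \equiv \bar q(a_x) = q_L(x) = x \cdot x \pmod 2$ (using $\xi \perp B$), so $\xi$ is the required characteristic vector of $L$ contained in $A$. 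There is no real obstacle in this argument; the only conceptual point to highlight is that $q_L$, which a priori lives on $L/2L$, factors through the quotient $A^\sharp/2A^\sharp$ exactly under the hypothesis that $B$ is even, after which the self-duality of $A$ mod $2$ produces $\xi \in A$ essentially for free.
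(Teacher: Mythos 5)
Your proof is correct and is essentially the paper's argument in dual packaging: the paper works with orthogonal complements in the perfect pairing $L/2L \times L^\sharp/2L^\sharp \to \Z/2$ (identifying the orthogonal of ${\rm Char}\,L$ with $L^{\rm even}/2L$ and that of $A$ with $(B+2L)/2L$), while you phrase the same mod~$2$ duality by descending the linear form $x \mapsto x\cdot x$ along $L/2L \twoheadrightarrow A^\sharp/2A^\sharp$ and representing it by some $\xi \in A$ via the self-duality of $A$ mod $2$. Both hinge on the same key consequence of $H = \resb A$, namely that ${\rm pr}_A(L)=A^\sharp$ so that the kernel of $L/2L \to A^\sharp/2A^\sharp$ is exactly $(B+2L)/2L$; no gap.
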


\begin{pf}
We may view $L$ as a sublattice of $(A \perp B)^\sharp = A^\sharp \perp B^\sharp$
(hence $\iota$ as an inclusion), and we denote by
${\rm pr}_A : L \rightarrow A^\sharp$ the orthogonal projection,
with kernel $B^\sharp \cap L = B$. By assumption on $H$ we
have ${\rm pr}_A(L)=A^\sharp$, which implies
\begin{equation}
\label{eq:orthmod2AisB} {\rm pr}_A^{-1}(2A^\sharp)=B+2L.
\end{equation}
We have a natural perfect pairing \(L/2L \times L^\sharp/2L^\sharp \to \Z/2\Z\).
For this pairing, the orthogonal of ${\rm Char}\, L$ is $L^{\rm even}/2L$, 
and that of $A+2L^\sharp$ is the orthogonal mod $2$ of $A$ in $L/2L$,
which is also ${\rm pr}_A^{-1}(2A^\sharp)/2L=(B+2L)/2L$ by \eqref{eq:orthmod2AisB}.
So the image of $A$ in $L^\sharp/2L^\sharp$ contains ${\rm Char}\,L$
if, and only if, $B \subset L^{\rm even}$.
\end{pf}

\begin{example}
\label{ex:charequivalencecharvectors}
For every integer $d\geq 1$, 
there is an equivalence between the natural groupoids of:
\begin{itemize}
\item[(i)] pairs $(L,v)$ with 
$L$ an odd unimodular lattice 
and $v \in {\rm Char}\, L$ primitive of norm $d$, \ps
\item[(ii)] pairs $(N,w)$ with 
$N$ an even lattice of determinant $d$, 
and $w \in \resq\, N$ of order $d$ 
satisfying ${\rm q}(w) \equiv  \frac{1}{2}(1-\frac{1}{d}) \bmod \Z$.
\end{itemize}
In this equivalence, we have $N \simeq  L \cap v^\perp$.
\end{example}

\begin{pf}
Apply Proposition~\ref{prop:nikulingroupoids} to 
$A= \langle d \rangle = \Z e$ and $H=\resb\, A$.
As in Example~\ref{ex:orthospecial} isometric embeddings \(\iota: A \to L\) with saturated image 
correspond bijectively to primitive elements of \(L\) of norm \(d\), 
by mapping \(\iota\) to \(v = \iota(e)\).
Any primitive element $v$ of a unimodular lattice satisfies ${\rm m}(v)=1$.
As $H$ is cyclic of order $d$ and generated by $e/d$, 
it is the same to give an isometric embedding $\eta : \resb A \rightarrow - \resb N$ 
and an order $d$ element $w \in \resb N$ with $w \cdot w\equiv -1/d \bmod \Z$ (namely, $w=\eta(e/d)$).
Assume $(L,v)$ and $(N,w)$ correspond to each other.
We have $\det L = 1\,\,\iff \,\,\det N= d$, 
and we assume that this condition holds.
We may also assume $N= L \cap v^\perp$.

Assume that \(L\) is odd and \(v \in {\rm Char}\, L\).
By Proposition~\ref{prop:ortheven} we know that \(N\) is even.
We either have \({\rm q}(w) \equiv -\frac{1}{2d} \bmod \Z\) 
or \({\rm q}(w) \equiv \frac{1}{2}(1-\frac{1}{d}) \bmod \Z\).
The first case is not possible as \(L\) would be even (Example~\ref{ex:orthospecial}).

Conversely assume that \(N\) is even 
and \({\rm q}(w) \equiv \frac{1}{2}(1-\frac{1}{d}) \bmod \Z\).
As $H$ is cyclic generated by the class of $e/d$, 
the Lagrangian of $\resb \iota(A) \perp \resb N$ defining $L$ 
is generated by the class of an element $f \in L$ 
of the form $f = \frac{v}{d} + n$, with $n \in N^\sharp$ and $n \equiv w \bmod N$.
We have \(f \cdot f = \frac{1}{d} + n \cdot n \in 1 + 2\Z\) and so \(L\) is odd.
It then follows from Proposition~\ref{prop:ortheven} that \(v\) belongs to \({\rm Char}\, L\).
\end{pf}

\subsection{Marked ${\rm BV}$ invariant of depth $d$}
\label{subsect:markedBV}
	The ${\rm BV}$ invariant of a lattice $L$ was introduced in \cite[\S 3]{cheuni2}.
It is a variant of some polynomial invariants defined by Bacher and Venkov in \cite{bachervenkov} 
(see Remark 3.3 in~\cite{cheuni2}), 
and is especially simple to implement on a computer. 
This invariant will play a crucial role in the proof of Theorem~\ref{thm:uni29no1}, 
and we will now explain a generalization of it 
that we will use to study non-unimodular lattices in Sect.~\ref{sect:hdiscp}.\ps

	Fix a lattice $A$, as well as a $\Z$-basis $\alpha_1,\dots,\alpha_r$ of $A$. 
We are interested in finding invariants for the isometry classes of 
pairs $(L,\iota)$ with $\iota : A \rightarrow L$ an isometric embedding. 
Fix such a pair $(L,\iota)$ and choose some integer $d$ (the {\it depth}). 
Consider the graph $\mathcal{G}_{\leq d}(L)$ 
whose set $V$ of vertices consists of the unordered pairs $\{ \pm v\}$ 
with $v \in L$ a nonzero vector of norm $\leq d$, 
and with an edge between $\{ \pm v\}$ and $\{ \pm v'\}$ 
if and only if $v \cdot v' \equiv 1 \bmod 2$. 
Let $M$ be the adjacency $V \times V$-matrix of this graph. 
To each $\{ \pm v\} \in V$, we associate the following two objects:
\begin{itemize}
\item[(i)] The multiset ${\rm m}(v)$ of entries in the $v$-th column of $M^2$, \ps
\item[(ii)] The set of $r$-tuples ${\rm a}(v) = \{\pm (a_1,\dots,a_r)\} \subset \Z^r$ with $a_i = \iota(\alpha_i) \cdot  v$.\ps
\end{itemize}
In other words, ${\rm m}(v)$ is the collection of 
{\it numbers of length-$2$ paths in $\mathcal{G}_{\leq d}(L)$ starting at $v$ and ending at each fixed vertex}.

\begin{definition}
\label{def:markedhbv}
Fix a lattice $A$, 
a basis $\alpha=(\alpha_1,\dots,\alpha_r)$ of $A$, 
and an integer $d\geq 1$.
For a pair $(L,\iota)$ with $L$ a lattice 
and $\iota : A \rightarrow L$ an isometric embedding, 
the ${\rm BV}$-invariant of $(L,\iota)$ of depth $d$, and relative to $\alpha$, is the multiset
\[ {\rm BV}^\alpha_d(L,\iota) = \{\!\{\,({\rm m}(v),{\rm a}(v)) \,\,|\,\, v \in V\,\}\!\}. \]
\end{definition}

This is clearly an invariant of the isomorphism class of $(L,\iota)$.
Changing $\alpha$ into $g(\alpha)$ with $g \in {\rm GL}_r(\Z)$ amounts to applying $g$ to each \({\rm a}(v)\).
The exact choice of $\alpha$ is thus not essential, 
and we shall often simply write ${\rm BV}_d(L,\iota)$.
The original, unmarked, ${\rm BV}$-invariant, denoted ${\rm BV}(L)$, 
is the case $A=\{0\}$, $\alpha=()$ and $d=3$. \ps

\begin{remark}
\label{rem:varianthbv} {\rm (Variants)}
{\rm
There are several variants of ${\rm BV}$ which are both finer and more natural.
For instance, we could rather have defined $M$ as 
the $V \times V$-matrix $(|v \cdot w|)_{ (\pm v, \pm w) \in V \times V}$ ({\it absolute variant})
or doubled the size of the set $V$ of vertices of $\mathcal{G}_{\leq d}(L)$ 
by distinguishing a vector and its opposite ({\it signed variant}).
The reason for our choices is purely practical.
Indeed, the computation of ${\rm BV}$ requires\footnote{
We ignore here the asymptotically faster algorithms for matrix multiplication, 
as the matrix sizes occurring in this paper do not warrant using them. }
${\rm O}(n^3)$ operations with $n=|{\rm R}_{\leq d}(L)|/2$,
as the most time consuming part is to compute the square of the adjacency matrix $M$ above, 
which is of size $n$.
The $\pm$ tricks thus allows to divide the computation time by $8$.
Also, the fact that our $M$ has only $\{0,1\}$-coefficients 
allows to substantially speed up the computation of $M^2$ (see~\S\ref{subsect:algolat} (f)).
Fortunately, these choices turned out to be harmless in practice, 
presumably because the graphs $\mathcal{G}_{\leq d}(L)$ we encountered 
are so random or complicated. 
See however Remark~\ref{rem:case235} for a case where 
the absolute variant is needed.}
\end{remark}

These invariants ${\rm BV}^\alpha_d(L,\iota)$ are 
typically efficient in practice when we are interested in 
pairs $(L,\iota)$ for which $d$ is large enough 
so that ${\rm R}_{\leq d}(L)$ generates $L$ over $\Z$. 
The tension is that when ${\rm R}_{\leq d}(L)$ is too large, 
${\rm BV}^\alpha_d(L,\iota)$ is too long to compute.
Our current implementation, discussed in Sect.\ref{subsect:algolat} (f), runs in about $150$ \texttt{ms} for $n \approx 1300$, which is more than $10$ times faster than the previous implementation of ${\rm BV}$ in \cite{cheuni2}.
The most important open question about ${\rm BV}^\alpha_d(L,\iota)$ 
is to provide a conceptual explanation of why it is so sharp 
in the situations occurring in this paper.
This is purely empirical so far.

\subsection{Lattice algorithms}
\label{subsect:algolat} ${}^{}$ 
In our computations, we make extensive use of several classical lattice algorithms. 
We used the open-source computer algebra system \cite{parigp}. 
In order to perform our computations as efficiently as possible, 
we had to refine several of these algorithms (or their implementations in PARI/GP) 
and use specific variants tailored for the lattices we consider (such as unimodular lattices).
This is actually an important aspect of our work, 
although we only briefly discuss it below, 
referring to \cite{chetaiweb} for our scripts and 
a more furnished documentation. 
These algorithms have been mostly developed by the second author 
and will be the subject of an independent publication (see already \cite{taibiateliergp}).\ps

\begin{itemize}
\item[(a)] The Fincke-Pohst algorithm \cite{FP} 
is used to determine the short vectors in a lattice $L$, 
{\it i.e.} ${\rm R}_{\leq i}(L)$, from a Gram matrix $g$ of $L$. 
This algorithm is implemented as $\texttt{qfminim}(g,i)$ in PARI/GP.
As this implementation uses floating-point numbers, hence approximations, 
the second author implemented an exact variant \texttt{eqfminim} of it.\ps
\item[(b)] The root system of a lattice $L$, and its various attached objects 
(Weyl vector, simple roots, irreducible components, isomorphism class...) 
are easily determined from ${\rm R}_2(L)$: 
see e.g. \cite[Remark 4.2]{cheuni}.
\ps
\item[(c)] The Plesken-Souvignier algorithm \cite{pleskensouvignier} 
returns the order and generators of ${\rm O}(L)$ from a Gram matrix of $L$. 
It is possible to exploit our knowledge of root systems to improve this algorithm. 
Indeed, as explained in \cite[Remark 4.4]{cheuni}, 
the extra bilinear forms allowed in Souvignier's code (PARI's \texttt{qfauto}) 
make it possible to directly compute the {\it reduced} isometry group of $L$, 
which is often much faster. 
As most of our lattices have a {\it ``trivial''}\footnote{
By {\it ``trivial''} here we mean  ${\rm O}(L)^{\rm red}=\langle -w \rangle$, 
see Sect.~\ref{subsect:notation}.} 
reduced isometry group, this is an important simplification.
A much more efficient implementation of this idea 
was developed by Ta\"ibi in \cite{taibiateliergp} 
and led to the function \texttt{qfautors} in \cite{chetaiweb}. \ps

\item[(d)] For the Plesken-Souvignier algorithm, 
or its variant above, to be efficient, 
we must first find a $\Z$-basis of the lattice with shortest possible vectors. 
In~\cite[\S 4]{cheuni2}, a simple probabilistic algorithm to find such bases is given. 
The function \texttt{goodbasis}, developed by the second author, 
takes a further step by combining this idea and an LLL reduction. 
All the Gram matrices given in \cite{chetaiweb} have been found by this algorithm.\ps

\item[(e)] The function \texttt{orbmod2} in \cite{chetaiweb} 
takes as input a list of matrices $m_1$, $\dots$, $m_r$ in ${\rm M}_n(\Z)$ 
with odd determinant and returns 
representatives for the orbits of the subgroup 
$\langle m_1,\dots,m_r \rangle \subset {\rm GL}_n(\Z/2)$ 
acting on $(\Z/2)^n$, as well as the cardinality of each orbit.
This is a standard algorithm in computational group theory, 
adapted to the case at hand for greater efficiency 
(``population count'' instructions allow for fast matrix multiplication over \(\Z/2\)).\ps

\item[(f)] We implemented the invariant \({\rm BV}_d^\alpha(L,\iota)\) of Definition \ref{def:markedhbv} 
as a function \texttt{fast\_marked\_HBV}, 
taking as input the Gram matrix of \(L\) and \(\alpha\).
Of course it would be inefficient (in both space and time) 
to compute actual multisets, so we apply hash functions to them 
to obtain an integer between \(0\) and \(2^N-1\) 
(we chose a hash function with \(N=64\)).
After computing the set of vertices \(V\) using the Fincke-Pohst algorithm 
the main step consists of computing the square of the adjacency matrix \(M\).
We take advantage of the fact that \(M\) has coefficients in \(\{0,1\}\) 
to save space when storing \(M\) (using approximately \(|V|^2\) bits) 
and to compute its square efficiently 
(again, using ``population count'' instructions).
Note that it is not necessary to store \(M^2\), 
as we only need the hash of the multiset of its columns.
\end{itemize}

\section{Unimodular lattices having a pair of orthogonal roots}
\label{sect:genunimodwithpairroots}

As explained in the introduction \S~\ref{sect:introunimod},
The starting point of our approach to Theorem~\ref{thmintro:uni29} is
the following general relation between rank $n+2$ unimodular lattices
having a pair of orthogonal roots and rank $n$ unimodular lattices.
We denote by ${\rm Q}_0$ the root lattice ${\rm A}_1 \perp {\rm A}_1$.
	
\begin{prop}
\label{prop:uninplus2n}
For any integer $n\geq 1$, 
there are equivalences between 
the three following natural groupoids:\begin{itemize}
\item[(i)] pairs $(L,e)$ with 
$L$ a unimodular lattice of rank $n$ and 
$e$ an element of $L/2L$ with $e \cdot e \equiv 2 \bmod 4$, \par
\item[(ii)] lattices $M$ of rank $n$ such that 
$\resb\, M$ is isomorphic to $\resb\, {\rm Q}_0$, \par
\item[(iii)] pairs $(U,\,\{\alpha,\beta\})$ 
with $U$ a unimodular lattice of rank $n+2$ and 
$\{\alpha,\beta\}$ a pair of orthogonal roots in $U$ with $\frac{\alpha+\beta}{2} \notin U$.
\end{itemize}
\end{prop}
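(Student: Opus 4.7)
The plan is to prove (ii)$\leftrightarrow$(iii) via the gluing construction of Proposition~\ref{prop:nikulingroupoids}, and to establish (i)$\leftrightarrow$(ii) by constructing explicit mutually inverse functors.

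For (ii)$\leftrightarrow$(iii), I would first identify an unordered pair $\{\alpha,\beta\}$ of orthogonal roots in a lattice $U$ with an isometric embedding $\iota:{\rm Q}_0 \hookrightarrow U$, taken up to the swap automorphism $\sigma \in {\rm O}({\rm Q}_0)$ exchanging the two ${\bf A}_1$-factors. Since $\alpha/2$ and $\beta/2$ have norm $1/2$ and cannot belong to $U$, the hypothesis $(\alpha+\beta)/2\notin U$ is precisely the condition that $\iota({\rm Q}_0)$ is saturated in $U$. Applying Proposition~\ref{prop:nikulingroupoids} and Remark~\ref{rem:afternik}(i) with $A = {\rm Q}_0$ and $H = \resb\,{\rm Q}_0$ produces an equivalence between the groupoid $\mc{A}'$ of pairs $(U,\iota)$ with $U$ unimodular and $\iota$ saturated, and the groupoid $\mc{B}'$ of pairs $(M,\eta)$ with $\eta:\resb\,{\rm Q}_0 \isomo -\resb\,M$. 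A short calculation in ${\rm GL}_2(\Z/2)$ shows that ${\rm O}(\resb\,{\rm Q}_0) = \{1,\sigma\}$; quotienting both sides by $\sigma$ yields (iii) on the $\mc{A}'$-side, and on the $\mc{B}'$-side the groupoid of lattices $M$ with $\resb\,M \simeq \resb\,{\rm Q}_0$ (self-opposite), which is (ii). Since $|{\rm O}(\resb\,{\rm Q}_0)| = 2$, any isometry $f:M \to M'$ automatically satisfies $\resb f \in \{\eta'\eta^{-1},\eta'\sigma\eta^{-1}\}$, so the morphism sets in the quotient match those of (ii).

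For (i)$\leftrightarrow$(ii), given $(L,e)$ I pick a lift $\tilde e \in L$ of $e$ and set
\[ M \,:=\, \{x \in L \,|\, x \cdot \tilde e \equiv 0 \bmod 2\}, \]
an index-$2$ sublattice of $L$, independent of the lift, with $\det M = 4$. Since $\tilde e/2 \in M^\sharp$ has norm $(\tilde e \cdot \tilde e)/4 \equiv 1/2 \bmod \Z$, a short computation in the basis of $\resb\,M$ given by the class of $\tilde e/2$ together with any lift of a generator of $L/M \subset M^\sharp/M$, followed by a diagonalizing change of basis, exhibits an isometry $\resb\,M \simeq \resb\,{\rm Q}_0$. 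Conversely, given $M$ in (ii), the three nonzero classes of $\resb\,M$ split into one isotropic class $\lambda$ and two non-isotropic classes $\epsilon_1,\epsilon_2$ with $\lambda = \epsilon_1 + \epsilon_2$; the subgroup $\la\lambda\ra$ is the unique Lagrangian, producing a unique unimodular overlattice $L := M + \Z\tilde\lambda$. Setting $e := 2\tilde\epsilon_1 \bmod 2L$ yields a class with $e \cdot e \equiv 2 \bmod 4$, and the relation $\tilde\epsilon_1 - \tilde\epsilon_2 \equiv \tilde\lambda \bmod M$ implies $2\tilde\epsilon_1 \equiv 2\tilde\epsilon_2 \bmod 2L$, so $e$ does not depend on the chosen non-isotropic class. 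Mutual inversion on objects and functoriality on morphisms then follow since $L$ and $e$ are canonical invariants of $M$, and $M$ is a canonical invariant of $(L,e)$.

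The main obstacle, I expect, is the careful bookkeeping in the $\sigma$-quotient step of (ii)$\leftrightarrow$(iii): one must confirm that the swap of $\{\alpha,\beta\}$ on the $\mc{A}'$-side corresponds, under the gluing equivalence, to the swap of the two non-isotropic classes in $\resb\,M$, and that quotienting produces (ii) on the nose rather than a finer groupoid — all of which ultimately reduces to the computation ${\rm O}(\resb\,{\rm Q}_0) = \{1,\sigma\}$. In the (i)$\leftrightarrow$(ii) step, the essential ingredient is the splitting of the three nonzero classes of $\resb\,M \simeq \resb\,{\rm Q}_0$ into one isotropic and two non-isotropic ones, which simultaneously pins down the overlattice $L$ and makes $e$ well-defined.
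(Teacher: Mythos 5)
Your proposal is correct and follows essentially the same route as the paper: the equivalence (ii)$\leftrightarrow$(iii) is obtained from Proposition~\ref{prop:nikulingroupoids} with $A={\rm Q}_0$ together with the observation that the swap of the two roots realizes the unique nontrivial element of ${\rm O}(\resb\,{\rm Q}_0)$ (your ``quotient by $\sigma$'' is just a repackaging of the paper's check that ${\rm O}(U,\{\alpha,\beta\})\rightarrow{\rm O}(M)$ is bijective), and (i)$\leftrightarrow$(ii) is the paper's Lemma~\ref{lem:ind2ML}, with the same computation of the three nonzero classes of $\resb\,M$ (one isotropic, two of self-pairing $1/2$) pinning down both the unique unimodular overlattice and the class $e$.
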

\ps

If $L$ is a unimodular lattice, 
its index $2$ subgroups are the \begin{equation}
\label{eq:defM2Le}	
{\rm M}_2(L;e):=\{ v \in L \, |\, v \cdot e \equiv 0 \bmod 2\},
\end{equation}
with $e \in L/2L$ nonzero (and uniquely determined).
	
\begin{lemma}
\label{lem:ind2ML} 
\begin{itemize}
\item[(i)] For any lattice $M$ satisfying $\resb\, M \simeq \resb\, {\rm Q}_0$, 
there is a unique unimodular lattice $M \subset L \subset M^\sharp$, 
and it satisfies $L/M \simeq \Z/2$.
\item[(ii)] Let $L$ be a unimodular lattice, 
$e$ a nonzero element in $L /2L$, and set $M={\rm M}_2(L;e)$. 
We have $\resb\, M \simeq \resb\,{\rm Q}_0$ if and only if 
$e \cdot e \equiv 2 \bmod 4$.
\end{itemize}
\end{lemma}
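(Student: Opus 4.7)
The plan is to prove (i) as a direct application of the gluing bijection (R2) from Section~\ref{subsect:gluing}, and to establish (ii) by an explicit computation of the finite bilinear space $\resb\, M$ in terms of a lift $\tilde e \in L$ of $e$.

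For (i), one observes that $\resb\, {\rm Q}_0$ is $\mathbb{F}_2^2$ equipped with the non-degenerate bilinear form of Gram matrix $\mathrm{diag}(1/2,\,1/2) \bmod \Z$. By (R2), integral lattices $M \subset L \subset M^\sharp$ correspond bijectively to isotropic subgroups of $\resb\, M$, and $L$ is unimodular if and only if $L/M$ is Lagrangian (of order $2$ here, since $|\resb\, M|=4$). Of the three non-zero classes in $\mathbb{F}_2^2$, the two ``coordinate'' ones have self-product $1/2 \not\equiv 0$, while $(1,1)$ has self-product $1\equiv 0$; hence there is a unique isotropic line, which is automatically Lagrangian. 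The identity $|L/M|^2 \det L = \det M = 4$ then yields $\det L = 1$ and $[L:M]=2$.

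For (ii), fix a lift $\tilde e \in L$ of $e$; then $\tilde e \cdot \tilde e \bmod 4$ does not depend on the choice (replacing $\tilde e$ by $\tilde e + 2w$ alters it by $4\tilde e \cdot w + 4 w\cdot w \in 4\Z$), so the condition $e \cdot e \equiv 2 \bmod 4$ is well-posed. Since $L$ is unimodular and $e \ne 0$ in $L/2L$, the perfect pairing $L/2L \times L/2L \to \Z/2$ produces $v \in L$ with $v \cdot e \equiv 1 \bmod 2$, so $M={\rm M}_2(L;e)$ has index $2$ in $L$ and $|\resb\, M|=4$. The key observation is that $\tilde e/2 \in M^\sharp$: for every $w \in M$ we have $w\cdot \tilde e \equiv w \cdot e \equiv 0 \bmod 2$. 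Moreover $\tilde e \notin 2L$ forces $\tilde e/2 \notin M$.

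I then split by the parity of $e \cdot e$. If $e \cdot e$ is odd then $\tilde e \notin M$, hence $\tilde e/2 + M$ has order $4$ in $\resb\, M$, so $\resb\, M \simeq \Z/4$, which is not isomorphic to $\resb\, {\rm Q}_0$. If $e \cdot e$ is even then $\tilde e \in M$ and $\tilde e/2 + M$ has order $2$; combined with $v+M$ (distinct from $\tilde e/2 + M$ because $v \in L$ but $\tilde e/2 \notin L$) it forms a basis of $\resb\, M \simeq \mathbb{F}_2^2$. In this basis the bilinear form has Gram matrix
\[ \begin{pmatrix} v \cdot v & v \cdot \tilde e/2 \\ v \cdot \tilde e/2 & \tilde e \cdot \tilde e/4 \end{pmatrix} \equiv \begin{pmatrix} 0 & 1/2 \\ 1/2 & (\tilde e \cdot \tilde e)/4 \end{pmatrix} \pmod{\Z}, \]
using $v \cdot v \in \Z$ and $v \cdot \tilde e \equiv 1 \bmod 2$. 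When $e\cdot e \equiv 2 \bmod 4$ the bottom-right entry is $\equiv 1/2$, and the change of basis $(v+M,\,\tilde e/2+M) \mapsto (v+\tilde e/2+M,\,\tilde e/2+M)$ yields the diagonal form $\mathrm{diag}(1/2,\,1/2) = \resb\, {\rm Q}_0$. When $e\cdot e \equiv 0 \bmod 4$ the diagonal entry is $0$, and this hyperbolic form has three non-zero isotropic classes instead of one, so is not isomorphic to $\resb\, {\rm Q}_0$. The only delicate step is this final distinction of the two non-degenerate bilinear forms on $\mathbb{F}_2^2$; the rest is routine bookkeeping of cosets in $(1/2)L / M$.
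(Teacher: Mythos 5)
Your proof is correct, and it follows essentially the same route as the paper's: part (i) via the (R2) bijection and the unique isotropic line in $\resb\,{\rm Q}_0$, and part (ii) by exhibiting the nonzero cosets of $M^\sharp/M$ (generated by $e/2$ and a vector $f$ with $e\cdot f$ odd) and computing their self-pairings mod $\Z$. The paper compresses your parity case split by observing that in all cases the three nonzero classes are those of $f$, $e/2$ and $e/2+f$, with self-pairings $0$, $\tfrac{e\cdot e}{4}$, $\tfrac{e\cdot e}{4}$, but the content is the same.
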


\begin{pf} 
We have $\resb\,{\rm Q}_0 \,=\, \Z/2 \,x \,\perp \,\Z/2\,y$ 
with $x \cdot x \equiv y \cdot y \equiv 1/2$, 
and the third nonzero element $z=x+y$ satisfies $z \cdot z \equiv 0$ 
and is unique in $\resb\,{\rm Q}_0$ with that property. 
This proves assertion (i) by~\S \ref{subsect:gluing} (R2).\ps
We now prove (ii). By unimodularity of $L$, 
there is $f \in L$ with $e \cdot f \equiv 1 \bmod 2$.
The three nonzero elements of $M^\sharp/M$ are thus 
the classes of $f$, $\frac{e}{2}$ and $\frac{e}{2}+f$.
We conclude as we have 
$f \cdot f \equiv 0 \bmod \Z$, 
$\frac{e}{2} \cdot f \equiv 1/2 \bmod \Z$ and 
$(\frac{e}{2}+f) \cdot (\frac{e}{2}+f) \equiv \frac{e}{2} \cdot \frac{e}{2} \equiv  \frac{e \cdot e}{4} \bmod \Z$.
\end{pf}

We now prove Proposition~\ref{prop:uninplus2n}.
	
\begin{pf} 
We respectively denote by $\mathcal{A}_n$, $\mathcal{B}_n$ and $\mathcal{C}_n$ 
the three groupoids defined in (i), (ii) and (iii) of the statement.

We first prove the equivalence between \(\mathcal{B}_n\) and \(\mathcal{C}_n\).
Write ${\rm Q}_0 = \Z \alpha_0 \perp \Z \beta_0$.
Note that the map $\iota \mapsto (\iota(\alpha_0),\iota(\beta_0))$ 
identifies the embeddings $\iota : {\rm Q}_0 \rightarrow U$ with 
the {\it ordered} pair $(\alpha,\beta)$ of orthogonal roots in $U$. 
Also, in this bijection, $\iota({\rm Q}_0)$ is saturated in $U$ 
if and only if we have $\frac{\alpha+\beta}{2} \notin U$.
The equivalence between \(\mathcal{B}_n\) and \(\mathcal{C}_n\) is thus 
a variant of Proposition~\ref{prop:nikulingroupoids} 
in the case $\det L =1$ and $A={\rm Q}_0$ (see Remark~\ref{rem:afternik} (i)).
Fix $(U,\{\alpha,\beta\})$ in $\mathcal{C}_n$, 
we have seen that $D:= \Z \alpha \perp \Z \beta$ is a saturated rank $2$ sublattice of $U$.
Following the analysis in the proof of Proposition~\ref{prop:nikulingroupoids},
for $M:=D^\perp \cap U$ then $U/(D \perp M) \subset \resb\,D \perp \resb\,M$ 
is the graph of an isometry $\resb D \isomo -\resb M$.
As we have $D \simeq {\rm Q}_0$ and $-\resb\, {\rm Q}_0\, \simeq \,\resb \,{\rm Q}_0$, 
we have a natural, essentially surjective, functor 
$\mathcal{C}_n  \rightarrow \mathcal{B}_n, (U,\{\alpha,\beta\}) \mapsto M$.
In order to prove that this is an equivalence, 
it only remains to show that the group morphism 
${\rm O}(U, \{\alpha,\beta\}) \rightarrow {\rm O}(M), \sigma \mapsto \sigma_{|M},$ 
is bijective. But this follows from the fact that \({\rm O}(U,\{\alpha,\beta\})\) is 
the stabilizer of \(U/(D \perp M)\) in \({\rm O}(D, \{\alpha,\beta\}) \times {\rm O}(M)\) 
and the fact that the natural map 
${\rm O}(D, \{\alpha,\beta\}) \rightarrow {\rm O}( \resb\,D)$ is surjective: 
the involution of $D$ exchanging $\alpha$ and $\beta$ induces 
the unique nontrivial element of ${\rm O}( \resb\,D)$.\par
By Lemma~\ref{lem:ind2ML} (ii), 
we have a well-defined functor 
$\mathcal{A}_n \rightarrow \mathcal{B}_n, (L,e) \mapsto {\rm M}_2(L;e)$. 
It is an equivalence by the same lemma, part (i).
\end{pf}

Although we shall not use it, 
it is easy to deduce from description (i) that 
the mass of these three groupoids, say for odd $n$, 
is the mass of the genus of rank $n$ unimodular lattices 
times $\sum_{0\leq i \leq  (n-2)/4} {\binom{n}{2+4i}}$.
Proposition~\ref{prop:uninplus2n} together with the proof of Lemma~\ref{lem:ind2ML} (ii) show the:

\begin{cor}
\label{cor:algounim} 
Let $\mathrm{L}_n$ be a list of representatives of all rank $n$ unimodular lattices.
The following algorithm returns a list $\mathrm{U}_{n+2}$ containing 
representatives of all rank $n+2$ unimodular lattices having a pair of orthogonal roots. 
Start with an empty list ${\rm U}_{n+2}$ and then, for each $L \in {\rm L}_n$ : \ps
{\rm {\sff
(A1) Determine a set $\mathcal{E}(L)$ of representatives of the orbits of ${\rm O}(L)$ acting on $L/2L$, 
and only keep those $e \in \mathcal{E}(L)$ with $e \cdot e \equiv 2 \bmod 4$.\ps

(A2) For each $e \in \mathcal{E}(L)$, 
choose $f \in L$ with $e \cdot f \equiv 1 \bmod 2$, 
and add to $\mathrm{U}_{n+2}$ the lattice
\[ U(L,e):=\,({\rm M}_2(L;e) \perp {\rm Q}_0) \,+ \,\Z \, \frac{e+\alpha_0}{2}\,+ \,\Z \frac{e+2f+\beta_0}{2} \]
with ${\rm Q}_0 = \Z \alpha_0 \perp \Z \beta_0$.
}}
\end{cor}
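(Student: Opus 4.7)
The plan is to deduce the corollary from the equivalence $\mathcal{A}_n \overset{\sim}{\to} \mathcal{C}_n$ of Proposition~\ref{prop:uninplus2n}, by verifying that the formula for $U(L,e)$ in step (A2) is the explicit realization of this equivalence on objects. Forgetting the marked pair of roots at the end then produces a representative of every isomorphism class of $\mathcal{C}_n$, which gives the corollary.

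First, I enumerate the isomorphism classes of $\mathcal{A}_n$. With $L \in \mathrm{L}_n$ fixed, two pairs $(L,e)$, $(L,e')$ in $\mathcal{A}_n$ are isomorphic precisely when $e$ and $e'$ lie in the same $\mathrm{O}(L)$-orbit on $L/2L$. Combined with the characterization $e \cdot e \equiv 2 \bmod 4$ from Lemma~\ref{lem:ind2ML}(ii), step (A1) iterated over $L \in \mathrm{L}_n$ therefore produces exactly one representative per isomorphism class of $\mathcal{A}_n$.

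The key step is then identifying the lattice $U(L,e)$ defined in (A2) with the image of $(L,e)$ under the composition $\mathcal{A}_n \to \mathcal{B}_n \to \mathcal{C}_n$ from the proof of Proposition~\ref{prop:uninplus2n}. The first functor sends $(L,e)$ to $M := \mathrm{M}_2(L;e)$; the second is the gluing of Proposition~\ref{prop:nikulingroupoids} applied with $A = Q_0$ and $B = M$, attaching $Q_0$ to $M$ along the isometry $\eta : \resb Q_0 \overset{\sim}{\to} -\resb M$ (unique up to the swap $\alpha_0 \leftrightarrow \beta_0$). The proof of Lemma~\ref{lem:ind2ML}(ii) computes the three nonzero classes in $\resb M$ as $[f]$, $[e/2]$, $[e/2+f]$, with respective $\Q/\Z$-norms $0$, $(e\cdot e)/4 \equiv 1/2$, and $1/2$. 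Matching these with the norms of $\alpha_0/2$, $\beta_0/2$, $(\alpha_0+\beta_0)/2$ in $\resb Q_0$ forces $\eta(\alpha_0/2)=e/2$ and $\eta(\beta_0/2)=e/2+f$ (after swapping $\alpha_0$ and $\beta_0$ if necessary, which is harmless in view of the unordered pair structure of $\mathcal{C}_n$). The associated glue vectors are $\alpha_0/2 + e/2 = (\alpha_0+e)/2$ and $\beta_0/2 + (e/2 + f) = (\beta_0+e+2f)/2$, matching exactly the generators added to $Q_0 \perp \mathrm{M}_2(L;e)$ in the formula for $U(L,e)$.

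This identification, combined with the enumeration in step one, yields the corollary. The only verification I anticipate needing some care is that $U(L,e)$, as an abstract lattice, is independent of the choice of lift $f \in L$ with $e \cdot f \equiv 1 \bmod 2$: replacing $f$ by $f + m$ with $m \in \mathrm{M}_2(L;e)$ shifts the second generator by $m$, which already lies in $Q_0 \perp \mathrm{M}_2(L;e) \subset U(L,e)$, so the lattice is unchanged. With this routine check in place, the proof is then a straightforward unwinding of Proposition~\ref{prop:uninplus2n} and Proposition~\ref{prop:nikulingroupoids}.
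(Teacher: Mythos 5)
Your proposal is correct and is exactly the paper's (unwritten) proof: the paper disposes of this corollary with the single remark that Proposition~\ref{prop:uninplus2n} together with the computation in the proof of Lemma~\ref{lem:ind2ML}~(ii) imply it, and your unwinding of the composite $\mathcal{A}_n \to \mathcal{B}_n \to \mathcal{C}_n$, the matching of the glue vectors $\frac{\alpha_0+e}{2}$, $\frac{\beta_0+e+2f}{2}$ with the Lagrangian ${\rm I}(\iota,\eta)$, and the check that $U(L,e)$ is independent of the lift $f$ is precisely that unwinding. The one point you pass over in silence is the same one the paper does, namely that producing every class of $\mathcal{C}_n$ only covers the lattices admitting a pair of orthogonal roots with $\frac{\alpha+\beta}{2}\notin U$ (this can fail, e.g.\ for ${\rm I}_2 \perp U'$ with ${\rm R}_{\leq 2}(U')=\emptyset$), which is harmless in practice since the version actually used, Corollary~\ref{cor:algounim2}, restricts to lattices with no norm~$1$ vectors, where every orthogonal pair of roots is automatically saturated.
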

\ps

We now discuss the redundancies in the list ${\rm U}_{n+2}$ produced by this algorithm.
By construction, the isometry class of each $U$ in ${\rm U}_{n+2}$ 
appears as many times as the number of ${\rm O}(U)$-orbits of 
pairs of orthogonal, and ``saturated'', roots in $U$. 
We now explain a simple way to minimize these redundancies.
Let us totally order the isomorphism classes of irreducible {\rm ADE} root systems, say 
$${\bf A}_1 \prec {\bf A}_2 \prec \dots \prec {\bf A}_n \prec  \dots\prec {\bf D}_4 \prec \dots \prec {\bf D}_n \prec  \dots\prec {\bf E}_6 \prec {\bf E}_7 \prec {\bf E}_8.$$ 
We extend this total ordering $\prec$ on all isotypic root systems $m{\bf X}_n$ 
using as well a lexicographic ordering on $(m,{\bf X}_n)$: 
{\it e.g.} ${\bf E}_6 \prec 2 {\bf D}_5 \prec 3 {\bf A}_1$.  
We denote by ${\rm m}_1(R)$ the number of irreducible components of $R$ occurring with multiplicity $1$.

\begin{definition}
\label{def:relpair}
Let $R$ be a root system and $\{\alpha,\beta\} \subset R$ a pair of orthogonal roots.
We denote by $C_1 \prec C_2 \prec \cdots \prec C_r$ 
the isotypic components of $R$. 
We say that $\{\alpha,\beta\}$ is {\rm relevant} if 
one of the following exclusive assertions\footnote{
We could sharpen a bit this definition when 
$R$ has an irreducible component with multiplicity $2$, 
or in case (ii) with $C_1 \simeq {\bf D}_n$, but this would not affect significantly its applications.}
holds:\begin{itemize}
\item[(i)] ${\rm m}_1(R) \geq 2$, and $\{\alpha,\beta\}$ meets both $C_1$ and $C_2$.\par
\item[(ii)] ${\rm m}_1(R)=1$, and $\{\alpha,\beta\} \subset C_1$.\par
\item[(iii)] ${\rm m}_1(R)=1$, $C_1 \simeq {\bf A}_1$ or ${\bf A}_2$, 
and $\{\alpha,\beta\}$ meets both $C_1$ and $C_2$,\par
\item[(iv)] ${\rm m}_1(R)=0$, and $\alpha$ and $\beta$ are 
in distinct irreducible components of $C_1$.
\end{itemize}
\end{definition}
\par \noindent 
We have the following trivial fact:

\begin{fact} 
\label{fact:relevant} 
If a root system $R$ has a pair of orthogonal roots, 
it also has a relevant such pair.
\end{fact}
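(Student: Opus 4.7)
The plan is to treat three exhaustive cases according to the value of ${\rm m}_1(R)\in\{0,1,\geq 2\}$, exhibiting in each an explicit orthogonal pair in $R$ and checking that it satisfies the corresponding clause of Definition~\ref{def:relpair}.

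First, if ${\rm m}_1(R)\geq 2$ then $R$ has at least two isotypic components, so in particular $C_2$ exists; any pair $(\alpha,\beta)$ with $\alpha\in C_1$ and $\beta\in C_2$ is orthogonal, since roots in distinct irreducible components of an ADE root system are automatically orthogonal. Such a pair is relevant by clause (i). Second, if ${\rm m}_1(R)=0$, every isotypic component has multiplicity at least $2$; in particular $C_1$ contains at least two copies of its irreducible type, and choosing one root in each of two such copies produces an orthogonal pair whose elements lie in distinct irreducible components of $C_1$, which is then relevant by clause (iv).

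The one mildly subtle case is ${\rm m}_1(R)=1$. I would split it further according to whether $C_1$ carries an internal orthogonal pair of roots. This certainly holds whenever either $C_1$ has multiplicity $\geq 2$ (take one root in each of two of its copies), or $C_1$ is a single irreducible component of a type that by inspection admits orthogonal roots, namely ${\bf A}_n$ with $n\geq 3$, ${\bf D}_n$ with $n\geq 3$, or ${\bf E}_n$ for $n\in\{6,7,8\}$. In all these sub-cases the pair lies inside $C_1$ and is relevant by (ii). Otherwise $C_1$ is forced to be a single copy of ${\bf A}_1$ or ${\bf A}_2$, which contains no orthogonal pair; the hypothesis that $R$ does carry such a pair then guarantees the existence of at least one further isotypic component, so $C_2$ exists, and picking $\alpha\in C_1$, $\beta\in C_2$ yields an orthogonal pair relevant by (iii).

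No serious obstacle is expected: the argument is purely a case analysis using the convention ${\bf D}_2=2{\bf A}_1$ and the elementary fact that the irreducible ADE systems without a pair of orthogonal roots are exactly ${\bf A}_1$ and ${\bf A}_2$. The only care needed is to organise the sub-cases of ${\rm m}_1(R)=1$ so that exactly one of the mutually exclusive clauses of Definition~\ref{def:relpair} applies to the constructed pair, which the case split above does by design.
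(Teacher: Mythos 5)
Your case analysis is correct and is exactly the routine verification the paper leaves unwritten when it labels this a trivial fact: multiplicity-one components give clause (i) or (iii), an internal pair in $C_1$ gives (ii), and ${\rm m}_1(R)=0$ gives (iv). One small remark: since the ordering $\prec$ is lexicographic in $(m,{\bf X}_n)$ with the multiplicity $m$ first, when ${\rm m}_1(R)=1$ the component $C_1$ is automatically the unique irreducible component occurring with multiplicity $1$, so your subcase ``$C_1$ has multiplicity $\geq 2$'' is vacuous --- harmless, since even there you would land in clause (ii), but worth noting so that the sub-cases of ${\rm m}_1(R)=1$ are organised as the definition intends.
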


For many $R$ there are much less relevant pairs than arbitrary ones.

\begin{example}
\label{rem:m1geq2}
Assume $(L,e)$ and $(U,\{\alpha,\beta\})$ correspond to each other 
as in Proposition~\ref{prop:uninplus2n}, 
as well as ${\rm m}_1(R)\geq 2$ with $R={\rm R}_2(U)$. 
Then $R$ has a unique ${\rm W}(R)$-orbit of relevant pairs of orthogonal roots.
\end{example}

\begin{cor}
\label{cor:algounim2}  
Let ${\rm L}_n$ be as in Corollary~\ref{cor:algounim}. 
Then we produce a list $\mathrm{U}_{n+2}$ containing representatives of 
all rank $n+2$ unimodular lattices having a pair of orthogonal roots and no norm $1$ vectors 
by modifying step (A2) into : \ps
{\rm {\sff
(A2)' only add $U(L;e)$ to the list ${\rm U}_{n+2}$ 
if it has no norm $1$ vector, 
and if $\{\alpha_0,\beta_0\}$ is relevant for the root system of $U(L;e)$. }}
\end{cor}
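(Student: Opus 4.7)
\emph{Plan of proof.} The argument reduces to Corollary~\ref{cor:algounim} together with Fact~\ref{fact:relevant}. First, the restriction ``$U(L;e)$ has no norm $1$ vector'' in (A2)$'$ is harmless: Corollary~\ref{cor:algounim} already guarantees that, without this restriction, every rank $n+2$ unimodular lattice with a pair of orthogonal roots appears in ${\rm U}_{n+2}$, so we simply discard those possessing a norm $1$ vector. The real content is therefore the further restriction to relevant pairs.

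The key preliminary observation I would record is that in a unimodular lattice $U$ \emph{without} norm $1$ vectors, every pair $\{\alpha,\beta\}$ of orthogonal roots is automatically saturated in the sense of Proposition~\ref{prop:uninplus2n}(iii): indeed $(\alpha+\beta)/2$ has norm $1$ and so cannot belong to $U$. Combined with Proposition~\ref{prop:uninplus2n} and the explicit functor $\mathcal{A}_n \to \mathcal{C}_n$ used in Corollary~\ref{cor:algounim}, this implies that the classes $[(L,e)]$ produced by (A2) yielding a prescribed norm-$1$-free $U$ are in canonical bijection with the ${\rm O}(U)$-orbits of unordered pairs of orthogonal roots in $U$, the pair associated to $[(L,e)]$ being the image of $\{\alpha_0,\beta_0\}$ in $U(L;e) \simeq U$.

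Next I would verify that the relevance property of $\{\alpha_0,\beta_0\}$ depends only on the ${\rm O}(U)$-orbit of the pair $\{\alpha,\beta\}$ in $U$. Every $\sigma\in {\rm O}(U)$ induces a metric automorphism of $R:={\rm R}_2(U)$, hence permutes its irreducible components while preserving their isomorphism classes; in particular it fixes each isotypic component $C_i$ of Definition~\ref{def:relpair} setwise and preserves the integer ${\rm m}_1(R)$. Inspecting the four mutually exclusive conditions (i)--(iv) then shows each of them is ${\rm O}(U)$-invariant, so that ``relevant orbit'' is well defined.

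To conclude, let $U$ be any rank $n+2$ unimodular lattice having no norm $1$ vector and at least one pair of orthogonal roots. By Fact~\ref{fact:relevant} applied to the root system $R_2(U)$, there exists a relevant pair $\{\alpha,\beta\}$ in $U$, hence at least one relevant ${\rm O}(U)$-orbit. The corresponding class $[(L,e)]$ in the enumeration is retained by (A2)$'$, so that $U \simeq U(L;e)$ still appears in ${\rm U}_{n+2}$. The only substantive (and genuinely easy) point in the plan is the orbit-invariance of relevance; everything else is a direct bookkeeping application of Proposition~\ref{prop:uninplus2n} and Corollary~\ref{cor:algounim}.
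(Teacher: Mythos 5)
Your proposal is correct and follows essentially the same route as the paper: the paper's (one-line) proof likewise combines Fact~\ref{fact:relevant} with the observation that $\frac{\alpha+\beta}{2}$ has norm $1$, so that in a norm-$1$-free $U$ every pair of orthogonal roots is saturated and the relevant pair guaranteed by Fact~\ref{fact:relevant} gives an $O(U)$-orbit surviving step (A2)$'$. Your explicit check that relevance is an ${\rm O}(U)$-orbit invariant is a detail the paper leaves implicit, but it is a correct and worthwhile bookkeeping step rather than a different argument.
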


Indeed, this follows from Fact~\ref{fact:relevant} 
and from the fact that for any pair $\{\alpha,\beta\}$ of orthogonal roots 
the element $\frac{\alpha+\beta}{2}$ has norm $1$.
We end this discussion by observing that Proposition~\ref{prop:uninplus2n} 
also furnishes mass relations between $L$'s and $U$'s 
(hence useful ways to check computations).
Here is an especially simple example:
\begin{example}
\label{rem:m1geq2bis}
In the situation of Example~\ref{rem:m1geq2}, assume $\{\alpha,\beta\}$ is relevant.
Its orbit under ${\rm O}(U)$ is in bijection with $R_\alpha \times R_\beta$, 
where $R_\alpha,R_\beta$ are the irreducible components of $R$ 
containing respectively $\alpha$ and $\beta$. 
Proposition~\ref{prop:uninplus2n} implies
$|{\rm O}(U)| = \frac{|R_\alpha||R_\beta|}{{\rm n}(e)} |{\rm O}(L)|$,
where ${\rm n}(e)$ is the size of the ${\rm O}(L)$-orbit of $e \in L/2L$.
\end{example}

\section{\texorpdfstring{An application: the classification of rank $29$ unimodular lattices}
{An application: the classification of rank 29 unimodular lattices}
}
\label{sect:classrk29}

We now explain the details of our proof of Theorem~\ref{thmintro:uni29}.
By the classification of rank $28$ unimodular lattices in \cite{cheuni2}, 
it is enough to prove the following (see also Corollary~\ref{cor:BVsharpuni} below).

\begin{thm}
\label{thm:uni29no1}
There are $38\,592\,290$ isometry classes of 
unimodular lattices of rank $29$ with no norm $1$ vectors.  
All these are distinguished by their ${\rm BV}$ invariant. 
A list of Gram matrices of representatives is given in {\rm \cite{chetaiweb}}.
\end{thm}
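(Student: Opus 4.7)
The plan is to partition the target set according to the root system of $U$, handle each part with a different technique already developed in this paper, and finally deduplicate and verify completeness via a mass check.

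First, I would dispose of the lattices whose root system contains no pair of orthogonal roots, that is $R_2(U) \in \{\emptyset, \mathbf{A}_1, \mathbf{A}_2\}$, by appealing to Proposition~\ref{prop:X29emptyA1A2}, which treats these cases by the unimodular-hunting strategy of \cite{cheuni2}. For every remaining lattice $U$, the root system $R_2(U)$ contains a pair of orthogonal roots, so I would apply Corollary~\ref{cor:algounim2} with $n = 27$ to the known list of the $17\,059$ isometry classes in ${\rm X}_{27}$ (Table~\ref{tab:tabcardXn}). Concretely, for each rank-$27$ unimodular $L$, use \texttt{orbmod2} (\S~\ref{subsect:algolat}(e)) to enumerate $O(L)$-orbits on $L/2L$; keep only those orbits whose representative $e$ satisfies $e \cdot e \equiv 2 \bmod 4$; form $U(L,e)$ as in Corollary~\ref{cor:algounim}; and retain $U(L,e)$ only when it has no norm-$1$ vector and $\{\alpha_0,\beta_0\}$ is a relevant pair in $R_2(U(L,e))$ in the sense of Definition~\ref{def:relpair}. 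By Proposition~\ref{prop:uninplus2n} and Fact~\ref{fact:relevant}, the resulting list contains at least one representative of every rank-$29$ unimodular lattice with no norm-$1$ vector and with a pair of orthogonal roots.

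Next, I would kill the residual redundancies using the ${\rm BV}$ invariant of \S~\ref{subsect:markedBV}: compute ${\rm BV}(U)$ for each candidate (using \texttt{fast\_marked\_HBV}, \S~\ref{subsect:algolat}(f)) and retain one lattice per BV-value, producing a deduplicated list $\mathcal{L}$ to which the representatives from the first step are appended. The closing argument is a mass check. Compute $\sum_{U \in \mathcal{L}} 1/|O(U)|$ using \texttt{qfautors} (\S~\ref{subsect:algolat}(c)) and compare it to the expected mass of rank-$29$ unimodular lattices with no norm-$1$ vector, obtained from King's tables \cite{king} by summing over all nonempty $R$. By construction the mass of $\mathcal{L}$ is at most the true mass, with equality if and only if $\mathcal{L}$ is a complete irredundant set of representatives; equality therefore proves simultaneously that $|\mathcal{L}| = 38\,592\,290$, that ${\rm BV}$ is injective on the target, and that the list is complete. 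Intermediate consistency can be monitored by the local identity of Example~\ref{rem:m1geq2bis}, which predicts $|O(U)|$ from $|O(L)|$ and the orbit size of $e$ whenever ${\rm m}_1(R_2(U)) \geq 2$.

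The main obstacle is computational rather than conceptual. Rank-$27$ unimodular lattices whose root system has many components of type $\mathbf{A}_1$ or $\mathbf{A}_2$ produce an enormous number of pairs $(L,e)$ and, for each resulting $U$, many $O(U)$-orbits of relevant pairs of orthogonal roots, so the raw enumeration is orders of magnitude larger than its image in isometry classes. Even with the relevance filter of Definition~\ref{def:relpair}, getting \texttt{orbmod2}, \texttt{qfautors}, and \texttt{fast\_marked\_HBV} efficient enough for the whole computation to terminate is the entire practical difficulty. A more subtle point is that the injectivity of ${\rm BV}$ on this set is, a priori, only an empirical observation: the rigour of the argument rests entirely on the final mass identity, which is what turns a heuristic BV-based deduplication into a bona fide classification.
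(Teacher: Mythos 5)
Your proposal is correct and follows essentially the same route as the paper: dispose of $\emptyset$, ${\bf A}_1$, ${\bf A}_2$ via Proposition~\ref{prop:X29emptyA1A2}, generate all remaining classes from ${\rm X}_{27}$ using Corollaries~\ref{cor:algounim} and~\ref{cor:algounim2} with the relevance filter, deduplicate by ${\rm BV}$, and certify completeness and the injectivity of ${\rm BV}$ simultaneously by matching the mass formula. The only cosmetic difference is that the paper quotes the explicit mass of the genus with no norm~$1$ vectors directly rather than re-assembling it from King's per-root-system tables.
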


The given list \footnote{Actually, each lattice in our lists 
is given together with its root system, 
its reduced mass and its hashed BV invariant.} 
of Gram matrices allows a simple proof of the theorem: 
check that they are all positive definite, integral, of determinant $1$, 
with minimum $>1$, have different ${\rm BV}$ invariant, 
and that the sum of their masses is the rational given by the mass formula, 
namely (see~\cite[\S 16.2]{conwaysloane} and \cite[\S 6.4]{cheuni}):
\begin{equation*}
\scalebox{.8}{9683137883598841522700149306218386019856601/65188542827444074570459172044800000000}
\end{equation*}
As an indication, the computation of all the BV invariants 
takes about $65$ days of CPU time on a single core 
(about $145$ \texttt{ms} per lattice), 
and that of the reduced masses about $15$ days 
(about $33$ \texttt{ms} per lattice); the other checks are negligible. 
The fact that the ${\rm BV}$ invariant distinguishes all those lattices 
is quite miraculous and only follows from our whole computation.
As in \cite{cheuni2}, much remains to be explained about 
the apparent sharpness of this invariant. \ps

We now explain how we found these Gram matrices.
Recall from Sect.~\ref{sect:introunimod} that, for any root system $R$, 
we denote by ${\rm X}_n^R$
the set of isomorphism classes of rank $n$ odd unimodular lattices $L$
with no norm $1$ vector and root system isomorphic to $R$,
and by ${\rm m}_n(R)$ the reduced mass of this collection of lattices
(see Sect.~\ref{subsect:groupoidsmasses}).
Using~\cite{king}, we know ${\rm m}_{29}(R)$ for each $R$
(see also~\cite[\S 6.4]{cheuni}); 
it is nonzero for exactly $11\,085$ root systems $R$.
The following Table~\ref{tab:mostX29} indicates 
the root systems contributing the most:

\begin{table}[H]
\tabcolsep=3pt
{\scriptsize \renewcommand{\arraystretch}{1.3} \medskip
\begin{center}
\begin{tabular}{c|c|c|c|c|c|c|c|c|c}
$R$ & $7{\bf A}_1 {\bf A}_2$
& $7{\bf A}_1$
& $8{\bf A}_1 {\bf A}_2$
& $6{\bf A}_1$
& $6{\bf A}_1 {\bf A}_2$
& $8{\bf A}_1$
& $7{\bf A}_1 2{\bf A}_2$
& $5{\bf A}_1$
& $9{\bf A}_1 {\bf A}_2$\\
$2\,{\rm m}_{29}(R)$
& $1.64$
& $1.54$
& $1.52$
& $1.45$
& $1.42$
& $1.39$
& $1.21$
& $1.15$
& $1.13$ \\
$|{\rm X}_{29}^R|$
& $1.67$
& $1.57$
& $1.55$
& $1.48$
& $1.45$
& $1.42$
& $1.22$
& $1.18$
& $1.15$\\
\hline
$R$ & $6{\bf A}_1 2{\bf A}_2$
& $9{\bf A}_1$
& $8{\bf A}_1 2{\bf A}_2$
& $5{\bf A}_1 {\bf A}_2$
& $4{\bf A}_1$
& $5{\bf A}_1 2{\bf A}_2$
& $6{\bf A}_1 3{\bf A}_2$
& $7{\bf A}_1 3{\bf A}_2$
& $9{\bf A}_1 2{\bf A}_2$\\
$2\,{\rm m}_{29}(R)$ & $1.06$
& $1.05$
& $1.04$
& $1.01$
& $0.77$
& $0.71$
& $0.68$
& $0.68$
& $0.67$ \\
$| {\rm X}_{29}^R|$
& $1.07$
& $1.07$
& $1.06$
& $1.04$
& $0.80$
& $0.73$
& $0.69$
& $0.69$
& $0.68$
\end{tabular}
\caption{Root systems $R$ listed in the decreasing order of $|{\rm X}_{29}^R|$, 
with the values of $|{\rm X}_{29}^R|$ and $2\,{\rm m}_{29}(R)$ 
given in millions and rounded to $10^{-2}$.}
\label{tab:mostX29}
\end{center}
}
\end{table}
\begin{remark}
\label{rem:roa2}

For any unimodular lattice $L$ of odd rank $n$,
we claim that the reduced isometry group ${\rm O}(L)^{\rm red}$
always has even cardinality $\geq 2$, as its canonical $2$-torsion element $-w$
{\rm (}see Sect.~\ref{subsect:notation}{\rm)} is nontrivial.
Indeed, if $M$ denotes the index $2$ even sublattice of $L$,
we have ${\rm W}(M)={\rm W}(L)$, and
$M$ is in the genus of ${\rm D}_n$.
So $\resb M \simeq \Z/4$ as $n$ is odd, 
and we conclude by Lemma~\ref{lem:minus1wl}.
\end{remark}

We first consider the root systems which do not contain
any pair of orthogonal roots, namely $\emptyset$, ${\bf A}_1$ and ${\bf A}_2$.
We have ${\rm m}_{29}(\emptyset) =$ {\small 49612728929/11136000} $\simeq 4455$,
${\rm m}_{29}({\bf A}_1) = $ {\small 18609637771/673920} $\simeq 27614$
and ${\rm m}_{29}({\bf A}_2)=$ {\small 113241/80} $ \simeq 1415$.
 \ps

\begin{prop}
\label{prop:X29emptyA1A2}
We have $|{\rm X}_{29}^\emptyset|=10\,092$,
$|{\rm X}_{29}^{{\bf A}_1}|=59\,105$ and $|{\rm X}_{29}^{{\bf A}_2}|=3\,714$,
and neighbor forms for representatives of those lattices
are given in {\rm \cite{chetaiweb}}.
\end{prop}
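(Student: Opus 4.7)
The three root systems $\emptyset$, ${\bf A}_1$ and ${\bf A}_2$ share the property that they contain no pair of orthogonal roots, so Proposition~\ref{prop:uninplus2n} is useless for them. The plan is therefore to fall back on the biased Kneser neighbor strategy of \cite{chestat,cheuni,cheuni2}, applied to the lattice ${\rm I}_{29}$, and organize the proof around three tasks: produce a list of candidates, separate their isometry classes with the ${\rm BV}$ invariant of Section~\ref{subsect:markedBV}, and certify completeness by a mass computation.

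For $R=\emptyset$, the classification has already been carried out in \cite{cheuni2}: a biased neighbor search from ${\rm I}_{29}$, restricted to neighbors of minimum $\geq 3$, produced $10\,092$ candidate classes, pairwise distinguished by their ${\rm BV}$ invariants, of total reduced mass equal to ${\rm m}_{29}(\emptyset) =$ {\small 49612728929/11136000}. We simply import this and recall the corresponding Gram matrices in~\cite{chetaiweb}.

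For $R={\bf A}_1$ and $R={\bf A}_2$, the method is identical in spirit. We initialize a pool of lattices in ${\rm X}_{29}^R$ by extracting neighbors of ${\rm I}_{29}$ (and of already-found elements of ${\rm X}_{29}^\emptyset$) whose minimum is~$2$ and whose root system is ${\bf A}_1$ or ${\bf A}_2$; we then iteratively compute $p$-neighbors (for small primes $p$, in practice $p=2$ and $p=3$), biased toward small root systems. At each step, we keep only the neighbors with no norm~$1$ vector and root system isomorphic to $R$; we partition the running pool into isometry classes using the ${\rm BV}$ invariant, and compute $|{\rm O}(L)^{\rm red}|$ for each representative using the \texttt{qfautors} variant of the Plesken--Souvignier algorithm described in Section~\ref{subsect:algolat}(c). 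The process is iterated until saturation, {\it i.e.} until no new ${\rm BV}$ classes appear. The expected totals are $59\,105$ for ${\bf A}_1$ and $3\,714$ for ${\bf A}_2$, and completeness is then certified by verifying that the sum $\sum_{[L]} 1/|{\rm O}(L)^{\rm red}|$ over the produced classes exactly matches the reduced masses ${\rm m}_{29}({\bf A}_1)=$ {\small 18609637771/673920} and ${\rm m}_{29}({\bf A}_2) = $ {\small 113241/80} recalled above; combined with the pairwise distinctness of ${\rm BV}$ invariants, this forces the lists to be exhaustive.

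The obstacles are purely computational. The ${\bf A}_1$ case, with almost $6 \cdot 10^4$ classes, is by far the heaviest and requires careful steering of the biased neighbor exploration so that the number of candidates processed at each step stays manageable. More essentially, the proof depends on the empirical sharpness of ${\rm BV}$ at rank $29$ on lattices with small root system: as in~\cite{cheuni2} this turns out to hold, but it is a miracle rather than a theorem, and had it failed on some pair we would have had to refine ${\rm BV}$ along the lines of Remark~\ref{rem:varianthbv} (signed or absolute variant, larger depth) and, as a last resort, check isometry directly via the Plesken--Souvignier algorithm.
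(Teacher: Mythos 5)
Your proposal is correct and follows essentially the same route as the paper, whose own proof is just a two-sentence deferral: the case $R=\emptyset$ is imported from \cite{cheuni2}, and the cases $R={\bf A}_1,{\bf A}_2$ are handled by the same \texttt{BNE} (biased neighbor enumeration) strategy run for those visible root systems, with the ${\rm BV}$ invariant separating classes and the reduced masses from \cite{king} certifying completeness. Your write-up is simply a more explicit account of that same pipeline.
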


\begin{pf}
The case ${\rm X}_{29}^\emptyset$ was done in~\cite{cheuni2},
with a method lengthily discussed in Sect. 6.7 there.
A similar method works in the other cases,
starting with running the $\texttt{BNE}$ algorithm {\it loc. cit.}
for the {\it visible} root systems ${\bf A}_1$ and ${\bf A}_2$.
\end{pf}

Of course, we deal with the $11\,082$ remaining root systems
using the algorithms described in 
Corollaries~\ref{cor:algounim} and~\ref{cor:algounim2}.
In order to have an idea {\it a priori} of the size of 
the list ${\rm U}_{29}$ provided by Corollary~\ref{cor:algounim},
let us denote by\footnote{
For $R$ irreducible we have ${\rm np}(R)=1$, 
except ${\rm np}({\bf A}_1)={\rm np}({\bf A}_2)=0$ and ${\rm np}({\bf D}_m)=2$ for $m\geq 4$.
For $R=\bigsqcup_{i=1}^h R_i$ with $R_i$ irreducible, 
we have ${\rm np}(R)=\frac{h(h-1)}{2}+\sum_{i=1}^h {\rm np}(R_i)$.}
${\rm np}(R)$ the number of ${\rm W}(R)$-orbits of 
pairs of orthogonal roots in the root system $R$.
Under the heuristic, confirmed by the final computation,
that almost all lattices $U$ in ${\rm X}_{29}^R$ satisfy $|{\rm O}(U)^{\rm red}|=2$,
in which case ${\rm O}(U)$-orbits of pairs of orthogonal roots
coincide with their ${\rm W}(U)$-orbits by Remark~\ref{rem:roa2},
the redundancy of the isomorphism class of $U$
in the list ${\rm U}_{29}$ should be $\simeq {\rm np}({\rm R}_2(U))$,
so we expect
\begin{equation}
\label{eq:nborb}
|\mathrm{U}_{29}| \,\,\simeq \,\,2 \,\,\sum_R\,\, {\rm m}_{29}(R) \,{\rm np}(R)\,\, \simeq \,\,1,28 \cdot 10^{9}.
\end{equation}
This fits our computations: we find about $1.30$ billions\footnote{
To be more precise, as we are only interested in $U$ with ${\rm r}_1(U)=0$, 
we only enumerated pairs $(L,e)$ with 
${\rm R}_1(L) \cap {\rm M}_2(L;e) =\emptyset$, 
but this reduction is innocent.} 
isomorphism classes of pairs $(L,e)$,
using the list $\mathrm{L}_{27}$ (with $17\,059$ elements) given in \cite{cheuni} 
and the algorithm \texttt{orbmod2} (see Sect.~\ref{subsect:algolat}).
This computation takes about $28$ days of CPU time on a single core: 
about $2$ days to compute generators of ${\rm O}(L)$ for each $L \in \mathrm{L}_{27}$, 
and then about $1.72$ \texttt{ms} per orbit.\ps

If we replace ${\rm np}(R)$ in the sum \eqref{eq:nborb} by
the number ${\rm npr}(R)$ of {\it relevant} pairs in $R$ as in Definition~\ref{def:relpair},
we rather find about $309$ millions of orbits, that is $4$ times less!
As it is faster to check if a pair of roots is relevant 
than computing a ${\rm BV}$ invariant,\footnote{
Checking whether a pair or orthogonal roots $\{\alpha,\beta\}$ is 
relevant in ${\rm R}_2(L)$ takes about $9$ \texttt{ms},
which is more than $15$ times faster than computing the BV invariant of $L$.
The selection of all relevant pairs took about 
$4.5$ months of CPU time.}
this shows that it is worth excluding first all pairs $(L,e)$ such that 
$\{\alpha_0,\beta_0\}$ is not relevant for $U(L;e)$, 
as in Corollary~\ref{cor:algounim2}, 
before computing the invariant.
The final list ${\rm U}_{29}$ obtained this way 
has about $312$ millions of elements, 
very close to the estimation above.
As an indication, Table~\ref{tab:mostX29exc} below indicates 
the root systems most affected by the exclusion of non relevant pairs, 
and Table~\ref{tabmaxred} those contributing the most to the final list.

\begin{table}[H]
\tabcolsep=3pt
{\scriptsize \renewcommand{\arraystretch}{1.3} \medskip
\begin{center}
\begin{tabular}{c|c|c|c|c|c|c|c|c}
$R$ &
$8{\bf A}_1 2{\bf A}_2$ &
$8{\bf A}_1 {\bf A}_2$ &
$7{\bf A}_1 2{\bf A}_2$ &
$9{\bf A}_1 {\bf A}_2$ &
$9{\bf A}_1 2{\bf A}_2$ &
$7{\bf A}_1 {\bf A}_2$ &
$10{\bf A}_1 {\bf A}_2$ &
$6{\bf A}_1 2{\bf A}_2$  \\
${\rm exc}(R)$ &
$45.9$ &
$42.6$ &
$42.2$ &
$40.6$ &
$36.2$ &
$34.4$ &
$29.3$ &
$28.6$ \\ \hline
$R$ &
$7{\bf A}_1 3{\bf A}_2$ &
$8{\bf A}_1 3{\bf A}_2$ &
$6{\bf A}_1 3{\bf A}_2$ &
$6{\bf A}_1 {\bf A}_2$ &
$10{\bf A}_1 2{\bf A}_2$ &
$7{\bf A}_1 2{\bf A}_2 {\bf A}_3$ &
$11{\bf A}_1 {\bf A}_2$ &
$9{\bf A}_1 3{\bf A}_2$ \\
${\rm exc}(R)$  &
$28.5$ &
$24.7$ &
$22.4$ &
$21.3$ &
$20.4$ &
$16.1$ &
$15.7$ &
$14.5$
\end{tabular}
{\small
\caption{Root systems $R$ in the decreasing order of the quantity 
${\rm exc}(R)\,=\,2\,{\rm m}_{29}(R)({\rm np}(R)-{\rm npr}(R))$ (counted in millions).}
\label{tab:mostX29exc}
}
\end{center}
}
\end{table}

\begin{table}[H]
\tabcolsep=4pt
{\scriptsize \renewcommand{\arraystretch}{1.3} \medskip
\begin{center}
\begin{tabular}{c|c|c|c|c|c|c|c|c|c}
$R$ &
$8 {\bf A}_1$  & $9 {\bf A}_1$ & $7 {\bf A}_1$ & $10 {\bf A}_1$ & $6 {\bf A}_1$
& $11{\bf A}_1$ & $8{\bf A}_1{\bf A}_2$ & $5{\bf A}_1$  & $7{\bf A}_1{\bf A}_2$   \\
\hline
${\rm npr}(R)$ & $28$ & $36$ & $21$ & $45$ & $15$ & $55$ & $8$ & $10$ & $7$  \\
${\rm red}(R)$ &
$38.9$ & $37.9$ & $32.4$ & $29.8$ & $21.7$ &
$18.5$ & $12.2$ & $11.5$ & $11.5$  \\
\hline
$R$ & $9{\bf A}_1{\bf A}_2$ & $12{\bf A}_1$ &
$6{\bf A}_1 {\bf A}_2$ & $10{\bf A}_1 {\bf A}_2$ & $5{\bf A}_1 {\bf A}_2$ & $4{\bf A}_1$ & $13{\bf A}_1$ & $11{\bf A}_1 {\bf A}_2$ & $4{\bf A}_1 {\bf A}_2$ \\
${\rm npr}(R)$ & $9$ & $66$ & $6$ & $10$ & $5$ & $6$ & $78$ &  $11$ & $4$ \\
${\rm red}(R)$ & $10.1$ & $8.98$ & $8.54$ & $6.51$ & $5.04$ & $4.64$ & $3.30$ &  $3.15$ & $2.32$
\end{tabular}
\caption{Root systems $R$, in the decreasing order of their redundancy ${\rm red}(R)\,=\,2\,{\rm m}_{29}(R)\,{\rm npr}(R)$ (counted in millions).}
\label{tabmaxred}
\end{center}
}

\end{table}
Observe the many root systems $R$ of type $m{\bf A}_1$ in Table~\ref{tabmaxred}. 
For those we have ${\rm npr}(R)=\frac{m(m-1)}{2}$ (all pairs are relevant). 
It is a bit unfortunate that such root systems also tend to have 
a large reduced mass ${\rm m}_{29}(R)$ (see also \cite[Rem. 8 \S 3]{king}). 
The final computation of the ${\rm BV}$ invariants of the elements in ${\rm U}_{29}$ 
takes about $1$ year and a half of CPU time on a single core 
(but of course this step is straightforward to parallelize). 
This is still much less than the computation of ${\rm X}_{28}$ in \cite{cheuni2}, 
proving the clear superiority of this method over the neighbor computations. 
We do find  $38\,592\,290$ different ${\rm BV}$ invariants, 
and then select arbitrarily one lattice for each. 
We computed Gram matrices for these lattices using 
the \texttt{goodbases} algorithm 
(see Sect.~\ref{subsect:algolat}), which takes about $220$ \texttt{h} of CPU time 
on a single core (about $21$ \texttt{ms} per lattice). 
$\square$\ps\ps

We end this section by providing a few additional information 
about the rank $29$ unimodular lattices $L$ with no norm $1$ vectors 
that follow from our computation:\begin{itemize}
\item[(i)] The heuristic average number of roots of $L$, namely 
\scalebox{.8}{$\frac{\sum_R {\rm m}_{29}(R) |R|}{\sum_R {\rm m}_{29}(R)} \simeq 27.1$}, 
is confirmed. Moreover, we have 
${\rm r}_3(L)\, =\, 1856 - 128 \,|{\rm Exc}\,L|\, +\,10 \,{\rm r}_2(L)$ 
by a theta series argument as in~\cite{bachervenkov}.
Here ${\rm Exc}\,L$ is the set of exceptional vectors in $L$: see Sect.~\ref{sect:excuni}. 
We will see in Thm.~\ref{thm:excuni209} that 
$|{\rm Exc}\,L|$ is $0$ for $92.9 \%$ of the lattices, and $0.159$ on average.  
So the average value of ${\rm r}_3(L)$ is about $2107$, 
and the graph appearing in the computation of ${\rm BV}$ has 
$\frac{1}{2}({\rm r}_3(L)+{\rm r}_2(L))\simeq 1067$ vertices on average. \ps
\item[(ii)] Let ${\rm d}(L)$ be the smallest integer $d\geq 2$ 
such that $L$ is generated over $\Z$ by ${\rm R}_{\leq d}(L)$. 
Then $L$ always admits a $\Z$-basis in ${\rm R}_{\leq d}(L)$ for $d={\rm d}(L)$. 
Moreover, we always have ${\rm d}(L) = 3, 4$ or $5$, 
and ${\rm d}(L)=3$ for $38\,590\,862$ lattices, 
${\rm d}(L)=4$ for $1421$ lattices, and
${\rm d}(L)=5$ for $7$ lattices. \ps
\item[(iii)] Some statistics for the order of reduced isometry groups 
are given in Table~\ref{tab:statroa}. 
It is $2$ (resp. $4$) for $95.2\%$ (resp. $4.18 \%$) of the lattices.

\begin{table}[H]
\tabcolsep=1.5pt
{\scriptsize \renewcommand{\arraystretch}{1.3} \medskip
\begin{center}
\begin{tabular}{c|c|c|c|c|c|c|c|c|c|c|c|c|c|c|c}
$\texttt{ord}$ & $2$ & $4$ & $6$ & $8$ & $10$ & $12$ & $14$ & $16$ & $18$ & $20$ & $24$ & $28$ & $32$ & $36$ & $40$ \\
$\#$ & $36\,741\,838$ & $1\,613\,885$ & $7\,942$ & $165\,479$ & $22$ & $12\,147$ & $2$ & $28\,136$ & $19$ & $154$ & $5\,648$ & $13$ & $7\,797$ & $108$ & $239$ \\ \hline
$\texttt{ord}$ & $42$ & $48$ & $56$ & $60$ & $64$ & $72$ & $80$ & $84$ & $96$ & $104$ & $108$ & $120$ & $128$ & $144$ & $160$ \\
$\#$ & $8$ & $2204$ & $3$ & $11$ & $2070$ & $293$ & $94$ & $24$ & $1045$ & $3$ & $3$ & $30$ & $675$ & $213$ & $30$ \\ \hline
$\texttt{ord}$ & $192$ & $216$ & $224$ & $232$ & $240$ & $252$ & $256$ & $288$ & $320$ & $336$ & $384$ & $400$ & $432$ & $480$ & $512$\\
$\#$ & $383$ & $32$ & $1$ & $1$ & $82$ & $3$ & $266$ & $178$ & $9$ & $14$ & $181$ & $1$ & $21$ & $62$ & $90$
 \end{tabular}
\caption{\small Number $\#$ of lattices with reduced isometry group of order $\texttt{ord} \leq 512$.}
\label{tab:statroa}\label{tabstatroa}
\end{center}
}
\end{table}
\item[(iv)] An analysis of the Jordan-H\"older factors using \texttt{GAP} 
and similar to that in \cite[\S 12]{cheuni} 
shows that only $454$ lattices have a non-solvable reduced isometry group. 
Exactly $4$ of those groups have a Jordan-H\"older factor 
not appearing for unimodular lattices of smaller rank: see Table~\ref{tab:X29newsimpl}.

\tabcolsep=3.5pt
\begin{table}[H]
{\scriptsize
\renewcommand{\arraystretch}{1.8} \medskip
\begin{center}
\begin{tabular}{c|c|c|c|c}
${\rm R}_2(L)$ & ${\bf A}_1 {\bf A}_4$ & ${\bf A}_1 {\bf A}_5$ &  ${\bf D}_6$ & ${\bf A}_5$ \\ \hline
$|{\rm O}(L)^{\rm red}|$ & $20\,401\,920$  & $26\,127\,360$  & $52\,254\,720$  &  $3\,592\,512\,000$  \\  \hline
${\rm O}(L)^{\rm red}$ &  \scalebox{.9}{$\Z/2 \times {\rm M}_{23}$} &  \scalebox{.9}{$\Z/2 \times ({\rm U}_4(3) : \Z/2\times \Z/2)$} &  \scalebox{.9}{$\Z/2 \times ({\rm U}_4(3) : {\rm D}_8)$} &  \scalebox{.9}{$\Z/2 \times ({\rm McL} : \Z/2)$}
\end{tabular}
\end{center}
}
\caption{{\small The reduced isometry groups ${\rm O}(L)^{\rm red}$ of 
the $4$ rank $29$ unimodular lattices $L$ with a Jordan-H\"older factor 
not appearing in smaller rank, using \texttt{GAP}'s notation.}}
\label{tab:X29newsimpl}
\end{table}
These $4$ lattices could presumably be directly constructed, and analysed, using the Leech lattice.
\end{itemize}

A computer calculation shows that 
the assertion about ${\rm BV}$ in Theorem~\ref{thm:uni29no1} extends to all unimodular lattices, 
without any restriction on norm $1$ vectors. 
It is thus not even necessary to consider norm $1$ vectors separately.

\begin{cor}
\label{cor:BVsharpuni} 
Let $L$ and $L'$ be two unimodular lattices of same rank $\leq 29$.
Then $L$ and $L'$ are isometric if, and only if, we have ${\rm BV}(L)={\rm BV}(L')$.
\end{cor}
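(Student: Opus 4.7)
The plan is as follows. The ``only if'' direction is immediate since ${\rm BV}$ is an isometry invariant. For the converse, observe that ${\rm BV}(L)$ determines the sizes ${\rm r}_i(L)$ for $i \leq 3$, and in particular ${\rm r}_1(L)$. Writing the canonical decomposition $L = {\rm I}_m \perp L_1$ with ${\rm r}_1(L_1) = 0$ from Sect.~\ref{subsect:notation}, the integer $m = {\rm r}_1(L)/2$ is thus read off from ${\rm BV}(L)$. It therefore suffices to show that ${\rm BV}(L)$ determines the isometry class of $L_1$.

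I would split on the rank $n$. For $n \leq 28$ the statement is already contained in \cite{cheuni2}, where ${\rm BV}$ was shown to distinguish all unimodular lattices of such rank (including those with norm $1$ vectors). For $n = 29$ and $m = 0$ it is exactly Theorem~\ref{thm:uni29no1}. The only genuinely new case is therefore $n = 29$ and $m \geq 1$, in which $L_1$ is a unimodular lattice of rank $29 - m \leq 28$ with no norm $1$ vector.

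Since $m$ is recovered from ${\rm BV}(L)$, two lattices $L = {\rm I}_m \perp L_1$ and $L' = {\rm I}_{m'} \perp L_1'$ with the same ${\rm BV}$ invariant must have $m = m'$. It thus remains to verify, for each $m$ with $1 \leq m \leq 29$, that the map $L_1 \mapsto {\rm BV}({\rm I}_m \perp L_1)$ is injective on the (finite) set of rank $29 - m$ unimodular lattices with no norm $1$ vector. Representatives of these are available from Table~\ref{tab:tabcardXn} and Theorem~\ref{thm:uni29no1}, and the ${\rm BV}$ invariants involved are cheap to compute with the implementation described in Sect.~\ref{subsect:algolat}(f); the total number of ${\rm BV}$ evaluations to perform is a tiny fraction of those already done for the proof of Theorem~\ref{thm:uni29no1}. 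A routine comparison of hashes then settles the injectivity.

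The main obstacle is conceptual rather than computational. As already emphasized in the paper, the sharpness of the ${\rm BV}$ invariant remains an ``unexplained miracle'' even among lattices with no norm $1$ vectors, and there is no a priori reason why tensoring with (or stacking on) an ${\rm I}_m$ summand should preserve this sharpness. One could hope for a clean description of the effect of an orthogonal direct sum with a cubic factor on the graph $\mathcal{G}_{\leq 3}(L)$, which would reduce the claim to a statement at the level of $L_1$ alone, but absent such a structural result the direct enumeration above seems unavoidable — and is indeed the route on which the corollary is stated to rest.
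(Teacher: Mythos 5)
Your overall route --- reduce everything to a finite computer verification over representatives, invoking \cite{cheuni2} for rank $\leq 28$ and Theorem~\ref{thm:uni29no1} for rank $29$ lattices without norm $1$ vectors --- is exactly the paper's: its entire justification is the sentence preceding the corollary, namely that ``a computer calculation shows that the assertion about ${\rm BV}$ in Theorem~\ref{thm:uni29no1} extends to all unimodular lattices, without any restriction on norm $1$ vectors.'' However, your reduction rests on one unjustified step: the claim that ${\rm BV}(L)$ determines the individual counts ${\rm r}_i(L)$ for $i\leq 3$, hence the integer $m$ in $L={\rm I}_m\perp L_1$. By Definition~\ref{def:markedhbv}, ${\rm BV}(L)$ is the multiset $\{\!\{\,{\rm m}(v)\,\}\!\}$ over the vertices of $\mathcal{G}_{\leq 3}(L)$; its cardinality gives $\tfrac12({\rm r}_1+{\rm r}_2+{\rm r}_3)$, but no vertex is labelled by its norm, so nothing in the definition lets you read off ${\rm r}_1(L)$ --- and hence $m$ --- from it. If that claim fails, your verification plan (injectivity of $L_1\mapsto{\rm BV}({\rm I}_m\perp L_1)$ \emph{separately for each fixed} $m$) does not exclude a collision ${\rm BV}({\rm I}_m\perp L_1)={\rm BV}({\rm I}_{m'}\perp L_1')$ with $m\neq m'$, including against the $m'=0$ stratum already treated in Theorem~\ref{thm:uni29no1}. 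The repair is immediate and essentially free: drop the stratification and compare the ${\rm BV}$ hashes of all $38\,966\,352$ classes in ${\rm X}_{29}$ at once (only $374\,062$ of which are new relative to Theorem~\ref{thm:uni29no1}), which is precisely the computation the paper reports. Your closing remark that no structural result about adding an ${\rm I}_m$ summand is available, so that direct enumeration is unavoidable, accurately reflects the state of affairs in the paper.
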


\section{Fertile weights, extensions of root systems and gluing}
\label{sect:fertile}

${}^{}$ If $M$ is an even lattice, its {\it Venkov map} is 
the map $\nu : \resq M \rightarrow \Q_{\geq 0}$ defined by 
$\nu(\overline{x}) \,=\, \min_{y \in x+M} \,\,y \cdot y$.
For any \(x \in M^\sharp\) we have $\nu(\overline{x}) \equiv x \cdot x \bmod 2\Z$. \ps

Assume now $R$ is a root system.
The elements of ${\rm Q}(R)^\sharp$ are usually called {\it weights}.
A weight $\xi \in {\rm Q}(R)^\sharp$ is said {\it minuscule} if 
it satisfies $|\xi \cdot \alpha| \leq 1$ for all $\alpha \in R$.
(Some authors add the condition \(\xi \neq 0\), which we do not impose.)
For each $\xi \in {\rm Q}(R)^\sharp$ the minimal vectors of $\xi+{\rm Q}(R)$ 
coincide with its minuscule weights, 
and form a single ${\rm W}(R)$-orbit (see~\cite[Ch.\ VIII \S 7.3]{bourbaki_Lie78}).
In particular, the Venkov map of ${\rm Q}(R)$ is the norm of the minuscule lifts.

The study of minuscule weights reduces to the case where $R$ is irreducible, 
as $\xi \in {\rm Q}(R)^\sharp$ is minuscule if, and only if, 
so are its orthogonal projections to each of the irreducible components of \(R\).
For \(R\) irreducible, if we choose a positive system for $R$ 
then the non-zero dominant minuscule $\xi$ are 
the fundamental weights $\varpi_i$ associated to the simple roots $\alpha_i$ 
having coefficient $1$ in the highest root $\tilde{\alpha}$.
From the {\it ``Planches''} in~\cite{bourbaki} 
we easily deduce the values of $\varpi_i \cdot \varpi_i$.

\begin{example} 
When $R$ has type ${\bf A}_n$, all the fundamental weights are minuscule 
and their norms are as follows:
\begin{center}
\dynkin[labels={\alpha_1,\alpha_2,\alpha_i,\alpha_{n-1},\alpha_n},edge length=.8cm,
make indefinite edge/.list={2-3,3-4}]{A}{5}
 \, \, \, {\rm with}\, \, \,$\varpi_i  \cdot \varpi_i = \frac{i(n+1-i)}{n+1}$.
\end{center}
Also, there is a group isomorphism $\Z/(n+1) \isomo \resq {\rm Q}(R)$ sending, 
for all $1 \leq i \leq n$, the class of $i$ mod $n+1$ 
to that of $\varpi_i$ mod ${\rm Q}(R)$.
\end{example}

\begin{definition}
Let $\xi \in {\rm Q}(R)^\sharp$.
We say that $\xi$ is {\rm fertile} if we have $\xi \cdot \xi<2$.
Similarly, we say that a class $c \in \resq\, {\rm Q}(R)$ is fertile if we have $\nu(c)<2$.
\end{definition}

Fertile weights are preserved under ${\rm O}({\rm Q}(R))$ and are trivially minuscule.
As an example, the fertile dominant weights in type ${\bf A}_n$ are
$\varpi_1,\varpi_n$, as well as $\varpi_2,\varpi_{n-1}$ for $n\geq 3$,
and $\varpi_3,\varpi_{n-3}$ for $5 \leq n \leq 6$.
Fertile weights will serve in the following simple construction.
Fix a fertile $\varpi \in {\rm Q}(R)^\sharp$.
In the orthogonal direct sum ${\rm Q}(R)^\sharp \perp \Z e_0$
with $e_0 \cdot e_0=2-\varpi \cdot \varpi$ (positive!),  consider the lattice
\begin{equation}
\label{def:QRc}
{\rm Q}(R_\varpi) \,=\, {\rm Q}(R) \,+\, \Z\, (e_0 - \varpi).
\end{equation}
It only depends on the (fertile) class $c$ of $\varpi$ in $\resq {\rm Q}(R)$.
This is a root lattice as $e_0-\varpi$ is a root.
We denote by $R_\varpi$ or $R_c$ its root system,
hence the notation above. By construction,
we have a natural embedding $\upsilon_c : {\rm Q}(R) \rightarrow {\rm Q}(R_c)$.

\begin{remark} {\rm (Dynkin diagram of $R_c$)}
Fix $B=\{\alpha_1,\dots,\alpha_n\}$ a set of simple roots for $R$
and $\varpi$ a fertile weight dominant for $B$.
Let $I \subset \{1,\dots,n\}$ be the subset of $i$ such that $\varpi \cdot \alpha_i=1$,
then $B \cup \{e_0-\varpi\}$ is a basis of $R_c$,
whose Dynkin diagram is obtained from that of $B$
by adding a single node for $e_0-\varpi$
and by connecting this node to each $\alpha_i$ with $i \in I$.
Note that this Dynkin diagram has a marked node,
namely at $e_0-\varpi$.
Any Dynkin diagram with a marked node can be obtained by this construction,
in a unique way.\footnote{
The groupoid $\mathcal{A}$ of triples $(R, B, \varpi)$ with
$R$ a root system, $B$ a basis and $\varpi$ a dominant fertile weight,
is  equivalent to that $\mathcal{B}$ of $(R',B', \alpha)$ with
$R'$ a root system, $B'$ a basis of $R'$ and $\alpha \in B'$.
Indeed, we have defined $\mathcal{A} \rightarrow \mathcal{B},
(R,B,\varpi) \mapsto (R_c,B \cup \{\alpha\},\alpha)$ with $\alpha = e_0-\varpi$.
But we also have $\mathcal{B} \rightarrow \mathcal{A},
(R',B',\alpha) \rightarrow (R,B,\varpi)$ with $B=B' \smallsetminus \{\alpha\}$
and $\varpi$ the orthogonal projection of $-\alpha$ to the space generated by $B$.
These are trivially inverse equivalences.
}
\end{remark}

\begin{example}
\label{ex:fertileex}
\begin{itemize}
\item[(i)] Assume $R \simeq {\bf A}_n$. 
We have $R_{\varpi_1} \simeq R_{\varpi_n} \simeq {\bf A}_{n+1}$, 
$R_{\varpi_2} \simeq R_{\varpi_{n-1}} \simeq {\bf D}_{n+1}$ for $n\geq 3$, 
and $R_{\varpi_3} \simeq R_{\varpi_{n-2}} \simeq {\bf E}_{n+1}$ for $n=5,6,7$.\
\item[(ii)] Assume $R \simeq 3{\bf A}_1$. 
Then $\varpi=\varpi_1+\varpi_2+\varpi_3$ has norm $3/2<2$, hence is fertile, 
and we have $R_\varpi \simeq {\bf D}_4$.
\end{itemize}
\end{example}

We now explain why these apparently irrelevant notions appear in this work.
We first set a few definitions concerning the gluing construction of even unimodular lattices from
${\rm Q}(R)$ (see \S~\ref{subsect:gluing}).

\begin{definition}
\label{def:MRUR}
For $R$ a root system, we denote by:
\begin{itemize}
\item[{\rm (a)}] $\mathcal{M}^{R}$ the groupoid of pairs $(M,\eta)$ with 
\(M\) an even lattice and 
$\eta : \resq M \isomo - \resq {\rm Q}(R)$ an isometry, \par
\item[{\rm (b)}] $\mathcal{U}_R$ the groupoid of pairs $(U,\iota)$ with 
$U$ an even unimodular lattice and 
$\iota : {\rm Q}(R) \rightarrow U$ an isometric embedding with saturated image,\par
\item[{\rm (c)}] $F_R : \mathcal{M}^R \rightarrow \mathcal{U}_R$ 
the natural equivalence given by Proposition \ref{prop:nikulingroupoids} and Remark \ref{rem:afternik} (ii).\footnote{
Recall that for $(M,\eta)$ in $\mathcal{M}^{R}$, 
and $(U,\iota)=F_R(M,\eta)$, then 
$M \perp {\rm Q}(R) \subset U \subset M^\sharp \perp {\rm Q}(R)^\sharp$ 
is defined by the Lagrangian ${\rm I}(\iota,\eta)$ of \eqref{iiotaeta}, 
with $\iota$ the natural inclusion ${\rm Q}(R) \rightarrow U$. }
\end{itemize}
We also write $\mathcal{M}^{{\rm Q}(R)}$, $\mathcal{U}_{{\rm Q}(R)}$  and $F_{{\rm Q}(R)}$ for $\mathcal{M}^{R}$, $\mathcal{U}_R$ and $F_R$.
\end{definition}

If $A$, $B$, $L$ are lattices,
and if we have two isometric embeddings $\iota : A \rightarrow L$ and $\upsilon : A \rightarrow B$,
an {\it extension of $\iota$ to $B$ via $\upsilon$}
is an isometric embedding $\iota' : B \rightarrow L$
such that $\iota' \circ \upsilon = \iota$. We denote by ${\rm Ext}(\iota,\upsilon)$ 
the set of those extensions $\iota'$.
It has a natural action of ${\rm O}(L,\iota)$ given by 
$(g,\iota') \mapsto g \circ \iota'$.
The main observation in this section is the following:
\begin{prop}
\label{prop:orbfibreMc}
Let $(M,\eta)$ in $\mathcal{M}^{R}$ and 
denote by $(U,\iota)$ in $\mathcal{U}_R$ its image under $F_R$.
Fix a fertile class $c \in \resq {\rm Q}(R)$ and 
denote by ${\rm Exc}_{c,\eta}\,M$ the set of elements $e \in M^\sharp$ with 
$e \cdot e=2-\nu(c)$ and $\eta(e) \equiv -c$. 
For any fertile $\varpi$ in $c$, the map
\begin{equation}
\label{eq:defforbfibre}
f : {\rm Ext}(\iota,\upsilon_c) \longrightarrow {\rm Exc}_{c,\eta}\,M, \,\,\,\iota' \mapsto \iota'(e_0-\varpi)+\iota(\varpi),
\end{equation}
is well-defined, bijective and 
independent of the choice of $\varpi$ in $c$. 
Moreover:
\begin{itemize}
\item[(a)] for $\iota' \in {\rm Ext}(\iota,\upsilon_c)$ and $e=f(\iota')$, 
then the subgroup $\iota'({\rm Q}(R_c))$ is saturated in $U$ if, and only if, 
the element $e$ is primitive in $M^\sharp$.\ps
\item[(b)] the map $f$ is equivariant with respect to 
the natural action of ${\rm O}(U,\iota)$ on ${\rm Ext}(\iota,\upsilon_c)$, 
that of ${\rm O}(M,\eta)$ on ${\rm Exc}_{c,\eta}\,M$, and
the natural group isomorphism ${\rm O}(M,\eta) \isomo {\rm O}(U,\iota)$ 
given by $F_R$.
\end{itemize}
\end{prop}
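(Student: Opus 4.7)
The plan is to extract everything from the explicit gluing description of $U$ provided by Proposition \ref{prop:nikulingroupoids}: realizing $U$ as a sublattice of $M^\sharp \perp \iota({\rm Q}(R))^\sharp$ consisting of pairs $(m, y)$ satisfying the gluing compatibility (matching signs with Definition \ref{def:MRUR} and the Lagrangian \eqref{iiotaeta}), every element $\iota'(e_0-\varpi) \in U$ can be uniquely decomposed along $U \otimes \R = (M \otimes \R) \perp \iota({\rm Q}(R)^\sharp \otimes \R)$. The ${\rm Q}(R)$-component of $\iota'(e_0-\varpi)$ is forced to equal $-\iota(\varpi)$, because for every $\alpha \in {\rm Q}(R)$ the identity $\iota' \circ \upsilon_c = \iota$ and the orthogonality $e_0 \perp {\rm Q}(R)$ give $\iota'(e_0-\varpi) \cdot \iota(\alpha) = -\varpi \cdot \alpha$. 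Hence the $M$-component equals $f(\iota')$ and everything reduces to bookkeeping on the two orthogonal summands.

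For well-definedness, I would first observe that any fertile lift $\varpi$ of $c$ satisfies $\varpi \cdot \varpi = \nu(c)$, using $\nu(c) \equiv \varpi \cdot \varpi \bmod 2\Z$ together with $\varpi \cdot \varpi < 2$. Setting $e := f(\iota')$, the orthogonality to $\iota({\rm Q}(R))$ was just verified; $e \in M^\sharp$ follows from $\iota'(e_0-\varpi) \cdot m \in \Z$ for all $m \in M$, which holds because $\iota'(e_0-\varpi), m \in U$ and $U$ is integral; the norm expands to $2 - 2(\varpi \cdot \varpi) + (\varpi \cdot \varpi) = 2 - \nu(c)$; and the gluing description directly forces $\eta(\bar e) = -c$. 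Independence from the choice of $\varpi \in c$ is verified by replacing $\varpi$ with $\varpi+\alpha$ for $\alpha \in {\rm Q}(R)$, noting that ${\rm Q}(R) + \Z(e_0 - \varpi)$ is stable under such shifts, and observing that the formula produces the telescoping cancellation $\iota'(\alpha) - \iota(\alpha) = 0$. Injectivity of $f$ is immediate, since $\iota'$ is determined by its value on the generator $e_0-\varpi$ of ${\rm Q}(R_c)$ modulo ${\rm Q}(R)$.

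For surjectivity, given $e \in {\rm Exc}_{c, \eta}\,M$, I would set $r := e - \iota(\varpi)$ and verify three identities: that $r \in U$ (this is exactly the gluing condition $\eta(\bar e) = -c$), that $r \cdot r = 2$ (direct expansion), and that $r \cdot \iota(\alpha) = -\varpi \cdot \alpha$ for all $\alpha \in {\rm Q}(R)$ (which uses $e \perp \iota({\rm Q}(R))$). These are precisely what is needed for the $\Z$-linear extension $\iota' : {\rm Q}(R_c) \to U$ defined by $\iota'|_{\upsilon_c({\rm Q}(R))} = \iota$ and $\iota'(e_0 - \varpi) = r$ to be an isometric embedding, and $f(\iota') = e$ by construction.

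For (a), the orthogonal projection $\pi : U \to M^\sharp$, which is surjective with kernel $\iota({\rm Q}(R))$ by the gluing description and the surjectivity of $\eta$, sends $N := \iota'({\rm Q}(R_c)) = \iota({\rm Q}(R)) + \Z r$ onto $\Z e$, yielding $U/N \isomo M^\sharp / \Z e$; hence $N$ is saturated in $U$ exactly when $e$ is primitive in $M^\sharp$. For (b), the isomorphism ${\rm O}(M, \eta) \isomo {\rm O}(U, \iota)$ from $F_R$ realizes $g \in {\rm O}(M, \eta)$ as the automorphism $\tilde g \in {\rm O}(U, \iota)$ acting by $g$ on $M$ and trivially on $\iota({\rm Q}(R))$, so $\tilde g(\iota'(e_0-\varpi)) = g(e) - \iota(\varpi)$, and the equivariance $f(\tilde g \circ \iota') = g \cdot f(\iota')$ is immediate. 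The principal hurdle throughout is consistency of sign conventions between the map $\eta : \resq M \to -\resq {\rm Q}(R)$ and the formula \eqref{iiotaeta} used to define $U$; once these are aligned, no nontrivial calculation remains.
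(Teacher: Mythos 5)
Your proposal is correct and follows essentially the same route as the paper: both decompose $U$ inside $M^\sharp \perp {\rm Q}(R)^\sharp$ via the gluing Lagrangian, identify an extension $\iota'$ with the single root $\beta=\iota'(e_0-\varpi)\in U$ subject to $\beta\cdot\beta=2$ and $\beta\cdot x=-\varpi\cdot x$ for $x\in{\rm Q}(R)$, and then read off that $\beta=e-\iota(\varpi)$ with $e\in{\rm Exc}_{c,\eta}\,M$; parts (a) and (b) are handled by the same projection $U/{\rm Q}(R)\isomo M^\sharp$ and the same description of $\tilde g$. Your extra checks (that $\varpi\cdot\varpi=\nu(c)$ for a fertile lift, and the telescoping for independence of $\varpi$) are points the paper leaves implicit, and they are carried out correctly.
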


\begin{pf} 
We have $M \perp {\rm Q}(R) \subset U \subset M^\sharp \perp {\rm Q}(R)^\sharp$ 
and $\iota$ is the natural inclusion. 
Choose a fertile $\varpi$ in $c$. As ${\rm Q}(R_c)$ is generated 
by $e_0-\varpi$ and $R$, it is equivalent to give $\iota' \in {\rm Ext}(\iota,\upsilon_c)$ 
and the element $\iota'(e_0-\varpi)$, which can be any element $\beta \in U$ 
satisfying $\beta \cdot \beta=2$ and $\beta \cdot x=-\varpi \cdot x$ 
for all $x \in {\rm Q}(R)$. An element $\beta= e + q$, with $e \in M^\sharp$ 
and $q \in {\rm Q}(R)^\sharp$, has these properties if, and only if, 
we have $\eta(e) \equiv q$, $e \cdot e+q \cdot q=2$ and $q \cdot x=-\varpi \cdot x$ 
for all $x \in {\rm Q}(R)$. But this is equivalent to $q=-\varpi$ and $e \in {\rm Exc}_{c,\eta}\,M$.
This shows that $f$ is well-defined and bijective.
It is obvious that it does not depend on the choice of $\varpi$. \par
For assertion (a), fix $\iota' \in {\rm Ext}(\iota,\upsilon_c)$ and set $e=f(\iota')$.
As $\iota({\rm Q}(R))$ is saturated in $U$, and as we have 
${\rm Q}(R_c) = \Z (e_0-\varpi) \oplus {\rm Q}(R)$, 
the subgroup $\iota'({\rm Q}(R_c))$ is saturated in $U$ if, 
and only if, the element $\iota'(e_0-\varpi)=e-\varpi$ is 
primitive in $U/{\rm Q}(R)$. But the orthogonal projection $U \rightarrow M^\sharp$ 
induces a $\Z$-linear isomorphism $U/{\rm Q}(R) \isomo M^\sharp$ sending $e-\varpi$ to $e$,
concluding the proof of (a). \par
We finally check the (trivial!) assertion (b).
Let ${\rm O}(M,\eta) \isomo {\rm O}(U,\iota), g \mapsto \tilde{g},$ be 
the isomorphism induced by $F_R$.
By definition, $\tilde{g}$ is the unique element $h$ in ${\rm O}(U,\iota)$ with $h_{|M}=g$.
For $\iota'$ in ${\rm Ext}(\iota,\upsilon_c)$ and $g$ in ${\rm O}(M,\eta)$
we have $\tilde{g}(\iota(\varpi))=\iota(\varpi)$, hence
$g(f(\iota')) = \tilde{g}(f(\iota')) = \tilde{g}(\iota'(e_0 - \varpi)) + \iota(\varpi)=f(\tilde{g} \circ \iota')$.
\end{pf}

\begin{cor}
\label{cor:keypropfertile} 
Fix a root system $R$ and $c \in \resq {\rm Q}(R)$ a fertile class. 
Let $\mathcal{M}^{R,c}$ be the groupoid of triples $(M,\eta,e)$ with 
$(M,\eta)$ in $\mathcal{M}^R$ and $e \in M^\sharp$ primitive 
such that $e \cdot e = 2 - \nu(c)$ and $\eta(e) \equiv -c$. 
Then there is an equivalence of groupoids 
$F_{R,c} : \mathcal{M}^{R,c} \rightarrow \mathcal{U}_{R_c}$ such that 
the following diagram commutes:
\begin{equation*}
\xymatrix{
\mathcal{M}^{R,c}  \ar "2,1"^O \ar "1,2"^{F_{R,c}}  & \mathcal{U}_{R_c}\ar "2,2"^{O'} \\
\mathcal{M}^R \ar "2,2"^{F_R}   & \mathcal{U}_R
}
\end{equation*}
where $O : \mathcal{M}^{R,c} \rightarrow \mathcal{M}$ and 
$O' : \mathcal{U}_{R_c} \rightarrow \mathcal{U}_R$ 
are the respective forgetful functors 
$(M,\eta,e) \mapsto (M,\eta)$ and $(U,\iota') \mapsto (U, \iota' \circ \upsilon_c)$.
\end{cor}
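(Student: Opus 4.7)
The plan is to construct $F_{R,c}$ directly from the equivalence $F_R$ of Definition~\ref{def:MRUR}(c) and the bijection $f$ of Proposition~\ref{prop:orbfibreMc}, then verify it is an equivalence fitting the diagram. On objects, given $(M,\eta,e)$ in $\mathcal{M}^{R,c}$, set $(U,\iota)=F_R(M,\eta)$. Since $e \in {\rm Exc}_{c,\eta}\, M$ by the defining conditions of $\mathcal{M}^{R,c}$, Proposition~\ref{prop:orbfibreMc} furnishes a unique $\iota' \in {\rm Ext}(\iota,\upsilon_c)$ with $f(\iota')=e$; and primitivity of $e$ in $M^\sharp$ is equivalent, by part (a) of that proposition, to $\iota'({\rm Q}(R_c))$ being saturated in $U$. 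Hence $(U,\iota')$ lies in $\mathcal{U}_{R_c}$, and we set $F_{R,c}(M,\eta,e)=(U,\iota')$. Commutativity of the diagram on objects is then immediate: the condition $\iota' \in {\rm Ext}(\iota,\upsilon_c)$ means precisely $\iota' \circ \upsilon_c = \iota$, so $O'\circ F_{R,c}(M,\eta,e)=(U,\iota)=F_R\circ O(M,\eta,e)$.

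For morphisms, an arrow $g : (M,\eta,e) \to (M',\eta',e')$ in $\mathcal{M}^{R,c}$ is an arrow $g : (M,\eta) \to (M',\eta')$ in $\mathcal{M}^R$ satisfying $g(e)=e'$ (where $g$ is extended to $M^\sharp$). Applying $F_R$ produces an arrow $\tilde{g} : (U,\iota) \to (U',\iota_{M'})$ in $\mathcal{U}_R$. The natural formula $(g,\iota') \mapsto \tilde{g}\circ\iota'$ sends ${\rm Ext}(\iota,\upsilon_c)$ to ${\rm Ext}(\iota_{M'},\upsilon_c)$, and the same computation as in the proof of Proposition~\ref{prop:orbfibreMc}(b) yields $f(\tilde{g}\circ\iota')=\tilde{g}(f(\iota'))=g(e)=e'$. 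So $\tilde{g}$ sends the extension attached to $e$ to that attached to $e'$, i.e.~$\tilde{g}$ is a morphism in $\mathcal{U}_{R_c}$; we set $F_{R,c}(g)=\tilde{g}$. Functoriality and diagram commutativity on morphisms are inherited from $F_R$.

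It remains to check that $F_{R,c}$ is fully faithful and essentially surjective. Fully faithful: given $(M,\eta,e)$ and $(M',\eta',e')$, the map on Hom-sets factors as the restriction, via $O$, of the bijective Hom-map of $F_R$ to those $g$ with $g(e)=e'$; by the equivariance noted above, this condition is equivalent to the image $\tilde{g}$ satisfying $\tilde{g}\circ\iota'=\iota''$, which is precisely what it means to be a morphism in $\mathcal{U}_{R_c}$ after applying $O'$. Essentially surjective: given $(U,\iota')$ in $\mathcal{U}_{R_c}$, set $\iota=\iota'\circ\upsilon_c$. The pair $(U,\iota)$ lies in $\mathcal{U}_R$ because saturation is transitive and ${\rm Q}(R)$ is saturated in ${\rm Q}(R_c)$; indeed by the very definition~\eqref{def:QRc}, ${\rm Q}(R_c)/{\rm Q}(R)$ is generated by the class of $e_0-\varpi$, whose multiples $n(e_0-\varpi)$ lie in ${\rm Q}(R)$ only for $n=0$ (as $e_0$ is linearly independent from ${\rm Q}(R)^\sharp$). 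Then let $(M,\eta)=F_R^{-1}(U,\iota)$ and $e=f(\iota')\in {\rm Exc}_{c,\eta}M$; part (a) of Proposition~\ref{prop:orbfibreMc} guarantees $e$ is primitive, so $(M,\eta,e)\in\mathcal{M}^{R,c}$ and $F_{R,c}(M,\eta,e)=(U,\iota')$.

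The only genuine point to verify is the transitivity-of-saturation step in the essentially surjective part; all other assertions are essentially formal consequences of Proposition~\ref{prop:orbfibreMc}. The computation that ${\rm Q}(R)$ is saturated in ${\rm Q}(R_c)$ is the small but crucial observation that links the saturation hypotheses on the two sides of the diagram, and it is the only non-cosmetic ingredient beyond what is already proved in Proposition~\ref{prop:orbfibreMc}.
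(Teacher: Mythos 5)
Your proposal is correct and follows essentially the same route as the paper: define $F_{R,c}$ on objects via the bijection $f$ of Proposition~\ref{prop:orbfibreMc} (using part (a) to match primitivity with saturation), deduce full faithfulness from the equivariance in part (b), and obtain essential surjectivity from the surjectivity of $f$ together with the saturation of ${\rm Q}(R)$ in ${\rm Q}(R_c)$. The only difference is that you spell out the saturation check and the morphism-level verifications in more detail than the paper, which leaves them as immediate consequences.
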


\begin{pf} 
The functor $O'$ is well-defined as ${\rm Q}(R)$ is saturated in ${\rm Q}(R_c)$.
Fix $(M,\eta,e)$ in $\mathcal{M}^{R,c}$ and set $(U,\iota)=F_R(M,\eta)$.
By Proposition~\ref{prop:orbfibreMc}, 
there is a unique $\iota' \in {\rm Ext}(\iota,\upsilon_c)$ with 
saturated image and $f(\iota')=e$: define $F_{R,c}(M,\eta,e)$ as 
the object $(U,\iota')$ in $\mathcal{U}_{R_c}$.
This naturally extends to a functor $F_{R,c} : \mathcal{M}^{R,c} \rightarrow \mathcal{U}_{R_c}$.
By the proposition again part (b), 
$F_{R,c}$ is fully faithful. Also, it trivially satisfies $O' \circ F_{R,c} = F_R \circ O$
(and $\mathcal{M}^{R,c} = \mathcal{M}^R \times_{\mathcal{U}_R} \mathcal{U}_{R,c}$).
To check the essential surjectivity, choose $(U,\iota')$ in $\mathcal{U}_{R_c}$.
As $F_R$ is essentially surjective, 
we may assume $(U,\iota' \circ \upsilon_c)=F_R(M,\eta)$ for some $(M,\eta)$ in $\mathcal{M}^R$. 
By surjectivity of the map \eqref{eq:defforbfibre}, 
there is $e \in M^\sharp$ such that 
$F_{R,c}(M,\eta,e)=(U,\iota')$, 
and we are done.
\end{pf}

By composing $F_{R,c}$ with the natural inverse of $F_{R_c}$ (Sect.~\ref{subsect:gluing}), 
we deduce:

\begin{cor}
\label{cor:MRcMRc} 
We have a natural equivalence $\mathcal{M}^{R,c} \rightarrow \mathcal{M}^{R_c}$,
sending any $(M,\eta,e)$ in $\mathcal{M}^{R,c}$ 
to some $(M',\eta')$ in $\mathcal{M}^{R_c}$ with $M'=M \cap e^\perp$.
\end{cor}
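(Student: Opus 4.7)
The plan is essentially dictated by the sentence preceding the corollary: compose the equivalence $F_{R,c}:\mathcal{M}^{R,c}\to \mathcal{U}_{R_c}$ from Corollary~\ref{cor:keypropfertile} with a quasi-inverse of the equivalence $F_{R_c}:\mathcal{M}^{R_c}\to \mathcal{U}_{R_c}$ from Definition~\ref{def:MRUR}~(c). Both functors are equivalences of groupoids (the former by Corollary~\ref{cor:keypropfertile}, the latter by Proposition~\ref{prop:nikulingroupoids} and Remark~\ref{rem:afternik}~(ii)), so the composition is automatically an equivalence $\mathcal{M}^{R,c}\to\mathcal{M}^{R_c}$. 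The only real content left is therefore to identify the underlying lattice $M'$ of the image of $(M,\eta,e)$ with $M\cap e^\perp$, and to check that the ``residue'' isometry $\eta'$ coming out of the gluing construction is the natural one.

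For the identification of $M'$, I would unpack the constructions. Starting from $(M,\eta,e)$ with image $(U,\iota)=F_R(M,\eta)$ under $F_R$, Corollary~\ref{cor:keypropfertile} produces $(U,\iota')$ in $\mathcal{U}_{R_c}$, where $\iota'$ is the unique extension of $\iota$ along $\upsilon_c$ with $\iota'(e_0-\varpi)=e-\iota(\varpi)$, for any fertile $\varpi$ in $c$. Applying the quasi-inverse of $F_{R_c}$ to $(U,\iota')$ gives the pair $(M',\eta')$ with $M':=U\cap \iota'({\rm Q}(R_c))^\perp$ and $\eta'$ induced by the gluing Lagrangian.

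The key check is then that $U\cap \iota'({\rm Q}(R_c))^\perp$ equals $M\cap e^\perp$. Using $U\subset M^\sharp\perp{\rm Q}(R)^\sharp$ and the fact that ${\rm Q}(R_c)$ is generated over $\Z$ by ${\rm Q}(R)$ and $e_0-\varpi$, a vector $v\in U$ is in $\iota'({\rm Q}(R_c))^\perp$ iff it is orthogonal both to $\iota({\rm Q}(R))$ and to $e-\iota(\varpi)$. The first condition forces $v\in M^\sharp$ (since ${\rm Q}(R)^\sharp$ contains no nonzero element orthogonal to ${\rm Q}(R)$), and the second becomes $v\cdot e=0$, as $\iota(\varpi)\in {\rm Q}(R)^\sharp$ is then orthogonal to $v$. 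So I must finally show $U\cap M^\sharp=M$; this follows from the description of $U$ as a glued lattice in Proposition~\ref{prop:nikulingroupoids} (the gluing Lagrangian ${\rm I}(\iota,\eta)$ is the graph of $\eta$, so its intersection with $M^\sharp\perp 0$ is trivial modulo $M\perp{\rm Q}(R)$). Hence $M'=M\cap e^\perp$, as required.

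The induced isometry $\eta':\resq M'\isomo -\resq {\rm Q}(R_c)$ is then produced by the gluing construction applied to $(U,\iota')$, i.e.\ by the Lagrangian $U/(M'\perp {\rm Q}(R_c))$ inside $\resq M'\perp \resq {\rm Q}(R_c)$; functoriality of $F_{R_c}^{-1}$ makes it canonical. The only step that is not bookkeeping is the verification $U\cap M^\sharp=M$; this is where one must carefully use that $\eta$ in Proposition~\ref{prop:nikulingroupoids} is an \emph{embedding} (so its graph projects injectively onto $\resq M$), but this is immediate from the construction of ${\rm I}(\iota,\eta)$ in~\eqref{iiotaeta}.
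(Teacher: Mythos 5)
Your proposal is correct and follows exactly the paper's route: the paper's entire proof is the one-line composition of $F_{R,c}$ with a quasi-inverse of $F_{R_c}$, with the identification $M'=M\cap e^\perp$ left to the reader as a consequence of the formula $B\simeq L\cap\iota(A)^\perp$ in Proposition~\ref{prop:nikulingroupoids}. Your explicit verification that $U\cap\iota'({\rm Q}(R_c))^\perp=M\cap e^\perp$ (via $U\cap M^\sharp=M$, using that the gluing Lagrangian is a graph) is a correct filling-in of that step.
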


As an example we consider the case $R \simeq {\bf A}_n$ and $c \equiv \varpi_1$.
In this case we have seen $R_c \simeq {\bf A}_{n+1}$,
$\nu(c)=\frac{n}{n+1}$ and so $2-\nu(c)=\frac{n+2}{n+1}$.

\begin{cor}
\label{cor:manpi}
$\mathcal{M}^{{\rm A}_n,\varpi_1}$ is naturally equivalent to
the following groupoids:
\begin{itemize}
\item[(i)] pairs $(M,e)$ with 
$M$ an even lattice such that $\resq M \simeq - \resq {\rm A}_n$, and
$e \in M^\sharp$ a primitive element with $e \cdot e = \frac{n+2}{n+1}$,\ps
\item[(ii)] pairs $(M',\eta')$ with 
$M'$ an even lattice and 
$\eta' : \resq M' \rightarrow - \resq {\rm A}_{n+1}$ an isometry.
\end{itemize}
In this correspondence, 
we have $M' = M \cap e^\perp$, hence ${\rm rk} \,M'\, = \,{\rm rk} M -1$.
\end{cor}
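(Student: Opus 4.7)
The plan is to apply Corollary~\ref{cor:MRcMRc} with $R = {\bf A}_n$ and $c$ the class of $\varpi_1$ in $\resq \,{\rm Q}({\bf A}_n)$. First I verify that $c$ is fertile: $\nu(c) = \varpi_1 \cdot \varpi_1 = n/(n+1) < 2$, so $2 - \nu(c) = (n+2)/(n+1)$, and $R_c \simeq {\bf A}_{n+1}$ by Example~\ref{ex:fertileex}(i). Corollary~\ref{cor:MRcMRc} thus yields an equivalence $\mathcal{M}^{{\bf A}_n, \varpi_1} \isomo \mathcal{M}^{{\bf A}_{n+1}}$, which is exactly the groupoid (ii); the identification $M' = M \cap e^\perp$ is part of the conclusion of that corollary.

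For the equivalence with (i), I would show that the forgetful functor $(M, \eta, e) \mapsto (M, e)$ from $\mathcal{M}^{{\bf A}_n, \varpi_1}$ to the groupoid (i) is an equivalence of groupoids. The crux is the following lemma: under the hypotheses of (i), the class $\overline{e} \in \resq M$ generates the cyclic group $\resq M \simeq \Z/(n+1)$. To prove it, let $k$ be the order of $\overline{e}$ and set $n+1 = mk$; via any fixed isometry $\eta_0 : \resq M \isomo -\resq {\bf A}_n$, write $\eta_0(\overline{e}) = m j\, \varpi_1$ with $\gcd(j, k) = 1$ (the generic order-$k$ element of $\Z/(n+1)$). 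The norm identity $q(\overline{e}) \equiv (n+2)/(2(n+1)) \bmod \Z$, together with $q(\varpi_1) \equiv n/(2(n+1)) \bmod \Z$ on $\resq {\bf A}_n$ and the sign reversal under $\eta_0$, translates after clearing denominators into a congruence of the shape $(\text{multiple of } m) + 1 \equiv 0 \bmod m$, forcing $m = 1$ and $k = n+1$.

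Granting the lemma, essential surjectivity holds because the norm condition together with $\overline{e}$ being a generator forces $\eta_0(\overline{e})$ to lie in the ${\rm O}(-\resq {\bf A}_n)$-orbit of $-\varpi_1$, so composing $\eta_0$ with a suitable element of that group produces an $\eta$ with $\eta(\overline{e}) = -\varpi_1$. Full faithfulness is automatic: any lattice isometry $\sigma : M \to M'$ with $\sigma(e) = e'$ induces $\resq \sigma$ sending the generator $\overline{e}$ to $\overline{e'}$, which determines $\resq \sigma$ completely; when $\eta(\overline{e}) = -\varpi_1 = \eta'(\overline{e'})$, the relation $\eta' \circ \resq \sigma = \eta$ is then verified on the generator, hence on all of $\resq M$.

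The main obstacle is the key lemma on $\overline{e}$ generating $\resq M$; once this short congruence calculation is carried out, the rest is formal bookkeeping within the machinery of Sect.~\ref{sect:fertile} specialized to the pair $(R, c) = ({\bf A}_n, [\varpi_1])$.
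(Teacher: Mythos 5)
Your proposal is correct, and its overall skeleton coincides with the paper's: part (ii) is exactly Corollary~\ref{cor:MRcMRc} applied to $(R,c)=({\bf A}_n,[\varpi_1])$, and part (i) reduces in both cases to showing that for each pair $(M,e)$ as in (i) there is a (necessarily unique) isometry $\eta:\resq M \isomo -\resq {\rm A}_n$ with $\eta(\overline{e})\equiv -\varpi_1$. Where you diverge is in how this existence is established. You first prove, as a separate lemma, that $\overline{e}$ generates $\resq M$ (your congruence is fine: writing $n+1=mk$ with $k$ the order of $\overline{e}$, reducing $-(mj)^2 n\equiv n+2 \bmod 2(n+1)$ modulo $m$ gives $0\equiv 1 \bmod m$), and then invoke transitivity of ${\rm O}(-\resq{\rm A}_n)$ on generators of a fixed norm to adjust an arbitrary $\eta_0$; that transitivity does hold, since multiplication by a unit $j$ with $j^2\,{\rm q}(\varpi_1)\equiv {\rm q}(\varpi_1)$ is an isometry of the cyclic quadratic space. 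The paper instead builds $\eta^{-1}$ in one stroke: the group morphism $-\resq{\rm A}_n\to\resq M$ sending $-\varpi_1\mapsto\overline{e}$ is automatically well defined (the order of $\overline{e}$ divides $n+1=|\resq M|$), preserves ${\rm q}$ because it does so on a generator, is injective by nondegeneracy of $\resq{\rm A}_n$, and is then bijective by counting. This makes your key lemma a free byproduct rather than a prerequisite and avoids both the congruence computation and the orbit argument; but your route is a complete and valid alternative.
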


As will follow from the proof, we may also replace the condition $
\resq M \simeq - \resq {\rm A}_n$ in (i) by 
the (only apparently) weaker condition $\det M = n+1$.

\begin{pf} 
The equivalence with (ii) is Corollary~\ref{cor:MRcMRc}. 
For that with (i), we claim that for $(M,e)$ as in (i) there is
a unique isometry $\eta : \resq M \isomo - \resq {\rm A}_n$
with $\eta(e) \equiv - \varpi_1$. 
The uniqueness is obvious as $\varpi_1$ generates $\resq {\rm A}_n$. 
For the existence, we have a unique group morphism
\(- \resq \mathrm{A}_n \to \resq M\) mapping the generator \(-\varpi_1\) to \(e\). 
It is clearly an isometry, in particular it is injective 
as $\resq \mathrm{A}_n$ is nondegenerate. 
As source and target have the same cardinality, it is bijective. 
This constructs \(\eta^{-1}\).
\end{pf}
 
%

Observe that even unimodular lattices entirely disappeared 
in the statement of Corollary~\ref{cor:MRcMRc}!
Also, the statement of Corollary~\ref{cor:manpi} establishes 
a rather surprising equivalence between the two genera (i) and (ii), 
and we may wish to have a more direct proof of it.
This led us to discover the following result, 
which does include that equivalence and generalizes it in another direction.
Let us stress however that the use of the approach above, 
and more precisely Proposition~\ref{prop:orbfibreMc}, 
will still be crucial to prove some subsequent results, such as
Theorem~\ref{thm:uniqueorbex} and Proposition~\ref{prop:keyuniqueorbitAn}.
See Definition~\ref{def:spevec} for the notion of special vector.

\begin{prop}
\label{prop:d1d2}
Let $n,d_1,d_2$ be integers $\geq 1$ with 
${\rm gcd}(d_1,d_2)=1$ and $d_1d_2$ even.
The following natural groupoids are equivalent:
\begin{itemize}
\item[(i)] pairs $(L,u)$ where 
$L$ is a rank $n$ even lattice of determinant $d_1$ and 
$u$ is a primitive special vector in $L$ with $u \cdot u=d_1d_2$,\ps
\item[(ii)] pairs $(L,v)$ where 
$L$ is a rank $n$ even lattice of determinant $d_1$ and 
$v$ is a primitive vector in $L^\sharp$ with $v \cdot v=d_2/d_1$, \ps
\item[(iii)] pairs $(N,w)$ where 
$N$ is a rank $n-1$ even lattice of determinant $d_2$ and 
$w$ a generator of $\resq\,N$ with ${\rm q}(w) \,\equiv \,- \frac{d_1}{2d_2} \bmod \Z$.
\end{itemize}
In these correspondences, we have 
$u=d_1 v$ and $N=L \cap u^\perp = L \cap v^\perp$.
 \end{prop}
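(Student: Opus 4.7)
The strategy is to establish the equivalence (i) $\leftrightarrow$ (iii) as a direct instance of Example~\ref{ex:orthospecial}, and the equivalence (i) $\leftrightarrow$ (ii) via the explicit scalar bijection $u \leftrightarrow v := u/d_1$. The compatibility $N = L \cap u^\perp = L \cap v^\perp$ is then automatic, since $u$ and $v$ are positive scalar multiples of each other. Functoriality is immediate in both cases: an isometry sending $u$ to $u'$ is the same as one sending $v = u/d_1$ to $v'= u'/d_1$.

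For (i) $\leftrightarrow$ (iii) I would apply Example~\ref{ex:orthospecial} with $d = d_1 d_2$ and $m = d_1$. The condition $m \mid d$ holds, and the conditions ``$u$ primitive special with $u \cdot u = d_1 d_2$'' translate exactly into ``$u \in L$ primitive with $u \cdot u = d$ and ${\rm m}(u) = m$'' since $\det L = d_1 = m$. The output pair $(N, w)$ then satisfies $\det N = d \cdot \det L / m^2 = d_2$, $w \in \resq N$ of order $d/m = d_2 = |\resq N|$ (hence a generator), and ${\rm q}(w) \equiv -m^2/(2d) \equiv -d_1/(2d_2) \bmod \Z$, matching (iii) exactly. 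The relation $N = L \cap u^\perp$ is also part of that statement.

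For (i) $\leftrightarrow$ (ii) I would record two preliminary observations: $d_1 L^\sharp \subseteq L$ (since $d_1 = |L^\sharp/L|$), and a vector $v \in L^\sharp$ is primitive if and only if $\gcd\{v \cdot y : y \in L\} = 1$ (otherwise $v$ would be divisible in $L^\sharp$ by this gcd). The forward map $(L,u) \mapsto (L, v := u/d_1)$ is well-defined into (ii): specialness $u \in d_1 L^\sharp$ gives $v \in L^\sharp$, the norm becomes $d_2/d_1$, and $v \cdot L = d_1^{-1}(u \cdot L) = d_1^{-1}\,{\rm m}(u)\,\Z = \Z$, so $v$ is primitive. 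Conversely, given $(L, v)$ in (ii), set $u := d_1 v \in d_1 L^\sharp \subseteq L$; then $u \cdot u = d_1 d_2$ and ${\rm m}(u) = d_1 \gcd(v \cdot L) = d_1$, so $u$ is special.

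The main obstacle is proving primitivity of $u = d_1 v$ in this backward direction. Suppose $u = k u'$ with $k \geq 1$ and $u' \in L$ primitive. Then ${\rm m}(u') = {\rm m}(u)/k = d_1/k$ and $u' \cdot u' = u \cdot u/k^2 = d_1 d_2/k^2$. Integrality of the modulus ${\rm m}(u') \in \Z$ forces $k \mid d_1$; and the universal divisibility ${\rm m}(u') \mid u' \cdot u'$ (since $u' \cdot u'$ belongs to $u' \cdot L$) yields $(d_1/k) \mid (d_1 d_2/k^2)$, i.e.\ $k \mid d_2$. The coprimality hypothesis $\gcd(d_1, d_2) = 1$ then forces $k = 1$, so $u$ is primitive. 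This combinatorial dovetailing of two a priori unrelated divisibilities is the delicate step: without the coprimality, $u$ could genuinely be a proper multiple of a shorter primitive (non-special) vector of $L$, and the correspondence would fail.
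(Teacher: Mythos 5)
Your proposal is correct, and its overall architecture coincides with the paper's: the equivalence (i) $\leftrightarrow$ (iii) is obtained exactly as in the paper, by specializing Example~\ref{ex:orthospecial} to $d=d_1d_2$ and $m=d_1$, and the equivalence (i) $\leftrightarrow$ (ii) is the scalar bijection $u \leftrightarrow u/d_1$. The only genuine divergence is in the one nontrivial step of (i) $\leftrightarrow$ (ii), namely that $u=d_1v$ is primitive in $L$ when $v$ is primitive in $L^\sharp$. The paper first shows that $\resb L$ is cyclic of order $d_1$ generated by the class of $v$ (via $kv\in L \Rightarrow k\,v\cdot v\in\Z \Rightarrow d_1\mid k$), deduces that any factorization $u=mu'$ with $u'\in L$ forces $\gcd(m,d_1)=1$, and concludes $m=\pm1$ by a B\'ezout argument producing $u'/d_1\in L^\sharp$. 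You instead exploit the modulus: from $u=ku'$ you get ${\rm m}(u')={\rm m}(u)/k=d_1/k$, whence $k\mid d_1$ by integrality, while ${\rm m}(u')\mid u'\cdot u'$ gives $k\mid d_2$, and coprimality forces $k=1$. Both arguments are valid and of comparable length; yours isolates more cleanly where the hypothesis $\gcd(d_1,d_2)=1$ enters (two independent divisibilities colliding), whereas the paper's yields as a by-product the structural fact that $\resb L$ is cyclic generated by $\overline{v}$, which it uses again in the remark following the statement.
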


As we shall see during the proof, for all pairs $(L,v)$ as in (ii), 
the group $\resq\, L$ is cyclic of order $d$, generated by the class of $v$, 
which satisfies by definition ${\rm q}(v) \equiv \frac{d_2}{2d_1} \bmod \Z$.
This shows that the genus of $L$ is uniquely determined, and of course, 
that of $N$ is also determined by (iii).\footnote{
It would be possible to give the precise conditions on $(n,d_1,d_2)$ 
such that those genera are non empty, but we shall not do it.}

\begin{pf}  
Let $L$ be an even lattice with $\det L=d_1$.
We have $d_1 L^\sharp \subset L$ and the map $v \mapsto d_1 v$ 
defines a bijection between 
the set of $v \in L^\sharp$ with $v \cdot v=d_2/d_1$, 
and that of $u \in d_1L^\sharp$ with $u \cdot u=d_1d_2$.
Assume we have $v \in L^\sharp$ with 
$v \cdot v = d_2/d_1$ and ${\rm gcd}(d_1,d_2)=1$.
Then $L^\sharp/L$ is cyclic of order $d_1$, 
generated by the class $\overline{v}$ of $v$.
Indeed, for $k \in \Z$ with $kv \in L$ we have 
$k v \cdot v \in \Z$ hence $k \equiv 0 \bmod d_1$.
In particular, the group $d_1L^\sharp/d_1L \simeq \Z/d_1$ is 
generated by $d_1 v$.
It is clear that if \(d_1 v\) is primitive in \(L\) 
then \(v\) is primitive in \(L^\sharp\).
Let us check the converse.
Assume that \(v\) is primitive in \(L^\sharp\), denote \(u = d_1 v\) and 
assume \(u = m u'\) with \(m \in \Z\) and \(u' \in L\).
Since the image of \(u\) generates \(d_1 L^\sharp / d_1 L\) 
we have that \(m\) is coprime to \(d_1\).
Writing \(1 = am + bd_1\) with \(a,b \in \Z\) 
we deduce \(v = m u'/d_1\) with \(u'/d_1 = av + bu' \in L^\sharp\) and so \(m = \pm 1\).
We have shown that $(L,v) \mapsto (L, d_1v)$ is 
an equivalence between the groupoids in (i) and (ii).\par
The equivalence between (i) and (iii) is 
a consequence of Example~\ref{ex:orthospecial} 
in the special case $\det L=d_1$, $d=d_1d_2$ and ${\rm m}(v)=d_1$ 
($v$ is a primitive special vector of $L$), and $\det N=d_2$.
\end{pf}

\begin{remark} 
\label{rem:d1d2noneven}
The statement also holds if we remove 
all four occurrences of the word {\rm even} in it, and 
replace $\resq\,N$ by $\resb\,N$, and
${\rm q}(w) \equiv - \frac{d_1}{2d_2}$ by $w \cdot w \equiv - \frac{d_1}{d_2}$.
\end{remark}

\section{Exceptional vectors in odd unimodular lattices}
\label{sect:excuni}
${}^{}$
Let $L$ be a unimodular lattice of rank $n$.
Recall that if $L$ is odd, then 
$L$ is in the genus of ${\rm I}_n$, so 
the norm of any characteristic vector of $L$ is\footnote{
The characteristic vectors of ${\rm I}_n$ are 
the $(x_i) \in \Z^n$ with $x_i \equiv 1 \bmod 2$.} 
$\equiv n \bmod 8$.
We denote by ${\rm Exc}\, L$ the set of characteristic vectors of $L$ with norm $<8$, and
following~\cite{bachervenkov} we introduce the following definition:

\begin{definition}
\label{def:except}
A unimodular lattice $L$ is called {\it exceptional} if ${\rm Exc}\, L \neq \emptyset$.
\end{definition}

If $L$ is even, then we have ${\rm Exc}\, L=\{0\}$, 
so $L$ is exceptional and we now focus on the odd case. 
We are interested here in the set ${\rm Exc}\,L$, together with its natural action of ${\rm O}(L)$.

\begin{remark} \label{rem:rel_def_exc}
  The sets \(\mathrm{Exc}_{c,\eta}\, M\) introduced in Section \ref{sect:fertile} (and in Proposition \ref{prop:orbfibreMc}) 
  are related to the sets  \(\mathrm{Exc} L\) as follows, which justifies our terminology for the former.
  Indeed, let \(L\) be an odd unimodular lattice of rank \(n \not\equiv 0 \bmod 8\) and let \(k\) be the integer satisfying \(0 < k < 8\) and \(n + k \equiv 0 \bmod 8\).
  Let \(M = L^\mathrm{even}\), so that \(M\) is in the genus of \(\mathrm{D}_n\).
  Realize \(\mathrm{D}_k\) as \(\mathrm{I}_k^\mathrm{even}\).
  There are exactly two isometries \(\eta_1,\eta_2 : \resq M \rightarrow  - \resq \mathrm{D}_k\) mapping \(L/M\) to \(\mathrm{I}_k/\mathrm{D}_k\).
  Set \(c = \tfrac{1}{2} (1, \dots, 1) \in \resq \mathrm{D}_k\).
  Then \(\tfrac{1}{2} \mathrm{Exc}\, L\) is equal to \(\mathrm{Exc}_{c,\eta_1}\, M \sqcup \mathrm{Exc}_{c,\eta_2}\, M\): for \(e \in \mathrm{Exc}\, L\) we have \(e/2 \in M^\sharp \smallsetminus L\) and exactly one of \(\eta_1\), \(\eta_2\) maps \(e/2 + M\) to \(-c\).
\end{remark}

Using ${\rm Char}(A \perp B) = \{ a+b\, |\, a \in {\rm Char}(A), b \in {\rm Char}(B)\}$,
the study of exceptional unimodular lattices is easily reduced to 
that of those with no norm $1$ vectors (see e.g. \cite[Prop. 9.2]{cheuni}):

\begin{lemma}
\label{red:excno1}
Let $L$ be a unimodular lattice of rank $8k+r$ with ${\rm r}_1(L)=0$ and $0\leq r <8$. 
Then $L \perp {\rm I}_s$ is exceptional if, and only if, so is $L$ and $r+s<8$.
\end{lemma}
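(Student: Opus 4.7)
The result is a consequence of two elementary facts about characteristic vectors: the additive decomposition $\mathrm{Char}(L \perp \mathrm{I}_s) = \mathrm{Char}(L) + \mathrm{Char}(\mathrm{I}_s)$ that is recalled just before the statement, and the observation that $\mathrm{Char}(\mathrm{I}_s)$ consists of vectors $(w_1,\dots,w_s)$ with all $w_i$ odd, hence has minimal norm $s$ (attained by $(\pm 1,\dots,\pm 1)$) and all its elements satisfy $w\cdot w \equiv s \bmod 8$. I would first make these two facts explicit and then read off the statement by a case analysis on whether $L$ is odd or even.

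The key numerical remark is that any $v \in \mathrm{Char}(L)$ has norm $\equiv 8k+r \equiv r \bmod 8$, so the possible norms of such $v$ form a subset of $\{r,r+8,r+16,\dots\}$ if $r>0$, and of $\{0,8,16,\dots\}$ if $r=0$ (the value $0$ being attained only by $v=0$, which lies in $\mathrm{Char}(L)$ iff $L$ is even). Consequently $L$ is exceptional precisely when $L$ has a characteristic vector of norm exactly $r$ (for $r>0$), or when $L$ is even (for $r=0$).

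For the implication $(\Leftarrow)$, assuming $L$ is exceptional and $r+s<8$, I would pick $v \in \mathrm{Exc}\,L$ of norm $r$ (or $v=0$ when $r=0$, which is characteristic because $L$ is even) and set $w = (1,\dots,1) \in \mathrm{I}_s$, which has norm $s$ and is characteristic. Then $v+w \in \mathrm{Char}(L\perp \mathrm{I}_s)$ has norm $r+s<8$, so $L\perp\mathrm{I}_s$ is exceptional.

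For the converse $(\Rightarrow)$, take any $v+w \in \mathrm{Exc}(L \perp \mathrm{I}_s)$ with $v \in \mathrm{Char}\,L$ and $w \in \mathrm{Char}\,\mathrm{I}_s$. The inequality $v\cdot v + w\cdot w \le 7$ together with $w\cdot w \ge s$ forces $v\cdot v \le 7-s$; combined with $v\cdot v \ge 0$ and $v\cdot v \equiv r \bmod 8$, this forces $v\cdot v = r$ (respectively $v\cdot v=0$, so $v=0$ and $L$ is even, when $r=0$). In either situation $v \in \mathrm{Exc}\,L$, so $L$ is exceptional, and the bound $v\cdot v = r \le 7-s$ rewrites as $r+s<8$. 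There is no serious obstacle here: everything reduces to the additivity of characteristic vectors and the mod $8$ constraint on their norms; the only tiny subtlety is the handling of the degenerate case $r=0$, where one must remember that the characteristic vector of $L$ forced to have norm zero must itself be zero, forcing $L$ to be even.
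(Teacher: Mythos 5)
Your proof is correct and follows exactly the route the paper indicates (it states the lemma without a written proof, pointing only to the additivity $\mathrm{Char}(A\perp B)=\mathrm{Char}(A)+\mathrm{Char}(B)$ and a reference): you combine that additivity with the congruence $\xi\cdot\xi\equiv n\bmod 8$ for characteristic vectors of a rank-$n$ unimodular lattice and the minimal norm $s$ of $\mathrm{Char}(\mathrm{I}_s)$. The handling of the $r=0$ case (forcing $v=0$ and $L$ even) is the right way to close the only subtle point, so nothing is missing.
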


Assume $L$ is odd exceptional with ${\rm r}_1(L)=0$. 
This clearly forces $n \not\equiv 0,1 \bmod 8$.
We first recall some results in \cite[\S 9.1]{cheuni}.

\begin{itemize}
\item[(i)] For $n \equiv 2, 3 \bmod 8$, we have $|{\rm Exc}\,L|=2$.\ps
\item[(ii)] For $n \equiv 4 \bmod 8$, we have $2 \leq |{\rm Exc}\, L| \leq 2n$. \ps
\end{itemize}

The situation in case (ii) is more interesting. Indeed, setting $M=L^{\rm even}$,
there are exactly $3$ unimodular lattices in $M^\sharp$ containing $M$, 
namely $L$ and two others called the {\it companions} of $L$. 
Those two companions are odd, so have even part $M$ as well.
If $L$ is exceptional with ${\rm r}_1(L)=0$, 
exactly one of its two companions $L'$ satisfies ${\rm r}_1(L') \neq 0$.
It is non exceptional, satisfies ${\rm O}(L')={\rm O}(M)$ and 
the map $v \mapsto v/2$ induces a natural ${\rm O}(M)$-equivariant bijection 
${\rm Exc}\,L \isomo {\rm R}_1(L')$ 
(in particular, ${\rm O}(M)$ preserves ${\rm Exc}\, L$). 
As we have ${\rm W}(L)={\rm W}(L')={\rm W}(M)$, 
and ${\rm W}(D_r)^{\pm}$ trivially acts transitively on ${\rm R}_1({\rm I}_r)$, 
these descriptions show:

\begin{prop}
\label{prop:orbex1} 
Assume $L$ is an exceptional odd unimodular lattice of rank $n$ 
with $n \equiv 2,3,4 \bmod 8$ and ${\rm r}_1(L)=0$. 
Then ${\rm W}(L)^{\pm}$ acts transitively on ${\rm Exc}\,L$.
\end{prop}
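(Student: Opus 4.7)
My plan is to separate the two cases $n\equiv 2,3\bmod 8$ and $n\equiv 4\bmod 8$, each reduced to the structural statements (i) and (ii) recalled immediately before the proposition.

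For $n\equiv 2,3\bmod 8$, statement (i) gives $|{\rm Exc}\,L|=2$. Since ${\rm Char}\,L$ is stable under $v\mapsto -v$ and the norm is $(-1)$-invariant, the two elements of ${\rm Exc}\,L$ must form an opposite pair $\{\xi,-\xi\}$, which are exchanged by $-1\in {\rm W}(L)^{\pm}$. This case requires nothing beyond citing (i).

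For $n\equiv 4\bmod 8$ I would use the companion $L'$ of $L$ with ${\rm r}_1(L')\neq 0$ provided by (ii), set $M=L^{\rm even}=(L')^{\rm even}$, and exploit the ${\rm O}(M)$-equivariant bijection $\phi:{\rm Exc}\,L\isomo {\rm R}_1(L')$, $v\mapsto v/2$. The key observation is that ${\rm W}(L)^{\pm}$ is automatically contained in ${\rm O}(M)$: every root of $L$ has even norm, hence lies in $M$, so ${\rm W}(L)={\rm W}(M)$, and of course $-1$ preserves $M$. Consequently $\phi$ is ${\rm W}(L)^{\pm}$-equivariant, and the problem reduces to showing that ${\rm W}(L)^{\pm}={\rm W}(L')^{\pm}$ acts transitively on ${\rm R}_1(L')$. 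Writing $L'=L'_0\perp L'_1$ with $L'_0\simeq {\rm I}_r$ (so $r\geq 1$) and ${\rm r}_1(L'_1)=0$, one has ${\rm R}_1(L')={\rm R}_1(L'_0)$, and a short norm check shows every root of $L'$ is supported entirely in $L'_0$ or in $L'_1$: decomposing $\alpha=\alpha_0+\alpha_1$ with $\alpha\cdot\alpha=2$, both summands contribute non-negative norms, and since $L'_1$ has no norm-$1$ vectors one summand must vanish. Hence ${\rm W}(L')\simeq {\rm W}({\bf D}_r)\times {\rm W}(L'_1)$, so ${\rm W}({\bf D}_r)^{\pm}$ embeds as a subgroup of ${\rm W}(L')^{\pm}$ acting on $L'_0$ as on ${\rm I}_r$; the transitivity of ${\rm W}({\bf D}_r)^{\pm}$ on ${\rm R}_1({\rm I}_r)$ is the standard fact recalled in the paragraph preceding the statement.

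I do not expect any real obstacle. The only conceptual point is upgrading the equivariance of $\phi$ from ${\rm O}(M)$ to ${\rm W}(L)^{\pm}$, but this is immediate from ${\rm W}(L)={\rm W}(M)$, which holds simply because roots are of even norm. Everything else is either a direct citation of (i)--(ii) or a one-line norm computation.
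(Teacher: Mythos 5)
Your proof is correct and follows essentially the same route as the paper: for $n\equiv 2,3\bmod 8$ the two exceptional vectors form an antipodal pair swapped by $-1$, and for $n\equiv 4\bmod 8$ one transfers the problem via the companion $L'$ and the bijection ${\rm Exc}\,L\isomo{\rm R}_1(L')$, using ${\rm W}(L)={\rm W}(L')={\rm W}(M)$ and the transitivity of ${\rm W}({\bf D}_r)^{\pm}$ on ${\rm R}_1({\rm I}_r)$. The only detail you spell out more than the paper does is the splitting of the roots of $L'$ across $L'_0\perp L'_1$, which is indeed the reason the restriction of ${\rm W}(L')^{\pm}$ to $L'_0$ contains ${\rm W}({\bf D}_r)^{\pm}$ (compare Lemma~\ref{lem:surjWpm}).
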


Our first aim now is to pursue this study 
in the quite more subtle case $n \equiv 5 \bmod 8$, 
which is the situation of interest for $n=29$.

\begin{thm}
\label{thm:uniqueorbex} 
Assume $L$ is an exceptional odd unimodular lattice of rank $n$ with 
$n \equiv 5 \bmod 8$ and ${\rm r}_1(L)=0$. Then we have 
$|{\rm Exc}\,L|= 2m$ for $1 \leq m\leq n$, 
and ${\rm W}(L)^{\pm}$ acts transitively on ${\rm Exc}\,L$.
\end{thm}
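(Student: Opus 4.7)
The plan is to apply the framework of Section~\ref{sect:fertile} with $R \simeq {\bf A}_3$ and $M := L^{\mathrm{even}}$ (which lies in the genus of ${\rm D}_n$ since $n+3 \equiv 0 \bmod 8$), and a fertile class $c \in \resq {\rm D}_3 \simeq \Z/4$ of order $4$ (norm $3/4$, so ${\rm R}_c \simeq {\bf A}_4$ and $2-\nu(c) = 5/4$). By Remark~\ref{rem:rel_def_exc}, taking the two isometries $\eta_1, \eta_2 : \resq M \isomo -\resq {\rm D}_3$ mapping $L/M$ to ${\rm I}_3/{\rm D}_3$, there is a decomposition $\tfrac{1}{2}{\rm Exc}\,L = {\rm Exc}_{c,\eta_1}\,M \sqcup {\rm Exc}_{c,\eta_2}\,M$; since $\eta_2 = -\eta_1$, the involution $-1$ on $M^\sharp$ swaps the two components. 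Because every root of $L$ lies in $M$, we have ${\rm W}(L) = {\rm W}(M)$, so it suffices to show that ${\rm W}(M)$ acts transitively on ${\rm Exc}_{c,\eta_1}\, M$ and that this set has cardinality $m$ with $1 \leq m \leq n$.

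The crux is the claim: for distinct $e, e' \in {\rm Exc}_{c,\eta_1}\,M$, the vector $e - e'$ is a root of $M$. Both $e, e'$ lie in the same coset of $M$ in $M^\sharp$ and have norm $5/4$, so $e - e' \in M$ and $\|e-e'\|^2 = \tfrac{5}{2} - 2\, e \cdot e' \in 2\Z_{>0}$; combined with Cauchy--Schwarz $|e \cdot e'|\leq 5/4$, this forces $e \cdot e' \in \{1/4, -3/4\}$. The key observation is that the case $e \cdot e' = -3/4$ cannot occur: there $\|e+e'\|^2 = 1$ and $\eta_1(e+e') \equiv -2c$, the unique element of order $2$ in $-\resq {\rm D}_3$; since $\eta_1$ sends $L/M$ onto the order-$2$ subgroup ${\rm I}_3/{\rm D}_3$, this forces $e+e' \in L$, providing a norm-$1$ vector of $L$ and contradicting ${\rm r}_1(L)=0$. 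This is the main obstacle of the argument and is precisely where the hypothesis ${\rm r}_1(L)=0$ enters; it corresponds morally to the vanishing of the ``${\bf D}_4$-type'' set ${\rm Exc}_{\varpi_2, \eta_1}\, M$ alluded to in the introduction (this is the second fertile class needed to handle the case $n \equiv 5 \bmod 8$).

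With the claim in hand, the reflection ${\rm s}_{e-e'} \in {\rm W}(M)$ satisfies ${\rm s}_{e-e'}(e) = e - (5/4 - 1/4)(e - e') = e'$, yielding transitivity. For the cardinality bound, the Gram matrix of any $k$ distinct elements of ${\rm Exc}_{c, \eta_1}\, M$ equals $I_k + \tfrac{1}{4} J_k$ (with $J_k$ the all-ones matrix of size $k$), whose eigenvalues $1 + k/4$ and $1$ (of multiplicity $k-1$) are strictly positive, so these vectors are linearly independent in $M \otimes \R$ and $k \leq n$. Combining the $-1$-swap of the two components with the bijection $e \mapsto 2e$ from $\tfrac{1}{2}{\rm Exc}\,L$ back to ${\rm Exc}\,L$ then yields $|{\rm Exc}\, L| = 2m$ with $1 \leq m \leq n$ and the desired transitivity of ${\rm W}(L)^{\pm}$.
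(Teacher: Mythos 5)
Your proof is correct, and it takes a genuinely different and more elementary route than the paper's. The paper glues $M=L^{\rm even}$ with ${\rm A}_3$ to a rank-$(n+3)$ even unimodular lattice $U$, identifies your set ${\rm Exc}_{c,\eta_1}\,M$ with extensions of ${\rm A}_3\hookrightarrow U$ to ${\rm A}_4$, uses ${\rm r}_1(L)=0$ to exclude extensions to ${\rm D}_4$, and then invokes a combinatorial lemma on embeddings of root systems (Lemma~\ref{lem:embA3}) to obtain a single orbit sitting inside an ${\bf A}_{m+3}$-component. You replace all of that by a direct inner-product computation in $M^\sharp$: two distinct vectors $e,e'$ of norm $5/4$ in the same class mod $M$ satisfy $e\cdot e'\in\{1/4,-3/4\}$, and the value $-3/4$ is excluded because $e+e'$ would then be a norm-$1$ vector of $L$ --- your exclusion of $-3/4$ is exactly the paper's exclusion of ${\rm D}_4$-extensions, translated back through Proposition~\ref{prop:orbfibreMc}, and you correctly flag this. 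In the remaining case $e-e'$ is a root whose reflection sends $e$ to $e'$, and the Gram matrix $I_k+\frac{1}{4}J_k$ gives $m\le n$. All the supporting steps check out: the decomposition via Remark~\ref{rem:rel_def_exc}, the fact that $-1$ swaps the two components (because $c$ has order $4$), the equality ${\rm W}(L)={\rm W}(M)$, the reflection coefficient $2e\cdot(e-e')/\|e-e'\|^2=1$, and the nonemptiness of each component giving $m\ge 1$. What the paper's longer route buys is the extra structural information of Corollary~\ref{cor:ex2mUmplus3} (the bijection with pairs $(U,C)$ with $C$ an ${\bf A}_{m+3}$-component of ${\rm R}_2(U)$), which is used afterwards for the consistency check against King's mass computations; your argument proves the theorem as stated without yielding that refinement, but is shorter and essentially self-contained.
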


An immediate consequence of this theorem is the following result,
that was empirically observed for $n=29$ in~\cite{cheuni2}.

\begin{cor}
\label{cor:exnorootdim5mod8}
If $L$ is an exceptional unimodular lattice of rank $n \equiv 5 \bmod 8$ 
and with no nonzero vector of norm $\leq 2$,
then we have $|{\rm Exc}\,L|=2$.
\end{cor}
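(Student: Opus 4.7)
This is an immediate corollary of Theorem~\ref{thm:uniqueorbex}, and the plan is to simply extract the bound on $|{\rm Exc}\,L|$ from the triviality of the Weyl group. First I would note that since $n \equiv 5 \bmod 8$, a positive definite unimodular lattice $L$ of rank $n$ is necessarily odd (even unimodular lattices have rank divisible by $8$), so the hypotheses of Theorem~\ref{thm:uniqueorbex} are in force: $L$ is odd exceptional with ${\rm r}_1(L) = 0$.

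Next, the assumption that $L$ has no nonzero vector of norm $\leq 2$ forces ${\rm R}_2(L) = \emptyset$, so ${\rm W}(L)$, being generated by the orthogonal reflections through roots (see Sect.~\ref{subsect:notation}), is trivial. Consequently ${\rm W}(L)^{\pm} = \{\pm {\rm id}_L\}$ has order exactly~$2$.

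To conclude, I would apply Theorem~\ref{thm:uniqueorbex}: since $L$ is exceptional the set ${\rm Exc}\,L$ is non-empty, and ${\rm W}(L)^{\pm}$ acts transitively on it, giving $|{\rm Exc}\,L| \leq |{\rm W}(L)^{\pm}| = 2$. For the matching lower bound, $L$ odd implies $0 \notin {\rm Char}\,L$, so any $v \in {\rm Exc}\,L$ satisfies $v \neq 0$ and hence $-v \neq v$; pairing $v$ with $-v$ shows $|{\rm Exc}\,L|$ is even and positive, hence $\geq 2$. Both inequalities together yield $|{\rm Exc}\,L| = 2$. There is no real obstacle: the only point requiring a moment of care is the parity of the rank forcing $L$ to be odd so that Theorem~\ref{thm:uniqueorbex} applies.
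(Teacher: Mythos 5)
Your proof is correct and follows exactly the route the paper intends: the paper presents this corollary as an immediate consequence of Theorem~\ref{thm:uniqueorbex}, and your argument (rank $\equiv 5 \bmod 8$ forces $L$ odd, absence of roots makes ${\rm W}(L)^{\pm}=\{\pm 1\}$, transitivity bounds $|{\rm Exc}\,L|$ by $2$, and non-emptiness plus the $\pm v$ pairing gives the lower bound) is precisely the intended deduction. Note that the lower bound $|{\rm Exc}\,L|\geq 2$ is also already contained in the theorem's statement $|{\rm Exc}\,L|=2m$ with $m\geq 1$, so your symmetry argument, while fine, is not strictly needed.
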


Note that Lemma~\ref{red:excno1}, together with Proposition~\ref{prop:orbex1} 
and Theorem~\ref{thm:uniqueorbex}, 
imply Theorem~\ref{thm:excsingleorbituni} of the introduction.
As a preliminary, we shall consider certain embeddings of root systems.

\begin{lemma}
\label{lem:orblemmaA3} 
Let $Q$ be an irreducible root lattice. 
Then there is a unique ${\rm O}(Q)$-orbit 
of isometric embeddings $f: {\rm A}_3 \rightarrow Q$, 
unless we have $Q \simeq {\rm D}_m$ with $m\geq 5$ 
and in which case there are two.
\end{lemma}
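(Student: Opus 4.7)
My plan is to first translate the statement into one about sub-root-systems of the root system of \(Q\), then analyse case by case in the irreducible type of \(Q\).

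An embedding \(f : \mathrm{A}_3 \to Q\) has image \(S = f(\mathrm{A}_3)\), an \(\mathrm{A}_3\) sub-root-system of \(Q\), and two embeddings with the same image differ by an element of \(\mathrm{O}(\mathrm{A}_3)\). To reduce counting orbits of embeddings to counting orbits of sub-root-systems, I use the observation that \(-\mathrm{id}_Q \in \mathrm{O}(Q)\) and \(\mathrm{W}(S) \subset \mathrm{W}(Q)\) together imply that the stabilizer \(\mathrm{O}(Q)_S\) always surjects onto \(\mathrm{O}(S) \simeq \mathrm{O}(\mathrm{A}_3)\): if \(w_0 \in \mathrm{W}(S)\) is the longest element, then the outer automorphism \(-w_0\) of \(\mathrm{A}_3\) is realized by \((-\mathrm{id}_Q) \circ w_0\). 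Hence \(\mathrm{O}(Q)\)-orbits of embeddings correspond bijectively to \(\mathrm{O}(Q)\)-orbits of \(\mathrm{A}_3\) sub-root-systems of \(Q\).

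For \(Q \simeq \mathrm{A}_n\) (\(n \geq 3\)), realizing roots as \(e_i - e_j\) and using that \(\mathrm{W}(\mathrm{A}_n) = S_{n+1}\) acts transitively on ordered chains of roots with prescribed inner products, I normalize successively to the simple roots \(\beta_i = e_i - e_{i+1}\), giving exactly one orbit. For \(Q \simeq \mathrm{D}_n\) (\(n \geq 4\)) I similarly normalize \(\beta_1 = e_1 - e_2\), \(\beta_2 = e_2 - e_3\), and enumerate roots \(\beta_3\) orthogonal to \(\beta_1\) with \(\beta_2 \cdot \beta_3 = -1\). Modulo the stabilizer in \(\mathrm{W}(\mathrm{D}_n)\), exactly two options remain: (a) \(\beta_3 = -e_1 - e_2\), whose image is a copy of \(\mathrm{D}_3 \simeq \mathrm{A}_3\) lying in the coordinate \(3\)-plane \(\langle e_1, e_2, e_3\rangle\); and (b) \(\beta_3 = e_3 + e_4\) (the choice of sign absorbed by the stabilizer whenever \(n \geq 5\)), whose image has \(\mathbb{R}\)-span the non-coordinate hyperplane \(\{a_1 + a_2 + a_3 - a_4 = 0\}\) in \(\langle e_1, \dots, e_4\rangle\). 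These two spans are in different \(\mathrm{W}(\mathrm{D}_n)\)-orbits. For \(n \geq 5\), \(\mathrm{O}(\mathrm{D}_n)/\mathrm{W}(\mathrm{D}_n) \simeq \Z/2\) acts by a single sign change which still preserves coordinate subspaces, so (a) and (b) remain distinct and I obtain two orbits. For \(n = 4\), triality extends the outer automorphism group to \(S_3\), and an explicit triality permuting the three Dynkin legs \(\alpha_1, \alpha_3, \alpha_4\) of the standard \(\mathrm{D}_4\) diagram maps the coordinate \(3\)-plane to a non-coordinate hyperplane, merging (a) and (b) into a single orbit.

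For \(Q \simeq \mathrm{E}_n\) (\(n = 6, 7, 8\)), the Borel--de Siebenthal / Dynkin classification of sub-root-systems shows that \(\mathrm{W}(\mathrm{E}_n)\) acts transitively on \(\mathrm{A}_3\) sub-root-systems (the orthogonal sub-root-system of any such \(\mathrm{A}_3\) has a fixed isomorphism type in each case). Combined with \(\mathrm{Out}(\mathrm{E}_7) = \mathrm{Out}(\mathrm{E}_8) = 1\) and the fact that \(\mathrm{Out}(\mathrm{E}_6) = \Z/2\) must preserve a unique orbit, this yields a single orbit in each case. The main obstacle in the proof will be the \(\mathrm{D}_n\) analysis: confirming that (a) and (b) are not conjugate for \(n \geq 5\) requires a careful tracking of how the outer sign-change acts on spans, while the \(n = 4\) merging requires exhibiting a concrete triality element (whose matrix in the \(\mathbb{R}^4\) realization has half-integer entries) and verifying that it genuinely sends the coordinate \(3\)-plane to a non-coordinate hyperplane.
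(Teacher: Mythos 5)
Your argument is correct, and it takes a genuinely different route from the paper: the paper disposes of this lemma by citing Table 4 of King's paper (noting and correcting its error in the case $Q\simeq{\rm D}_4$) and the companion paper on Weyl orbits of embeddings of root systems, whereas you give an explicit argument. Your reduction from orbits of embeddings to orbits of images is exactly the mechanism the paper uses elsewhere (Lemmas~\ref{lem:surjWpm} and~\ref{lem:emban1}): since $-{\rm id}_Q$ and ${\rm W}(f({\rm A}_3))\subset{\rm W}(Q)$ stabilize the image and generate all of ${\rm O}({\rm A}_3)={\rm W}({\bf A}_3)^{\pm}$ on it, orbits of embeddings and of images coincide. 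Your normalizations in types ${\bf A}_n$ and ${\bf D}_n$ are correct, and the span invariant (coordinate $3$-plane versus non-coordinate hyperplane of a coordinate $4$-space) cleanly separates the two ${\rm D}_{m\geq 5}$ orbits even under the full signed permutation group ${\rm O}({\rm D}_m)$; for ${\rm D}_4$ you could avoid exhibiting an explicit half-integral triality matrix by observing that all three candidate subsystems are ${\rm W}({\rm D}_4)$-conjugate to parabolic ${\bf A}_3$'s obtained by deleting one outer node, and that ${\rm O}({\rm D}_4)={\rm Aut}({\bf D}_4)$ contains the $S_3$ of diagram automorphisms permuting these transitively. The one place where your proof is no more self-contained than the paper's is type ${\bf E}$: transitivity of ${\rm W}({\bf E}_n)$ on ${\bf A}_3$-subsystems is a table lookup (Dynkin/Oshima), and your parenthetical remark that the orthogonal subsystems all have the same isomorphism type is only a consistency check, not a proof of conjugacy — so that case still rests on a citation, which is the same level of rigor as the paper's own one-line proof. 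What your approach buys is a transparent, verifiable treatment of precisely the classical cases where the two-orbit phenomenon (and King's tabulation error at ${\rm D}_4$) actually occurs.
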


\begin{pf} 
See Table 4 in \cite{king} (note however that the line $S={\rm A}_3$, $T={\rm D}_j$ of that table 
is incorrect in the case $j=4$, as there is a unique orbit of size $12$) or
the companion paper \cite{chetaiapp}. 
\end{pf}

We use the standard description of the ${\rm A}_n$ lattice, 
with simple roots $\alpha_i= \epsilon_{i+1}-\epsilon_i$ for $1 \leq i \leq n$. 
Here are examples of both types of embeddings ${\rm A}_3 \rightarrow {\rm D}_m$:

\begin{table}[H]
\tabcolsep=3pt
{\scriptsize \renewcommand{\arraystretch}{1.3} \medskip
\begin{minipage}{.45\linewidth}
\centering
Type I\,\,
\dynkin[labels={,,\alpha_2,\alpha_1,\alpha_3},edge length=.8cm,make indefinite edge={1-2}]{D}{5}
\end{minipage}
\begin{minipage}{.53\linewidth}
\centering
Type II\,\,
\dynkin[labels={,\alpha_1,\alpha_2,\alpha_3,},edge length=.8cm,make indefinite edge={1-2}]{D}{5}

\end{minipage}
}
\end{table}
The orthogonal of such an ${\rm A}_3$ in ${\rm D}_m$ is isometric to ${\rm D}_{m-3}$ in Type I, and to ${\rm D}_1 \perp {\rm D}_{m-4}$ in type II (setting ${\rm D}_n= {\rm I}_n^{\rm even}$ as well for $n=1,2,3$).

\begin{lemma}
\label{lem:embA3}
Fix embeddings $f : {\rm A}_3 \rightarrow Q$, 
with $Q$ an irreducible root lattice, 
$\mu :  {\rm A}_3 \rightarrow {\rm A}_4$ and $\nu : {\rm A}_3 \rightarrow {\rm D}_4$. 
Let $\Phi$ denote the set of embeddings $f' : {\rm A}_4 \rightarrow Q$ {\rm extending $f$ via $\mu$},
that is, with $f' \circ \mu = f$. 
Then $\Phi$ has a natural action of the subgroup $W$ of ${\rm W}(Q)$ fixing pointwise $f({\rm A}_3)$, and
\begin{itemize}
\item[(i)] either there is an embedding $f'' : {\rm D}_4 \rightarrow Q$ extending $f$ via $\nu$, \ps
\item[(ii)] or we have $Q \simeq {\rm A}_m$, $|\Phi|=m-3$ and $W$ acts transitively on $\Phi$.
\end{itemize}
\end{lemma}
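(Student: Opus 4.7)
The plan is to split on the isomorphism type of $Q$, using Lemma~\ref{lem:orblemmaA3} to reduce to an explicit representative of the (at most two) orbits of $f : {\bf A}_3 \to Q$. The first thing I would note is that $W$ acts on $\Phi$ by $(w, f') \mapsto w \circ f'$: since $w$ fixes $f(A_3)$ pointwise and $f' \circ \mu = f$, the composition $w \circ f'$ still extends $f$ via $\mu$. The two cases then correspond to whether $Q$ does or does not contain ${\bf D}_4$ as a sub-root system: the Dynkin diagrams of ${\bf D}_m$ ($m\ge 4$), ${\bf E}_6$, ${\bf E}_7$, ${\bf E}_8$ all obviously do, while ${\bf A}_m$ does not.

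For $Q \in \{{\bf D}_m, {\bf E}_6, {\bf E}_7, {\bf E}_8\}$ I would argue that case (i) holds. Fix a ${\bf D}_4$ inside $Q$; it contains a canonical ${\bf A}_3$ which clearly extends to this ${\bf D}_4$ via $\nu$. For $Q = {\bf E}_n$, Lemma~\ref{lem:orblemmaA3} gives a unique ${\rm O}(Q)$-orbit of ${\bf A}_3$-embeddings, so any $f$ is obtained from the ${\bf A}_3 \subset {\bf D}_4 \subset Q$ by composing with an isometry of $Q$, and hence also extends. For $Q = {\rm D}_m$ ($m\ge 5$), Lemma~\ref{lem:orblemmaA3} gives two orbits. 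Type~II is represented by $f(\alpha_i) = \epsilon_i - \epsilon_{i+1}$ ($i=1,2,3$), which sits in the ${\rm D}_4 \subset {\rm D}_m$ on the first four coordinates and extends trivially. Type~I is represented (in the realization ${\rm D}_m = \{x \in \Z^m : \sum x_i \in 2\Z\}$) by $\alpha_2 \mapsto \epsilon_{m-2} - \epsilon_{m-1}$, $\alpha_1 \mapsto \epsilon_{m-1}-\epsilon_m$, $\alpha_3 \mapsto \epsilon_{m-1}+\epsilon_m$; solving $\beta \cdot \alpha_2 = -1$, $\beta \perp \alpha_1, \alpha_3$, $\beta \cdot \beta = 2$ forces $\beta = \pm \epsilon_i - \epsilon_{m-2}$ for some $i \leq m-3$, which is a root of ${\rm D}_m$, and any such $\beta$ completes $f$ to a ${\bf D}_4$-embedding.

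For $Q = {\bf A}_m$ I would apply Lemma~\ref{lem:orblemmaA3} to assume $f(\alpha_i) = \epsilon_i - \epsilon_{i+1}$ for $i=1,2,3$ inside ${\rm A}_m = \{x \in \Z^{m+1} : \sum x_i = 0\}$. The ${\bf A}_4$-extensions via $\mu$ are the roots $\beta \in {\rm A}_m$ with $\beta \perp f(\alpha_1), f(\alpha_2)$ and $\beta \cdot f(\alpha_3) = -1$. Writing $\beta = (c,c,c,c+1,b_5,\dots,b_{m+1})$ and imposing $\sum b_i = -4c-1$ and $4c^2 + (c+1)^2 + \sum b_i^2 = 2$, a short estimate shows $c=0$ is the only integer solution, and then $\sum b_j = -1$, $\sum b_j^2 = 1$ forces exactly one $b_j = -1$. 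This gives $\Phi = \{\epsilon_4 - \epsilon_j \mid 5 \le j \le m+1\}$ of cardinality $m-3$. The pointwise stabilizer $W$ of $f({\bf A}_3)$ in ${\rm W}({\rm A}_m) = S_{m+1}$ is exactly $S_{\{5,\dots,m+1\}} \simeq S_{m-3}$, acting transitively on $\Phi$ by permuting $j$. The analogous calculation for a ${\bf D}_4$-extension (replacing $\beta \cdot f(\alpha_3)=-1$ by $\beta \cdot f(\alpha_2)=-1$ and $\beta \perp f(\alpha_1), f(\alpha_3)$) leads to $\sum b_j^2 = 0$ with $\sum b_j \in \{\pm 2\}$, which is impossible, so case (i) fails here and case (ii) holds.

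The main obstacle is bookkeeping: the Type~I ${\rm D}_4$-extension in ${\rm D}_m$ and the existence/non-existence arguments for ${\rm A}_m$ both hinge on short but explicit root-chases, and one must keep the two orbit types in ${\rm D}_m$ ($m \ge 5$) straight. Once the representative $f$'s are fixed, both the count $|\Phi|=m-3$ and the transitivity of $W$ follow essentially by inspection, and the heart of the proof reduces to a single coordinate parametrisation.
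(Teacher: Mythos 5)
Your proof is correct and follows essentially the same route as the paper's: both reduce via Lemma~\ref{lem:orblemmaA3} to explicit orbit representatives, dispose of the types ${\bf D}$ and ${\bf E}$ by exhibiting a ${\bf D}_4$-extension (including both Type I and Type II embeddings into ${\rm D}_m$), and for $Q\simeq{\rm A}_m$ identify $\Phi$ with the $m-3$ roots $\epsilon_4-\epsilon_j$ permuted transitively by $W\simeq S_{\{5,\dots,m+1\}}$. (The only slip is the coefficient $4c^2$, which should be $3c^2$, the contribution of the first three coordinates; the conclusion $c=0$ is unaffected, and your extra verification that no ${\bf D}_4$-extension exists inside ${\rm A}_m$ is not required for the inclusive disjunction but is harmless.)
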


\begin{pf} 
Observe that neither the hypotheses, nor the conclusions, 
are affected by replacing $f$, $\mu$, $\nu$, by $g\circ f$, $h \circ \mu$ and $k \circ \nu$, 
with $g \in {\rm O}(Q)$, $h \in {\rm O}({\rm A}_4)$ and $k \in {\rm O}({\rm D}_4)$: 
use the bijections $f' \mapsto g \circ f' \circ h^{-1}$ and $f'' \mapsto g \circ f'' \circ k^{-1}$. \par
Assume $Q \simeq {\rm A}_m$. 
By this observation and Lemma~\ref{lem:orblemmaA3}, 
we may assume $Q={\rm A}_m$ and that $f$ and $\mu$ send 
$\alpha_i$ to $\alpha_i$ for $1 \leq i \leq 3$. 
It is the same to give $f' \in \Phi$ and $f'(\alpha_4)$, 
which is any root $\alpha=\epsilon_i - \epsilon_j$ in ${\rm A}_m$ 
with $\alpha \cdot \alpha_1=\alpha \cdot \alpha_2=0$ and $\alpha  \cdot \alpha_3=-1$. 
This is equivalent to $i=4$ and $5 \leq j  \leq m+1$. This shows (ii).\par
Assume $Q \simeq {\rm D}_m$ with $m\geq 5$.
Again we may assume \(Q = \mathrm{D}_m\).
By Lemma~\ref{lem:orblemmaA3} we may assume that 
\(f\) is as in the pictured examples above, 
and we see that (i) holds for both types I and II of $f$.
Assume finally $Q$ has type ${\bf E}$ or ${\bf D}_4$.
Choose any embedding $f'' : {\rm D}_4 \rightarrow Q$.
Then $f'' \circ \nu$ and $f$ are in the same ${\rm O}(Q)$-orbit 
by Lemma~\ref{lem:orblemmaA3}, so (i) holds again.
\end{pf}

We are finally able to prove Theorem~\ref{thm:uniqueorbex}.

\begin{pf}
Let $L$ be a unimodular lattice with odd rank $n \equiv 5 \bmod 8$.
Then $L$ is the unique proper overlattice of its even part $M:=L^{\rm even}$,
and $M$ is in the genus of ${\rm D}_n$. 
As we have $\resq {\rm D}_n \simeq - \resq {\rm A}_3$,
we may choose an isometry $\eta : \resq M \rightarrow  - \resq {\rm A}_3$.
In the notations of Sect.~\ref{sect:fertile}, 
we have $(M,\eta) \in \mathcal{M}^{{\rm A}_3}$, and we may consider
the associated $(U,\iota)=F_{{\rm A}_3}(M,\eta)$, 
with $U$ a rank $n+3$ even unimodular lattice
and $\iota : {\rm A}_3 \rightarrow U$ an embedding with saturated image. 
The Dynkin diagram of $R:={\rm R}_2({\rm A}_3)$, 
labelled by norms of fundamental weights, is
$$\dynkin[labels={\alpha_1,\alpha_2,\alpha_3},labels*={3/4,1,3/4},edge length=.8cm]{A}{3}$$
We are going to apply Proposition~\ref{prop:orbfibreMc} 
to both the fertile classes of $\varpi_1$ and $\varpi_2$,
which satisfy $R_{\varpi_1} \simeq {\bf A}_4$ and $R_{\varpi_2} \simeq {\bf D}_4$.  \ps
(a) We have $2-\varpi_2 \cdot \varpi_2=1$. 
As any element $e \in M^\sharp \smallsetminus L$ satisfies 
$e \cdot e \equiv  \frac{1}{4} \bmod \Z$, 
each norm $1$ vector $e \in M^\sharp$ belongs to $L$ and satisfies 
$\eta(e) \equiv -\varpi_2$. By Proposition~\ref{prop:orbfibreMc}, 
the extensions of $\iota$ to ${\rm Q}(R_{\varpi_2}) \simeq {\rm D}_4$ via $\upsilon_{\varpi_2}$ are thus 
in bijection with norm $1$ vectors in $L$.
As we have ${\rm r}_1(L)=0$ by assumption on $L$, 
there are no such extensions. \ps
(b) We have $2-\varpi_1 \cdot  \varpi_1=5/4$. 
Let $E$ denote the set of norm $5/4$ vectors in $M^\sharp$.
Observe that the ${\rm O}(M)$-equivariant map $E \rightarrow L$, $e \mapsto 2e$, 
induces a bijection $E \isomo {\rm Exc}\,L$. For $e \in E$ we have 
either $\eta(e) \equiv -\varpi_1$ or $\eta(e) \equiv -\varpi_3$, 
and we denote by $E=E_1 \sqcup E_3$ the corresponding partition of $E$. 
It satisfies $E_1=-E_3$. By Proposition~\ref{prop:orbfibreMc}, the set $E_1$ is 
in natural ${\rm O}(M,\eta)$-equivariant bijection with 
the set $\Psi$ of extensions of $\iota$ to $R_{\varpi_1} \simeq {\bf A}_4$
via $\upsilon_{\varpi_1}$. \ps

Let $Q$ be the lattice generated by 
the unique irreducible component of $U$ containing $\iota({\rm A}_3)$.
We apply Lemma~\ref{lem:embA3} to the embedding $f : {\rm A}_3 \rightarrow Q$ defined by $\iota$, 
as well as $\mu = \upsilon_{\varpi_1}$ and $\nu=\upsilon_{\varpi_2}$. 
We have $\Psi=\Phi$. By (a) above, we are not in case (i) of this Lemma, 
so its assertion (ii) is satisfied. 
But the subgroup of ${\rm W}(U)$ fixing pointwise $\iota({\rm A}_3)$ coincides with ${\rm W}(L)$ by 
\cite[Ch. V, \S  3.3, Prop. 2]{bourbaki} and the equalities 
$M=U \cap \iota({\rm A}_3)^\perp$ and ${\rm W}(M)={\rm W}(L)$.
So $E_1$ consists of a single ${\rm W}(L)$-orbit, 
and we have $Q \simeq {\rm A}_{m+3}$ with $m=|E_1|=|E_3|$.
\end{pf}

\begin{cor}
\label{cor:ex2mUmplus3}
Let $m,n \geq 0$ be integers with $n \equiv 5 \bmod 8$.
The set of isometry classes of rank $n$ unimodular lattices
satisfying ${\rm r}_1(L)=0$ and $|{\rm Exc}\, L|=2m$ is in natural bijection
with that of pairs  $(U,C)$ with $U$ a rank $n+3$ even unimodular lattice
and $C$ an irreducible component of ${\rm R}_2(U)$ of type ${\bf A}_{m+3}$.
\end{cor}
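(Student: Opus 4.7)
The plan is to compose the equivalence $F_{{\rm A}_3} : \mathcal{M}^{{\rm A}_3} \isomo \mathcal{U}_{{\rm A}_3}$ of Definition \ref{def:MRUR} with the passage $L \leftrightarrow L^{\rm even}$, packaging the analysis of the proof of Theorem \ref{thm:uniqueorbex} into a bijection. Given $L$ satisfying the hypotheses, set $M = L^{\rm even}$: it lies in the genus of ${\rm D}_n$, so $\resq M \simeq -\resq {\rm A}_3$. Pick any isometry $\eta : \resq M \isomo -\resq {\rm A}_3$, let $(U,\iota) := F_{{\rm A}_3}(M,\eta)$, and let $C$ be the unique irreducible component of ${\rm R}_2(U)$ containing $\iota({\rm R}_2({\rm A}_3))$. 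The proof of Theorem \ref{thm:uniqueorbex} shows that $C \simeq {\bf A}_{|{\rm Exc}\,L|/2 + 3}$, so with $m = |{\rm Exc}\,L|/2$ one has $C \simeq {\bf A}_{m+3}$. The pair $(U,C)$ is well-defined up to isometry: changing $\eta$ by an automorphism of $-\resq {\rm A}_3$ amounts to precomposing $\iota$ with some $\sigma \in {\rm O}({\rm A}_3)$, and hence leaves $C$ unchanged up to an isometry of $U$.

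For the inverse $(U,C) \mapsto L$, pick an isometric embedding $\iota : {\rm A}_3 \hookrightarrow {\rm Q}(C) \subset U$, unique up to the action of ${\rm O}({\rm Q}(C))$ by Lemma \ref{lem:orblemmaA3}. The crucial preliminary is that $\iota({\rm A}_3)$ is actually saturated in $U$: the only nontrivial bilinear-isotropic class of $\resq {\rm A}_3$ (a group of order $4$) is $2\varpi_1 + {\rm A}_3$, and every representative of this class has odd norm, which is impossible in the even lattice $U$. Hence $(U,\iota) \in \mathcal{U}_{{\rm A}_3}$ and we may set $(M,\eta) := F_{{\rm A}_3}^{-1}(U,\iota)$. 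The cyclic group underlying $\resq M$ has a unique nontrivial bilinear-isotropic subgroup, whose ${\rm q}$-value equals $1/2 \not\equiv 0 \bmod \Z$; thus $M$ admits a unique odd unimodular overlattice $L$ in $M^\sharp$, and one has $L^{\rm even} = M$ by construction.

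It remains to check that ${\rm r}_1(L) = 0$, that $|{\rm Exc}\,L| = 2m$, and that the two maps are mutually inverse. The first two mirror exactly steps (a) and (b) of the proof of Theorem \ref{thm:uniqueorbex}, via Proposition \ref{prop:orbfibreMc}. Applied to the fertile class $\varpi_2$, it identifies the norm-$1$ vectors of $L$ with the extensions of $\iota$ to ${\rm Q}(R_{\varpi_2}) \simeq {\rm D}_4$ via $\upsilon_{\varpi_2}$; any such extension has image inside the irreducible component $C \simeq {\bf A}_{m+3}$, which contains no ${\bf D}_4$ subsystem, whence ${\rm r}_1(L) = 0$. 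Applied to $\varpi_1$, it identifies ${\rm Exc}\,L$ (through $e \mapsto 2e$) with $E_1 \sqcup (-E_1)$, where $E_1$ is in natural bijection with the extensions of $\iota$ to ${\rm Q}(R_{\varpi_1}) \simeq {\rm A}_4$ via $\upsilon_{\varpi_1}$; by Lemma \ref{lem:embA3}(ii) applied to $Q = {\rm Q}(C) \simeq {\rm A}_{m+3}$, these number exactly $(m+3)-3 = m$, yielding $|{\rm Exc}\,L| = 2m$. Mutual inversion is then immediate from the functoriality of $F_{{\rm A}_3}$ and the uniqueness of the odd unimodular overlattice of $M$. The sole genuine subtlety I anticipate is the saturation argument above, resolved cleanly by the parity of the norm on the relevant residue class.
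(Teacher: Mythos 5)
Your proof is correct and follows essentially the same route as the paper's: both reduce the statement to the equivalence $F_{{\rm A}_3}$ combined with the analysis in the proof of Theorem~\ref{thm:uniqueorbex} together with Lemmas~\ref{lem:orblemmaA3} and~\ref{lem:embA3}. The only step you elide that the paper makes explicit is why two embeddings ${\rm A}_3 \rightarrow {\rm Q}(C)$ that are merely ${\rm O}({\rm Q}(C))$-conjugate yield isomorphic objects of $\mathcal{U}_{{\rm A}_3}$ (hence isometric lattices $L$): the conjugating isometry must be extended to all of $U$, which holds because ${\rm O}({\rm Q}(C))={\rm W}({\rm Q}(C))^{\pm}$ and the restriction map ${\rm W}(U)^{\pm}\rightarrow {\rm W}({\rm Q}(C))^{\pm}$ is surjective.
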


\begin{pf}
Given the proof of Theorem~\ref{thm:uniqueorbex},
this follows from the equivalence ${\rm F}_{{\rm A}_3}$ and the two
following facts. First, we have ${\rm O}(\resq {\rm A}_3) = \langle - {\rm id} \rangle$
and for $(M,\eta)$ in $\mathcal{M}^{\rm A}_3$ we always have $(M,\eta) \simeq (M,-\eta)$.
Second, if we have $(U,i_1)$ and $(U,i_2)$ in $\mathcal{U}_{{\rm A}_3}$ such that 
$i_1({\rm A}_3)$ and $i_2({\rm A}_3)$ lie 
in a same irreducible root lattice $Q \subset U$ of type ${\rm A}_{m+3}$, 
there is $g \in {\rm O}(U)$ with $g \circ i_1 = i_2$.
Indeed, there is $h \in {\rm O}(Q)$ with $h \circ i_1 = i_2$ by Lemma~\ref{lem:orblemmaA3},
and we conclude as ${\rm O}(Q)={\rm W}(Q)^{\pm}$
and the restriction map ${\rm W}(L)^\pm \rightarrow {\rm W}(Q)^\pm$ is trivially surjective.
\end{pf}

For example, we easily deduce from this and the classification of Niemeier lattices\footnote{
Recall that the isometry group of a Niemeier lattice $N$ 
permutes transitively the irreducible components of ${\rm R}_2(N)$ of same type.},
the number of exceptional vectors of 
the $12$ rank $21$ unimodular lattices with no norm $1$ vectors.
We finally consider the rank $n=29$. 
It is trivial to compute $|{\rm Exc}\, L|$ for each lattice $L$ 
in the list given by Thm.~\ref{thm:uni29no1}. We find:

\begin{thm}
\label{thm:excuni209}
There are $2\,721\,152$ exceptional unimodular lattices of rank $29$ with no norm $1$ vectors. 
The number $\#$ of those lattices having {\rm \texttt{exc}} exceptional vectors is given in Table~\ref{tab:statexc}.
\end{thm}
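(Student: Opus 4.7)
\begin{pf}[Proof plan for Theorem \ref{thm:excuni209}]
The strategy is to compute $|{\rm Exc}\,L|$ for each of the $38\,592\,290$ lattices in the list furnished by Theorem \ref{thm:uni29no1}, then tabulate. The key tool is the linear identity
\[ {\rm r}_3(L) \;=\; 1856 \,-\, 128\,|{\rm Exc}\,L| \,+\, 10\,{\rm r}_2(L), \]
recorded in observation (i) after Theorem \ref{thm:uni29no1}, which we justify from a short dimension count as in \cite{bachervenkov}: the theta series $\theta_L$ and the characteristic theta series $\sum_{v \in {\rm Char}\,L} q^{v\cdot v/8}$ of a rank $29$ odd unimodular lattice both lie in an explicit finite-dimensional space of modular forms of weight $29/2$, and knowing their values on a basis of Eisenstein-type forms yields a single linear relation among ${\rm r}_2(L)$, ${\rm r}_3(L)$ and the number of characteristic vectors of norm $5$ (the only characteristic norm $<8$, since $29 \equiv 5 \bmod 8$), which is exactly $|{\rm Exc}\,L|$.

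Solving for $|{\rm Exc}\,L|$ gives
\[ |{\rm Exc}\,L| \;=\; \frac{1856 + 10\,{\rm r}_2(L) - {\rm r}_3(L)}{128}, \]
so the computation reduces to determining ${\rm r}_2(L)$ and ${\rm r}_3(L)$ for each $L$ in the list. The value ${\rm r}_2(L)=|R_2(L)|$ is already stored with each Gram matrix in \cite{chetaiweb} (as part of the root system data), so only ${\rm r}_3(L)$ requires fresh work, and for this I would call the \texttt{eqfminim} variant of Fincke-Pohst (see \S\ref{subsect:algolat} (a)) on the (good) Gram matrix of $L$ with bound $3$. By Theorem \ref{thm:uniqueorbex} the value obtained is automatically a non-negative even integer in $\{0,2,\dots,58\}$, which serves as a useful sanity check of each individual computation and of the validity of the theta identity.

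It then remains to count: the number of $L$ with $|{\rm Exc}\,L|>0$ gives the stated $2\,721\,152$, and the distribution of $|{\rm Exc}\,L|$ over the list reproduces Table \ref{tab:statexc}. The only real obstacle is computational volume — running Fincke-Pohst $38.6$ million times — but this is entirely routine, easily parallelizable, and much cheaper than the ${\rm BV}$ computation already carried out to prove Theorem \ref{thm:uni29no1} (since the count of norm $\leq 3$ vectors is all that is needed, not their pairwise inner products modulo $2$). As a partial cross-check, one may also verify that the resulting total $\sum_L |{\rm Exc}\,L|$ agrees with the mass-weighted expectation predicted by Corollary \ref{cor:ex2mUmplus3} whenever Niemeier-level data on rank $32$ even unimodular lattices with an ${\bf A}_{m+3}$ component is available.
\end{pf}
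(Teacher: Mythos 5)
Your proposal is correct and matches the paper's (very terse) proof in substance: the paper simply computes $|{\rm Exc}\,L|$ for each lattice in the list of Theorem~\ref{thm:uni29no1} and tabulates, and the theta-series identity you invoke is exactly the one recorded in observation~(i) after that theorem (the paper could equally well enumerate the norm-$5$ vectors of $\Z\xi+2L$ not in $2L$, as noted in the remark following Proposition~\ref{prop:algoLLprime}, but either route is routine). Your cross-check against Corollary~\ref{cor:ex2mUmplus3} via King's masses is also precisely the consistency check the paper carries out after stating the theorem.
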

\vspace{-.5cm}
\begin{table}[H]
\tabcolsep=1.7pt
{\scriptsize \renewcommand{\arraystretch}{1.3} \medskip
\begin{center}
\begin{tabular}{c|c|c|c|c|c|c|c|c|c|c|c|c|c|c|c|c|c|c|c|c|c|c|c}
$\texttt{exc}$
 & \scalebox{.8}{$2$}
 & \scalebox{.8}{$4$}
 & \scalebox{.8}{$6$}
 & \scalebox{.8}{$8$}
 & \scalebox{.8}{$10$}
 & \scalebox{.8}{$12$}
 & \scalebox{.8}{$14$}
 & \scalebox{.8}{$16$}
 & \scalebox{.8}{$18$}
 & \scalebox{.8}{$20$}
 & \scalebox{.8}{$22$}
 & \scalebox{.8}{$24$}
 & \scalebox{.8}{$26$}
 & \scalebox{.8}{$28$}
 & \scalebox{.8}{$30$}
 & \scalebox{.8}{$32$}
 & \scalebox{.8}{$34$}
 & \scalebox{.8}{$36$}
 & \scalebox{.8}{$40$}
 & \scalebox{.8}{$42$}
 & \scalebox{.8}{$46$}
 & \scalebox{.8}{$56$}
 & other \\
$\#$
 & \scalebox{.8}{$2439727$}
 & \scalebox{.8}{$237232$}
 & \scalebox{.8}{$33400$}
 & \scalebox{.8}{$7509$}
 & \scalebox{.8}{$1966$}
 & \scalebox{.8}{$734$}
 & \scalebox{.8}{$257$}
 & \scalebox{.8}{$165$}
 & \scalebox{.8}{$58$}
 & \scalebox{.8}{$40$}
 & \scalebox{.8}{$18$}
 & \scalebox{.8}{$19$}
 & \scalebox{.8}{$7$}
 & \scalebox{.8}{$7$}
 & \scalebox{.8}{$2$}
 & \scalebox{.8}{$3$}
 & \scalebox{.8}{$2$}
 & \scalebox{.8}{$1$}
 & \scalebox{.8}{$2$}
 & \scalebox{.8}{$1$}
 & \scalebox{.8}{$1$}
 & \scalebox{.8}{$1$}
  & \scalebox{.8}{$0$} \end{tabular}
\caption{\small Number $\#$ of exceptional unimodular lattices of rank $29$ 
having $\texttt{exc}$ exceptional vectors.}
\label{tab:statexc}
\end{center}
}
\end{table}
\vspace{-.5cm}
This table is coherent with the computation in \cite{king} 
of the mass ${\rm m}_{32}^{{\rm II}}(R)$ of the rank $32$ even unimodular lattices 
with root system $\simeq R$. Indeed, if ${\rm N}_m(R)$ denotes 
the number of irreducible components of $R$ of type ${\bf A}_m$, 
Corollary~\ref{cor:ex2mUmplus3} shows that 
$f_{2m} = 2\,\sum_{R} {\rm N}_{m+3}(R)\, |{\rm W}(R)|\, {\rm m}_{32}^{\rm II}(R)$ 
is a (heuristically close) lower bound for the number of isomorphism classes of $L$ 
with ${\rm r}_1(L)=0$ and $|{\rm Exc}\,L|=2m>0$.
We numerically find $\lceil f_0 \rceil = 35\,026\,757$, and for $m\geq 1$:
\vspace{-.4cm}
\begin{table}[H]
\tabcolsep=1.7pt
{\scriptsize \renewcommand{\arraystretch}{1.3} \medskip
\begin{center}
\begin{tabular}{c|c|c|c|c|c|c|c|c|c|c|c|c|c|c|c|c|c|c|c|c|c|c|c}
$2m$
 & \scalebox{.8}{$2$}
 & \scalebox{.8}{$4$}
 & \scalebox{.8}{$6$}
 & \scalebox{.8}{$8$}
 & \scalebox{.8}{$10$}
 & \scalebox{.8}{$12$}
 & \scalebox{.8}{$14$}
 & \scalebox{.8}{$16$}
 & \scalebox{.8}{$18$}
 & \scalebox{.8}{$20$}
 & \scalebox{.8}{$22$}
 & \scalebox{.8}{$24$}
 & \scalebox{.8}{$26$}
 & \scalebox{.8}{$28$}
 & \scalebox{.8}{$30$}
 & \scalebox{.8}{$32$}
 & \scalebox{.8}{$34$}
 & \scalebox{.8}{$36$}
 & \scalebox{.8}{$40$}
 & \scalebox{.8}{$42$}
 & \scalebox{.8}{$46$}
 & \scalebox{.8}{$56$}
 & other \\
$\lceil f_{2m} \rceil $
 & \scalebox{.8}{$2323793$}
 & \scalebox{.8}{$211670$}
 & \scalebox{.8}{$28918$}
 & \scalebox{.8}{$5847$}
 & \scalebox{.8}{$1558$}
 & \scalebox{.8}{$551$}
 & \scalebox{.8}{$210$}
 & \scalebox{.8}{$120$}
 & \scalebox{.8}{$46$}
 & \scalebox{.8}{$32$}
 & \scalebox{.8}{$16$}
 & \scalebox{.8}{$16$}
 & \scalebox{.8}{$6$}
 & \scalebox{.8}{$7$}
 & \scalebox{.8}{$2$}
 & \scalebox{.8}{$3$}
 & \scalebox{.8}{$2$}
 & \scalebox{.8}{$1$}
 & \scalebox{.8}{$2$}
 & \scalebox{.8}{$1$}
 & \scalebox{.8}{$1$}
 & \scalebox{.8}{$1$}
  & \scalebox{.8}{$0$} \end{tabular}
\end{center}
}
\end{table}
\vspace{-.5cm}
\noindent This is consistent with 
the table of Theorem~\ref{thm:excuni209} and explains all $0$ values there.

\begin{remark}
\label{rem:dim6mod8} 
{\rm (Case $n\equiv 6 \bmod 8$) } 
Assume $L$ is a unimodular lattice of rank $n \equiv 6 \bmod 8$ with ${\rm r}_1(L)=0$.
Then ${\rm Exc}\,L$ can be studied using similar ideas.
We can prove that each of the two fibers of the natural map 
$f: {\rm Exc}\, L \rightarrow {\rm Char}(L)/2L^{\rm even}$ 
is a single ${\rm W}(L)^{\pm}$-orbit, or empty.
To be slightly more precise, let us fix an isometry 
$\eta : \resq L^{\rm even} \rightarrow - \resq ({\rm A}_1 \perp {\rm A}_1)$ 
and denote by $(U,\beta_1,\beta_2)$ the rank $n+2$ even unimodular lattice 
equipped with two orthogonal roots
associated to $(L^{\rm even},\eta)$. 
Denote by $C_i$ the irreducible component of ${\rm R}_2(U)$ containing $\beta_i$. 
We can show $C_1 \neq C_2$, and that 
the fibers of $f$ are naturally indexed by $\{1,2\}$ and have size $2{\rm h}(C_i)-4$, 
where ${\rm h}(R)$ denotes the Coxeter number of $R$.
\end{remark}

\section{Even lattices of prime (half-)determinant}
\label{sect:hdiscp} 

Our main aim in this section is to prove Theorems~\ref{thmintro:hdiscptable}
and~\ref{thmintro:hdiscptablebvp} of the introduction. 
We start with some information about the genera $\mathcal{G}_{n,p}$ briefly introduced in Sect.~\ref{subsect:hnp}.

\subsection{The genera $\mathcal{G}_{n,p}$}
\label{subsect:residuegnp} 
Let $n\geq 1$ be an integer and $p$ an odd prime.
We are interested in this section in 
the even lattices of rank $n$ and determinant $d=p$ or $2p$.
A mod $2$ inspection shows that the $d=p$ (resp.\ $d=2p$) case 
is only possible for $n$ even (resp.\ odd). 
Moreover, as is well-known, these lattices form a single genus 
that we will denote by $\mathcal{G}_{n,p}$, 
and which satisfies the following properties: \ps

-- For $n$ even, we have 
$\mathcal{G}_{n,p} \neq \emptyset \,\iff\,n+p \equiv 1 \bmod 4$. 
For $L \in \mathcal{G}_{n,p}$, the nonzero values of 
$x \mapsto x \cdot x \bmod \Z$ on $\resq L \simeq \Z/p$ 
are the $\frac{a}{p}$ such that the Legendre symbol $(\frac{a}{p})$ is {\scriptsize $(-1)^{(n+p-1)/4}$}. 
We denote below by ${\rm R}_{n,p}$ the isometry class of this quadratic space. 
Note that {\scriptsize $(-1)^{(n+p-1)/4}$}$(\frac{2}{p})$ only depends on $p \bmod 4$.  \ps

-- For $n$ odd, we always have 
$\mathcal{G}_{n,p} \neq \emptyset$, and for $L \in \mathcal{G}_{n,p}$, 
we have $\resq L \simeq {\rm R}_{n-1,p} \perp \resq {\rm A}_1$ for $n+p \equiv 2 \bmod 4$, 
and $\resq L \simeq {\rm R}_{n+1,p} \perp -\resq {\rm A}_1$ otherwise. \ps

This information is gathered in Table~\ref{tab:generap2p} below: 
the sign $\pm$ is the Legendre symbol $(\frac{a}{p})$ if 
the residual {\it quadratic} form ${\rm q}(x)=\frac{x \cdot x}{2}\bmod \Z$ 
takes the nonzero value $a/p \bmod \Z$ on $\resq L$, and 
we write it in blue if we have $n$ odd and if ${\rm q}$ takes the value $-1/4$ 
(rather than $1/4$).

\begin{table}[H]
\tabcolsep=6pt
{\scriptsize \renewcommand{\arraystretch}{1.7} \medskip
\begin{center}
\begin{tabular}{c|c|c|c|c|c|c|c|c}
$p \bmod 4 \, \backslash \, n \bmod 8$ & $0$ & $1$ & $2$ & $3$ & $4$ & $5$ & $6$ & $7$ \\
\hline
$1$ & $+$ & $+$ & & ${\color{blue} -}$ & $-$ & $-$ & &  ${\color{blue} +}$ \\
\hline
$3$ & & ${\color{blue} +}$ & $+$ & $+$ & & ${\color{blue} -}$  &  $-$ &  $-$
\end{tabular}
\end{center}
}
\caption{\small The genera $\mathcal{G}_{n,p}$.}
\label{tab:generap2p}
\end{table}

In the important cases $p\leq 7$ for us, Table~\ref{tab:generap2p357} gives 
an example of a lattice in $\mathcal{G}_{n,p}$, up to adding copies of ${\rm E}_8$. 
In this table, $L'$ denotes the orthogonal of a root in $L$, 
${\rm F}_8$ denotes the unique even lattice of determinant \(5\) 
containing ${\rm E}_7 \perp \langle 10 \rangle$ and 
${\rm S}_2$ denotes a rank $2$ lattice with Gram matrix 
{\tiny $\left[\begin{array}{cc} 2 & -1 \\ -1 & 4\end{array}\right]$}.

\begin{table}[H]
\tabcolsep=6pt
{\scriptsize \renewcommand{\arraystretch}{1.7} \medskip
\begin{center}
\begin{tabular}{c|c|c|c|c|c|c|c|c}
$p  \, \backslash \, n$  & $1$ & $2$ & $3$ & $4$ & $5$ & $6$ & $7$ & $8$ \\
\hline
$3$ & $\langle 6 \rangle$ & ${\rm A}_2$ & ${\rm A}_1 \perp {\rm A}_2$ & & ${\rm A}_5$  &  ${\rm E}_6$ &  ${\rm A}_1 \perp {\rm E}_6$ & \\
\hline
$5$ & $\langle 10 \rangle$ & & ${\rm A}'_4$ & ${\rm A}_4$ & ${\rm A}_1 \perp {\rm A}_4$ & &  ${\rm F}'_8$ & ${\rm F}_8$ \\
\hline
$7$ & $\langle 14 \rangle$ & ${\rm S}_2$ & ${\rm A}_1 \perp {\rm S}_2$ & & ${\rm A}'_6$  &  ${\rm A}_6$ &  ${\rm A}_1 \perp {\rm A}_6$ &
\end{tabular}
\end{center}
}
\caption{\small Some lattices in $\mathcal{G}_{n,p}$, for $n \leq 8$ and $p \leq 7$.}
\label{tab:generap2p357}
\end{table}

We conclude with a table relevant for the application of Proposition~\ref{prop:d1d2}.
In this table, it is convenient to include the genus $\mathcal{G}_{n,1}$ of rank $n$ even lattices 
with (half-)determinant $1$.

\begin{table}[H]
\tabcolsep=4pt
{\scriptsize \renewcommand{\arraystretch}{1.7} \medskip
\begin{center}
\begin{tabular}{c|c|c|c|c|c|c|c|c}
$p \, \backslash \, n \bmod 8$ & $1$ & $2$ & $3$ & $4$ & $5$ & $6$ & $7$ & $8$  \\
\hline
$1$ & $\frac{1}{2}, \frac{5}{2}, \frac{9}{2}, \frac{13}{2}$ & & & & & & $\frac{3}{2}, \frac{7}{2}, \frac{11}{2}$ & \\
\hline
$3$ & $\frac{1}{6}, \frac{13}{6}$ & $\frac{2}{3}, \frac{8}{3}, \frac{14}{3}$ & $\frac{7}{6}$ & & $\frac{5}{6}$ & $\frac{4}{3}, \frac{10}{3}$ & $\frac{11}{6}$ & \\
\hline
$5$ & $\frac{1}{10}, \frac{9}{10}$  & & $\frac{3}{10}, \frac{7}{10}$ & $\frac{4}{5},\, \frac{6}{5}, \frac{14}{5}$ & $\frac{13}{10}$ & & $\frac{11}{10}$ & $\frac{2}{5}, \, \frac{8}{5},\, \frac{12}{5}$  \\
\hline
$7$ & $\frac{1}{14}, \frac{9}{14}$ & $\frac{2}{7}, \frac{4}{7}, \frac{8}{7}$ & $\frac{11}{14}$ & & $\frac{5}{14},\,\frac{13}{14}$ & $\frac{6}{7}, \frac{10}{7}, \frac{12}{7}$ &  $\frac{3}{14}$ & 
\end{tabular}
\end{center}
}
\vspace{-5 mm}
\caption{\small Rationals $\frac{m}{d}$ with $m\leq 14$ and ${\rm gcd}(m,d)=1$ satisfying $\lambda \equiv x.x \bmod 2\Z$ for some nonzero $x$ in the residue of the lattices in $\mathcal{G}_{n,p}$, with $d=p$ or $2p$.}
\label{tab:tabd1d2}
\end{table}

\subsection{Proof of Theorem~\ref{thmintro:hdiscptable}: the orbit method}
\label{subsect:orbitmethod}
We focus on the most important cases $n\geq 23$, referring the reader to \cite{chetaiweb} for the easier cases $n\leq 22$.
As already explained in Sect.~\ref{subsect:hnp}, 
our strategy is to deduce everything from 
the classification of unimodular lattices of rank $\leq 29$, 
following the diagram in Figure~\ref{fig:imageliensgenres} 
that we now decipher. 
\begin{itemize}\ps
\item[(a)] Each disc in Figure~\ref{fig:imageliensgenres} 
represents the genus $\mathcal{G}$ of lattices of the given dimension, 
determinant and parity, where even genera are blue, and odd in yellow. 
Each blue genus is either of the form $\mathcal{G}_{n,p}$, or equal to 
the (already known) genus\footnote{We may think of $\mathcal{G}_n$ as $\mathcal{G}_{n,1}$.} of even Euclidean lattices of determinant $1$ (case $n$ even) or $2$ (case $n$ odd). \ps

\item[(b)] Each arrow $\mathcal{G} \overset{t}{\rightarrow} \mathcal{G}'$ in Figure~\ref{fig:imageliensgenres} 
means that we shall deduce, 
from a set $\mathcal{L}$ of representatives for the isometry classes in $\mathcal{G}$, 
a set $\mathcal{L}'$ of representatives of that in $\mathcal{G}'$, 
using the vectors in each $L \in \mathcal{L}$ of a certain type determined by the label $t$. \ps
\item[(c)] Each label $t$ as in (b) consists of an integer $\nu$ and an attribute $\chi$ 
which is either $\texttt{char}$, $\texttt{exc}$, $\texttt{sp}$ or empty. 
A vector of {\it type $t$} in a lattice $L$ in $\mathcal{G}$ is then defined as 
a primitive\footnote{
As $\nu$ is always square free, the primitive condition is automatic.} 
vector of $L$ of norm $\nu$, which is respectively 
characteristic (\S \ref{subsect:charvec}), exceptional (Definition~\ref{def:except}), 
special (Definition~\ref{def:spevec}), or with no extra property.
We also write $t=(\nu,\chi)$.
\end{itemize}

The general algorithm is as follows. 
Start with an arrow $\mathcal{G} \overset{t}{\rightarrow} \mathcal{G}'$ in Figure~\ref{fig:imageliensgenres} 
and a subset $\mathcal{L}$ of $\mathcal{G}$ as in (b). 
Set $\mathcal{L}'=\emptyset$, and for each $L \in \mathcal{L}$, 
do the following: 
\begin{itemize}
{\sff
\item[A1.] Compute the set $S$ of all vectors of type $t$ in $L$,\par
\item[A2.] Compute a set of generators of ${\rm O}(L)$,\par
\item[A3.] Determine representatives $v_1,\dots,v_n$ for the action of ${\rm O}(L)$ on $S$,\par
\item[A4.]  For $i=1,\dots,n$, compute a Gram matrix for $v_i^\perp \cap L$, and add it to $\mathcal{L}'$.
}
\end{itemize}

\begin{prop}
\label{prop:algoLLprime}
For each $\mathcal{G} \overset{t}{\rightarrow} \mathcal{G}'$ and $\mathcal{L}$ as above, 
the algorithm above returns 
a set $\mathcal{L}'$ of representatives for the isometry classes in $\mathcal{G}'$.
\end{prop}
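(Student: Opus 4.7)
The plan is straightforward: the algorithm tautologically returns the set $\mathcal{L}'$ whose elements are Gram matrices of the lattices $v_i^\perp \cap L$, for $L$ ranging over $\mathcal{L}$ and $v_1,\dots,v_n \in L$ ranging over representatives of the ${\rm O}(L)$-orbits on the vectors of type $t$ in $L$. The content of the proposition is therefore that each $v^\perp \cap L$ lies in $\mathcal{G}'$, and that the induced map
\[
\bigsqcup_{L \in \mathcal{L}} \{\text{type } t \text{ vectors in } L\}\big/{\rm O}(L) \;\longrightarrow\; \mathcal{G}'/\!\simeq,\quad (L,v)\longmapsto v^\perp \cap L,
\]
is a bijection.
\ps

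First I would observe that, by the definition of $\mathcal{L}$ as a set of isometry-class representatives, the source of this map is canonically in bijection with isomorphism classes of pairs $(L,v)$ with $L$ in $\mathcal{G}$ and $v$ of type $t$. The statement thus reduces to proving, for each arrow $\mathcal{G} \overset{t}{\to} \mathcal{G}'$ in Figure~\ref{fig:imageliensgenres}, the \emph{orbit method equivalence} recalled in \S\ref{subsect:hnp}: the forgetful functor $(L,v) \mapsto v^\perp \cap L$ is an equivalence between the groupoid of such pairs and the groupoid $\mathcal{G}'$.
\ps

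Next I would verify this equivalence arrow by arrow, using the gluing toolkit of Section~\ref{subsect:gluing} together with the residue bookkeeping of \S\ref{subsect:residuegnp}. For arrows with no attribute, Example~\ref{ex:orthospecial} (or its odd-lattice variant, Remark~\ref{rem:d1d2noneven}) translates ``primitive vector of norm $\nu$'' into an equivalent pair $(N,w)$ with $N \simeq v^\perp \cap L$, and Table~\ref{tab:generap2p} confirms that the residue of $N$ matches the one prescribed by $\mathcal{G}'$. For the attribute $\texttt{char}$ I would invoke Example~\ref{ex:charequivalencecharvectors}; for $\texttt{sp}$, Proposition~\ref{prop:d1d2}, with Table~\ref{tab:tabd1d2} exhibiting the relevant rationals $\nu/d$; for $\texttt{exc}$, Remark~\ref{rem:rel_def_exc} combined with Proposition~\ref{prop:orbfibreMc} and Corollary~\ref{cor:MRcMRc}, which identify exceptional vectors with extensions of embeddings of root lattices and yield the target genus after passing to $v^\perp \cap L$.
\ps

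The hard part will not be the computational steps A1--A4 themselves (Fincke--Pohst for A1, Plesken--Souvignier for A2, a standard orbit computation for A3, elementary linear algebra for A4; see~\S\ref{subsect:algolat}). Rather, it lies in checking, for each arrow of Figure~\ref{fig:imageliensgenres}, that the forgetful functor is \emph{fully faithful}: equivalently, that every automorphism of $v^\perp \cap L$ extends uniquely to an element of the stabilizer of $v$ in ${\rm O}(L)$, and that no isometry class of $\mathcal{G}'$ is missed. Both assertions follow from the equivalences of Sections~\ref{subsect:gluing} and~\ref{sect:fertile} once the parity, residue and determinant data of the source and target genera are matched via the tables of \S\ref{subsect:residuegnp}.
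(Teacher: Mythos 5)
Your reduction of the proposition to a groupoid statement, and your identification of the three ingredients (Example~\ref{ex:orthospecial} for empty attribute, Example~\ref{ex:charequivalencecharvectors} for \texttt{char} and \texttt{exc}, Proposition~\ref{prop:d1d2} for \texttt{sp}) match the paper's proof. But there is a genuine gap at the final step, and moreover the step you propose in its place is false as stated. Each of the three cited results gives an equivalence between the groupoid $\mathcal{A}$ of pairs $(L,v)$ and a groupoid $\mathcal{B}$ of pairs $(N,w)$ with $N \in \mathcal{G}'$ and $w \in \resq N$ of prescribed order $o$ and prescribed ${\rm q}(w)$ --- \emph{not} an equivalence with $\mathcal{G}'$ itself. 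To conclude that the algorithm lists each class of $\mathcal{G}'$ once and only once, one must show that the forgetful functor $\mathcal{B} \rightarrow \mathcal{G}'$, $(N,w) \mapsto N$, induces a bijection on isomorphism classes. You assert instead that it is \emph{fully faithful}, i.e.\ that every automorphism of $N = v^\perp \cap L$ extends to the stabilizer of $v$ in ${\rm O}(L)$; this is equivalent to ${\rm O}(N,w) = {\rm O}(N)$, which fails in general (for $o>2$ the isometry $-1$ of $N$ sends $w$ to $-w \neq w$). The functor is faithful but not full, and full faithfulness is neither true nor what is needed.

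What is actually needed, and what the paper proves, is: for each $N \in \mathcal{G}'$ there exists a $w$ with the prescribed order and norm (surjectivity, which follows since all lattices in $\mathcal{G}'$ have isomorphic quadratic residues and at least one pair $(N,w)$ was produced), and any two such $w,w'$ satisfy $w' = \pm w$, so that $-{\rm id}_N$ gives an isomorphism $(N,w) \simeq (N,w')$ (injectivity on classes). The uniqueness up to sign is a small but essential arithmetic argument: $\resb N$ is cyclic, so $w' = \lambda w$, hence $(1-\lambda^2){\rm q}(w) \equiv 0$; since ${\rm q}(w) \equiv x/2o$ with $\gcd(x,2o)=1$ one gets $\lambda^2 \equiv 1 \bmod 2o$, and $o = 2$ or $2p$ forces $\lambda \equiv \pm 1 \bmod o$. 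Nothing in your appeal to "matching parity, residue and determinant data via the tables" supplies this; without it, a single $N$ could a priori arise from two ${\rm O}(N)$-inequivalent glue vectors and hence appear twice in $\mathcal{L}'$. (A minor further point: for the \texttt{exc} arrows the paper simply uses Example~\ref{ex:charequivalencecharvectors}, since exceptional vectors are characteristic vectors of small norm; your detour through Proposition~\ref{prop:orbfibreMc} and Corollary~\ref{cor:MRcMRc} is not needed here --- those results serve the separate unique-orbit statement of Proposition~\ref{prop:propuniqueorb}, which is not required for the present proposition.)
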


\begin{pf}
In order to prove this Proposition, we apply: \begin{itemize}
\item[(i)]  Example~\ref{ex:orthospecial} with $(d,m)=(\nu,1)$ 
in the case $t=(\nu,\emptyset)$ (in all cases we have that \(\det \mathcal{G}\) and \(\nu\) are coprime 
so the modulus condition is automatically satisfied), \par
\item[(ii)] Example~\ref{ex:charequivalencecharvectors} with $d=\nu$ 
for $t=(\nu,\texttt{char})$ or $t=(\nu,\texttt{exc})$, \par
\item[(iii)] Proposition~\ref{prop:d1d2} with $d_1= \det \mathcal{G}$ and $d_2=\det \mathcal{G}'$ 
in the case $t=(d_1d_2,\texttt{sp})$. \par
\end{itemize}
Here we have denoted by $\det \mathcal{G}$ 
the common determinant of any $L$ in $\mathcal{G}$. 
Each of these statements provides an equivalence between 
the groupoids $\mathcal{A}$ of pairs $(L,v)$ 
with $L$ in $\mathcal{G}$ and $v$ in $L$ of type $t$, 
and that $\mathcal{B}$ of pairs $(N,w)$ 
with $N \in \mathcal{G}'$ and $w \in \resq N$ having a certain order $o>1$ 
and with a certain $q={\rm q}(w) \in \Q/\Z$ determined by $t$. 
Those $o$ and $q$ are as follows, 
with $d_1=\det \mathcal{G}$ and $d_2=\det \mathcal{G}'$:

\begin{table}[H]

\tabcolsep=5pt
{\scriptsize \renewcommand{\arraystretch}{1.8} \medskip
\begin{center}
\begin{tabular}{cc|cc|cc|cc}
\multicolumn{2}{c|}{case} & \multicolumn{2}{c|}{(i)} & \multicolumn{2}{c|}{(ii)} & \multicolumn{2}{c}{(iii)}\\
\hline
$o$ & $q$ & $\nu$ & $-\frac{1}{2\nu}$ & $d_2$ & $\frac{1}{2}\frac{d_2-1}{d_2}$ & $d_2$ & $-\frac{d_1}{2d_2}$
\end{tabular}
\end{center}
\caption{Values of $o$ and $q$ in each case}
\label{tab:valuesoq}
}
\end{table}
 We claim that for any given $N \in \mathcal{G}'$, 
there is a $w \in \resq N$, unique up to sign, 
such that $(N,w) \in \mathcal{B}$. 
As the isometry $-1$ of $N$ induces an isomorphism $(N,w) \isomo (N,-w)$, 
it shows that the forgetful functor $\mathcal{B} \rightarrow \mathcal{G}', (N,w) \mapsto N,$ 
induces a bijection on isomorphism classes, concluding the proof.\par
Let us now check the claim.
The existence part would follow for instance case-by-case 
by comparing Tables~\ref{tab:tabd1d2} and~\ref{tab:valuesoq}.
This is a bit tedious, and actually unnecessary, 
since the fact that we did find in each case at least one $(L,v)$ in $\mathcal{A}$, 
hence one $(N,w)$ in $\mathcal{B}$, concludes, 
as all elements in $\mathcal{G'}$ have isomorphic quadratic residues.
For uniqueness, assume $w,w' \in \resq N$ have 
the same order $o$ and satisfy ${\rm q}(w)={\rm q}(w')$. 
As $\resb N$ is cyclic we must have $w'= \lambda w$ for some $\lambda \in \Z$, 
hence $(1-\lambda^2){\rm q}(w) \equiv 0 \bmod \Z$. 
In all cases we have ${\rm q}(w) \equiv \frac{x}{2o} \bmod \Z$ 
for some $x \in \Z$ with ${\rm gcd}(x,2o)=1$, 
hence $\lambda^2 \equiv 1 \bmod 2o$, 
which forces $\lambda \equiv \pm 1 \bmod o$ 
as we have either $o=2$ or $2p$ with $p$ prime, 
hence $w'=\pm w$.
\end{pf}


\begin{remark} 
{\rm (}On Step {\rm {\sff A1}} of the algorithm{\rm )}
\begin{itemize}
\item[--] {\rm (Case (ii))} In order to enumerate 
the characteristic vectors of norm $\nu$ in a unimodular lattice $L$, 
we may choose $\xi \in {\rm Char}\, L$ and
enumerate the norm $\nu$ vectors in the lattice $\Z \xi + 2L$ not belonging to $2L$.
This is especially simple when $\nu<8$ (exceptional case) and ${\rm r}_1(L)=0$, 
since all nonzero vectors of $2L$ have norm $\geq 8$. \par
\item[--] {\rm (Case (iii))}  We may enumerate the special vectors 
by enumerating first the vectors of norm $d_2/d_1$ in $L^\sharp$, 
and apply $x \mapsto d_1 x$ (see Proposition~\ref{prop:d1d2}).\par
\end{itemize}
\end{remark}

In order to perform Step {\sff A3}, 
we may use standard orbit algorithms, such as the one implemented as $\texttt{qforbits}$ in \texttt{PARI/GP}. 
However, we claim that {\it this is unnecessary for all green arrows in Figure~\ref{fig:imageliensgenres} !} 
More precisely:

\begin{prop}
\label{prop:propuniqueorb}
For each green arrow 
$\mathcal{G} \overset{t}{\rightarrow} \mathcal{G}'$ in Figure~\ref{fig:imageliensgenres},
and each $L \in \mathcal{G}$, 
there is at most one ${\rm O}(L)$-orbit of type $t$ vectors of $L$.
\end{prop}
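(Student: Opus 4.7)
The plan is to prove this case-by-case, by running through the green arrows in Figure~\ref{fig:imageliensgenres} according to their label $t = (\nu, \chi)$. By inspection of the figure, every green arrow has either $\chi = \texttt{exc}$ or else belongs to the family of arrows of type $R \to R_c$ with $(R, R_c) = ({\bf A}_n, {\bf A}_{n+1})$ coming from the fertile-class framework of Sect.~\ref{sect:fertile}. It therefore suffices to treat these two situations uniformly.

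For a green arrow with $\chi = \texttt{exc}$, the vectors of type $t$ in $L \in \mathcal{G}$ are by definition the primitive characteristic vectors of $L$ of norm $\nu < 8$, so the type $t$ vectors of $L$ are exactly ${\rm Exc}\, L$. A glance at the figure shows that in each such case $L$ is a (necessarily odd) unimodular lattice of rank $n \not\equiv 6, 7 \bmod 8$, and the statement is vacuous unless $L$ is exceptional. In the non-vacuous case, Theorem~\ref{thm:excsingleorbituni} asserts that ${\rm O}(L)$ acts transitively on ${\rm Exc}\, L$, and we are done.

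For a green arrow of type $R \to R_c$ with $(R,R_c) = ({\bf A}_n, {\bf A}_{n+1})$, fix $L \in \mathcal{G}$ and an isometry $\eta : \resq L \isomo -\resq {\rm Q}(R)$. Identify $(L,\eta)$ with an object of $\mathcal{M}^R$ as in Definition~\ref{def:MRUR}, and let $(U,\iota) = F_R(L,\eta)$ be the associated pair in $\mathcal{U}_R$. Inspecting the entry of Table~\ref{tab:tabd1d2} corresponding to $\mathcal{G}$ and $\mathcal{G}'$ shows that under this identification the primitive type $t$ vectors of $L$ coincide (possibly up to sign) with the $(c,\eta)$-exceptional vectors ${\rm Exc}_{c,\eta}\, L$ attached to the fertile class $c \equiv \varpi_1 \in \resq {\rm Q}(R)$. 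By Proposition~\ref{prop:orbfibreMc}(b), the bijection~\eqref{eq:defforbfibre} transports the ${\rm O}(L,\eta)$-action on ${\rm Exc}_{c,\eta}\, L$ onto the ${\rm O}(U,\iota)$-action on the set of extensions $\iota' : {\rm Q}(R_c) \to U$ of $\iota$. Since ${\rm O}(L)$-orbits in ${\rm Exc}_{c,\eta}\, L$ surject onto ${\rm O}(L,\eta)$-orbits, it is enough to show that ${\rm O}(U, \iota)$ acts transitively on these extensions—a purely root-theoretic Weyl group orbit question in the ambient unimodular lattice $U$, which is precisely what Proposition~\ref{prop:keyuniqueorbitAn} delivers for the pair $({\bf A}_n, {\bf A}_{n+1})$.

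The main obstacle is the second case: we must check that the specific instances of ${\bf A}_n \to {\bf A}_{n+1}$ arrows drawn in green in Figure~\ref{fig:imageliensgenres} fall on the unique-orbit side of Proposition~\ref{prop:keyuniqueorbitAn} rather than among its finitely many exceptions (the latter accounting for the brown arrow ${\bf A}_4 \to {\bf A}_5$). This is a bookkeeping task: for each green $R \to R_c$ arrow we identify the irreducible component of ${\rm R}_2(U)$ containing $\iota({\rm Q}(R))$, check via Lemma~\ref{lem:orblemmaA3}-style arguments that the extension is unique up to the stabilizer of $\iota$, and verify that this component is not one of the sporadic types excluded in Proposition~\ref{prop:keyuniqueorbitAn}. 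Once this verification is carried out, the proposition follows.
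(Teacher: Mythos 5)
Your overall architecture matches the paper's: the \texttt{exc} arrows are handled by Theorem~\ref{thm:excsingleorbituni}, and the ${\bf A}_n \rightarrow {\bf A}_{n+1}$ arrows by Proposition~\ref{prop:keyuniqueorbitAn}. But the argument does not close, for two reasons. First, you end by declaring the decisive step --- checking that the green ${\bf A}_n \to {\bf A}_{n+1}$ arrows avoid the exceptional cases of Proposition~\ref{prop:keyuniqueorbitAn} --- to be ``a bookkeeping task'' that you do not perform, and the bookkeeping you describe is the wrong kind. The hypotheses of Proposition~\ref{prop:keyuniqueorbitAn} under which one gets a \emph{single} orbit are conditions on $M$ itself ($n$ odd, or $M^\sharp$ having no vector of norm $\tfrac{4}{n+1}$), not a list of ``sporadic component types'' of the ambient unimodular lattice $U$ to be excluded lattice by lattice; the component analysis you allude to lives inside the \emph{proof} of that proposition, and a per-lattice check of the component of ${\rm R}_2(U)$ containing $\iota({\rm Q}(R))$ could not, in any case, establish the statement uniformly over $\mathcal{G}$. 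The resolution is one line, which you needed to supply: the green arrow is ${\bf A}_5 \rightarrow {\bf A}_6$, so $n=5$ is odd (and the surjectivity hypothesis of Proposition~\ref{prop:keyuniqueorbitAn} is automatic for $n=5$), whence at most one orbit unconditionally; this is exactly why ${\bf A}_4 \rightarrow {\bf A}_5$ ($n$ even) is only brown.

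Second, your opening claim that every green arrow is either of type \texttt{exc} or of type $({\bf A}_n,{\bf A}_{n+1})$ omits the arrow ${\bf E}_6 \rightarrow {\bf E}_7$, which the paper's proof also accounts for (as an instance of Corollary~\ref{cor:MRcMRc} with $R \simeq {\bf E}_6$ and $c \equiv \varpi_1$, the unique-orbit statement there being classical). A smaller point: your reduction to ${\rm O}(U,\iota)$-orbits of extensions only sees the type $t$ vectors lying in the single residue class $-c$, whereas the full set of type $t$ vectors is the union over the classes $-\varpi_1$ and $-\varpi_n$; passing between the two requires the surjectivity of ${\rm O}(L) \rightarrow {\rm O}(\resq L)$, which is precisely what the statement and proof of Proposition~\ref{prop:keyuniqueorbitAn} already package. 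Citing that proposition directly on the set $\mathcal{E}$ (after rescaling the special vectors by $1/d_1$ as in Proposition~\ref{prop:d1d2}), as the paper does, is both cleaner and avoids the ``up to sign'' imprecision in your intermediate step.
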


We stress that this proposition, although interesting in itself (see~\S\ref{subsect:uniqueorbintro}), is 
not essential to the proof of Theorem~\ref{thmintro:hdiscptable}.
Indeed, as we shall see below, in those green situations 
the number of type $t$ vectors does not exceed a few thousands, 
so that Step {\sff A3} is actually straightforward for the computer. 
The proof of Proposition~\ref{prop:propuniqueorb} will use 
the methods developed in section~\ref{sect:fertile}.

\begin{example}
\label{ex:g285}
{\rm (}Determination of $\mathcal{G}_{28,5}${\rm )}
{\rm The numbers of exceptional unimodular lattices with no norm $1$ vectors of rank 
$24$, $25$, $26$, $27$, $28$ and $29$ 
are respectively $24$, $0$, $97$, $557$ 
(Borcherds, M\'egarban\'e, \cite[\S 9]{cheuni}), 
$16381$ (\cite{cheuni2}) 
and $2\,721\,152$ (Theorem~\ref{thm:excuni209}). 
By Lemma~\ref{red:excno1}, there are thus exactly
$$24+97+557+16381+2721152=2738211$$
exceptional unimodular lattices of rank $29$, 
hence as many classes in $\mathcal{G}_{28,5}$ by Proposition~\ref{prop:propuniqueorb}, 
providing a satisfactory explanation of this large number in Table~\ref{tab:gnpneven}.
Similarly, $\mathcal{G}_{26,3}$ has $557+97+24=678$ isometry classes, 
and $\mathcal{G}_{25,2}$ has $24+97=121$. 
It is easy to see\footnote{Let $L$ be an odd exceptional unimodular lattice, 
$v \in {\rm Exc}\,L$ with norm $r$ 
and set $M = L \cap v^\perp$. 
The orthogonal projection $L \rightarrow M^\sharp$ induces 
a map ${\rm R}_1(L) \rightarrow {\rm R}_{\frac{r-1}{r}}(M^\sharp)$ which is bijective for $r>2$, 
and a $2 : 1$ surjection for $r=2$. } 
that for $p=5, 3, 2$ and a given $M$ in $\mathcal{G}_{23+p,p}$, 
the number of norm $1$ vectors of the exceptional lattice $L$ 
from which $M$ comes coincides with the number of norm $\frac{p-1}{p}$ vectors in $M^\sharp$, 
multiplied by $2$ for $p=2$. }
\end{example}

\begin{pf} 
(Of Proposition~\ref{prop:propuniqueorb})
For the three arrows in exceptional cases, 
the proposition follows from Theorem~\ref{thm:excsingleorbituni}. 
For the two arrows 
$\mathcal{G} \overset{t}{\rightarrow} \mathcal{G}'$ in Figure~\ref{fig:imageliensgenres} 
labelled by ${\rm A}_n \rightarrow {\rm A}_{n+1}$, 
$\mathcal{G}$ is the genus of even lattices $M$ 
with $\resq M \simeq - \resq {\rm A}_n$, 
and we are in the situation of Corollary~\ref{cor:manpi}.
The result follows in this case from Proposition~\ref{prop:keyuniqueorbitAn} 
below. The green arrow ${\rm A}_5 \rightarrow {\rm A}_6$ is the special case $n=5$ (odd).
The arrow ${\rm E}_6 \rightarrow {\rm E}_7$ could also be dealt
with as a special case of Corollary~\ref{cor:MRcMRc} 
for $R \simeq {\bf E}_6$ and $c \equiv \varpi_1$ (but this case is not new).
\end{pf}

\begin{prop}
\label{prop:keyuniqueorbitAn}
Let $M$ be an even lattice with $\resq M \simeq - \resq {\rm A}_n$ for $n\geq 4$. 
Assume that the natural map ${\rm O}(M) \rightarrow {\rm O}(\resq M)$ is surjective.\footnote{
This is automatic if we have ${\rm O}(\resq {\rm A}_n)=\{ \pm 1\}$,
or equivalently, if we have $n+1=p^r$, $2p^r$, or $2^r$ for some odd prime $p$ and $r\geq 1$.
This holds for $n=4,5$.}
Let $\mathcal{E}$ be the set of primitive $e \in M^\sharp$ with $e \cdot e=${\small $\frac{n+2}{n+1}$}. 
Then ${\rm O}(M)$ has at most two orbits on $\mathcal{E}$, 
and at most one if either $n$ is odd or $M^\sharp$ has no vector of norm {\small $\frac{4}{n+1}$}.\ps
\end{prop}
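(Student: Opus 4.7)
The plan is to apply the fertile-class machinery of Section~\ref{sect:fertile} to $R = \mathbf{A}_n$ and the fertile class $c \in \resq \mathrm{Q}(\mathbf{A}_n)$ represented by $\varpi_1$. Recall $R_c \simeq \mathbf{A}_{n+1}$, $\nu(c) = \varpi_1 \cdot \varpi_1 = n/(n+1)$, so $2-\nu(c) = (n+2)/(n+1)$ is precisely the norm required in the definition of $\mathcal{E}$. Fix an isometry $\eta : \resq M \isomo -\resq \mathrm{Q}(\mathbf{A}_n)$ and let $(U,\iota) = F_{\mathbf{A}_n}(M,\eta)$.

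First I would partition $\mathcal{E} = \mathcal{E}_1 \sqcup \mathcal{E}_{-1}$ according to whether $\eta(\overline{e}) \equiv -\varpi_1$ or $\equiv \varpi_1$ in $\resq \mathrm{Q}(\mathbf{A}_n) \simeq \Z/(n+1)$. A direct computation with the quadratic form on $\resq \mathbf{A}_n$ shows that, for $n\geq 4$, the condition ${\rm q}(\varpi_i) \equiv (n+2)/(2(n+1)) \bmod \Z$ forces $i \equiv \pm 1 \bmod (n+1)$, justifying the partition. Since $\resq M$ is cyclic and generated by the image of $\varpi_1$, any automorphism of $\resq M$ fixing that class is the identity; hence the stabilizer in $\mathrm{O}(M)$ of $\mathcal{E}_1$ coincides with $\mathrm{O}(M,\eta) := \ker(\mathrm{O}(M) \to \mathrm{O}(\resq M))$. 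The surjectivity hypothesis together with the existence of $-\mathrm{id} \in \mathrm{O}(\resq M)$ swapping $\pm\varpi_1$ then yields a natural bijection between $\mathrm{O}(M)$-orbits on $\mathcal{E}$ and $\mathrm{O}(M,\eta)$-orbits on $\mathcal{E}_1$.

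Next, Proposition~\ref{prop:orbfibreMc} gives an $\mathrm{O}(M,\eta) \simeq \mathrm{O}(U,\iota)$-equivariant bijection between $\mathcal{E}_1 = \mathrm{Exc}_{\varpi_1,\eta}\,M$ and the set of extensions $\iota' : \mathrm{Q}(\mathbf{A}_{n+1}) \hookrightarrow U$ of $\iota$ via $\upsilon_{\varpi_1}$, primitive $e$ corresponding to saturated $\iota'$. It therefore suffices to count $\mathrm{O}(U,\iota)$-orbits of saturated extensions. As $\mathbf{A}_{n+1}$ is irreducible and extends $\iota(\mathbf{A}_n)$, any such extension lands in the unique irreducible component $Q$ of the root lattice of $U$ containing $\iota(\mathrm{Q}(\mathbf{A}_n))$. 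I would then prove an analog of Lemma~\ref{lem:embA3} for the pair $(\mathbf{A}_n, \mathbf{A}_{n+1})$: when $Q \simeq \mathbf{A}_m$ with $m>n$, the new simple roots available are the $\epsilon_i - \epsilon_{n+1}$ with $i \in \{n+2,\dots,m+1\}$, and the symmetric group $S_{m-n}$ sitting inside the pointwise stabilizer of $\iota$ acts transitively, giving a single orbit; when $Q$ is of type $\mathbf{D}_m$ a case analysis according to the type of the embedding $\iota$ (as in Lemma~\ref{lem:orblemmaA3}) shows that at most two orbits occur, arising from distinct positions of $\iota'(\alpha_0)$ relative to the fork of $Q$; the $\mathbf{E}_6, \mathbf{E}_7, \mathbf{E}_8$ cases are finite and can be checked directly.

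Finally, I would deduce the refinements from the identity $\iota'_1(\alpha_0) \cdot \iota'_2(\alpha_0) = e_1 \cdot e_2 + n/(n+1)$, which follows from the orthogonal decomposition $\iota'_i(\alpha_0) = -\iota(\varpi_1) + e_i$ provided by Proposition~\ref{prop:orbfibreMc}. The case analysis above produces two distinct orbits only when the corresponding roots $\iota'_1(\alpha_0), \iota'_2(\alpha_0)$ are orthogonal, which forces $e_1 \cdot e_2 = -n/(n+1)$; then $v := e_1 + e_2 \in M^\sharp$ lies in the class $-\varpi_2$ and has norm exactly $4/(n+1)$, giving the ``no norm $4/(n+1)$ vector'' criterion. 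For the odd-$n$ case I would exploit extra symmetries of the $\mathbf{D}$-subcase, combining the outer diagram involution of the $\mathbf{D}$-component with $-\mathrm{id}$ restricted to $Q$, to identify the two putative orbits when $n$ is odd. The main obstacle I expect is this $\mathbf{D}$-component analysis: carrying it out cleanly enough to extract simultaneously the ``at most two'' bound in general, and the precise parity and short-vector conditions under which the orbits collapse, is the delicate step.
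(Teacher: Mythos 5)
Your overall strategy is the paper's: reduce via Proposition~\ref{prop:orbfibreMc} to counting extensions of $\iota({\rm A}_n)$ to an ${\rm A}_{n+1}$ inside the irreducible component $S$ of ${\rm R}_2(U)$ containing it, and detect the two-orbit cases through a vector of norm $\frac{4}{n+1}$ in $M^\sharp$. But your first step contains a genuine error. You claim that the quadratic condition on $\eta(\overline{e})$ forces $i\equiv \pm 1 \bmod (n+1)$; this is false in general. The correct condition is $(i^2-1)\,\frac{n}{2(n+1)}\in\Z$, which for $n=11$ is also satisfied by $i=5$ (indeed ${\rm q}(\varpi_5)=\frac{35}{24}\equiv\frac{11}{24}={\rm q}(\varpi_1)\bmod \Z$) and for $n=14$ by $i=4$. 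So $\mathcal{E}$ need not equal $\mathcal{E}_1\sqcup\mathcal{E}_{-1}$, and your passage from ${\rm O}(M)$-orbits on $\mathcal{E}$ to ${\rm O}(M,\eta)$-orbits on $\mathcal{E}_1$ breaks down. A telltale sign is that in your argument the surjectivity hypothesis is used only to realize $-{\rm id}$ on $\resq M$, which is automatic (take $-{\rm id}_M$), so the hypothesis would be vacuous. Its real purpose is precisely to repair this step: the solutions $i$ of $(i^2-1)\frac{n}{2(n+1)}\in\Z$ form exactly the orbit of $1$ under ${\rm O}(\resq{\rm A}_n)$, so surjectivity of ${\rm O}(M)\rightarrow{\rm O}(\resq M)$ lets you move every class of the right norm onto $-\varpi_1$; the paper's proof opens with exactly this transitivity statement.

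The remainder of your plan follows the paper, with two differences worth noting. First, the paper does not redo the classification of Weyl orbits of ${\rm A}_{n+1}$'s containing a given ${\rm A}_n$: it quotes Lemma~\ref{lem:emban2} (deferred to King's tables and a companion paper), which pins down the two-orbit cases as $S\simeq{\bf D}_{n+2}$ with $n$ even and $S\simeq{\bf E}_7$ with $n=4$. Your ${\bf D}$/${\bf E}$ case analysis, which you rightly flag as the delicate step, is not actually carried out, so the ``at most two'' bound and the parity criterion remain unproved as written. Second, producing the norm-$\frac{4}{n+1}$ vector as $e_1+e_2$ with $e_1\cdot e_2=-\frac{n}{n+1}$ is a genuinely slicker route than the paper's (which extends $\iota$ to a ${\rm D}_{n+1}$ via the fertile weight $\varpi_2$ and applies Proposition~\ref{prop:orbfibreMc} a second time), but it rests on the unverified claim that in every two-orbit case one can choose representatives whose added roots are orthogonal; this is immediate for ${\bf D}_{n+2}$ (the two candidate roots are $-\epsilon_{n+1}\pm\epsilon_{n+2}$) but must still be checked for the ${\bf E}_7$, $n=4$ case, which the paper's route sidesteps.
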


\begin{pf} 
Fix a positive system on ${\rm A}_n$, consider the associated (fertile) fundamental weight 
$\varpi_1$ and choose an isometry $\eta : \resq M \rightarrow - \resq {\rm A}_n$.
The isometry group of $\resq {\rm A}_n$ acts transitively on the vectors  $w \in \resq {\rm A}_n$
with $w \cdot w$ $ \equiv \varpi_1 \cdot \varpi_1 \equiv${\small $\,\frac{n}{n+1}$}\,$\bmod \,2\Z$.
By our assumption on $M$,
we deduce $\mathcal{E} \,=\, {\rm O}(M) \,\cdot\, {\rm Exc}_{\varpi_1,\eta}\,M$.
It is thus enough to determine the number $r$ of 
${\rm O}(M,\eta)$-orbits on ${\rm Exc}_{\varpi_1,\eta}\, M$.\par
Let $(U,\iota)$ be the pair corresponding to $(M,\eta)$ under the gluing construction;
so $U$ is an even (unimodular) lattice and $\iota : {\rm A}_n \rightarrow U$ is a saturated embedding.
Let $\upsilon$ be the embedding $\upsilon_{\varpi_1} : {\rm A}_n \rightarrow {\rm A}_{n+1}$, 
let $\mathcal{E}'$ be the set of saturated embeddings $\iota' : {\rm A}_{n+1} \rightarrow U$ 
with $\iota' \circ \upsilon = \iota$. By Proposition~\ref{prop:orbfibreMc}, ${\rm O}(U,\iota)$ has $r$
orbits on $\mathcal{E}'$. \par

Let $S$ be the irreducible component of the root system of $U$ containing $\iota({\rm A}_n)$,
and denote by $s$ the number of ${\rm W}(S)$-orbits of saturated sublattices of ${\rm Q}(S)$
isometric to ${\rm A}_{n+1}$. By Lemma~\ref{lem:emban1} (applied to $m=n+1$), we have $r \leq s$. 
By Lemma~\ref{lem:emban2} applied to $(R,m)=(S,n+1)$, we either have (a) $s\leq 1$,
or (b) $s=2$, $n$ is even, and either $S \simeq {\bf D}_{n+2}$ or $n=4$ and $S \simeq {\bf E}_7$.
In case (a) we are done, so we may and do assume $r=s=2$ and that we are in case (b). \par
There is a sublattice $\iota({\rm A}_n) \subset Q \subset U$ with $Q \simeq {\rm D}_{n+2}$.
Indeed, this is obvious for $S \simeq {\bf D}_{n+2}$, and it follows from Lemma~\ref{lem:emban2} (ii) in the case $S \simeq {\bf E}_7$, since the orthogonal of a root in ${\rm E}_7$ is isometric to ${\rm D}_6$.
Let $\upsilon' : {\rm A}_n \rightarrow {\rm D}_{n+1}$ denote the isometric embedding associated to any of the fertile weights $\varpi_2,\varpi_{n-2}$ of ${\rm A}_n$: see Example~\ref{ex:fertileex}.
By Lemma \ref{lem:emban3} there exists an isometric embedding ${\rm D}_{n+1} \rightarrow U$ extending $\iota$ via $\upsilon'$.
We have $\varpi_2 \cdot \varpi_2 = \varpi_{n-2} \cdot \varpi_{n-2} = \frac{2(n-1)}{n+1}$.
By Proposition~\ref{prop:orbfibreMc} again (applied to $R={\bf A}_n$, $\eta$, and $c \equiv \varpi_2$ or $\varpi_{n-2}$), there is an element $f \in M^\sharp$ with $f \cdot f = 2 - \frac{2(n-1)}{n+1} = \frac{4}{n+1}$, concluding the proof.
\end{pf}

\begin{lemma} 
\label{lem:emban1}
Let $U$ be a lattice and $m\geq 1$. Two isometric embeddings ${\rm A}_m \rightarrow U$ 
are in the same ${\rm W}(U)^{\pm}$-orbit if and only if their images are.
\end{lemma}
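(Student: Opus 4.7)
The ``only if'' direction is immediate: if $\iota_2 = g \circ \iota_1$ for some $g \in {\rm W}(U)^{\pm}$, then obviously $\iota_2({\rm A}_m) = g(\iota_1({\rm A}_m))$. For the nontrivial direction, my plan is to reduce to the case where the two embeddings share the same image and then show that the induced isometry of that image extends to ${\rm W}(U)^{\pm}$.

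Concretely, suppose $\iota_2({\rm A}_m) = g(\iota_1({\rm A}_m))$ for some $g \in {\rm W}(U)^{\pm}$. Replacing $\iota_1$ by $g \circ \iota_1$, we may assume the two embeddings have a common image $L \subset U$ with $L \simeq {\rm A}_m$. Then $\sigma := \iota_2 \circ \iota_1^{-1}$ lies in ${\rm O}(L)$, and the statement reduces to showing that $\sigma$ is the restriction of some element of ${\rm W}(U)^{\pm}$. By Lemma~\ref{lem:surjWpm}, any element of ${\rm W}(L)^{\pm}$ has this property, so it suffices to verify the group-theoretic identity ${\rm O}({\rm A}_m) = {\rm W}({\rm A}_m)^{\pm}$.

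This identity is the heart of the argument, but it is elementary. For $m=1$ we have ${\rm W}({\rm A}_1) = \{\pm 1\} = {\rm O}({\rm A}_1)$. For $m\geq 2$, the group ${\rm O}({\rm A}_m)$ is generated by ${\rm W}({\rm A}_m)$ and the nontrivial diagram automorphism $\tau$ exchanging $\alpha_i$ with $\alpha_{m+1-i}$; but the longest element $w_0 \in {\rm W}({\rm A}_m)$ acts by $\alpha_i \mapsto -\alpha_{m+1-i}$, hence $\tau = -w_0 \in {\rm W}({\rm A}_m)^{\pm}$.

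There is no genuine obstacle here; the only thing to be careful about is the reduction step (to pass from ``same orbit of images'' to ``same image''), and the mild case distinction between $m=1$ and $m\geq 2$ in identifying ${\rm O}({\rm A}_m)$ with ${\rm W}({\rm A}_m)^{\pm}$. Once these are in place, combining the identity with Lemma~\ref{lem:surjWpm} finishes the proof in one line.
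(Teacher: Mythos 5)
Your proof is correct and follows essentially the same route as the paper's: reduce to the case of a common image, invoke Lemma~\ref{lem:surjWpm} to extend isometries of that image from ${\rm W}({\rm A}_m)^{\pm}$ to ${\rm W}(U)^{\pm}$, and conclude from the identity ${\rm O}({\rm A}_m)={\rm W}({\rm A}_m)^{\pm}$ (which you verify explicitly via $\tau=-w_0$, where the paper simply asserts it).
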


\begin{pf} 
Fix isometric embeddings $f,g :  {\rm A}_m \rightarrow U$ 
and $w \in {\rm W}(U)$ with $w({\rm Im \,g}) = {\rm Im\,f}$. 
We only have to show that there is $w' \in {\rm W}(U)^{\pm}$ such that $w' \circ g = f$. 
Replacing $g$ with $w \circ g$, we may assume ${\rm Im}\,g = {\rm Im}\,f$.
By Lemma~\ref{lem:surjWpm}, we may even assume $f$ and $g$ surjective and $U \simeq {\rm A}_m$.
The result follows since we have $f=w' \circ g$ for some $w' \in {\rm O}(U)= {\rm W}(U)^{\pm}$.
\end{pf}

We have used the following lemma about embeddings of root systems, for which we omit the details: see~\cite[Table 4]{king} for some information and \cite{chetaiapp} for the full details.

\begin{lemma} 
\label{lem:emban2}
Let $R$ be an irreducible root system and $m\geq 4$. 
Then either there is at most one ${\rm W}(R)^{\pm}$-orbit of 
embeddings ${\rm A}_m \rightarrow {\rm Q}(R)$, or there are two and 
we are in one of the following cases:\begin{itemize} 
\item[(i)] $R \simeq {\bf D}_{m+1}$ with $m$ odd, 
and the two orbits are permuted by ${\rm O}({\rm Q}(R))$, 
\item[(ii)] $m=5$ and $R \simeq {\bf E}_7$ 
{\rm (}one orbit with orthogonal ${\rm A}_2$, 
the other ${\rm A}_1 \perp \langle 6 \rangle${\rm )},
\item[(iii)] $m=7$ and $R \simeq {\bf E}_8$, and one of the two orbits is not saturated.
\end{itemize}
\end{lemma}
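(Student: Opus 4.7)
The plan is to use Lemma~\ref{lem:emban1} to reduce the problem to classifying ${\rm W}(R)^{\pm}$-orbits of sublattices $\Lambda \subset {\rm Q}(R)$ isometric to ${\rm A}_m$, and then split these into saturated and non-saturated cases. Saturated $\Lambda$ are in bijection with ${\bf A}_m$-sub-root-systems of $R$. For non-saturated $\Lambda$, the saturation $\widetilde{\Lambda}$ is an overlattice with $\widetilde{\Lambda}/\Lambda$ a nontrivial isotropic subgroup of $\resq\,{\rm A}_m \simeq \Z/(m+1)$, which constrains $\widetilde{\Lambda}$ (for $m \geq 4$) to essentially a single possibility (e.g.\ for $m=7$ the only proper even overlattice of ${\rm A}_7$ is ${\rm E}_7$). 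The existence of such non-saturated $\Lambda$'s inside ${\rm Q}(R)$ thus reduces to an embedding question for $\widetilde{\Lambda}$ one rank up.

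For the classical types the verification is direct. When $R \simeq {\bf A}_n$ with $n \geq m$, the Weyl group $S_{n+1}$ acts transitively on ${\bf A}_m$-chains of simple roots, giving a unique saturated orbit; a short discriminant comparison excludes non-saturated copies for $m\geq 4$. When $R \simeq {\bf D}_n$ with $n \geq m$, up to $W({\rm D}_n)$ the ${\bf A}_m$-sub-root-systems are either the ``straight'' $\{e_i - e_{i+1}\}_{i=1}^m$ or the ``bent'' $\{e_i - e_{i+1}\}_{i<m} \cup \{e_m + e_{m+1}\}$, interchanged by the single sign change $e_{m+1} \mapsto -e_{m+1}$. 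For $n \geq m+2$, pairing this sign change with another involving a free coordinate produces an element of $W({\rm D}_n)$, merging the two orbits. For $n = m+1$, the description of $W({\bf D}_n)^{\pm}$ as signed permutations whose number of sign changes is either even or has the parity of $n$ (the latter via $-\mathrm{id}$) shows the single sign change lies in $W({\bf D}_{m+1})^{\pm}$ iff $m+1$ is odd, i.e., $m$ even. Thus for $m$ odd the two orbits persist, and they are swapped by the outer Dynkin automorphism of ${\bf D}_{m+1}$ living inside $O({\rm Q}({\bf D}_{m+1}))$, yielding case~(i). Non-saturated ${\bf A}_m$'s inside ${\bf D}_n$ are ruled out by noting that the relevant overlattices of ${\rm A}_m$ are not isometric to saturated sublattices of ${\rm D}_n$ (another discriminant check).

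For $R \in \{{\bf E}_6, {\bf E}_7, {\bf E}_8\}$ and $4 \leq m \leq \mathrm{rk}(R)$, iterated Borel--de Siebenthal on the extended Dynkin diagrams produces a finite explicit list of ${\bf A}_m$-sub-root-systems (cf.~\cite[Table 4]{king}). Comparing their orthogonal complements in ${\rm Q}(R)$ to decide whether $-1$ merges a pair of orbits, one finds a unique saturated orbit in each case except $(R, m) = ({\bf E}_7, 5)$, where two orbits arise with distinct orthogonal complements ${\bf A}_2$ and ${\bf A}_1 \perp \langle 6 \rangle$, establishing case~(ii). For non-saturated isometric ${\bf A}_m$'s in exceptional $R$, the discriminant constraint above reduces the question to finding a saturated copy of a specific overlattice of ${\rm A}_m$ inside ${\rm Q}(R)$; this succeeds only for $(R, m) = ({\bf E}_8, 7)$, via the chain ${\bf A}_7 \subset {\bf E}_7 \subset {\bf E}_8$ (where the first inclusion has index $2$), which produces a non-saturated ${\bf A}_7$ in ${\bf E}_8$ distinct from the saturated one coming from $D_8 \subset E_8$; this is case~(iii). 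The main obstacle is precisely this exceptional-case enumeration: while each individual verification is elementary, the exhaustive bookkeeping across all candidate sub-root-systems, together with tracking the action of $-1$ on Weyl-conjugacy classes and searching for non-saturated isometric copies, is exactly the ``full details'' deferred by the authors to the companion paper~\cite{chetaiapp}.
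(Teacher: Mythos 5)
The paper does not actually prove this lemma: it explicitly omits the details, pointing to \cite[Table 4]{king} and to the companion paper \cite{chetaiapp}. So there is no in-text argument to compare against; your sketch is a reconstruction of the expected proof, and its overall strategy (reduce via Lemma~\ref{lem:emban1} to ${\rm W}(R)^{\pm}$-orbits of sublattices isometric to ${\rm A}_m$, handle classical types by hand, enumerate subsystems of the exceptional types, and track saturation separately) is the right one. Your parity computation for the single sign change in ${\rm W}({\rm D}_{m+1})^{\pm}$, and your identifications of the two-orbit cases (ii) and (iii), are correct.

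One conceptual point should be repaired. You assert that \emph{saturated} $\Lambda$ are in bijection with ${\bf A}_m$-sub-root-systems of $R$, and treat non-saturated $\Lambda$ as a separate species detected by embedding an overlattice. In fact \emph{every} sublattice $\Lambda \subset {\rm Q}(R)$ isometric to ${\rm A}_m$ is generated by its norm-$2$ vectors, which are roots of $R$, so every such $\Lambda$ (saturated or not) is the span of an ${\bf A}_m$-sub-root-system; for instance the non-saturated ${\rm A}_7 \subset {\rm E}_8$ of case (iii) is the span of the ``straight'' ${\bf A}_7$ inside ${\bf D}_8 \subset {\bf E}_8$. The clean logic is therefore: classify all ${\bf A}_m$-subsystems up to ${\rm W}(R)$, compute the span of each, and only then decide saturation; your overlattice criterion ($\Lambda$ non-saturated iff its saturation is the proper even overlattice of ${\rm A}_m$, e.g.\ ${\rm E}_7$ for $m=7$) is a useful cross-check but not a disjoint classification. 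Relatedly, your claim that the ${\bf A}_m$-subsystems of ${\bf D}_n$ are exactly the straight and bent chains is genuinely false for $m=3$ (the ``Type I'' configuration through the fork, spanning ${\rm D}_3 \simeq {\rm A}_3$, is a third, non-conjugate possibility --- this is why Lemma~\ref{lem:orblemmaA3} has two ${\rm O}(Q)$-orbits for ${\rm D}_m$), so the hypothesis $m \geq 4$ is doing real work there and deserves an explicit word; similarly, ruling out non-saturated copies in ${\bf A}_n$ and ${\bf D}_n$ for general $m$ (e.g.\ $m=15$, where the proper even overlattice of ${\rm A}_{15}$ acquires no new roots) is not a pure discriminant comparison but again follows from the subsystem classification. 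With these points tightened, your argument matches what the deferred reference is meant to supply.
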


\begin{lemma} \label{lem:emban3}
  Let \(n \geq 4\).
  Let \(\iota: \mathrm{A}_n \to \mathrm{D}_{n+2}\) and \(\upsilon: \mathrm{A}_n \to \mathrm{D}_{n+1}\) be isometric embeddings.
  Then there exists an extension \(\mathrm{D}_{n+1} \to \mathrm{D}_{n+2}\) of \(\iota\) via \(\upsilon\).
\end{lemma}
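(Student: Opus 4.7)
The plan is to reduce the existence of $f$ to the unique-orbit statement of Lemma~\ref{lem:emban2} applied to isometric embeddings $\mathrm{A}_n \to \mathrm{Q}(\mathbf{D}_{n+2}) = \mathrm{D}_{n+2}$.

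First, I would fix an explicit reference isometric embedding $g : \mathrm{D}_{n+1} \to \mathrm{D}_{n+2}$---for instance, in the standard realization $\mathrm{D}_m = \{x \in \Z^m \mid \sum x_i \equiv 0 \bmod 2\}$, the map $(x_1, \ldots, x_{n+1}) \mapsto (x_1, \ldots, x_{n+1}, 0)$, whose image is a saturated isometric copy of $\mathrm{D}_{n+1}$ in $\mathrm{D}_{n+2}$. Composing, $g \circ \upsilon$ becomes an isometric embedding $\mathrm{A}_n \to \mathrm{D}_{n+2}$, and the goal is then to conjugate it to $\iota$ by an element of $\mathrm{O}(\mathrm{D}_{n+2})$.

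Next, I would apply Lemma~\ref{lem:emban2} with $R = \mathbf{D}_{n+2}$ and $m = n \geq 4$, observing that none of its three exceptional two-orbit cases can arise here: case (i) would require $\mathbf{D}_{n+2} \simeq \mathbf{D}_{n+1}$; case (ii) would require $\mathbf{D}_{n+2} \simeq \mathbf{E}_7$ (forcing the impossible $\mathbf{D}_7 \simeq \mathbf{E}_7$ at $n=5$); and case (iii) would require $\mathbf{D}_{n+2} \simeq \mathbf{E}_8$ (forcing the impossible $\mathbf{D}_9 \simeq \mathbf{E}_8$ at $n=7$). Consequently, there is a single $\mathrm{W}(\mathrm{D}_{n+2})^{\pm}$-orbit of isometric embeddings $\mathrm{A}_n \to \mathrm{D}_{n+2}$, so $g \circ \upsilon$ and $\iota$ must lie in the same orbit.

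Finally, I would pick some $w \in \mathrm{W}(\mathrm{D}_{n+2})^{\pm}$ satisfying $w \circ g \circ \upsilon = \iota$ and set $f := w \circ g$, which is then an isometric embedding $\mathrm{D}_{n+1} \to \mathrm{D}_{n+2}$ with $f \circ \upsilon = \iota$, as required. The only step that really requires any thought is the combinatorial verification in the middle paragraph, and this is essentially immediate once Lemma~\ref{lem:emban2} is granted, since the listed exceptional cases never put $\mathbf{D}_{n+2}$ in the role of $R$ when asking about $\mathrm{A}_n$; everything else is bookkeeping.
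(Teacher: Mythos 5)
Your argument is correct and is essentially the paper's own proof: both reduce the claim to the existence of some isometric embedding $\mathrm{D}_{n+1} \to \mathrm{D}_{n+2}$ together with the fact that, for $n \geq 4$, all isometric embeddings $\mathrm{A}_n \to \mathrm{D}_{n+2}$ lie in a single orbit by Lemma~\ref{lem:emban2} (whose exceptional cases you correctly rule out). The paper states the single-orbit fact for $\mathrm{O}(\mathrm{D}_{n+2})$ while you use the finer group $\mathrm{W}(\mathrm{D}_{n+2})^{\pm}$, which is what Lemma~\ref{lem:emban2} actually provides; this is an immaterial difference.
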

\begin{proof}
  This follows from the existence of an isometric embedding \(\mathrm{D}_{n+1} \to \mathrm{D}_{n+2}\) (obvious) and the fact that the set of isometric embeddings \(\mathrm{A}_n \to \mathrm{D}_{n+2}\) consists of a single \(\mathrm{O}(\mathrm{D}_{n+2})\)-orbit (Lemma \ref{lem:emban2}).
\end{proof}

\begin{remark}
\label{rem:brownarrow}
{\rm (}The brown arrow{\rm )} 
{\rm The arrow ${\rm A}_4 \rightarrow {\rm A}_{5}$ is 
the case $n=4$ of Proposition~\ref{prop:keyuniqueorbitAn}. 
We have thus the unique orbit property for type $t$ vectors 
of most lattices in $\mathcal{G}_{28,5}$ by Example~\ref{ex:g285}, 
namely all the $2\,721\,152$ ones coming from rank $29$ exceptional lattices 
with no norm $1$ vectors (note $4/(n+1)=(5-1)/5$). }
\end{remark}

As promised above, we provide in Table~\ref{tab:nbormtvec} below 
some information about the maximal/average number of type $t$ vectors 
of the lattices in the genera at the source of an arrow of Figure~\ref{fig:imageliensgenres}.
We exclude the case of exceptional vectors (detailed in Sect.\ref{sect:excuni}), 
the case of Niemeier lattices  (discussed below), 
and that of roots: an inspection of the Coxeter numbers of root systems 
shows that the number of roots in any lattice of rank $n\geq 16$ is $\leq 2n(n-1)$, 
which is always $\leq 1512$ for $n\leq 28$. 
The table shows that the maximum number of type $t$ vectors 
never exceeds a few thousands, 
showing that Step {\sff A3} is a straightforward task for a computer.

\begin{table}[H]

\tabcolsep=4pt
{\scriptsize \renewcommand{\arraystretch}{1.6} \medskip
\begin{center}
\begin{tabular}{cc|cc|c|c!{\vrule width 1pt}cc|cc|c|c}
$\texttt{dim} $ & $\texttt{det} $ &  \multicolumn{2}{c|}{\texttt{t} } & \(\texttt{avg}\) & $\texttt{max}$
&
$\texttt{dim}  $ & $\texttt{det}  $ &  \multicolumn{2}{c|}{\texttt{t} } & \(\texttt{avg}\) & $\texttt{max}$ \\ \hline
$28$ & $5$ &  $70$ & ${\rm sp}$  & $1958.6$ & $11\,100$ &
$28$ & $5$ &  $30$ & ${\rm sp}$ & $2.5$ & $108$
\\ \hline
$27$ & $6$ & $42$ & ${\rm sp}$ & $2.8$ & $104$ &
$26$ & $1$ &  $10$ & ${\rm char}$ & $1002.6$ & $4424$
\\ \hline
$26$ & $3$ &  $6$ & ${\rm sp}$ & $2.8$ & $6$  &
$25$ & $2$ &  $10$ & ${\rm sp}$ &  $899$ & $2208$
\end{tabular}
\end{center}
\caption{Average \(\texttt{avg}\) and maximum $\texttt{max}$, of the {\it nonzero} numbers of type $\texttt{t} $ vectors over the lattices of rank $\texttt{dim} $ and determinant $\texttt{det} $ in Figure~\ref{fig:imageliensgenres}.}
\label{tab:nbormtvec}
}
\end{table}
The situation is quite different for 
vectors of norm $14$ (resp.\ $6$) in Niemeier lattices: 
there are about respectively $187$ billions (resp.\ $17$ millions) in each Niemeier lattice!
Of course, the isometry groups of a Niemeier lattice $L$ is huge, 
so we expect far fewer orbits, 
but enumerating them with the usual algorithm is not reasonable.
One idea to circumvent this problem is to divide the computation in two steps: 
first determine the orbits of ${\rm O}(L)$ in $L/2L$, and then, 
for a representative $\xi \in L/2L$ of such an orbit, 
determine the ${\rm O}(L; \xi)$-orbits of norm $14$ (resp.\ $6$) vectors in $\xi+2L$. 
For this purpose, the following straightforward variant of Lemma~\ref{lem:ind2ML} 
(see also Formula~\eqref{eq:defM2Le}) is useful: \ps

\begin{lemma}
\label{lem:variantind2ML}
The groupoid of pairs $(L,e)$, 
with $L$ a rank $n$ even unimodular lattice 
and $e \in L/2L$ with $e \cdot e \equiv 2 \bmod 4$, 
is equivalent to that of rank $n$ even lattices $M$ 
with $\resq M\, \simeq \,\resq {\rm A}_1 \perp  - \resq {\rm A}_1$, 
via $(L,e) \mapsto M:={\rm M}_2(L; e)$. In this correspondence, 
we have $L= \{ v \in M^\sharp\,| \, \, v \cdot v \in \Z\}$ 
and $\frac{1}{2}(e+2L)=M^\sharp \smallsetminus M$.
\end{lemma}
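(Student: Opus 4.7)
The plan is to adapt the proof of Lemma~\ref{lem:ind2ML} to the even setting, the essential new ingredient being that the bilinear residue $\resb$ must now be refined to the quadratic residue $\resq$. The forward functor is $(L,e) \mapsto M := {\rm M}_2(L;e)$, and the inverse functor sends $M$ to the pair $(L,e)$ where $L$ is the sublattice of $M^\sharp$ cut out by integrality of the norm, equipped with the unique nonzero class in $L/2L$ whose associated character on $L$ has kernel $M$.

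For the forward direction, note that the condition $e\cdot e \equiv 2 \bmod 4$ is well-defined on $L/2L$ (since $L$ is even) and forces $e \neq 0$, so $M$ has index $2$ in $L$ and $|\resq M| = [L:M]^2 \det L = 4$. Picking a lift $\tilde e \in L$ and an $f \in L$ with $\tilde e \cdot f$ odd (which exists by unimodularity), the three nontrivial classes of $M^\sharp/M$ are represented by $f$, $\tilde e/2$, and $\tilde e/2 + f$. A direct computation, using evenness of $L$ (so $f\cdot f \in 2\Z$) and $\tilde e \cdot \tilde e \equiv 2 \bmod 4$, yields quadratic values $0$, $\pm\tfrac14$, $\mp\tfrac14$ modulo $\Z$, matching $\resq {\rm A}_1 \perp -\resq {\rm A}_1$ and giving the required isometry.

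Conversely, given an even $M$ with $\resq M \simeq \resq {\rm A}_1 \perp -\resq {\rm A}_1$, there is a unique nonzero $\bar a \in \resq M$ with $q(\bar a) \equiv 0 \bmod \Z$. The bilinear pairing on $\resb M$ being $\tfrac12\Z/\Z$-valued makes $L := \{v \in M^\sharp : v\cdot v \in \Z\}$ closed under addition, and it coincides with the index-$2$ overlattice $M + \Z\tilde a$ for any lift $\tilde a$ of $\bar a$. Evenness of $L$ follows from $q(\bar a) \equiv 0 \bmod \Z$ together with the evenness of $M$, and the determinant formula $[L:M]^2\det L = \det M = 4$ gives $\det L = 1$, so $L$ is even unimodular. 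Unimodularity then makes the mod-$2$ pairing on $L/2L$ perfect, so the index-$2$ sublattice $M$ is of the form ${\rm M}_2(L;e)$ for a unique nonzero $e \in L/2L$; the relation $(\tilde e/2)\cdot(\tilde e/2) = (\tilde e\cdot \tilde e)/4$ combined with $\tilde e/2 \in M^\sharp \smallsetminus L$ forces $e\cdot e \equiv 2 \bmod 4$.

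The two constructions are manifestly functorial in isometries and mutually inverse. The supplementary identities then fall out of the construction: $L = \{v \in M^\sharp : v \cdot v \in \Z\}$ is the defining formula of the inverse functor, and $\tfrac12(e + 2L) = \tilde e/2 + L$ is precisely the nontrivial coset of $L$ in $M^\sharp$ (the two cosets of $M$ with $v\cdot v \equiv \tfrac12 \bmod \Z$). No step presents a substantive obstacle; the only real care required is in the bookkeeping of bilinear versus quadratic values modulo $\Z$, which is where the ``even'' hypothesis enters the argument.
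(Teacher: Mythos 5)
Your proof is correct and follows exactly the route the paper intends: the paper gives no proof of this lemma, presenting it as a ``straightforward variant'' of Lemma~\ref{lem:ind2ML} (and of Proposition~\ref{prop:uninplus2n}), and your argument is precisely that adaptation, with the only genuinely new point --- the refinement from $\resb$ to $\resq$, i.e.\ checking the quadratic values $0,\tfrac14,-\tfrac14$ on the classes of $f$, $\tilde e/2$, $\tilde e/2+f$ using the evenness of $L$ --- handled correctly. One remark: your computation shows $\tfrac12(e+2L)=\tilde e/2+L=M^\sharp\smallsetminus L$ (the unique coset of $L$ in $M^\sharp$, i.e.\ the elements of non-integral norm), whereas the statement prints $M^\sharp\smallsetminus M$; the latter is a typo (it consists of three cosets of $M$, while $\tfrac12(e+2L)$ is only two), so your version is the correct one and is what the subsequent application to norm $7/2$ and $3/2$ vectors actually uses.
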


The isometry classes of $M$'s above of rank $24$ are easily determined 
by first computing orbits of mod $2$ vectors in Niemeier lattices 
(see \S~\ref{subsect:algolat} (e)): we find only $339$ classes. 
In order to conclude, it only remains to determine, for each such $M$, 
representatives for the ${\rm O}(M)$-orbits of vectors of norm 
$14/4=7/2$ (resp.\ $6/4=3/2$) in $M^\sharp$, and then take 
their orthogonal in $M$. The maximum number of such vectors in $M^\sharp$ 
is now only $32\,384$ (resp.\ $88$), which makes the orbit computation feasible. 
This ends the proof of Theorem~\ref{thmintro:hdiscptable}. $\square$

\begin{remark}
\label{rem:freemasses}
{\rm (}Free masses{\rm )} 
{\rm Assume $\mathcal{G} \overset{t}{\rightarrow} \mathcal{G}'$ 
appears in Figure~\ref{fig:imageliensgenres}. 
Assume also that the lattice $N$ in $\mathcal{G}'$ is obtained as $L \cap v^\perp$ 
with $L$ in $\mathcal{G}$ and $v \in L$ of type $t$. 
Let $s$ be the size of the ${\rm O}(L)$-orbit of $v$ in $L$, 
a quantity usually given by the orbit algorithm in A3. 
Set $e=1$ if $o=2$, $e=2$ otherwise (see Table~\ref{tab:valuesoq}). 
Then the proof of Proposition~\ref{prop:algoLLprime} shows
$\frac{1}{e}|{\rm O}(N)|=|{\rm O}(N,w)|=|{\rm O}(L,v)|=\frac{1}{s} |{\rm O}(L)|$. 
In particular, we obtain for free the masses of all the classes in $\mathcal{G}'$. A
s the Plesken-Souvignier algorithm also allows to find generators of ${\rm O}(L,v)$, 
this also provides an alternative method to determine ${\rm O}(N)$ 
which is sometimes more efficient than the direct one 
({\it e.g.} when $L$ is unimodular).}
\end{remark}

\subsection{The invariant ${\rm BV}_{n,p}$ and other methods}
\label{subsect:checkhdiscp}

	Our aim now is to prove Theorem~\ref{thmintro:hdiscptablebvp}.
The assertion about the black entries of Tables~\ref{tab:gnpneven} and~\ref{tab:gnpnodd} follows
from a straightforward computation.
We shall discuss here some new isometry invariants for the lattices in $\mathcal{G}_{n,p}$.
Let us emphasize that the sharpness of these invariants, 
as for ${\rm BV}$ itself, 
is quite empirical and based on computer calculations. 
We apologize for this, and leave as an important open problem to understand why they work.\ps

The lattices in $\mathcal{G}_{n,p}$ being even,
a natural idea is to consider their depth $4$ invariant ${\rm BV}_4$ of~\S\ref{subsect:markedBV}.
Unfortunately, they typically have
more than $80\,000$ norm $4$ vectors in our ranges for $(n,p)$,
so that the computation of ${\rm BV}_4$ for such a lattice 
is about $40^3=64\,000$ times slower than that of ${\rm BV}_3$ 
for a rank $29$ unimodular lattices for instance. 
An alternative solution would be to apply (variants of) ${\rm BV}$ to short vectors in the dual lattices. 
We focus here on another (but related) method, 
whose idea is to first glue to an odd unimodular lattice of rank $\leq 29$, 
for which we already know that ${\rm BV}_3$ is sharp by Corollary~\ref{cor:BVsharpuni}, 
and then apply a marked ${\rm BV}$ invariant of depth $3$ 
introduced in Definition~\ref{def:markedhbv}.  \ps
	
	Consider for this the following Table~\ref{tab:generap2p357}, 
in which we have set $S_5=$\scalebox{.5}{$\left[\begin{array}{cc} 2 & 1 \\ 1 & 3\end{array}\right]$} 
and $S_7=$\scalebox{.5}{$\left[\begin{array}{ccc} 2 & 1 & 1 \\ 1 & 2 & 1 \\ 1 & 1 & 3 \end{array} \right]$} 
(two odd lattices with respective determinant $5$ and $7$).

\begin{table}[H]
\tabcolsep=6pt
{\scriptsize \renewcommand{\arraystretch}{1.7} \medskip
\begin{center}
\begin{tabular}{c|c|c|c|c|c|c|c|c}
$p\, \, \, \backslash\,\,\, n \bmod 8$  & $1$ & $2$ & $3$ & $4$ & $5$ & $6$ & $7$ & $8$ \\
\hline
$3$ & $\langle 2 \rangle \perp \langle 3 \rangle$ & $\langle 3 \rangle$ & $\langle 2 \rangle \perp \langle 3 \rangle$ & & $\langle 6 \rangle$  & \cellcolor{gray!30} ${\rm A}_2$ & $\langle 2 \rangle \perp {\rm A}_2 $ & \\
\hline
$5$ & $\langle 10 \rangle$ & & $\langle 2 \rangle \perp \langle 5 \rangle$ & $\langle 5 \rangle$ & $\langle 2 \rangle \perp \langle 5 \rangle$ & & $\langle 2 \rangle \perp S_5$ & $S_5$ \\
\hline
$7$ & $\langle 2 \rangle \perp S_7 $  & $S_7$ & $\langle 2 \rangle \perp S_7 $ & & $\langle 14 \rangle$ & $\langle 7 \rangle$ & $\langle 2 \rangle \perp \langle 7 \rangle$ &

\end{tabular}
\end{center}
}
\caption{\small Some lattices whose bilinear residue is {\it opposite} to that of $\mathcal{G}_{n,p}$}
\label{tab:oppres357}
\end{table}

\begin{fact}
Each $(n,p)$-entry of Table~\ref{tab:oppres357} is
a lattice $A$ (of rank $\leq 3$) satisfying $\resb A \simeq - \resb L$
for any $L \in \mathcal{G}_{n,p}$.
\end{fact}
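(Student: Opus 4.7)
The approach is a direct entry-by-entry verification, exploiting the precise classification of \(\resb L\) for \(L \in \mathcal{G}_{n,p}\) recalled in Sect.~\ref{subsect:residuegnp}. The central observation is that this residue has order \(p\) or \(2p\), and its isomorphism type as a bilinear \(\Q/\Z\)-valued space is pinned down by at most two invariants: the Legendre symbol \((\frac{a}{p})\) of a generator norm \(a/p\) on the \(\Z/p\)-summand, and (when \(n\) is odd) the trivially unique non-degenerate form on the \(\Z/2\)-summand. Taking the opposite \(-\resb L\) leaves the \(\Z/2\)-summand unchanged, since \(-1/2 \equiv 1/2 \bmod \Z\), and multiplies the Legendre symbol on the \(\Z/p\)-summand by \((\frac{-1}{p}) = (-1)^{(p-1)/2}\); so the opposite is isomorphic to the original if \(p \equiv 1 \bmod 4\), and switches the two classes if \(p \equiv 3 \bmod 4\).

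The plan then unfolds in three steps. First, for each entry \(A\) of Table~\ref{tab:oppres357} I would compute \(\resb A\). For the rank-one entries \(A = \langle d \rangle\) this is immediate: \(\resb \langle d \rangle = \Z/d\) generated by \(e/d\) of norm \(1/d \bmod \Z\); when \(d = 2p\) the group decomposes as \(\Z/2 \perp \Z/p\) with \(\Z/p\)-generator of norm \(2/p \bmod \Z\). Orthogonal sums are handled by additivity of \(\resb\). For the rank-two lattice \(S_5\) and the rank-three lattice \(S_7\) a short calculation from the displayed Gram matrix yields a generator of \(\resb\) and its norm (for instance \(e_1^\sharp \cdot e_1^\sharp = 3/5\) for \(S_5\) and \(5/7\) for \(S_7\), giving Legendre symbols \(-1\) on each \(\Z/p\)-part). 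Second, I would check that orders match, which is automatic since \(\det A = p\) or \(2p\) equals \(\det \mathcal{G}_{n,p}\). Third, I would compare the Legendre symbol on the \(\Z/p\)-summand of \(\resb A\) with the one predicted for \(-\resb L\) by the rule above applied to the classification in Sect.~\ref{subsect:residuegnp}; the \(\Z/2\)-summand, present exactly when \(n\) is odd, matches automatically.

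The main obstacle is just the bookkeeping over the roughly twenty entries, together with some care about the \(q\)-versus-\(b\) conventions and the twist by \((-1)^{(p-1)/2}\) when \(p \equiv 3 \bmod 4\). There is no conceptual hurdle. A cleaner but less uniform alternative would be to exhibit, for each entry, an explicit (possibly odd) unimodular lattice \(U\) containing \(A \perp L\) as a saturated sublattice; Proposition~\ref{prop:nikulingroupoids} together with Remark~\ref{rem:afternik} (i) would then yield \(\resb A \simeq -\resb L\) at once, bypassing any Legendre-symbol computation. Such a \(U\) is available in every case (for instance by extending the standard representatives of Table~\ref{tab:generap2p357} by a \(\langle 1 \rangle\) or \(\langle 2 \rangle\) summand and gluing), but constructing it per entry is no less tedious, so I would use it only as a sanity check.
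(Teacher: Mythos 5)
Your proposal is correct and follows essentially the same route as the paper: a case-by-case verification of the bilinear residues against the classification of $\resb L$ recalled in Sect.~\ref{subsect:residuegnp}, with the $\Z/2$-summand for odd $n$ handled automatically (the paper phrases this as the observation that $\mathcal{G}_{n\pm 1,p}$ has bilinear residue $\resb L \perp \resb {\rm A}_1$ for $L \in \mathcal{G}_{n,p}$ with $n$ even). Your explicit bookkeeping of the $(\tfrac{-1}{p})$ twist and the ${\rm q}$-versus-bilinear conversion is exactly the content the paper leaves implicit.
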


\begin{pf}
This is a simple case-by-case verification
using the discussion of~\S\ref{subsect:residuegnp}.
It is useful to observe that for $n$ even and $\mathcal{G}_{n,p} \neq \emptyset$,
then the lattices in $\mathcal{G}_{n-1,p}$ and $\mathcal{G}_{n+1,p}$
have isomorphic {\it bilinear} residues $\resb L \perp \resb {\rm A}_1$,
with $L \in \mathcal{G}_{n,p}$.
\end{pf}

Fix $(n,p)$ and $A$ as above, and choose $L$ in $\mathcal{G}_{n,p}$.
By the gluing construction (Proposition~\ref{prop:nikulingroupoids}),
the choice of an isometry $\eta : -\resb A \isomo \resb L$
defines a unique unimodular overlattice $U$ of $L \perp A$.
The isomorphism class of the pair $(L,\eta)$
uniquely determines that of $(U,\iota)$,
with $\iota : A \rightarrow U$ the natural inclusion.
Since the natural morphism
${\rm O}(L) \rightarrow {\rm O}(\resb L)$ is surjective
by the trivial equality ${\rm O}(\resb L)=\{ \pm 1\}$,
the isomorphism class of $(U,\iota)$ actually
only depends on the isometry class of $L$: we denote it by $\widetilde{L}$.
\begin{definition}
\label{eqdefBV1np}
For $L \in \mathcal{G}_{n,p}$ we set ${\rm BV}^1_{n,p}(L)={\rm BV}_3(U,\iota)$, 
with $(U,\iota)=\widetilde{L}$.
\end{definition}

Note that unless $p=3$ and $n \equiv 6 \bmod 8$ 
(the grey cell in Table~\ref{tab:oppres357}), the unimodular lattice $U$ is odd,
since its rank is $\not \equiv 0 \bmod 8$.
If ${\rm rank}\, U \leq 29$, we know that ${\rm BV}_3(U)$
is a sharp (and fast to compute) invariant of $U$ by Corollary~\ref{cor:BVsharpuni},
so it is tempting to hope that ${\rm BV}^1_{n,p}(L)$ 
is a sharp invariant\footnote{
The marked invariant is only marginally slower to compute than the unmarked one, 
so we refer to \cite{cheuni2} and \S \ref{sect:classrk29} for indications about computation times.}
of $L$ as well. Note that there is no red entry $(n,3)$ 
with $n\equiv 6 \bmod 8$ in Tables~\ref{tab:gnpneven} \&~\ref{tab:gnpnodd}.

\begin{fact}
\label{fact:checkinv1}
The invariant ${\rm BV}^1_{n,p}$ is sharp on $\mathcal{G}_{n,p}$ for all
red $(n,p)$ in Table~\ref{tab:gnpneven} (so $n$ even), 
and all red $(n,3)$ in Table~\ref{tab:gnpnodd}.
\end{fact}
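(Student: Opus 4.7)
The plan is a direct computer verification that reuses the work already done in previous sections. For each red entry $(n,p)$, Theorem~\ref{thmintro:hdiscptable} furnishes an explicit complete list of representatives for $\mathcal{G}_{n,p}$. For each $L$ in the list, I would construct $\widetilde{L}=(U,\iota)$ by carrying out the gluing of Proposition~\ref{prop:nikulingroupoids} with the auxiliary lattice $A$ read off from Table~\ref{tab:oppres357}; the explicit Lagrangian $\mathrm{I}(\iota,\eta)$ in the proof there lets one write down a Gram matrix of $U$ together with a $\Z$-basis of $\iota(A)$ with minimal fuss. Since $\mathrm{rk}(U)=n+\mathrm{rk}(A)\leq 29$ for every red entry under consideration, Corollary~\ref{cor:BVsharpuni} already guarantees that the unmarked invariant ${\rm BV}_3(U)$ separates the underlying unimodular lattices, so it is at least reasonable to expect the marked refinement ${\rm BV}_3(U,\iota)$ to also separate the pairs.

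With this setup, the proof reduces to computing the hashed value of ${\rm BV}_3(\widetilde{L})$ for every $L$ in each red genus, using the implementation \texttt{fast\_marked\_HBV} of \S\ref{subsect:algolat}(f), and checking that the resulting multiset of integers is in fact a set. Since the mass formula combined with Theorem~\ref{thmintro:hdiscptable} already gives the exact number $N_{n,p}$ of isometry classes in $\mathcal{G}_{n,p}$, the verification boils down to confirming that the number of distinct hashes produced is exactly $N_{n,p}$; no orbit computation on $\mathrm{O}(U)$ is required, which is fortunate since $\mathrm{O}(U)$ can be extremely large in these ranges.

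The main obstacle is purely practical. The largest red case, $(p,n)=(5,28)$, involves $2\,738\,211$ glued pairs $(U,\iota)$ with $\mathrm{rk}(U)=29$, each costing on the order of $150$\,\texttt{ms} for a ${\rm BV}_3$ evaluation; the aggregate is several months of single-core CPU time, but trivially parallelisable on the hardware described in \S\ref{sect:introunimod}. The deeper, conceptual difficulty is that there is no a priori reason the marked ${\rm BV}$ invariant should be sharp: as for Theorems~\ref{thmintro:uni29} and~\ref{thm:uni29no1}, this is an empirical miracle and the proof offers no explanation. Should an incidental collision occur for some red entry, one would have to enrich the invariant, for instance by switching to the signed or absolute variant of Remark~\ref{rem:varianthbv}, by choosing a different auxiliary $A$, or by raising the depth from $3$ to $4$; the point of the fact is that in every red case the basic construction already suffices.
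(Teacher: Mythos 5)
Your proposal is correct and is essentially the paper's own proof: the paper simply states that the fact ``follows from a direct computer calculation'' (with CPU-time indications in Example~\ref{ex:check273}), which is exactly the verification you describe — glue each $L$ to $\widetilde{L}=(U,\iota)$ via Table~\ref{tab:oppres357}, hash ${\rm BV}_3(\widetilde{L})$ for every representative from Theorem~\ref{thmintro:hdiscptable}, and check the hashes are pairwise distinct. Your remarks on the rank bound $\leq 29$, the lack of any a priori guarantee of sharpness, and the practical cost all match the paper's discussion.
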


\begin{pf} 
It follows from a direct computer calculation. 
See~Example~\ref{ex:check273} for a few CPU time indications.
\end{pf}

For the case \((n,p)=(18,7)\) this invariant is overkill: there are only two isometry classes with the same isomorphism class of root system (namely \(\mathbf{A}_{17}\)), and they are distinguished by their numbers of vectors of norm \(4\).

\begin{example} 
{\rm ({\it Case $(n,p)=(28,5)$})
For $L \in \mathcal{G}_{28,5}$ and $\widetilde{L}=(U,\iota)$, 
the lattice $U$ is the exceptional unimodular lattice of rank $29$ associated to $L$, 
so we did expect ${\rm BV}^1_{28,5}$ to be sharp on $\mathcal{G}_{28,5}$ 
by Theorem~\ref{thm:excsingleorbituni}. }
\end{example}


Although a computation shows that the invariant ${\rm BV}^1_{n,p}$ 
is also sharp for some other cases of $(n,p)$, 
such as $(19,5)$, $(21,5)$, $(19,7)$, $(25,7)$ and $(27,5)$, 
it fails to be so in all cases. For instance, 
the $1396$ lattices in $\mathcal{G}_{23,5}$ 
only have $1370$ distinct ${\rm BV}^1$ invariants. 
Our aim now is to define two variants ${\rm BV}^2$ and ${\rm BV}^3$ of ${\rm BV}^1$, 
actually faster to compute,\footnote{
Having a faster invariant can be useful 
for instance if anyone wants to compute a Hecke operator on $\mathcal{G}_{n,p}$.}, 
the combination of which will eventually allow 
to provide sharp invariants in all cases except $(23,5)$. \ps

Assume $n$ is odd, $p \leq 7$ and set $m=n+1$ or $m=n-1$ 
so that $m+p \equiv 1 \bmod 4$.
Instead of gluing $L \in \mathcal{G}_{n,p}$ 
with the lattice $A$ in the $(n,p)$-entry of Table~\ref{tab:oppres357},
we may rather glue it either with the lattice in the $(m,p)$-entry of this table, 
or with the ${\rm A}_1$ lattice.
In the first (resp.\ second) case, we denote by $\widetilde{L}^{'}$ (resp.\ $\widetilde{L}^{''}$) 
the resulting pair $(V,\iota)$.
The lattice $V$ has determinant $2$ (resp.\ $p$). 
For $p=5,7$, $V$ is always odd in the first case, 
as well as in the second case for $n \equiv p \bmod 4$.

\begin{definition}
\label{eqdefBV23np}
For $n$ odd, $p \leq 7$ and $L \in \mathcal{G}_{n,p}$, we set 
${\rm BV}^2_{n,p}(L)={\rm BV}_3(\widetilde{L}^{'})$ 
and ${\rm BV}^3_{n,p}(L)={\rm BV}_3(\widetilde{L}^{''})$.
\end{definition}

For $i=(i_1,\dots,i_k) \in \{1,2,3\}^k$ and $L \in \mathcal{G}_{n,p}$ 
we also denote by ${\rm BV}_{n,p}^i(L)$ the $k$-uple
$({\rm BV}^{i_1}_{n,p}(L), \dots,{\rm BV}^{i_k}_{n,p}(L))$. 
A computer calculation shows then:

\begin{fact}
\label{fact:checkinvi}
For each black (resp.\ red) entry $i$ in the box $(n,p)$ of Table~\ref{tab:bvnpnodd}, 
the invariant ${\rm BV}^i_{n,p}$ is sharp (resp.\ not sharp) on $\mathcal{G}_{n,p}$.
\end{fact}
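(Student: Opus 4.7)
The proof plan is essentially computational verification, taking advantage of the fact that Theorem~\ref{thmintro:hdiscptable} already provides, via \cite{chetaiweb}, a complete list of isometry class representatives $\mathcal{L}_{n,p}$ of $\mathcal{G}_{n,p}$ for each relevant $(n,p)$. The strategy is, for each entry $i \in \{1,2,3\}^k$ appearing in the box $(n,p)$, to compute the multiset $\{\!\{ \mathrm{BV}^i_{n,p}(L) \mid L \in \mathcal{L}_{n,p}\}\!\}$ and check whether its cardinality (counted without multiplicity) equals $|\mathcal{L}_{n,p}|$. If equality holds, the invariant is sharp on $\mathcal{G}_{n,p}$ (a black entry); otherwise, some collision occurs and the invariant is not sharp (a red entry). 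Since $\mathrm{BV}^i_{n,p}$ is by construction an isomorphism invariant (being assembled from marked $\mathrm{BV}$-invariants of gluings, which are functorial as recalled in Sect.~\ref{subsect:markedBV}), this single check suffices.

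More concretely, for each $L \in \mathcal{L}_{n,p}$ and each $i_j$ appearing in $i$, I would perform the following steps. First, apply Table~\ref{tab:oppres357} (or the two variant gluings used in Definition~\ref{eqdefBV23np}) to fix the appropriate auxiliary lattice $A$ of small rank whose bilinear residue is opposite to that of $L$. Second, choose any isometry $\eta : -\resb A \isomo \resb L$; such an isometry exists by the fact verified just after Table~\ref{tab:oppres357}, and since $\mathrm{O}(\resb L) = \{\pm 1\}$, the resulting glued pair $(U,\iota) = \widetilde{L}^{(i_j)}$ is well-defined up to isomorphism, independently of $\eta$. Third, compute a Gram matrix of $U$ together with the vectors $\iota(\alpha_k)$ for a fixed basis $\alpha$ of $A$ (both of which are produced directly by the gluing construction of Proposition~\ref{prop:nikulingroupoids}). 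Fourth, call the routine \texttt{fast\_marked\_HBV} of Sect.~\ref{subsect:algolat}(f) on this input to obtain (a hash of) $\mathrm{BV}_3(U,\iota)$.

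The main obstacle is purely practical: the genera concerned are large (up to tens of millions of lattices for cases like $(25,7)$ or $(27,5)$), and each invariant requires computing the short vectors ${\rm R}_{\leq 3}(U)$ via Fincke--Pohst and squaring an adjacency matrix of size comparable to $|{\rm R}_{\leq 3}(U)|/2$. The sizes of the relevant $U$ are at most $30$, so each single evaluation remains of the same order as the rank-$29$ unimodular case discussed after Theorem~\ref{thm:uni29no1} (of the order of a hundred milliseconds each), but the cumulative cost is substantial and must be parallelised. A secondary difficulty is that once a sharp $\mathrm{BV}^1$ fails (as observed for $(23,5)$, $(25,7)$, $(27,5)$), one must try the combinations $\mathrm{BV}^{(i_1,\dots,i_k)}$; however, since each $\mathrm{BV}^j$ is already stored per lattice, the collision check is only a final tallying step requiring negligible additional CPU time.

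Once the tally matches $|\mathcal{L}_{n,p}|$ for the combinations listed in black in Table~\ref{tab:bvnpnodd}, sharpness follows; for the red combinations, the same tally exhibits at least one collision among the computed invariants, and this collision can be recorded explicitly by keeping one colliding pair as a witness. The mass formula of \cite{conwaysloanemass}, together with the reduced masses stored in \cite{chetaiweb}, provides an independent consistency check that $\mathcal{L}_{n,p}$ is indeed complete, so that no isometry class is being missed when declaring an invariant sharp. This completes the proof outline; no conceptual ingredient beyond the marked $\mathrm{BV}$ formalism of Sect.~\ref{subsect:markedBV} and the classification output of Theorem~\ref{thmintro:hdiscptable} is needed.
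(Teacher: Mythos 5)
Your proposal is correct and is essentially the paper's own argument: the paper justifies this Fact by the single phrase ``a computer calculation shows then,'' and what you describe — gluing each $L$ in the already-classified list to the auxiliary lattice from Table~\ref{tab:oppres357} (or to the variants of Definition~\ref{eqdefBV23np}), noting the independence of the choice of $\eta$ via ${\rm O}(\resb L)=\{\pm 1\}$, computing the hashed marked ${\rm BV}_3$ invariants, and tallying collisions — is exactly that computation, with the mass-formula completeness check playing the same role it does throughout Sect.~\ref{subsect:checkhdiscp}.
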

\vspace{-.5cm}
\begin{table}[H]
\tabcolsep=6pt
{\scriptsize \renewcommand{\arraystretch}{1.7} \medskip
\begin{center}
\begin{tabular}{c|c|c|c|c|c}
$p \, \backslash \, n $ & $19$ & $21$ &  $23$ & $25$ & $27$ \\
\hline
$3$ & & & $1,{\color{red} 2},3$ & $1,2,{\color{red} 3}$ & $1, 2, {\color{red} 3}$ \\
\hline
$5$ & $1, 2, 3$ & $1, 2, 3$ & ${\color{red} (1,2,3)}$ & ${\color{red} 1}, {\color{red} 2}, {\color{red} 3}, (1,2), (1,3), (2,3)$ & $1, {\color{red} 2},{\color{red} 3}, (2,3)$  \\
\hline
$7$ & $1, 2, {\color{red} 3}$ & $2, {\color{red} (1, 3)}$ &${\color{red} 1}, 2, {\color{red} 3}, (1, 3)$ & $1, {\color{red} 2}, {\color{red} 3}, (2,3)$ &  \\
\end{tabular}
\end{center}
}
\caption{\small
Sharpness of the invariants ${\rm BV}^i_{n,p}$ 
for the red entries $(n,p)$ in Table~\ref{tab:gnpnodd}}
\label{tab:bvnpnodd}
\end{table}

The case \((n,p)=(17,7)\) is not included in this table because as for \((18,7)\) it is much easier: there are only two isometry classes in this genus having the same isomorphism class of root system (namely \(\mathbf{A}_{15}\)), and they are distinguished by their number of vectors of norm \(4\).

\begin{example} 
{\rm ({\it Case $(n,p)=(25,5)$}) 
\label{ex:case255}
The invariant ${\rm BV}^1$ falls short of being sharp in this case: 
there are $38\,749$ lattices but $38\,746$ different ${\rm BV}^1$ invariants, 
hence exactly $3$ pairs of ambiguous lattices. 
For only one of these pairs the two lattices have the same root system, 
namely ${\bf A}_1\, {\bf A}_2 \,2{\bf A}_3\, {\bf A}_9 \,{\bf D}_4$! 
For each of these $3$ pairs, the two lattices are distinguished 
both by ${\rm BV}^2$ and ${\rm BV}^3$. 
Similar exceptional behaviors do repeat for some other values of $(n,p)$, 
which shows that we may have been quite lucky 
that the assertion about ${\rm BV}$ holds in Theorem~\ref{thm:uni29no1}.
Note also that for $(25,5)$, 
the average computation time of ${\rm BV}^1$, ${\rm BV}^2$ and ${\rm BV}^3$ 
are respectively $330\,\texttt{ms}$, 27\,\texttt{ms}, $17\,\texttt{ms}$.}
\end{example}

At this point, the only remaining genera 
for which we do not have provided any isometry invariant 
is $\mathcal{G}_{23,5}$, since ${\rm BV}^{(1,2,3)}$ is not sharp in this case by Fact~\ref{fact:checkinvi}.
We now treat this case in a ad hoc way.

\begin{remark}
\label{rem:case235}
 {\rm ({\it Case $(n,p)=(23,5)$})
A computation shows that ${\rm BV}^2$ is close to be sharp here: the
$1\,396$ lattices have $1\,394$ distinct ${\rm BV}^2$ invariants. 
Unfortunately, the two pairs with the same ${\rm BV}^2$ 
have a common root system ($4{\bf A}_1\, 3{\bf A}_2\, 2{\bf A}_3$ for one pair 
and $2{\bf A}_1 \,{\bf A}_2 \,2{\bf A}_3\, {\bf A}_4 \,{\bf A}_5$ for the other), 
as well as the same ${\rm BV}^1$ and ${\rm BV}^3$. 
We may distinguish those last lattices in the following ad hoc way. 
For $L$ in $\mathcal{G}_{23,5}$, the average number of vectors of norm $n \leq 30$ 
of the {\it rescaled dual lattice} $L^\flat:=\sqrt{ \det L}\, L^\sharp$ 
is given by Table~\ref{tab:nbvecscaldual235}. We checked that 
the invariant $f(L):={\rm BV}_{24}(L^\flat)$ does distinguish the two lattices in each pair, 
{\it if we compute ${\rm BV}$ using the absolute variant} described in Remark~\ref{rem:varianthbv}.}
\begin{table}[H]
\tabcolsep=6pt
{\scriptsize \renewcommand{\arraystretch}{1.7} \medskip
\begin{center}
\begin{tabular}{c|c|c|c|c|c|c|c|c|c}
$n $ & $4$ & $11$ &  $15$ & $16$ & $19$ & $20$ & $21$ & $24$ & {\rm other $\leq 30$} \\
\hline
$\# $ & $0.1$ & $2.4$ &  $8.0$ & $24.1$ & $85.4$ & $134.4$ & $21$ & $670.4$ & $0$
\end{tabular}
\end{center}
}
\caption{\small
Average number $\#$ of norm $n$ elements in $\sqrt{\det L}\,\,L^\sharp$ for $L \in \mathcal{G}_{23,5}$.
}
\label{tab:nbvecscaldual235}
\end{table}
\end{remark}

We finally define ${\rm BV}_{n,p}$.
Assume the $(n,p)$-box is red in Tables~\ref{tab:gnpneven} or~\ref{tab:gnpnodd}.
If \(p=7\) and \(n \in \{17,18\}\) simply define \(\mathrm{BV}_{n,p}(L)\) as the pair formed of the isomorphism class of the root system of \(L\) and \(\mathrm{r}_4(L)\).
Now assume that \((n,p) \not\in \{(17,7),(18,7)\}\).
If $n$ is even, set ${\rm BV}_{n,p}={\rm BV}^1_{n,p}$.
If $n \geq 19$ is odd and $(n,p) \neq (23,5)$, define ${\rm BV}_{n,p}$ as ${\rm BV}^i_{n,p}$
for any black entry $i$ in the $(n,p)$-box of Table~\ref{tab:bvnpnodd}.
If we write $a \prec b$ to mean that $a$ is faster than $b$, 
we usually have ${\rm BV}^3 \prec {\rm BV}^2 \prec {\rm BV}^{(2,3)} \prec {\rm BV}^1$, 
hence a best choice for $i$ given by Table~\ref{tab:bvnpnodd}.
Finally, for $(n,p)= (23,5)$, set ${\rm BV}_{n,p}=({\rm BV}^2,f)$ as in Remark~\ref{rem:case235}.
From Facts~\ref{fact:checkinv1} \& \ref{fact:checkinvi} and that remark, we deduce:

\begin{cor}
Theorem~\ref{thmintro:hdiscptablebvp} holds for the definition above of ${\rm BV}_{n,p}$.
\end{cor}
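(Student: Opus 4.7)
My plan is to prove the corollary by checking that the case-by-case definition of $\mathrm{BV}_{n,p}$ just above the corollary matches, in each red box of Tables~\ref{tab:gnpneven} and~\ref{tab:gnpnodd}, a sharp invariant established in the preceding facts and remark. First I will recall that the assertion of Theorem~\ref{thmintro:hdiscptablebvp} about \emph{black} entries was already noted to follow from a direct computer check on the (relatively short) lists in~\cite{chetaiweb}: for each black $(p,n)$ one enumerates the lattices, computes the isomorphism class of the root system of each, and verifies that no two coincide. I will simply reduce to that statement and focus the rest of the argument on the red entries.

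Next I will dispatch the two sporadic small cases. For $(n,p)\in\{(17,7),(18,7)\}$, the invariant $\mathrm{BV}_{n,p}(L)$ is defined to be the pair $(\mathrm{R}_2(L)/{\simeq},\mathrm{r}_4(L))$. By the parenthetical remarks following Facts~\ref{fact:checkinv1} and~\ref{fact:checkinvi}, each of these two genera contains only two isometry classes sharing a root system (namely $\mathbf{A}_{17}$, resp.\ $\mathbf{A}_{15}$), and the two classes are already separated by their number of vectors of norm $4$. Hence $\mathrm{BV}_{n,p}$ is sharp in these two cases.

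I will then handle the remaining red entries according to the parity of $n$. For every red $(n,p)$ with $n$ even (excluding no cases), $\mathrm{BV}_{n,p}=\mathrm{BV}^1_{n,p}$ by definition, and Fact~\ref{fact:checkinv1} asserts exactly that $\mathrm{BV}^1_{n,p}$ is sharp on $\mathcal{G}_{n,p}$; similarly, Fact~\ref{fact:checkinv1} covers the red entries $(n,3)$ with $n$ odd. For a red entry $(n,p)$ with $n\geq 19$ odd and $(n,p)\neq (23,5)$, the corresponding $(n,p)$-box in Table~\ref{tab:bvnpnodd} contains at least one black entry $i\in\{1,2,3\}^k$, and $\mathrm{BV}_{n,p}$ is defined as $\mathrm{BV}^i_{n,p}$ for such an $i$; by Fact~\ref{fact:checkinvi} this is sharp. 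Finally, for the exceptional case $(n,p)=(23,5)$, $\mathrm{BV}_{n,p}=(\mathrm{BV}^2_{n,p},f)$, where $f(L)=\mathrm{BV}_{24}(L^\flat)$ is computed with the absolute variant of Remark~\ref{rem:varianthbv}; Remark~\ref{rem:case235} records that the two pairs of lattices which $\mathrm{BV}^2_{n,p}$ fails to separate are separated by $f$, so the pair $(\mathrm{BV}^2_{n,p},f)$ is sharp on $\mathcal{G}_{23,5}$.

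The main obstacle is that none of these steps is a mathematical argument in the traditional sense: the corollary is essentially a bookkeeping statement asserting that the ad hoc definition of $\mathrm{BV}_{n,p}$ was designed precisely to collate the sharpness results of Facts~\ref{fact:checkinv1} and~\ref{fact:checkinvi} and Remark~\ref{rem:case235}. The true content lies in those three verifications, each of which is an explicit computer calculation on the lattice lists of~\cite{chetaiweb} that I will not reproduce here. With all four ranges (the two sporadic boxes, the red even-$n$ boxes, the red odd-$n$ boxes with $(n,p)\neq(23,5)$, and the box $(23,5)$) covered, together with the black-entry check, the corollary follows, completing the proof of Theorem~\ref{thmintro:hdiscptablebvp}.
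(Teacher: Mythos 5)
Your proposal is correct and follows essentially the same route as the paper: the corollary is pure bookkeeping, deduced by matching each branch of the case-by-case definition of ${\rm BV}_{n,p}$ (black entries, the sporadic boxes $(17,7)$ and $(18,7)$, red even-$n$ boxes via Fact~\ref{fact:checkinv1}, red odd-$n$ boxes via the black entries of Table~\ref{tab:bvnpnodd} and Fact~\ref{fact:checkinvi}, and $(23,5)$ via Remark~\ref{rem:case235}) against the corresponding computer-verified sharpness statement. The paper gives no further argument, so your explicit enumeration of the cases is if anything slightly more detailed than the original.
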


	The first main application of the invariants ${\rm BV}_{n,p}$ is that 
they allow an independent verification that 
our lists of lattices in Theorem~\ref{thmintro:hdiscptable} are complete, 
using the mass formula.
	
\begin{example}
\label{ex:check273}
{\rm ({\it Independent check that the list of $285\,825$ representatives 
of $\mathcal{G}_{27,3}$ given in {\rm \cite{chetaiweb}} is complete})
The computation of all ${\rm BV}^1_{27,3}$ invariants takes about $6\,\texttt{h}\,45\,\texttt{min}$, 
{\it i.e.} about $85\,\texttt{ms}$ per lattice (and they are all distinct). 
Moreover, applying to the given Gram matrices the variant \texttt{qfautors} 
of the Plesken-Souvignier algorithm mentioned in~\S\ref{subsect:algolat} (c), 
or PARI's \texttt{qfauto} for the $9$ lattices with no roots,\footnote{
The order of the isometry groups of the $9$ lattices with no roots are 
$18\,720\,000$, $5\,760$, $4\,608$, $1\,152$, $240$, $120$, $48$, $48$, $48$. 
Their direct computation is a bit lengthy ($1\,\texttt{h}$). 
However, it only takes a few seconds if we rather use 
the associated rank $29$ unimodular lattices obtained 
by gluing with $\langle 2 \rangle \perp \langle 3 \rangle$ ({\it i.e.} 
if we rather compute $|{\rm O}(\widetilde{L})|=|{\rm O}(L)|/2$).  }  
the computation of the order of all isometry groups takes about $8\,\texttt{h}\,45\,\texttt{min}$, 
{\it i.e.} about $110\,\texttt{ms}$ per lattice 
(and actually, about $85\,\texttt{ms}$ for about $99.8$\% of the lattices). 
Statistics for reduced isometry groups of order $\leq 256$ are 
given in Table~\ref{tab:statroadim27det6} 
(only $509$ lattices have a larger reduced isometry group). 
The total mass of our lattices coincides with the mass formula of $\mathcal{G}_{27,3}$.\footnote{
It is
{\tiny
$184361388591800313635423567792726086296697/6214940800321288874910535133429760000000$.}
}\begin{table}[H]
\tabcolsep=2pt
{\scriptsize \renewcommand{\arraystretch}{1.3} \medskip
\begin{center}
\begin{tabular}{c|c|c|c|c|c|c|c|c|c|c|c|c|c|c|c}
$\texttt{ord}$ & $2$ & $4$ & $6$ & $8$ & $10$ & $12$ & $16$ & $18$ & $20$ & $24$ & $32$ & $36$ & $40$ & $42$ & $48$ \\
$\#$ & $225\,451$ & $44\,125$ & $707$ & $8\,775$ & $11$ & $1\,482$ & $1\,922$ & $5$ & $31$ & $896$ & $673$ & $19$ & $50$ & $3$ & $398$ \\ \hline
$\texttt{ord}$ & $64$ & $72$ & $80$ & $84$ & $96$ & $104$ & $108$ & $120$ & $128$ &  $144$ & $160$ & $192$ & $216$ & $240$ & $256$ \\
$\#$ & $135$ & $58$ & $24$ & $10$ & $261$ & $1$ & $2$ & $11$ & $64$ & $57$ & $8$ & $72$ & $19$ & $32$ & $14$
 \end{tabular}
\caption{\small Number $\#$ of lattices in $\mathcal{G}_{27,3}$ with reduced isometry group of order $\texttt{ord} \leq 256$.}
\label{tab:statroadim27det6}
\end{center}
}
\end{table}
}
\end{example}
The invariants ${\rm BV}_{n,p}$ also allow 
to use alternative methods to compute some genera $\mathcal{G}_{n,p}$. 
For instance, the case $(23,3)$ can be easily dealt with 
using Kneser neighbors and ${\rm BV}_{23,3}$, 
and this is how we first determined it.
\begin{example} 
{\rm ({\it Alternative determination of $\mathcal{G}_{27,3}$})
We initially determined $\mathcal{G}_{27,3}$ using a ``backward''  method, 
represented by the dotted arrow $\mathcal{G}_{26,3} \dashrightarrow \mathcal{G}_{27,3}$ 
in Figure~\ref{fig:imageliensgenres}, 
in the spirit of the method used in \S\ref{sect:classrk29} to deduce the rank $29$ unimodular lattices 
from rank $27$ ones. Indeed, a variant of Proposition~\ref{prop:uninplus2n} shows that
{\it the groupoid of pairs $(M,\alpha)$, 
with $M \in \mathcal{G}_{27,3}$ and $\alpha$ a root of $M$ with ${\rm m}(\alpha)=1$, 
is equivalent to that of pairs $(N,e)$ 
with $N \in \mathcal{G}_{26,3}$ and $e \in N/2N$ with $e \cdot e\,\equiv \,2\, \bmod \,4$}. 
Using \texttt{orbmod2}, the known classification of $\mathcal{G}_{26,3}$, and ${\rm BV}_{26,3}$, 
this allows one to determine all isometry classes in $\mathcal{G}_{27,3}$ with nonempty root system.
The lattices $M \in \mathcal{G}_{27,3}$ having a root $\alpha$ with ${\rm m}(\alpha)=2$ 
are precisely those of the form
${\rm A}_1 \perp N$ with $\mathcal{G}_{26,3}$.
The $9$ remaining lattices without root in $\mathcal{G}_{27,3}$ 
were found using Kneser neighbors
(their total mass is $951709/12480000$ by \cite[Prop. 6.5]{cheuni} and \cite{king}).}
\end{example}

\section{Rank $30$ unimodular lattices with few roots}
\label{sect:rank30}

${}^{}$ Our aim in this last section is to discuss the proof of Theorem~\ref{thmintro:X30}.
This is a massive computation, which required more than $100$ years\footnote{Precise CPU time is 
difficult to estimate (and may have been significantly higher than stated), as our parallel implementation was not fully optimized.} of CPU time (single core equivalent).
Using the observed sharpness of the BV invariant and the Plesken-Souvignier algorithm, 
the completeness of these lists can be easily checked 
independently of the way we found them;
given our current implementation, it takes less than $3.5$ years (see Remark~\ref{rem:X30CPUtime}). 
Indeed, for each root system $R$, we know from the work of King \cite{king} 
(see also~\cite[\S 6.4]{cheuni}) the reduced mass ${\rm m}_{30}(R)$ of ${\rm X}_{30}^R$: see Table~\ref{tab:redmassX30}.
Again, King's lower bounds for $2 \,{\rm m}_{30}(R)$ were 
not too far from the actual size of ${\rm X}_{30}^R$ in the three 
cases considered above. 
\vspace{-.4cm}
\begin{table}[H]
{\scriptsize \renewcommand{\arraystretch}{1.5} \medskip
\begin{center}
\begin{tabular}{c|c|c}
$R$ & ${\rm m}_{30}(R)$ & $\approx 2 {\rm m}_{30}(R) \cdot 10^{-6}$ \\
\hline
$\emptyset$ & $7180069576834562839/175111372800$ & $82.01$ \\
${\bf A}_1$ & $9242148948311/51840$ & $356.56$ \\
${\bf A}_2$ & $25436628608581/4043520$ & $12.58$
\end{tabular}
\caption{\small The reduced mass ${\rm m}_{30}(R)$, and $2 \,{\rm m}_{30}(R)$ in millions (rounded to $10^{-2}$).}
\label{tab:redmassX30}
\end{center}
}
\end{table}
\vspace{-.5cm}

We will not give many details about how we found the 
lists in Theorem~\ref{thmintro:X30}, as the method is close 
to the one described in details in~\cite{cheuni2} for the 
classification of ${\rm X}_{29}^\emptyset$. We will content 
ourselves with giving an overview of the main steps, assuming 
the reader is familiar with \cite{cheuni,cheuni2}, and 
to emphasize some novel difficulties we encountered 
in the two cases $R=\emptyset$ and $R = {\bf A}_1$.
The various improvements in lattice algorithms described in \S~\ref{subsect:algolat},
as well as the notion of visible isometries explained in \cite[\S 7]{cheuni},
were of great help in these new computations. \ps

\begin{remark}
\label{rem:X30CPUtime} 
Assume $L \in {\rm X}_{30}$ has no norm $1$ vectors. 
We have\footnote{It follows from similar arguments as in {\rm \cite[\S 4]{bachervenkov}}, 
and from Remark~\ref{rem:rel_def_exc}.}
${\rm r}_3(L)\,=\,1520 \,+\, 12 \,{\rm r}_2(L) \,-\, 64\, |{\rm Exc}\,L|$,
the possible values of $|{\rm  Exc}\, L|$ being given by Remark~\ref{rem:dim6mod8}. 
In the case ${\rm R}_2(L)=\emptyset$ {\rm (}resp. ${\bf A}_1$, ${\bf A}_2${\rm )}, it follows that
the number of vertices of the graph $\mathcal{G}_{\leq 3}(L)$
{\rm (}see~\S~\ref{subsect:markedBV}{\rm )} is bounded above by $760$ {\rm (}resp. $773$, $799${\rm )}.
Perhaps surprisingly, these quantities are slightly smaller than 
their $29$-dimensional analogues discussed at the end of Sect.~\ref{sect:classrk29} 
{\rm (}the second column of Table 1 in {\rm \cite{nebevenkov_shadow}} gives an idea 
of how these quantities vary with the rank{\rm )}. 
This makes the sharpness of ${\rm BV}(L)$ even more remarkable here, 
and its computation faster: it runs in about $63$\,{\rm \texttt{ms}}.
By comparison, it takes about $30$\,{\rm \texttt{ms}} 
to find a good Gram matrix for such a lattice $L$,
and then about $250$\,{\rm \texttt{ms}} {\rm (}resp. $120$\,{\rm \texttt{ms}}, $82$\,{\rm \texttt{ms}}{\rm )}
to compute $|{\rm O}(L)^{\rm red}|$.
\end{remark}

\subsection{Case $R= \emptyset$}
\label{subsect:X30vide}
We started with an exploration of the $d$-neighbors 
of ${\rm I}_{30}$ having an empty visible root system for $d$ ranging from $61$ to $147$,
following the \texttt{BNE} algorithm described in \cite[\S 5.4]{cheuni2}.
The number of new lattices we found for each $d$ is indicated 
in Table~\ref{tab:nbneiX30empty} below. Up to $d=83$, we enumerated {\it all} 
the $d$-neighbors\footnote{More precisely, all those defined by 
an isotropic line with some coordinate coprime to $d$.} of ${\rm I}_{30}$, 
but we stopped doing so from $d=84$ for efficiency reasons, 
preferring to increase $d$ when the algorithm 
started to yield fewer new lattices. From $d=96$ to $d=147$, 
we only selected about $10^7$ isotropic vectors 
(and ceased selecting minimal vectors 
in a line as in~\cite[Rem. 5.9]{cheuni2}, which is ineffective here).

\begin{table}[h]
\centering
\tiny
\setlength{\tabcolsep}{2pt} 
\renewcommand{\arraystretch}{1.1} 
\begin{tabular}{l r | l r | l r | l r | l r | l r | l r | l r| l r}
\texttt{d} & \#  & \texttt{d} & \# & \texttt{d} & \#  & \texttt{d} & \#  & \texttt{d} & \#  & \texttt{d} & \# & \texttt{d} & \# & \texttt{d} & \# & \texttt{d} & \# \\
 & & & & & & & & & & & & & & & & & \\
$61$ & $1$& $71$ & $130$& $81$ & $260\,291$& $91$ & $9\,124\,548$& $101$ & $289\,484$& $111$ & $174\,694$& $121$ & $58\,073$& $131$ & $18\,067$& $141$ & $1\,192$ \\
$62$ & $0$& $72$ & $1\,177$& $82$ & $907\,179$& $92$ & $8\,053\,421$& $102$ & $323\,373$& $112$ & $124\,628$& $122$ & $51\,277$& $132$ & $10\,797$& $142$ & $911$\\
$63$ & $0$& $73$ & $752$& $83$ & $638\,350$& $93$ & $4\,271\,277$& $103$ & $369\,491$& $113$ & $143\,898$& $123$ & $51\,319$& $133$ & $11\,486$& $143$ & $877$\\
$64$ & $0$& $74$ & $4\,986$& $84$ & $3\,378\,682$& $94$ & $2\,068\,536$& $104$ & $377\,261$& $114$ & $84\,183$& $124$ & $46\,145$& $134$ & $6\,407$& $144$ & $623$\\
$65$ & $4$& $75$ & $5\,678$& $85$ & $1\,846\,743$& $95$ & $1\,524\,120$& $105$ & $369\,945$& $115$ & $53\,617$& $125$ & $45\,977$& $135$ & $5\,786$& $145$ & $580$\\
$66$ & $4$& $76$ & $20\,249$& $86$ & $6\,085\,993$& $96$ & $137\,059$& $106$ & $343\,827$& $116$ & $69\,709$& $126$ & $35\,771$& $136$ & $3\,613$& $146$ & $526$\\
$67$ & $3$& $77$ & $18\,940$& $87$ & $5\,008\,910$ & $97$ & $145\,590$& $107$ & $384\,739$& $117$ & $67\,457$& $127$ & $37\,791$& $137$ & $4\,035$& $147$ & $243$\\
$68$ & $27$& $78$ & $103\,979$& $88$ & $12\,014\,052$& $98$ & $181\,599$& $108$ & $267\,665$& $118$ & $62\,403$& $128$ & $27\,466$& $138$ & $2\,268$\\
$69$ & $30$& $79$ & $57\,901$& $89$ & $6\,972\,717$& $99$ & $190\,195$& $109$ & $301\,591$& $119$ & $64\,000$& $129$ & $26\,221$& $139$ & $2\,329$\\
$70$ & $282$& $80$ & $320\,713$& $90$ & $14\,040\,632$& $100$ & $256\,585$& $110$ & $180\,120$& $120$ & $53\,835$& $130$ & $17\,898$& $140$ & $1\,181$
 \end{tabular}
\caption{{\scriptsize Number \# of new lattices found in ${\rm X}_{30}^\emptyset$ as $d$-neighbors of ${\rm I}_{30}$}}
\label{tab:nbneiX30empty}
\end{table}
 
After this massive computation, about $10^5$ lattices remained to be found, but 
it did not seem reasonable to pursue this strategy further.
Note that a specific search for exceptional lattices using the method described in~\cite[\S 9.3]{cheuni}
led to the discovery of only about $4\,000$ new lattices, most candidates having already been found.
At this point, the remaining mass was 
{\small
$$1593528554589611/M\,\,\,{\rm with}\,\,\,M=35022274560=29 \cdot 13 \cdot 7 \cdot 5 \cdot 3^4 \cdot 2^{15}.$$
}\par
\noindent In order to "clean" the denominator, we then searched for neighbors 
having an isometry of prime order $p \, |\, M$ and a prescribed characteristic polynomial, 
using the method of visible isometries described in \cite[\S 7]{cheuni} 
(see also \cite[\S 6.7]{cheuni2} for an example). We stopped after finding 
about $7\,600$ new lattices, leaving a remaining mass of $13033918217/M'$ 
with $M'=3^2 \cdot 2^{15}$. For instance, we found a lattice with mass $1/232$ for $p=29$, 
and two lattices with masses $1/8736$ and $1/134784$ for $p=13$ 
(with the characteristic polynomial $\Phi_{13}^2 \Phi_1^{6}$). \ps
 
To complete the classification, we then computed the $2$-neighbors of 
a suitably chosen collection $\mathcal{C}$ of already found lattices in ${\rm X}_{30}$.
As explained in \cite[\S 7.5]{cheuni}, the $2$-neighbors of a lattice with a large isometry group 
are good candidates for having a non-trivial (or large) isometry group; conversely, those of a 
lattice with a trivial isometry group are random enough to be useful
in the search for the (many) missing lattices with mass $1/2$. 
We therefore used both kinds of lattices in our choice of $\mathcal{C}$. \ps
More precisely, we first included $40$ lattices in $\mathcal{C}$, each given as a $d$-neighbor 
of ${\rm I}_{30}$ for odd $d$ between $65$ and $147$, and we computed 
{\it all} the $2$-neighbors of those lattices. We also performed a partial computation 
of the $2$-neighbors of the lattice with mass $1/96$ found for $d=65$, 
and of $4$ lattices with mass $1/32$ found for $d=81$ and $85$, 
in order to start hunting the powers of $2$ and $3$ in $M'$ 
(but this turned out to be ineffective). These massive neighbor computations 
allowed us to find more than $95\,000$ new lattices, 
and left a remaining mass of $4289033/294912 \approx 14.5$. 
A posteriori, we know that after this step, 
only $115$ lattices were actually missing, with masses given by Table~\ref{tab:last300lat} 
(note that all lattices but one have an isometry group which is a $2$-group).\par

\begin{table}[h]
\centering
\scriptsize
\setlength{\tabcolsep}{5pt} 
\renewcommand{\arraystretch}{1.5} 
\begin{tabular}{l|cccccccccc}
\texttt{mass} & $1/2$ & $1/4$ & $1/8$ & $1/16$ & $1/32$ & $1/64$ & $1/128$ & $1/512$ & $1/2304$ & $1/32768$ \\
\hline
\# & $1$ & $29$ & $38$ & $25$ & $13$ & $4$ & $1$ & $2$ & $1$ & $1$
\end{tabular}
\caption{{\small Number $\#$ of lattices with mass \texttt{mass} in the last $115$ lattices found}}
\label{tab:last300lat}
\end{table}

We then added to $\mathcal{C}$ about $200$ lattices of the form 
${\rm I}_1 \perp L$ with $L \in {\rm X}_{29}^\emptyset$ and mass $\leq 1/16$,
as well as a few lattices with mass $1/512$, $1/256$ and $1/16$ 
obtained using the visible isometry method.
We found the remaining lattices by computing, for each of them, ``only'' about $10^7$ $2$-neighbors.
For instance, the two lattices in ${\rm X}_{30}^\emptyset$ with mass $1/32768=1/2^{15}$ and $1/2304$
were discovered as $2$-neighbors of the same lattice ${\rm I}_1 \perp L$, 
where $L$ is the unique class in ${\rm X}_{29}^\emptyset$ with mass $1/18432=1/(2^{11} \cdot 3^{2})$.
As an anecdote, the last lattice we found has mass $1/16$.
The final statistics for the isometry groups 
in ${\rm X}_{30}^\emptyset$ are given in Table~\ref{tab:sizeautX30empty}.

\begin{table}[h]
\centering
\tiny
\setlength{\tabcolsep}{2.8pt} 
\renewcommand{\arraystretch}{1.1} 
\begin{tabular}{l r r | l r r | l r r | l r r | l r r | l r r}
\texttt{mass} & \# & \texttt{e} & \texttt{mass} & \# & \texttt{e} & \texttt{mass} & \# & \texttt{e} & \texttt{mass} & \# & \texttt{e} & \texttt{mass} & \# & \texttt{e} & \texttt{mass} & \# & \texttt{e} \\
 & & & & & & & & & & & & & & & & & \\
$1/2$ & $81706477$ & $4429936$ & $1/40$ & $33$ & $5$ & $1/128$ & $107$ & $20$ & $1/576$ & $2$ & $2$ & $1/2880$ & $1$ & $0$ & $1/20160$ & $1$ & $1$ \\
$1/4$ & $583827$ & $85387$ & $1/48$ & $144$ & $63$ & $1/144$ & $13$ & $10$ & $1/600$ & $2$ & $2$ & $1/3072$ & $4$ & $2$ & $1/30720$ & $1$ & $1$ \\
$1/6$ & $688$ & $195$ & $1/54$ & $1$ & $0$ & $1/160$ & $1$ & $1$ & $1/672$ & $1$ & $0$ & $1/3600$ & $1$ & $1$ & $1/32768$ & $1$ & $0$ \\
$1/8$ & $25837$ & $5127$ & $1/56$ & $9$ & $2$ & $1/192$ & $35$ & $13$ & $1/768$ & $17$ & $7$ & $1/3840$ & $1$ & $0$ & $1/57600$ & $1$ & $1$ \\
 $1/10$ & $23$ & $0$ & $1/60$ & $7$ & $1$ & $1/232$ & $1$ & $1$ & $1/864$ & $1$ & $1$ & $1/4032$ & $2$ & $2$ & $1/82944$ & $1$ & $1$ \\
 $1/12$ & $791$ & $312$ & $1/64$ & $229$ & $53$ & $1/240$ & $5$ & $2$ & $1/960$ & $2$ & $0$ & $1/4096$ & $2$ & $0$ & $1/134784$ & $1$ & $1$ \\ 
 $1/16$ & $3429$ & $850$ & $1/72$ & $15$ & $8$ & $1/256$ & $24$ & $4$ & $1/1024$ & $7$ & $2$ & $1/4608$ & $4$ & $2$ & $1/161280$ & $2$ & $1$ \\
 $1/18$ & $6$ & $1$ & $1/80$ & $7$ & $4$ & $1/288$ & $7$ & $5$ & $1/1152$ & $3$ & $2$ & $1/5760$ & $2$ & $1$ & $1/184320$ & $1$ & $0$ \\
$1/20$ & $34$ & $3$ & $1/84$ & $2$ & $2$ & $1/320$ & $1$ & $0$ & $1/1296$ & $1$ & $0$ & $1/6144$ & $3$ & $0$ & $1/688128$ & $1$ & $0$ \\
$1/24$ & $408$ & $170$ & $1/96$ & $79$ & $31$ & $1/384$ & $26$ & $9$ & $1/1536$ & $4$ & $2$ & $1/7168$ & $1$ & $0$ & $1/1179648$ & $1$ & $1$ \\
$1/28$ & $3$ & $0$ & $1/100$ & $1$ & $0$ & $1/448$ & $1$ & $0$ & $1/2048$ & $3$ & $1$ & $1/8736$ & $1$ & $1$ & $1/2419200$ & $1$ & $1$ \\ 
$1/32$ & $717$ & $159$ & $1/108$ & $2$ & $1$ & $1/480$ & $5$ & $1$ & $1/2304$ & $2$ & $2$ & $1/9216$ & $1$ & $1$ & $1/41287680$ & $1$ & $0$ \\ 
$1/36$ & $6$ & $3$ & $1/120$ & $5$ & $1$ & $1/512$ & $17$ & $4$ & $1/2688$ & $1$ & $0$ & $1/18432$ & $2$ & $1$
 \end{tabular}
\caption{{\scriptsize Number \# of classes (resp. \texttt{e} of exceptional classes) in ${\rm X}_{30}^\emptyset$ with mass \texttt{mass}}}
\label{tab:sizeautX30empty}
\end{table}

\ps

\subsection{Cases $R= {\bf A}_1$ and $R={\bf A}_2$}
\label{subsect:X30A1}
${}^{}$
We applied a strategy similar to the one described above, 
using the visible root system $R$ itself.
The ${\bf A}_2$ case presented no particular surprises, so we will not 
say anything about it. The ${\bf A}_1$ case was especially challenging.
Indeed, despite multiple enumerations of neighbors, 
it ceased to yield new lattices close to the end,
leaving a remaining mass of $3/4$. 
At this stage, finding the missing lattices by the neighbor method 
amounts to searching for a needle in a haystack.
Instead, we used an exceptional degree $3$ correspondence 
on ${\rm X}_{30}$, which we call the {\it triplication method}, 
and which we now briefly explain.\ps

Let $Q$ be the finite quadratic space $ - \resq \,({\rm A}_1 \perp {\rm A}_1 \perp {\rm A}_1)$,
and denote by $\mathcal{H}_n$ the genus of 
even lattices $H$ of rank $n$ satisfying $\resq H \simeq Q$. 

\begin{prop} 
\label{prop:finalgroupoideq}
Assume $n \equiv 6 \bmod 8$. There is a natural equivalence of groupoids between:
\begin{itemize}
\item[(i)] pairs $(L,\alpha)$ with 
$L$ a rank $n$ unimodular lattice and $\alpha$ a root of $L$. \ps
\item[(ii)] pairs $(H,w)$ with $H$ a lattice in $\mathcal{H}_{n-1}$ and 
$w \in \resq H$ such that ${\rm q}(w) \equiv 3/4 \bmod \Z$.
\end{itemize}
In this equivalence, we have $H \simeq L^{\rm even} \cap \alpha^\perp$ and 
$L^{\rm even} \simeq (H \perp \Z \alpha) + \Z (w+\alpha/2)$. 
\end{prop}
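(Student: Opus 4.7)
The plan is to deduce this equivalence by combining Proposition~\ref{prop:nikulingroupoids} (Nikulin's gluing, applied to $A = \Z\alpha \simeq \mathrm{A}_1$) with Example~\ref{ex:orthospecial}, and by exploiting the special structure of $\resq \mathrm{D}_n$ when $n \equiv 6 \bmod 8$. The functor will send $(L,\alpha)$ to $(H,w)$ with $H = L^{\rm even}\cap\alpha^\perp$ and $w$ the gluing class produced by Example~\ref{ex:orthospecial}; the inverse will reconstruct $L^{\rm even}$ by gluing $H$ with $\Z\alpha$ along the class $w$, and then recover $L$ as the unique odd unimodular overlattice of $L^{\rm even}$.

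For the forward direction, starting from $(L,\alpha)$, set $M := L^{\rm even}$ and $H := M \cap \alpha^\perp$. The critical first step is to show that the modulus ${\rm m}(\alpha)$ of $\alpha$ in $M$ equals $1$. Indeed, otherwise ${\rm m}(\alpha)=2$, i.e., $\alpha/2 \in M^\sharp$, which would give a nonzero class in $\resq M$ of ${\rm q} \equiv 1/4 \bmod \Z$. But $M$ lies in the genus of $\mathrm{D}_n$, and using the explicit generators $e_1$ and $(1/2,\dots,1/2)$ of $\mathrm{D}_n^\sharp$ one checks that for $n \equiv 6 \bmod 8$ the three nonzero elements of $\resq \mathrm{D}_n$ only carry the ${\rm q}$-values $1/2$, $3/4$, $3/4$, ruling out $1/4$. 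Granted ${\rm m}(\alpha)=1$, Example~\ref{ex:orthospecial} with $d=2$ and $m=1$ produces a pair $(H,w)$ with $w \in \resq H$ of order $2$ and ${\rm q}(w) \equiv -1/4 \equiv 3/4 \bmod \Z$. To place $H$ in $\mathcal{H}_{n-1}$, Remark~\ref{rem:afternik}~(ii) yields $\resq H \simeq -\resq\mathrm{A}_1 \perp \resq M \simeq -\resq\mathrm{A}_1 \perp \resq \mathrm{D}_n$, and a direct computation of the seven nonzero ${\rm q}$-values (one $1/4$, three $1/2$'s, three $3/4$'s, with the expected bilinear pairings) identifies this quadratic space with $Q = -\resq(3\mathrm{A}_1)$.

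For the inverse direction, given $(H,w)$ in $\mathcal{H}_{n-1}$, introduce a formal $\alpha$ with $\alpha\cdot\alpha = 2$, form $N := H \perp \Z\alpha$, and lift $w$ to $\tilde w \in H^\sharp$. Define $M := N + \Z(\tilde w + \alpha/2)$. The relation $2(\tilde w + \alpha/2) = 2\tilde w + \alpha \in N$ shows $[M:N]=2$, and the identity $(\tilde w + \alpha/2)\cdot(\tilde w + \alpha/2) \equiv 2\,{\rm q}(w) + 1/2 \bmod 2\Z$ combined with ${\rm q}(w) \equiv 3/4$ shows that this element has norm in $2\Z$, so $M$ is even. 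A residue computation via Remark~\ref{rem:afternik}~(ii) identifies $\resq M$ with the quadratic form on $(\Z/2)^2$ of $\mathrm{D}_n$ type. The odd unimodular $L$ is then recovered as the unique overlattice of index $2$ of $M$ obtained by adjoining the unique nonzero class of $\resq M$ with ${\rm q} \equiv 1/2 \bmod \Z$; one then checks that $L^{\rm even} = M$ and that $\alpha$ is a root of $L$.

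The two constructions are inverse to each other, and they are compatible with morphisms because the operations involved (orthogonal complement, Nikulin gluing, canonical overlattice) are all functorial. The main obstacle to watch is precisely this \emph{uniqueness} of the odd unimodular overlattice in the last step: it rests entirely on the fact that $\resq \mathrm{D}_n$ has exactly one nonzero class of ${\rm q} \equiv 1/2 \bmod \Z$ when $n \equiv 6 \bmod 8$ (the other two nonzero classes both have ${\rm q} \equiv 3/4$, which fails the integrality of the overlattice), a peculiarity of this congruence class that is precisely why the proposition is restricted to it.
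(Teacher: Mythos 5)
Your proof is correct and follows essentially the same route as the paper's: both reduce to pairs $(L^{\rm even},\alpha)$ with $L^{\rm even}$ in the genus of ${\rm D}_n$ and then apply the gluing construction (Proposition~\ref{prop:nikulingroupoids}, via Example~\ref{ex:orthospecial}) with $A={\rm A}_1$. The only divergence is local and harmless: where the paper identifies $\resq H$ with $Q$ by realizing $H$ as the orthogonal of ${\rm A}_1\perp{\rm A}_1\perp{\rm A}_1$ in an even unimodular lattice of rank $n+2$, you do it by a direct computation of residues, and you additionally make explicit two points the paper's sketch glosses over, namely the verification that ${\rm m}(\alpha)=1$ and the uniqueness of the odd unimodular overlattice of $M$ when $n\equiv 6\bmod 8$.
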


\begin{pf} 
We only sketch the proof.
The groupoid in (i) is naturally equivalent to that of pairs $(M,\alpha)$
with $M$ an even lattice in the genus of ${\rm D}_n$ and $\alpha$ a root of $M$, 
via $(L,\alpha) \mapsto (L^{\rm even},\alpha)$. Any such $M$ may be obtained as 
the orthogonal of some ${\rm D}_2 \simeq {\rm A}_1 \perp {\rm A}_1$
inside an even unimodular lattice $U$ of rank $n+2$. So $H:=M \cap \alpha^\perp$
is the orthogonal in $U$ of some ${\rm A}_1 \perp {\rm A}_1 \perp {\rm A}_1$,
showing $H \in \mathcal{H}_{n-1}$. The equivalence between (i) and (ii)
is then a consequence of Proposition~\ref{prop:nikulingroupoids} in the even context, 
with $A={\rm A}_1$ and $H=\resq A$.
\end{pf}

We have $Q = \Z/2 e_1 \perp \Z/2 e_2 \perp \Z/2 e_3$ 
with ${\rm q}(e_i) \equiv 3/4 \bmod \Z$ for $1 \leq i \leq 3$. 
For each $H$ in $\mathcal{H}_{n-1}$, there are thus exactly 
$3$ elements $w \in \resq H$ satisfying ${\rm q}(w) \equiv 3/4$, hence
$3$ corresponding pairs $(L,\alpha)$, explicitly given by 
the last formula in the proposition. 
The relation $H \simeq L \cap \alpha^\perp$ shows
$${\rm R}_2(L) \simeq {\bf A}_1\, \, {\rm or}\, \, {\bf A}_2 \,\iff\,{\rm R}_2(H) =\emptyset.$$ 
The construction above thus associates to an isometry class \([L] \in \mathrm{X}_n^{\mathbf{A}_1} \sqcup \mathrm{X}_n^{\mathbf{A}_2}\) a 3-element multiset of classes in \(\mathrm{X}_n^{\mathbf{A}_1} \sqcup \mathrm{X}_n^{\mathbf{A}_2}\) containing $[L]$.
This triple is easily computed.
Applied to our list of found lattices in ${\rm X}_{30}^{{\bf A}_1}$, this method allowed us to 
produce a list of rank $30$ unimodular lattices with root system ${\bf A}_1$ or ${\bf A}_2$ that is three 
times larger (but of course, with much redundancy).
The computation of the ${\rm BV}$ invariants of all the new classes 
happily led us to discover the $3$ 
remaining lattices in ${\rm X}_{30}^{{\bf A}_1}$, each having the mass $1/4$. 
This concludes the proof, up to the fact that these last $3$ lattices are not 
yet given as $d$-neighbors of ${\rm I}_{30}$. 
For this last step we use the following:

\begin{lemma} 
\label{lem:tripl2nei}
Let $H$ be an even lattice $\mathcal{H}_{n-1}$ with $n \equiv 6 \bmod 8$. 
Let $W$ be the $3$-element set of $w \in \resq \, H$ with ${\rm q}(w) \equiv 3/4 \bmod \Z$,
and for $w$ in $W$, let $L_w$ be the rank $n$ unimodular lattice associated to $(H,w)$
under the equivalence of Proposition~\ref{prop:finalgroupoideq}. 
Then $L_{w'}$ is a $2$-neighbor of $L_{w}$ for any $w' \neq w$ in $W$.
\end{lemma}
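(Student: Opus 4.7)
My plan is to realize $L_w$ and $L_{w'}$ as concrete sublattices of the common ambient space $V := (H \otimes \R) \oplus \R\alpha$ (using the same norm-$2$ vector $\alpha$ for both), each an overlattice of $H \perp \Z\alpha$ of index $4$, and then to verify the $2$-neighbor condition via a subgroup intersection in $\resq(H \perp \Z\alpha)$. A preliminary observation, following from the proof of Proposition~\ref{prop:finalgroupoideq}: the three elements of $W$ are the classes $\eta^{-1}(e_i)$, where the $e_i = \alpha_i/2$ are the standard generators of $\resq\,{\rm A}_1^3$. Since the $e_i$ are pairwise orthogonal and each satisfies ${\rm q}(e_i) \equiv 1/4 \bmod \Z$, the isometry $\eta$ into $-\resq\,{\rm A}_1^3$ yields $w \cdot w' \equiv 0 \bmod \Z$ for distinct $w, w' \in W$, and ${\rm q}(w) \equiv 3/4 \bmod \Z$ for every $w \in W$.

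The main step is to pin down $L_w$ explicitly. Write $W = \{w, w', w''\}$, choose lifts $\tilde w, \tilde w', \tilde w'' \in H^\sharp$, and set $f := \alpha/2 \bmod \Z\alpha$, so that $\resq(H \perp \Z\alpha) = \resq H \oplus (\Z/2)\,f$. By Proposition~\ref{prop:finalgroupoideq}, $M_w := L_w^{\mathrm{even}} = (H \perp \Z\alpha) + \Z(\tilde w + \alpha/2)$. Using the orthogonality relations above, the vector $\tilde w' + \tilde w''$ lies in $M_w^\sharp$ and represents a non-trivial class in $\resq M_w$ with
\[
{\rm q}(\tilde w' + \tilde w'' + M_w) \,\equiv\, {\rm q}(w') + {\rm q}(w'') + w' \cdot w'' \,\equiv\, 3/4 + 3/4 + 0 \,\equiv\, 1/2 \bmod \Z.
\]
Since $n \equiv 6 \bmod 8$, $\resq M_w \simeq \resq {\rm D}_n$ has non-zero quadratic values $\{1/2,\,3/4,\,3/4\}$, and only the class with ${\rm q} \equiv 1/2$ defines an integral (hence odd unimodular) overlattice. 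Consequently,
\[
L_w \,=\, (H \perp \Z\alpha) + \Z(\tilde w + \alpha/2) + \Z(\tilde w' + \tilde w'').
\]

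To conclude, reducing modulo $H \perp \Z\alpha$ yields
\[
L_w/(H \perp \Z\alpha) \,=\, \{0,\; w+f,\; w'+w'',\; w+w'+w''+f\},
\]
with the analogous order-$4$ subgroup for $L_{w'}$ obtained by swapping $w$ and $w'$. Their intersection is the order-$2$ subgroup $\{0,\; w+w'+w''+f\}$, so $[L_w \cap L_{w'} : H \perp \Z\alpha] = 2$, and hence $[L_w : L_w \cap L_{w'}] = [L_{w'} : L_w \cap L_{w'}] = 2$, which is exactly the $2$-neighbor condition. The main obstacle will be the explicit identification of the generator $\tilde w' + \tilde w''$ of $L_w$ over $M_w$ — i.e., locating the unique class in $\resq M_w$ giving its odd unimodular overlattice — after which everything reduces to an elementary subgroup calculation in $\resq(H \perp \Z\alpha)$.
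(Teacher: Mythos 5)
Your proof is correct and takes essentially the same route as the paper's: both identify $L_w = (H \perp \Z\alpha) + \Z(\tilde w + \alpha/2) + \Z(\tilde w' + \tilde w'')$ and then compute that $L_w \cap L_{w'} = (H \perp \Z\alpha) + \Z(\alpha/2 + \tilde w + \tilde w' + \tilde w'')$ has index $2$ on both sides. You merely spell out the step the paper labels ``we easily check,'' via the pairwise orthogonality of the elements of $W$ and the quadratic values $\{1/2,3/4,3/4\}$ on $\resq {\rm D}_n$, which is exactly the intended verification.
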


\begin{pf} 
Set $N=H \perp {\rm A}_1$ and write ${\rm A}_1 = \Z \alpha$.
For $w \in W$ we have $L_w^{\rm even} = N + \Z (w+\alpha/2)$, 
and we easily check $L_w \,=\, L_w^{\rm even} \,+\, \Z\, (w'+w'')$ where $\{w,w',w''\}=W$.
We have thus $L_w \cap L_{w'} \,=\, N + \Z \,(\alpha/2+w+w'+w'')$, and 
$L_w \cap L_{w'}$ has index $2$ in $L_w$ and $L_w'$.
\end{pf}

A neighbor form for the last $3$ lattices was finally obtained as follows.
For each such lattice $L$, we computed $2$-neighbors of $L$ 
until we found a lattice $L'$ belonging to our list and represented  
as a $d$-neighbor of ${\rm I}_{30}$ for some odd $d$ (this is very fast). 
We then computed $2$-neighbors of $L'$ with root system ${\bf A}_1$,
which can easily be done on neighbor forms using~\cite[Lemma 11.2]{cheuni}, 
until we found one with the same BV invariant as $L$.

\appendix

\end{document}